\newcommand{\Z}[1]{\ensuremath{\mathbb{Z}/#1\mathbb{Z}}}
\newcommand{\bx}{\boldsymbol{x}}
\newcommand{\by}{\boldsymbol{y}}
\newcommand{\bz}{\boldsymbol{z}}
\newcommand{\ba}{\boldsymbol{a}}
\newcommand{\bb}{\boldsymbol{b}}
\newcommand{\btheta}{\boldsymbol{\Theta}}
\newcommand{\balpha}{\boldsymbol{\alpha}}
\newcommand{\bbeta}{\boldsymbol{\beta}}
\newcommand{\bgamma}{\boldsymbol{\gamma}}
\newcommand{\bdelta}{\boldsymbol{\delta}}
\theoremstyle{plain}
\newtheorem{thm}{Theorem}[section]
\newtheorem{cor}[thm]{Corollary}
\newtheorem{prop}[thm]{Proposition}
\newtheorem{claim}[thm]{Lemma}
\theoremstyle{definition}
\newtheorem{defn}[thm]{Definition}
\theoremstyle{remark}
\newtheorem{rem}[thm]{Remark}
\newtheorem{remark}[thm]{Remark}
\title{Projective Naturality in Heegaard Floer Homology}
\author{Mike Gartner}
\thanks{MG was partly supported by NSF Grant DMS-1810893.}
\begin{document}
	
\maketitle

\begin{abstract}    
Let $\text{Man}_{*}$ denote the category of closed, connected, oriented and based $3$-manifolds, with basepoint preserving diffeomorphisms between them. Juh\'asz, Thurston and Zemke showed that the Heegaard Floer invariants are natural with respect to diffeomorphisms, in the sense that there are functors $$HF^{\circ}: \text{Man}_{*} \rightarrow \mathbb{F}_{2}[U]\text{-}\text{Mod}$$ whose values agree with the invariants defined by Ozsv\'ath and Szab\'o. The invariant associated to a based $3$-manifold comes from a transitive system in $\mathbb{F}_{2}[U]\text{-}\text{Mod}$ associated to a graph of embedded Heegaard diagrams representing the $3$-manifold. We show that the Heegaard Floer invariants yield functors $$HF^{\circ}: \text{Man}_{*} \rightarrow \text{Trans}(P(\mathbb{Z}[U]\text{-}\text{Mod}))$$ to the category of transitive systems in a projectivized category of $\mathbb{Z}[U]$-modules. 
In doing so, we will see that the transitive system of modules associated to a $3$-manifold actually comes from an underlying transitive system in the projectivized homotopy category of chain complexes over $\mathbb{Z}[U]\text{-}\text{Mod}$. We discuss an application to involutive Heegaard Floer homology, and potential generalizations of our results.
\end{abstract}

{
	\hypersetup{linkcolor=black}
	\tableofcontents
}

\section{Introduction}
\label{Introduction}
The Heegaard Floer invariants associated to closed, oriented 3-manifolds were defined in the work of Ozsv\'ath and Szab\'o \cite{Disks1}. There it was shown that to each such 3-manifold, one can associate an isomorphism class of $\mathbb{Z}[U]$-module. Furthermore, cobordisms between 3-manifolds were shown to induce maps between the invariants \cite{FourManifoldInvariants}. However, there was a gap in the proof of the naturality of these maps. Showing that these invariants are natural with respect even to diffeomorphisms is subtle, and involves detailed consideration of the dependence of the invariants on the choices of Heegaard data, basepoints and embeddings of Heegaard diagrams involved in their construction.

These subtleties were studied extensively by Juh\'asz, Thurston and Zemke in \cite{Naturality}. There they explicated a particular type of loop of Heegaard moves, simple handleswaps, which previous work did not preclude from potentially yielding monodromy in the Heegaard Floer invariants. Moves analogous to these simple handleswap moves were previously studied in detail and suggested as possible candidates for loops with monodromy in the work of Sarkar (eg in \cite{BasepointMoving}). Through a careful analysis of a space of embedded Heegaard diagrams, Juh\'asz, Thurston and Zemke exhausted all possible monodromies and obstructions to the Heegaard Floer assignments being natural with respect to diffeomorphisms, and were then able to provide a minimal set of requirements which could be checked to verify such naturality. They then checked that these requirements are satisfied for all variants of Heegaard Floer homology with coefficients in $\mathbb{F}_{2}$. By building on the work in \cite{FourManifoldInvariants} and \cite{Naturality}, Zemke described in \cite{GraphTQFT} the dependence of the cobordism maps defined in \cite{FourManifoldInvariants} on basepoints. Using this dependence, Zemke completed the verification of the fact that the cobordism maps are in fact natural (over $\mathbb{F}_{2}$) with respect to composition of cobordisms (when the cobordisms are appropriately decorated with graphs).

In this paper we explain the necessary modifications that must be made to obtain naturality with respect to diffeomorphisms of all variants of Heegaard Floer homology, but with coefficients in $\mathbb{Z}$. The most immediate goal of our work is simply to fill a gap in the literature. We hope this will be useful both as a resource for non-experts who aim to understand Heegaard Floer homology itself, and as groundwork which can be used to better understand other invariants associated with Heegaard Floer homology. For example, the contact invariants defined in \cite{Contact} have proven to be extremely effective in detecting subtle contact properties, and both their definition and many of their applications require the ability to nail down particular elements in the modules $HF^{\circ}$, and the ability to effectively compare two such elements in the same module. We also note that the results in \cite{Naturality} and the analogous integral results presented here are necessary steps for establishing naturality of the integral Heegaard Floer invariants with respect to cobordisms.

\subsection{Statement of Main Results}	
In order to study naturality of many flavors of Heegaard Floer homology and Knot Floer homology simultaneously, Juh\'asz, Thurston and Zemke work with sutured $3$-manifolds. They consider a graph $\mathcal{G}$ which encodes the combinatorial structure of a space of sutured Heegaard diagrams related by certain Heegaard moves. Roughly, the vertices of $\mathcal{G}$ correspond to isotopy diagrams of sutured manifolds, and between any two such isotopy diagrams there are edges which describe whether they are related by any of the standard Heegaard moves, or additionally whether they are related by a diffeomorphism. The graph $\mathcal{G}$ contains many sutured isotopy diagrams which are not relevant to the consideration of closed 3-manifolds, so in considering the closed $3$-manifold invariants $HF^{\circ}$ attention is restricted to a subgraph $\mathcal{G}(\mathcal{S}_{\text{man}})$. This is the full subgraph of $\mathcal{G}$ whose vertices consist only of those isotopy diagrams representing sutured manifolds which can be constructed from a closed 3-manifold in a prescribed way. Since we are only concerned with results regarding closed $3$-manifolds in this paper, we will minimize the role of sutured manifolds, and phrase our results in terms of a graph which is isomorphic to $\mathcal{G}(\mathcal{S}_{\text{man}})$ which we denote by $\mathcal{G}_{\text{man}}$. This graph has vertices corresponding to isotopy diagrams of closed, pointed $3$-manifolds, where the isotopies are required to be supported away from the basepoint. Edges in $\mathcal{G}_{\text{man}}$ correspond to sequences of handleslides, stabilizations and diffeomorphisms.

To study naturality using these graphs, we consider the two notions of a \emph{Heegaard invariant} introduced in \cite{Naturality}. The first, a \emph{weak Heegaard invariant} valued in a category $\mathcal{C}$, is simply a morphism of graphs from $\mathcal{G}_{\text{man}}$ to $\mathcal{C}$ under which all edges in the domain get mapped to isomorphisms. In this language, we can summarize one of the invariance results shown in \cite{Disks1} as stating that the morphisms of graphs  
$$HF^{\circ}: G_{\text{man}} \rightarrow\mathcal{C}$$ 
for $\mathcal{C} =  \mathbb{Z}[U]\text{-}\text{Mod}$ or $\mathcal{C} =  \mathbb{F}_{2}[U]\text{-}\text{Mod}$ determined by Heegaard Floer homology are weak Heegaard invariants. The second notion, that of a \emph{strong Heegaard invariant}, serves as a minimal set of conditions which are needed to ensure that a weak Heegaard invariant yields a natural invariant of the underlying $3$-manifolds; precisely, the authors show that the image of a strong Heegaard invariant $HF^{\circ}: G_{\text{man}} \rightarrow \mathcal{C}$, when appropriately restricted, forms a transitive system in $\mathcal{C}$. This step occupies a majority of the work in the paper, and none of the results in this step depend on the target category $\mathcal{C}$. The authors then prove that, in the case when $\mathcal{C} = \mathbb{F}_{2}[U]\text{-}\text{Mod}$, such a transitive system yields a functor 
$$HF^{\circ}: \text{Man}_{*} \rightarrow \mathbb{F}_{2}[U]\text{-}\text{Mod}.$$ 
Finally, they establish that $HF^{\circ}: G_{\text{man}} \rightarrow \mathbb{F}_{2}[U]\text{-}\text{Mod}$ is in fact a strong Heegaard invariant, completing their proof that the invariants $HF^{\circ}$ yield functors from $\text{Man}_{*}$ to $\mathbb{F}_{2}[U]\text{-}\text{Mod}$.

Our main goal here is to establish similar results for $\mathcal{C} = P(\mathbb{Z}[U]\text{-}\text{Mod})$, the quotient category obtained from $\mathbb{Z}[U]\text{-}\text{Mod}$ by the relation $f \sim -f$ for all $f \in \text{Hom}_{\mathbb{Z}[U]\text{-}\text{Mod}}$. Said simply, we want to show that naturality holds over $\mathbb{Z}$, up to a sign. We will consider a category $\text{Trans}(P(\mathbb{Z}[U] \text{-} \text{Mod}))$ of transitive systems in $P(\mathbb{Z}[U]\text{-}\text{Mod})$, and our main result will be:

\begin{thm}\label{Functoriality}
	There are functors 
	$$\widehat{HF}, HF^{-}, HF^{+}, HF^{\infty}: \text{Man}_{*} \rightarrow \text{Trans}(P(\mathbb{Z}[U] \text{-} \text{Mod}))$$
	whose values on a based 3-manifold $(Y,z)$ are isomorphic to the modules defined in \cite{Disks1}. Furthermore, isotopic diffeomorphisms have the same image under $HF^{\circ}$.
\end{thm}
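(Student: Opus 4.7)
The plan is to mirror the Juh\'asz--Thurston--Zemke framework for naturality, adjusting only those steps that break under a change of coefficients from $\mathbb{F}_{2}$ to $\mathbb{Z}$. First, I would observe that the Ozsv\'ath--Szab\'o construction over $\mathbb{Z}$ already determines a morphism of graphs $HF^{\circ}: \mathcal{G}_{\text{man}} \to \mathbb{Z}[U]\text{-}\text{Mod}$ sending every edge to an isomorphism, since triangle, stabilization, isotopy and diffeomorphism maps are defined over $\mathbb{Z}$ and induce isomorphisms on homology. Composing with the canonical projection $\mathbb{Z}[U]\text{-}\text{Mod} \to P(\mathbb{Z}[U]\text{-}\text{Mod})$ yields a weak Heegaard invariant into the projectivized category.

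The theorem then reduces, via the general machinery to be developed in this paper (that any strong Heegaard invariant valued in $\mathcal{C}$ extends to a functor $\text{Man}_{*} \to \text{Trans}(\mathcal{C})$), to verifying that $HF^{\circ}$ is a strong Heegaard invariant when viewed in $P(\mathbb{Z}[U]\text{-}\text{Mod})$. Most of the required axioms (commutativity of stabilization and handleslide rectangles, functoriality under diffeomorphisms, basepoint-moving compatibility) transport directly from Juh\'asz--Thurston--Zemke, since their diagrammatic arguments are coefficient-independent and the identities they verify hold on the nose over $\mathbb{Z}$, hence also after projectivization.

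The main obstacle is the simple handleswap invariance axiom. Over $\mathbb{F}_{2}$, Juh\'asz--Thurston--Zemke exhibit a model computation showing the handleswap loop induces the identity; over $\mathbb{Z}$, the analogous computation governed by coherent orientations on triangle moduli spaces a priori produces only $\pm \text{id}$, which is precisely why projectivization is required. To handle this, I would lift the analysis to the chain level and show that the Heegaard Floer complexes assemble into a strong Heegaard invariant valued in the projectivized homotopy category of chain complexes of $\mathbb{Z}[U]$-modules, with the handleswap chain map homotopic to $\pm \text{id}$ via the standard local model (the key point being that the handleswap is supported in a fixed model region, so the orientation ambiguity is a single global sign rather than a cocycle). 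This overall sign is absorbed in passing to $P$, yielding a bona fide strong invariant. Applying the extension machinery then produces the functors $\widehat{HF}, HF^{-}, HF^{+}, HF^{\infty}$, and isotopy invariance on diffeomorphisms follows by the same reduction to the mapping class group that Juh\'asz--Thurston--Zemke employ, which is itself diagrammatic and insensitive to the coefficient ring.
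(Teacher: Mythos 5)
Your overall route is the same as the paper's: projectivize, reduce Theorem \ref{Functoriality} to showing that $HF^{\circ}$ (and, at the chain level, $CF^{\circ}$) is a strong Heegaard invariant valued in the projectivized category, carry out the handleswap computation up to sign via the local model triple diagram, and then feed the result into the Juh\'asz--Thurston--Zemke transitive-system machinery to build functors into $\text{Trans}(P(\mathbb{Z}[U]\text{-}\text{Mod}))$. The construction of the functor on objects and on diffeomorphisms, and the deduction that isotopic diffeomorphisms agree, also match the paper (the latter follows directly from the Continuity axiom rather than any mapping class group argument, but that is cosmetic).

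The genuine gap is your claim that all axioms other than handleswap invariance ``transport directly'' because the identities ``hold on the nose over $\mathbb{Z}$.'' That is true for Functoriality and Commutativity (axioms (1) and (2) of Definition \ref{StrongHeegaardInvariant}), but it fails for Continuity (axiom (3)). Over $\mathbb{Z}$ the map induced by a diffeomorphism isotopic to $\text{Id}_{\Sigma}$ does \emph{not} coincide on the nose with the continuation map attached to the isotopy; one only gets $d_{*} \sim \pm \Gamma^{\balpha \rightarrow \balpha'}_{\bbeta \rightarrow \bbeta'}$, and even this statement requires a new argument tracking signs: one must compare $d_{J_{s},J_{s}'}$ with a hybrid continuation map $\Gamma_{d_{t}}$, choose coherent orientations so the two agree, and then build a chain homotopy from a parametrized moduli space of matched disks whose ends are counted with compatible coherent orientations (this is Theorem \ref{ContinuityAxiom} in the paper, with the orientation bookkeeping of Lemma \ref{ParametrizedOrientation} type arguments). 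This is not a coefficient-independent transport of the $\mathbb{F}_{2}$ proof, and it cannot be skipped: the handleswap computation itself ends with the identity $(g|_{\Sigma})_{*} \circ \Gamma_{\bbeta \rightarrow \bbeta'}^{\balpha'} \circ \Gamma_{\bbeta}^{\balpha \rightarrow \balpha'} \sim \pm \text{Id}$, i.e.\ it consumes the projective Continuity statement. Relatedly, your description of the handleswap sign as ``a single global sign rather than a cocycle'' understates what has to be proved: the count in Proposition \ref{trianglecount} requires showing the matched triangle moduli spaces are orientable, that orientations can be chosen coherently in a way compatible with varying the divisor constraint, and that a model divisor gives count $\pm 1$ (Lemma \ref{SingleDivisorModuliCount}); the fact that the answer is a single sign independent of the homology class and the divisor is a conclusion of that analysis, not an a priori consequence of the move being supported in a fixed region.
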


\begin{remark}
	The finite rank variant $HF_{\text{red}}$ of Heegaard Floer homology defined in \cite[Definition 4.7]{Disks1} arises as a suitable quotient (or submodule) of $HF^{\pm}$, and Theorem \ref{Functoriality} implies that this variant also yields a functor $HF_{\text{red}}: \text{Man}_{*} \rightarrow \text{Trans}(P(\mathbb{Z}[U] \text{-} \text{Mod}))$.
\end{remark}

We will import wholesale the logical structure of \cite{Naturality} used to prove the analog of Theorem \ref{Functoriality} appearing there. It will therefore suffice to show that $HF^{\circ}:  \mathcal{G}_{\text{man}} \rightarrow P(\mathbb{Z}[U]\text{-}\text{Mod})$ is a strong Heegaard invariant. We will in fact show something slightly stronger. Let $\text{Kom}(\mathbb{Z}[U]\text{-}\text{Mod})$ denote the homotopy category of chain complexes over $\mathbb{Z}[U]\text{-}\text{Mod}$, and, as described above, let $P(\text{Kom}(\mathbb{Z}[U]\text{-}\text{Mod}))$ denote the projectivization of this category. Finally, let  $\text{Trans}(P(\text{Kom}(\mathbb{Z}[U]\text{-}\text{Mod})))$ denote the category of transitive systems in $P(\text{Kom}(\mathbb{Z}[U]\text{-}\text{Mod}))$. We will unpack the precise meaning of these categories in Section \ref{TransitiveSystemSection}. A majority of the paper will be occupied with showing:

\begin{thm}\label{StrongHeegaard2}
	The morphisms
	$$\widehat{CF}, CF^{-}, CF^{+}, CF^{\infty}: \mathcal{G}_{\text{man}} \rightarrow \text{Trans}(P(\text{Kom}(\mathbb{Z}[U]\text{-}\text{Mod})))$$
	are strong Heegaard invariants.
\end{thm}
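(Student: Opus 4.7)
The plan is to directly verify the axioms of a \emph{strong Heegaard invariant} for each of $\widehat{CF}, CF^-, CF^+, CF^\infty$, viewed as a morphism $\mathcal{G}_{\text{man}} \to \text{Trans}(P(\text{Kom}(\mathbb{Z}[U]\text{-}\text{Mod})))$. In \cite{Naturality}, strong Heegaard invariance is reduced to checking a finite list of local relations on $\mathcal{G}_{\text{man}}$: the image of every \emph{distinguished rectangle} (disjoint commutation), every \emph{distinguished triangle} (compatibility of consecutive Heegaard moves), every continuity relation, every stabilization relation, and every \emph{simple handleswap loop} must act as the identity. Since the logical derivation of Theorem \ref{StrongHeegaard2} from these axioms is independent of the coefficient ring, I would import wholesale the axiomatic framework of \cite{Naturality}, and concentrate exclusively on re-verifying each axiom with $\mathbb{Z}$ coefficients up to an overall sign, at the level of the homotopy category.

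First I would fix a coherent system of signs on the moduli spaces of pseudo-holomorphic polygons in the symmetric product $\operatorname{Sym}^{g}(\Sigma)$, following the sign conventions used by Zemke in \cite{GraphTQFT}, yielding triangle maps, quadrilateral maps, one-handle stabilization maps, and tautological diffeomorphism maps as morphisms in $\text{Kom}(\mathbb{Z}[U]\text{-}\text{Mod})$ rather than merely $\text{Kom}(\mathbb{F}_2[U]\text{-}\text{Mod})$. Their images in the projectivized category $P(\text{Kom}(\mathbb{Z}[U]\text{-}\text{Mod}))$ are then independent of auxiliary orientation choices, since any two coherent orientation systems differ by an overall sign on each affected moduli space. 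With these edge maps in hand, the distinguished rectangle and continuity axioms follow from the standard deformation and energy arguments applied to compactified moduli spaces of polygons: one obtains in each case a signed chain homotopy between the two compositions, which gives the required equality in $\text{Trans}(P(\text{Kom}(\mathbb{Z}[U]\text{-}\text{Mod})))$. The distinguished triangle (associativity) axiom is a signed associativity relation for triangle maps, proven by degenerating a moduli space of holomorphic quadrilaterals; the stabilization axiom likewise reduces to a signed gluing argument for one-handle maps. In each case the sign ambiguity is washed out by the projectivization $P$.

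The main obstacle will be the handleswap invariance axiom. In \cite{Naturality} this is verified by an explicit model computation on a genus-three local Heegaard diagram, where the maps around a simple handleswap loop are computed by counting holomorphic triangles in an $\mathbb{F}_2$-valued formula. To prove the analogous statement up to sign over $\mathbb{Z}$, I would redo the model computation while tracking orientations on each moduli space, using Maslov index considerations and, where convenient, Lipshitz's cylindrical reformulation to make signed counts tractable. The output is a signed formula for the composition around the loop which I expect to equal $\pm \operatorname{Id}$ on the nose (up to chain homotopy); because the $\mathbb{F}_2$ reduction is the identity by \cite{Naturality}, the only possible signed outcomes are $+\operatorname{Id}$ or $-\operatorname{Id}$, both of which project to the identity in $P(\text{Kom}(\mathbb{Z}[U]\text{-}\text{Mod}))$. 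With each of the axioms verified in the projectivized homotopy category, Theorem \ref{StrongHeegaard2} follows directly from the axiomatic reduction of \cite{Naturality}.
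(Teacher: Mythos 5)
Your overall strategy is the same as the paper's (import the Juh\'asz--Thurston--Zemke reduction, re-verify the axioms over $\mathbb{Z}$ up to an overall sign, with handleswap invariance as the main difficulty, working where convenient in the cylindrical setting), but two steps as you state them do not hold up. First, your claim that the edge maps' images in $P(\text{Kom}(\mathbb{Z}[U]\text{-}\text{Mod}))$ are ``independent of auxiliary orientation choices, since any two coherent orientation systems differ by an overall sign on each affected moduli space'' is false. There are $2^{b_{1}(Y)}$ inequivalent coherent orientation systems on a given diagram, and changing the system changes the signed counts moduli space by moduli space: this can alter the differential itself (on the standard diagram for $S^{1}\times S^{2}$ the two bigons cancel for one system and add for the other), not merely multiply morphisms by $\pm 1$, which is all the projectivization forgives. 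The paper cannot and does not dismiss this; instead it fixes an orientation system on one diagram in each path component of $\mathcal{G}_{\text{man}}$ and propagates it along edges via the induced orientations, verifying that the induced system is path-independent (using the distinguished rectangles and the handleswap) as part of Definition \ref{WeakHeegaardFloer1}. Some such bookkeeping is needed before the axioms can even be formulated for a single well-defined morphism of graphs.

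Second, and more seriously, your handleswap argument is not a proof. Knowing that the composite around the loop reduces mod $2$ to the identity (by \cite{Naturality}) does not force it to be chain homotopic over $\mathbb{Z}$ to $\pm\operatorname{Id}$: a map of the form $\operatorname{Id}+2G$ with $G$ not null-homotopic also reduces to the identity, so ``the only possible signed outcomes are $+\operatorname{Id}$ or $-\operatorname{Id}$'' does not follow. The genuine content of the theorem is the signed local triangle count: the paper proves that for a sufficiently stretched neck the triangle map on $\mathcal{T}\#\mathcal{T}_{0}$ equals $\pm$ the triangle map on $\mathcal{T}$ tensored with the distinguished generator (Propositions \ref{trianglecount} and \ref{trianglecount2}), and this requires new orientation analysis with no $\mathbb{F}_{2}$ shortcut: orientability of the matched moduli spaces of triangles, construction of simultaneously coherent orientations on the one-parameter families of matched moduli spaces via an index-bundle argument (Lemma \ref{ParametrizedOrientation}), and the divisor-independent signed count $\#\mathcal{M}_{(\btheta,\ba,\bb)}(\boldsymbol{d})=\pm 1$ (Lemma \ref{SingleDivisorModuliCount}). ``Redo the model computation while tracking orientations'' gestures at this but supplies none of it, so the handleswap axiom, and hence the theorem, is not established. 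Two smaller points: the axiom list in Definition \ref{StrongHeegaardInvariant} consists of functoriality, commutativity for distinguished rectangles, continuity, and handleswap invariance --- there is no separate ``distinguished triangle''/associativity axiom, and the (de)stabilization relation is part of functoriality; and the continuity axiom over $\mathbb{Z}$ also requires a signed parametrized-moduli argument (the paper's Theorem \ref{ContinuityAxiom}, showing $d_{*}\sim\pm\Gamma^{\balpha\rightarrow\balpha'}_{\bbeta\rightarrow\bbeta'}$), which your appeal to ``standard deformation and energy arguments'' glosses over.
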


While proving Theorem \ref{StrongHeegaard2} we will show the analogous result holds on the level of homology:

\begin{cor}\label{StrongHeegaard}
	The morphisms
	$$\widehat{HF}, HF^{-}, HF^{+}, HF^{\infty}: \mathcal{G}_{\text{man}} \rightarrow P(\mathbb{Z}[U] \text{-} \text{Mod})$$
	are strong Heegaard invariants.
\end{cor}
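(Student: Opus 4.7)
The plan is to derive Corollary \ref{StrongHeegaard} from Theorem \ref{StrongHeegaard2} by post-composing the chain-level invariants with a homology functor and then collapsing the resulting transitive system of modules to a canonical representative. First I would observe that taking homology defines a $\mathbb{Z}$-linear functor $H_{*}: \text{Kom}(\mathbb{Z}[U]\text{-}\text{Mod}) \rightarrow \mathbb{Z}[U]\text{-}\text{Mod}$, and since $H_{*}(-f) = -H_{*}(f)$ for every chain map $f$, this functor respects the sign involution used to define the projectivizations and hence descends to a functor
$$\overline{H}_{*}: P(\text{Kom}(\mathbb{Z}[U]\text{-}\text{Mod})) \rightarrow P(\mathbb{Z}[U]\text{-}\text{Mod}).$$

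Next I would invoke the general fact that any functor $F: \mathcal{C} \rightarrow \mathcal{D}$ induces a functor $\text{Trans}(F): \text{Trans}(\mathcal{C}) \rightarrow \text{Trans}(\mathcal{D})$ by applying $F$ object-wise and morphism-wise, since functors preserve the commutative triangles defining a transitive system. Moreover, any transitive system in a category is canonically identified with any one of its representative objects. Composing the graph morphisms $CF^{\circ}$ of Theorem \ref{StrongHeegaard2} with $\text{Trans}(\overline{H}_{*})$ and then selecting a representative recovers precisely the graph morphism $HF^{\circ}: \mathcal{G}_{\text{man}} \rightarrow P(\mathbb{Z}[U]\text{-}\text{Mod})$ obtained by taking homology of the Heegaard Floer chain complex assigned to each diagram, defined up to sign.

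Finally I would check that the strong Heegaard invariant axioms of \cite{Naturality} are preserved under post-composition with a functor. These axioms consist of the commutativity of a finite family of diagrams of isomorphisms in the target, each associated to a loop of Heegaard moves (simple handleswaps, stabilization and diffeomorphism relations, and so on). Any functor sends commuting diagrams to commuting diagrams and isomorphisms to isomorphisms, so the composite assignment inherits the property. The only point that needs mild care is verifying that the edge morphisms assigned by $CF^{\circ}$ are all chain homotopy equivalences (up to sign) whose images under homology really are the expected $\pm$-isomorphisms on the modules $HF^{\circ}$; this is immediate from the construction supplying Theorem \ref{StrongHeegaard2}, so I do not anticipate a genuine obstacle, and Corollary \ref{StrongHeegaard} follows at once.
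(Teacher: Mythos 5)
Your overall mechanism is sound: homology is a functor $\text{Kom}(\mathbb{Z}[U]\text{-}\text{Mod}) \rightarrow \mathbb{Z}[U]\text{-}\text{Mod}$ sending $-f$ to $-H_{*}(f)$, so it descends to the projectivizations; any functor induces one on categories of transitive systems; and post-composition with a functor preserves the identity and commutativity requirements of Definition \ref{StrongHeegaardInvariant}. This is essentially why the paper can assert the homology statement ``while proving'' Theorem \ref{StrongHeegaard2}: each chain-level verification is recorded simultaneously on homology (compare Theorem \ref{ContinuityAxiom}, which states both versions, and Corollary \ref{HandleswapInvarianceHomology}), rather than being deduced by a single formal post-composition.

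The gap is in your collapsing step. Objects of $P(\mathbb{Z}[U]\text{-}\text{Mod})$ are honest modules, so a morphism of graphs $HF^{\circ}: \mathcal{G}_{\text{man}} \rightarrow P(\mathbb{Z}[U]\text{-}\text{Mod})$ must attach a single module to each isotopy diagram. The general principle you invoke (``any transitive system in a category is canonically identified with any one of its representative objects'') is false as stated: the structure maps of a transitive system need not be isomorphisms, and even when they are, selecting a representative is a genuine choice; moreover, as the paper notes in the Remark closing Section \ref{TransitiveSystemSection}, no colimit construction in $P(\mathbb{Z}[U]\text{-}\text{Mod})$ is available to perform this collapse canonically. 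What actually makes the corollary well-posed is un-projectivized input that is not retained in Theorem \ref{StrongHeegaard2}: on homology, the change-of-almost-complex-structure and continuation maps compose without sign ambiguity, so the modules attached to the strongly admissible diagrams representing $H$ form an honest transitive system in $\mathbb{Z}[U]\text{-}\text{Mod}$, and $HF^{\circ}(H)$ is defined as its colimit (Definition \ref{WeakHeegaardFloer1}), with edge maps obtained by conjugating with the canonical isomorphisms $i_{\mathcal{H}}$ as in Definition \ref{StabilizationMaps}. Your argument becomes correct if you replace ``select a representative of the projectivized system'' by ``use these colimits and the canonical isomorphisms $i_{\mathcal{H}}$,'' and then observe that applying $H_{*}$ to each chain-level axiom verification and conjugating by the $i_{\mathcal{H}}$ yields the corresponding identity, up to sign, for the maps on the colimits. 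As written, your construction produces a graph morphism only canonically isomorphic to, not equal to, the $HF^{\circ}$ in the statement, and the step that would make the identification canonical is exactly what the projectivized category fails to supply.
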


We will establish Theorem \ref{StrongHeegaard2} in Sections \ref{MainProof} and \ref{Moduli}. We will also obtain from Theorem \ref{StrongHeegaard2} the following statement about the constituent chain complexes.

\begin{cor}\label{TransitiveSystemResult}
	Given a closed, connected, oriented and based 3-manifold $(Y,z)$ and a $\text{Spin}^{c}$-structure $\mathfrak{s}$ over $Y$, the $\mathbb{Z}[U]$-module chain complexes $CF^{\circ}(\mathcal{H}, \mathfrak{s})$, ranging over all strongly $\mathfrak{s}$-admissible embedded Heegaard diagrams $\mathcal{H}$ for $(Y,z)$, fit into a transitive system of homotopy equivalences in $P(\text{Kom}(\mathbb{Z}[U]\text{-}\text{Mod}))$ with respect to the maps induced by sequences of pointed handleslides, stabilizations, isotopies, and diffeomorphisms of Heegaard surfaces which are isotopic to the identity in $Y$.
	
\end{cor}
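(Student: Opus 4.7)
The plan is to extract Corollary \ref{TransitiveSystemResult} from Theorem \ref{StrongHeegaard2} by restricting the strong Heegaard invariant $CF^{\circ}$ to the subgraph of $\mathcal{G}_{\text{man}}$ consisting of diagrams representing the fixed pointed $3$-manifold $(Y,z)$ with the specified admissibility condition. Once Theorem \ref{StrongHeegaard2} is granted, the formal implication "strong Heegaard invariant $\Rightarrow$ transitive system on each fixed $3$-manifold", established abstractly in \cite{Naturality}, does essentially all of the work; the corollary amounts to unpacking definitions and identifying the correct indexing set.

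Concretely, I would fix $(Y,z)$ and $\mathfrak{s}$ and isolate the full subgraph $\mathcal{G}_{(Y,z,\mathfrak{s})} \subseteq \mathcal{G}_{\text{man}}$ whose vertices are strongly $\mathfrak{s}$-admissible embedded Heegaard diagrams for $(Y,z)$, and whose edges are the generators of $\mathcal{G}_{\text{man}}$ that do not alter the underlying pointed $3$-manifold: pointed handleslides, stabilizations, isotopies supported away from $z$, and diffeomorphisms of the Heegaard surface that are isotopic to the identity in $Y$. Pointed Reidemeister--Singer, as packaged in \cite{Naturality}, guarantees that $\mathcal{G}_{(Y,z,\mathfrak{s})}$ is connected, and because $CF^{\circ}$ is already a weak Heegaard invariant every such edge is sent to a homotopy equivalence, hence to an isomorphism in $P(\text{Kom}(\mathbb{Z}[U]\text{-}\text{Mod}))$. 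For each ordered pair $(\mathcal{H}_{1},\mathcal{H}_{2})$ of vertices I would pick an arbitrary path between them and define $f_{\mathcal{H}_{1},\mathcal{H}_{2}}$ as the composition of the induced projectivized homotopy equivalences. The only substantive point is then path-independence, equivalent to the statement that every loop in $\mathcal{G}_{(Y,z,\mathfrak{s})}$ is sent by $CF^{\circ}$ to the identity morphism in $P(\text{Kom}(\mathbb{Z}[U]\text{-}\text{Mod}))$; once this holds, the transitive-system identities $f_{\mathcal{H},\mathcal{H}} = \mathrm{id}$ and $f_{\mathcal{H}_{2},\mathcal{H}_{3}} \circ f_{\mathcal{H}_{1},\mathcal{H}_{2}} = f_{\mathcal{H}_{1},\mathcal{H}_{3}}$ follow automatically.

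This loop-triviality statement is precisely what the axioms of a strong Heegaard invariant are engineered to deliver. In \cite{Naturality}, triviality of arbitrary loops is reduced to a short list of local relations (distant commutations, commutations of disjoint moves, the simple handleswap relation, stabilization/handleslide compatibility, and so on), and Theorem \ref{StrongHeegaard2} is exactly the assertion that $CF^{\circ}$ satisfies all of these in $P(\text{Kom}(\mathbb{Z}[U]\text{-}\text{Mod}))$. The main obstacle is therefore entirely absorbed into Theorem \ref{StrongHeegaard2} itself, whose verification occupies the bulk of the paper. Beyond that, I only need to observe that every one of the generating loops in \cite{Naturality} is supported in the restricted subgraph $\mathcal{G}_{(Y,z,\mathfrak{s})}$, which is immediate because each such loop is built from moves that locally preserve the underlying pointed $3$-manifold, so the loop-triviality delivered by Theorem \ref{StrongHeegaard2} really does cover all loops relevant to the corollary.
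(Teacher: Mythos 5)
Your proposal is correct and follows essentially the same route as the paper: the paper deduces the corollary in one line by applying the imported machinery of \cite{Naturality} (Theorem \ref{StrongImpliesTransitive} and Corollary \ref{StrongImpliesNoMonodromy}, i.e.\ path-independence of the maps associated to oriented paths in $(\mathcal{G}_{\text{man}})_{(Y,z)}$ for any strong Heegaard invariant) to the invariant furnished by Theorem \ref{StrongHeegaard2}. The only cosmetic difference is that the paper's restricted graph has \emph{isotopy} diagrams as vertices, with the strongly $\mathfrak{s}$-admissible representatives and complex-structure choices absorbed into the transitive systems $CF^{\circ}(H,\mathfrak{s})$ of Definition \ref{WeakHeegaardFloer1}, so the statement about individual admissible diagrams is the resulting two-dimensional transitive system rather than a direct restriction of the graph to admissible diagrams.
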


\begin{remark}
	The Heegaard Floer invariants arise as direct sums of invariants 
	$$HF^{\circ}(Y,z) = \bigoplus_{\mathfrak{s} \in \text{Spin}^{c}(Y)} HF^{\circ}(Y,z,\mathfrak{s})$$ associated to triples $(Y,z, \mathfrak{s})$ for $\mathfrak{s} \in \text{Spin}^{c}(Y)$. All of the main results have refined statements regarding these invariants of  $(Y,z, \mathfrak{s})$. Theorem \ref{StrongHeegaard2}, Corollary \ref{StrongHeegaard} and Corollary \ref{TransitiveSystemResult} also depend on choices of coherent orientation systems, which we omit from the statements here. For now, we note that all of the results above hold in particular for the Heegaard Floer chain complexes defined with respect to the canonical coherent orientation systems constructed in \cite{Disks2}. The precise conditions required of the coherent orientation systems implicitly appearing in the results above will be specified in Definition \ref{WeakHeegaardFloer1}.

\end{remark}

\subsection{Further Directions and Applications}
We now point out some applications and potential generalizations of our results. Given two based $3$-manifolds $(Y_{1},z_{1})$ and $(Y_{2},z_{2})$, a cobordism $W$ between them decorated with a choice of path in $W$ from $z_{1}$ to $z_{2}$, and a choice of $\mathfrak{t} \in \text{Spin}^{c}(W)$, Ozsv\'ath and Szab\'o constructed in \cite{FourManifoldInvariants} cobordism maps:
$$ F_{W, \mathfrak{t}}^{\circ}: HF^{\circ}(Y_{1}, z_{1}, \mathfrak{t}|_{Y_{1}}) \rightarrow HF^{\circ}(Y_{2}, z_{2}, \mathfrak{t}|_{Y_{2}}).$$
(The choice of path is not made explicit in \cite{FourManifoldInvariants}). In \cite{GraphTQFT} Zemke extended the results in \cite{Naturality} to show that over $\mathbb{F}_{2}$ these maps are well-defined and natural with respect to composition of decorated cobordisms. We expect that our results can be used in a similar way to establish such naturality over $\mathbb{Z}$, up to an overall sign. Furthermore, in \cite{FourManifoldInvariants} Ozsv\'ath and Szab\'o showed how naturality of the Heegaard Floer invariants with respect to decorated cobordisms can be used to define the so called mixed invariants of closed $4$-manifolds. Given a closed $4$-manifold $X$ and a choice of $\mathfrak{t} \in \text{Spin}^{c}(X)$, these take the form of maps
$$ \Phi_{X, \mathfrak{t}}: \Lambda^{*}(H_{1}(X; \mathbb{F}_{2}) / \text{Tors}) \otimes_{\mathbb{F}_{2}} \mathbb{F}_{2}[U] \rightarrow \mathbb{F}_{2}.$$
These share many of the features of the Seiberg-Witten invariants, and serve as powerful tools in detecting subtle smooth information. If one can establish naturality with respect to cobordisms over $\mathbb{Z} / \pm$, we would obtain corresponding mixed invariants
$$ \Phi_{X, \mathfrak{t}}: \Lambda^{*}(H_{1}(X; \mathbb{Z}) / \text{Tors}) \otimes_{\mathbb{Z}} \mathbb{Z}[U] \rightarrow \mathbb{Z} / \pm $$
which we expect would provide fruitful extra information. In fact, before the gap in the literature was noticed, the integral mixed invariants had already been extensively studied in papers including \cite{CobApps3}, \cite{CobApps1} and \cite{CobApps2}, so establishing naturality with respect to cobordisms over $\mathbb{Z}$ would immediately prove useful, and would likely also be useful for computations and applications in the future.

A second application of our work comes from involutive Heegaard Floer homology, defined by Hendricks and Manolescu in \cite{HM}. To describe it, fix a closed $3$-manifold $Y$ and  $\mathfrak{s} \in \text{Spin}^{c}(Y)$. Given a pointed Heegaard diagram $\mathcal{H} = (\Sigma, \balpha, \bbeta, z)$ for $(Y,z)$, there is a conjugate diagram $\overline{\mathcal{H}} = (-\Sigma, \bbeta, \balpha, z)$ for $(Y,z)$ given by reversing the orientation on the surface and switching the role of the $\alpha$ and $\beta$ curves. Under suitable admissibility hypotheses, there is a chain isomorphism 
$$\eta_{\mathcal{H} \rightarrow \overline{\mathcal{H}}}: CF^{\circ}(
\mathcal{H}, \mathfrak{s}) \rightarrow CF^{\circ}(
\overline{\mathcal{H}}, \overline{\mathfrak{s}}) $$
given by mapping intersection points to themselves \cite[Theorem 2.4]{Disks2}. Note that the role of coherent orientations here is not yet relevant, as Hendricks and Manolescu work over $\mathbb{F}_{2}$. Using the results in \cite{Naturality}, Hendricks and Manolescu showed that the $\mathbb{F}_{2}$ analog of Corollary \ref{TransitiveSystemResult} holds: the modules $CF^{\circ}(\mathcal{H}, \mathfrak{s})$ fit into a transitive system in the homotopy category of chain complexes of $\mathbb{F}_{2}[U]$-modules with respect to the maps induced by the Heegaard moves appearing in Corollary \ref{TransitiveSystemResult}. Thus, since $\mathcal{H}$ and $\overline{\mathcal{H}}$ represent the same $3$-manifold, there is a chain homotopy equivalence
$$\Phi(\overline{\mathcal{H}}, \mathcal{H}):CF^{\circ}(
\overline{\mathcal{H}}, \overline{\mathfrak{s}}) \rightarrow CF^{\circ}(
\mathcal{H}, \overline{\mathfrak{s}}) $$
of complexes of $\mathbb{F}_{2}[U]$-modules which is well defined up to homotopy. Using these maps, they consider the map $\iota : = \Phi(\overline{\mathcal{H}}, \mathcal{H}) \circ \eta_{\mathcal{H} \rightarrow \overline{\mathcal{H}}}$, which is well defined up to homotopy, and which is shown to be a homotopy involution in \cite[Lemma 2.5]{HM}. They then use it to construct an invariant of $Y$ as follows.

There is a $\Z2$ action on $\text{Spin}^{c}(Y)$ given by conjugation. Let $[\text{Spin}^{c}(Y)]$ denote the set of orbits in $\text{Spin}^{c}(Y)$ under this action. Given an orbit $\overline{\omega} \in [\text{Spin}^{c}(Y)]$, let
$$CF^{\circ}(\mathcal{H}, \overline{\omega}) = \bigoplus_{\mathfrak{s} \in \overline{\omega}} CF^{\circ}(\mathcal{H}, \mathfrak{s}).$$
The authors investigate the map $(1 + \iota)$, considered as a chain map between complexes of $\mathbb{F}_{2}[U]$-modules, and consider its cone $$CFI(\mathcal{H}, \overline{\omega}) := \text{Cone}(1 + \iota) = \left( CF^{\circ}(\mathcal{H}, \overline{\omega})[-1] \oplus CF^{\circ}(\mathcal{H}, \overline{\omega}), \partial_{\text{cone}} = \begin{pmatrix} \partial & 0 \\ 1 + \iota & -\partial \end{pmatrix}     \right).$$
Here $CF^{\circ}(\mathcal{H}, \overline{\omega})[-1]$ indicates the shifted chain complex, whose degree $n$ piece is given by $\left(CF^{\circ}(\mathcal{H}, \overline{\omega})[-1] \right)_{n}= CF^{\circ}(\mathcal{H}, \overline{\omega})_{n-1}$. They then introduce a formal variable $Q$ of degree $-1$ satisfying $Q^{2} =0$, and rewrite the map being coned over as
$$ CF^{\circ}(\mathcal{H}, \overline{\omega}) \xrightarrow{Q \cdot(1 + \iota)} Q \cdot CF^{\circ}(\mathcal{H}, \overline{\omega})[-1].$$
As one can readily check, the cone and its differential can then be rewritten as
\begin{equation}
\label{Q-action}
\text{Cone}(1 + \iota) = \left( CF^{\circ}(\mathcal{H}, \overline{\omega})[-1] \otimes \mathbb{F}_{2}[Q]/(Q^{2}), \partial + Q(1 + \iota) \right).
\end{equation}
Considered in this way, it is a complex of modules over the ring $\mathcal{R} = \mathbb{F}_{2}[Q,U]/(Q^{2})$. The authors then show that the quasi-isomorphism class of the complex $CFI(\mathcal{H}, \overline{\omega})$ of $\mathcal{R}$-modules thus defined is an invariant of $(Y, \overline{\omega})$. 

We now explain how Corollary \ref{TransitiveSystemResult} can be used to construct a version of such an invariant defined over $\mathbb{Z}$. Before doing so, we make a remark on the reliance of the following discussion on orientation systems.
\begin{remark}
	First we note that the proof establishing that $\eta$ is an isomorphism given in \cite[Theorem 2.4]{Disks2} implicitly proves the statement with respect to an arbitrary coherent orientation system $\mathfrak{o}$ over the domain $\mathcal{H}$ and, ostensibly, the \emph{same} coherent orientation system over the codomain $\overline{\mathcal{H}}$ (the use of the word \emph{same} makes sense because the underlying diagrams for the domain and codomain of $\eta$ are the same aside from labeling and orientations). However, to avoid this consideration we will simply focus attention here on the case where both diagrams are equipped with canonical orientation systems, as defined in \cite{Disks2}. We note that the maps $\Phi$ take canonical orientation systems to canonical orientation systems, since more generally any sequence of maps induced by Heegaard moves takes a canonical orientation system to a canonical orientation system. This follows from the facts that Heegaard moves induce module isomorphisms on the totally twisted module $\underline{HF}^{\infty}$ (see \cite[Section 8]{Disks2}), and that the canonical orientation system on a diagram can be characterized by the isomorphism type of $\underline{HF}^{\infty}$ (see \cite[Theorem 10.12]{Disks2}). Similarly, the isomorphism $\eta$ can be defined with respect to canonical orientation systems on both diagrams. Indeed, since the proof of \cite[Theorem 2.4]{Disks2} also shows that the map $\eta$ yields an isomorphism between the totally twisted module $\underline{HF}^{\infty}$ associated to a diagram equipped with the canonical orientation, and the totally twisted module associated to the reversed diagram equipped with the induced orientation system, the induced orientation system in this case must be the canonical one. With these remarks in mind, we will omit all reference to coherent orientation systems from our notation and description; all remarks in the remainder of the description of this application apply only to the canonical orientation systems. 
\end{remark}

Fix again a $3$-manifold $Y$, and diagrams $\mathcal{H}$ and $\overline{\mathcal{H}}$ representing $Y$ as above. Since $\mathcal{H}$ and $\overline{\mathcal{H}}$ represent the same $3$ manifold, we obtain from Corollary \ref{TransitiveSystemResult} (at most) two homotopy classes of chain homotopy equivalences
$$\pm \Psi(\overline{\mathcal{H}}, \mathcal{H}): CF^{\circ}(
\overline{\mathcal{H}}, \overline{\mathfrak{s}}) \rightarrow CF^{\circ}(
\mathcal{H}, \overline{\mathfrak{s}})$$
associated to sequences of Heegaard moves relating the two diagrams. The set $\{\pm \Psi(\overline{\mathcal{H}}, \mathcal{H})\}$ is well defined up to chain homotopy. We thus obtain two homotopy classes of maps $\pm \iota := \pm \Psi(\overline{\mathcal{H}}, \mathcal{H}) \circ \eta_{\mathcal{H} \rightarrow \overline{\mathcal{H}}}$. The same argument used in \cite[Lemma 2.5]{HM} to show that $\iota$ is a homotopy involution over $\mathbb{F}_{2}$ now shows that $\pm \iota$ both have order at most $4$ (up to homotopy) over $\mathbb{Z}$. We define
$$CFI_{\pm}(\mathcal{H}, \overline{\omega}) := \text{Cone}(1 \pm \iota),$$
where now both complexes are considered as complexes of $\mathbb{Z}[U]$-modules. While we can no longer conclude the maps $\pm \iota$ are homotopy involutions, we still obtain that the collection of the two quasi-isomorphism classes of the complexes of $\mathbb{Z}[U]$-modules that we obtain is an invariant of the underlying $3$-manifold.

\begin{thm} \label{InvolutiveInvariants}
	With respect to the canonical orientation systems of \cite{Disks2}, the unordered pair of quasi-ismorphism classes determined by the complexes $$CFI_{\pm}(\mathcal{H}, \overline{\omega})$$
	(considered as complexes of $\mathbb{Z}[U]$-modules) is an invariant of $(Y,\overline{\omega}, z)$.
\end{thm}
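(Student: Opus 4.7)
The plan is to adapt Hendricks–Manolescu's invariance argument to $\mathbb{Z}$ coefficients, tracking sign ambiguity throughout. Fix two strongly $\overline{\omega}$-admissible diagrams $\mathcal{H}, \mathcal{H}'$ for $(Y,z)$. Corollary \ref{TransitiveSystemResult} furnishes a projective class of homotopy equivalence $\pm\Psi : CF^{\circ}(\mathcal{H},\overline{\omega}) \to CF^{\circ}(\mathcal{H}',\overline{\omega})$, and analogously a class $\pm\overline{\Psi}$ between the conjugate diagrams $\overline{\mathcal{H}}$ and $\overline{\mathcal{H}}'$. The key technical step I would aim for is the naturality-up-to-sign of $\iota$:
$$\Psi \circ \iota_{\mathcal{H}} \simeq \sigma\, \iota_{\mathcal{H}'} \circ \Psi$$
in $P(\text{Kom}(\mathbb{Z}[U]\text{-Mod}))$ for some $\sigma \in \{\pm 1\}$ which may depend on representatives but not on the underlying homotopy class.

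To establish this, I would first prove the auxiliary statement that $\eta$ is natural in the transitive system up to sign, namely $\overline{\Psi}\circ\eta_{\mathcal{H}} \simeq \pm\,\eta_{\mathcal{H}'}\circ\Psi$. By decomposing a sequence of Heegaard moves from $\mathcal{H}$ to $\mathcal{H}'$ into elementary pieces (isotopies, handleslides, stabilizations, and basepoint-preserving diffeomorphisms isotopic to the identity), and using the conjugate decomposition $\overline{\mathcal{H}} \to \overline{\mathcal{H}}'$, the claim reduces to checking that each elementary Floer map is intertwined with its conjugate under $\eta$ up to sign and homotopy. This should follow from the symmetry of the holomorphic triangle counts and the one-handle maps under orientation reversal of $\Sigma$ together with swapping $\balpha \leftrightarrow \bbeta$. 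Granted this auxiliary statement, combining with the definition $\iota_{\mathcal{H}} = \Psi(\overline{\mathcal{H}},\mathcal{H})\circ\eta_{\mathcal{H}}$ and using the transitive system property of Corollary \ref{TransitiveSystemResult} to compare $\Psi(\overline{\mathcal{H}}',\mathcal{H}')\circ\overline{\Psi}$ with $\Psi\circ\Psi(\overline{\mathcal{H}},\mathcal{H})$ up to sign and homotopy yields the naturality-up-to-sign of $\iota$.

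Given this, the mapping cone argument is standard. For each $\epsilon \in \{\pm 1\}$, the map $\Psi$ extends (writing $CFI_{\epsilon}(\mathcal{H},\overline{\omega})$ in the $\mathbb{Z}[Q,U]/(Q^2)$ formulation) to a chain map
$$CFI_{\epsilon}(\mathcal{H},\overline{\omega}) \longrightarrow CFI_{\epsilon\sigma}(\mathcal{H}',\overline{\omega})$$
which is a quasi-isomorphism because $\Psi$ is. Running $\epsilon$ over $\{\pm\}$ produces a bijection between the unordered pairs $\{[CFI_+(\mathcal{H})], [CFI_-(\mathcal{H})]\}$ and $\{[CFI_+(\mathcal{H}')], [CFI_-(\mathcal{H}')]\}$, establishing invariance regardless of the undetermined sign $\sigma$.

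The main obstacle I anticipate is precisely the auxiliary compatibility of $\eta$ with the transitive system structure up to sign. Over $\mathbb{F}_2$ there is no sign to track and this compatibility is automatic from Hendricks–Manolescu's use of \cite{Naturality}; over $\mathbb{Z}$ one must verify that the particular sign choices made in assembling the transitive system of Theorem \ref{StrongHeegaard2} are compatible with conjugation of diagrams up to an overall sign. This amounts to a careful check on coherent orientation systems and the symmetry properties of the moduli spaces entering each elementary transition map, and I expect it to be the most delicate step—though philosophically it lies in the same register as the sign-tracking already performed to prove Theorem \ref{StrongHeegaard2}.
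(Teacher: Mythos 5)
Your proposal is correct and follows the same overall route as the paper: fix the sign ambiguity of $\Psi$ coming from Corollary \ref{TransitiveSystemResult}, prove that $\iota$ is natural up to sign and homotopy with respect to the transition maps, and then run the standard mapping-cone argument of Hendricks--Manolescu, which only needs the square to homotopy-commute for one of the two sign choices of $\iota'$ and yields a bijection of the unordered pairs of quasi-isomorphism classes. The one place where you diverge is the auxiliary lemma $\overline{\Psi}\circ\eta_{\mathcal{H}}\simeq \pm\,\eta_{\mathcal{H}'}\circ\Psi$, which you flag as the most delicate step and propose to verify move-by-move via symmetry of the holomorphic triangle counts and orientation systems under reversing $\Sigma$ and swapping $\balpha\leftrightarrow\bbeta$. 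That verification can be carried out, but the paper avoids it entirely with a softer observation: since $\eta$ is a tautological relabeling that has no effect on the attaching curves, the conjugated map $\eta_{\mathcal{H}'\rightarrow\overline{\mathcal{H}'}}\circ\Psi(\mathcal{H},\mathcal{H}')\circ\eta_{\overline{\mathcal{H}}\rightarrow\mathcal{H}}$ is itself a map induced by the same sequence of Heegaard moves, now viewed as moves connecting $\overline{\mathcal{H}}$ to $\overline{\mathcal{H}'}$, and Corollary \ref{TransitiveSystemResult} then identifies it with $\pm\Psi(\overline{\mathcal{H}},\overline{\mathcal{H}'})$ up to homotopy with no further analysis of moduli spaces or coherent orientations. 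In other words, the uniqueness-up-to-sign already established for Heegaard-move-induced maps absorbs exactly the sign bookkeeping you anticipated having to do by hand, so your ``hard step'' is in fact soft; with that substitution your argument and the paper's coincide.
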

\begin{proof}
	The proof is essentially the same as that in \cite{HM}, but we include a sketch of it here for the reader's convenience. 
	
	Fix $(Y,z,\overline{\omega})$, and consider a diagram $\mathcal{H}$ and its conjugate $\overline{\mathcal{H}}$ as above. As we noted earlier, for the fixed diagram $\mathcal{H}$ the collection of the two chain homotopy equivalences $\{\pm \Psi(\overline{\mathcal{H}}, \mathcal{H})\}$ is well defined up to chain homotopy by Corollary \ref{TransitiveSystemResult}. Thus so too is the collection $\{\pm \iota\}$. We conclude that the set of the two cones $\{CFI_{\pm}(\mathcal{H}, \overline{\omega})\}$ associated to $(\mathcal{H}, \overline{\omega})$ is well defined up to chain homotopy equivalence.
	
	Next, we consider the dependence on the choice of diagram. Consider a different diagram $\mathcal{H}'$ for $(Y,z)$ and its conjugate $\overline{\mathcal{H}'}$. We obtain corresponding collections $\{\pm \Psi(\overline{\mathcal{H}'}, \mathcal{H}')\}$ and $\{\pm \iota'\}$ which are both well defined up to homotopy, and $\{CFI_{\pm}(\mathcal{H}', \overline{\omega})\}$ well defined up to homotopy equivalence. Choose some fixed sequence of Heegaard moves connecting $\mathcal{H}$ to $\mathcal{H'}$, and consider either of the (at most two) corresponding chain homotopy equivalences $\pm \Psi(\mathcal{H}, \mathcal{H}')$ furnished by Corollary \ref{TransitiveSystemResult}. We denote our choice by $\Psi(\mathcal{H}, \mathcal{H}')$. Consider the following diagram involving the four cone complexes in question
	\begin{equation}\label{InvolutiveCommutes}
	\begin{tikzcd}
	CF^{\circ}(\mathcal{H}, \overline{\omega})[-1] \arrow{rr}{1 \pm \iota} \arrow{dd}{\Psi(\mathcal{H}, \mathcal{H}')} & & CF^{\circ}(\mathcal{H}, \overline{\omega}) \arrow{dd}{\Psi(\mathcal{H}, \mathcal{H}')}   \\
	& &   \\
	CF^{\circ}(\mathcal{H}', \overline{\omega})[-1] \arrow{rr}{1 \pm \iota'}  & & CF^{\circ}(\mathcal{H}', \overline{\omega})  \\
	\end{tikzcd} 
	\end{equation}
	
	We claim that for a fixed choice in $\{\pm \iota \}$, the diagram commutes up to homotopy for at least one of the two choices in $\{\pm \iota ' \}$. We denote our choice of the fixed homotopy class in the top row by $\iota$. To establish the claim, we need to show that
	$$\Psi(\mathcal{H}, \mathcal{H}') \circ \Psi(\overline{\mathcal{H}}, \mathcal{H}) \circ \eta_{\mathcal{H} \rightarrow \overline{\mathcal{H}}} \sim \pm \Psi(\overline{\mathcal{H}'}, \mathcal{H}') \circ \eta_{\mathcal{H}' \rightarrow \overline{\mathcal{H}'}} \circ \Psi(\mathcal{H}, \mathcal{H}').$$
	We note that 
	$$ \eta_{\mathcal{H}' \rightarrow \overline{\mathcal{H}'}} \circ \Psi(\mathcal{H}, \mathcal{H}') \circ \eta_{\overline{\mathcal{H}} \rightarrow \mathcal{H}} \sim \pm \Psi(\overline{\mathcal{H}}, \overline{\mathcal{H}'}).$$
	To see this, observe that $\Psi(\mathcal{H}, \mathcal{H}')$ is a map induced by some sequence of Heegard moves. The map resulting from precomposing and postcomposing this map with the isomorphisms $\eta$ can be realized as the map induced on $CF^{\circ}(\overline{\mathcal{H}})$ by the same set of Heegaard moves giving rise to $\Psi(\mathcal{H}, \mathcal{H}')$ (recall the maps $\eta$ have no effect on the attaching curves). Thus the conjugated map is homotopic to $\pm \Psi(\overline{\mathcal{H}}, \overline{\mathcal{H}'})$ by Corollary \ref{TransitiveSystemResult}. We thus conclude that 
	\begin{align*}
		\Psi(\overline{\mathcal{H}'}, \mathcal{H}') \circ \eta_{\mathcal{H}' \rightarrow \overline{\mathcal{H}'}} \circ \Psi(\mathcal{H}, \mathcal{H}') &\sim \pm  \Psi(\overline{\mathcal{H}'}, \mathcal{H}') \circ \Psi(\overline{\mathcal{H}}, \overline{\mathcal{H}'}) \circ \eta_{\mathcal{H} \rightarrow \overline{\mathcal{H}}} \\
		&\sim \pm \Psi(\mathcal{H}, \mathcal{H}') \circ \Psi(\overline{\mathcal{H}}, \mathcal{H}) \circ \eta_{\mathcal{H} \rightarrow \overline{\mathcal{H}}}		
	\end{align*}
	where the last two maps being homotopic up to a sign is again guaranteed by Corollary \ref{TransitiveSystemResult}.
	Having established that the diagram with $\iota$ in the top row commutes up to chain homotopy for at least one choice of $\{\pm \iota '\}$ in the bottom row, the argument in \cite{HM} now applies directly to establish that $\text{Cone}(1 + \iota)$ is quasi-isomorphic to at least one of the cones $\text{Cone}(1 \pm \iota')$. This concludes the proof.
\end{proof}

In the case of rational homology three spheres, the pair of quasi-isomorphism classes in Theorem \ref{InvolutiveInvariants} can actually be distinguished from one another to furnish two distinct invariants.

\begin{cor}\label{InvolutiveInvariantsModel}
	Let Y be a rational homology three sphere. One can specify the maps $\iota$ such that, with respect to the canonical orientation systems of \cite{Disks2}, the quasi-isomorphism classes determined by:
	$$CFI_{+}(\mathcal{H}, \overline{\omega})$$
	and 
	$$CFI_{-}(\mathcal{H}, \overline{\omega})$$
	(considered as complexes of $\mathbb{Z}[U]$-modules) are each invariants of $(Y, \overline{\omega}, z)$.
\end{cor}

\begin{proof}
	Since $Y$ is a rational homology three sphere, for each $\mathfrak{s} \in \text{Spin}^{c}(Y)$ we have 
	$$HF^{\infty}(Y, \mathfrak{s}) \cong \mathbb{Z}[U, U^{-1}]$$ as $\mathbb{Z}[U]$-modules by \cite[Theorem 10.1]{Disks2}. 
	
	Consider first the case of a $\mathbb{Z}/ 2\mathbb{Z}$-invariant $ \text{spin}^{c}$ structure. For each such $ \text{spin}^{c}$ structure $\mathfrak{s}$ , the maps $\pm \iota$ are homotopy equivalences, so induce graded module isomorphisms on $HF^{\infty}(Y, \mathfrak{s})$. Since $HF^{\infty}(Y, \mathfrak{s}) \cong \mathbb{Z}[U, U^{-1}]$ there are precisely two such morphisms: $\pm \text{Id}$. For each $\mathbb{Z}/ 2\mathbb{Z}$-invariant $\mathfrak{s}$, choose $\iota$ to be the map which induces $-\text{Id}$ on $HF^{\infty}(Y, \mathfrak{s})$. This can be accomplished for all invariant $ \text{spin}^{c}$ structures even with a fixed choice of sign on each map $\Psi(\overline{\mathcal{H}}, \mathcal{H})$, by altering the signs of the maps $\eta$ when necessary. Then the proof of Theorem \ref{InvolutiveInvariants} carries over directly to show the quasi-isomorphism class determined by 
	$$CFI_{+}(\mathcal{H}, \{\mathfrak{s}\})$$ is an invariant of $(Y,\mathfrak{s}, z)$. One must only note that the Diagram \eqref{InvolutiveCommutes} commutes with no sign ambiguity for $\iota$ and $\iota'$ specified by our definition. Indeed, the proof of Theorem \ref{InvolutiveInvariants} shows that the diagram commutes with $1 + \iota$ on top for one of $1 \pm \iota'$ on the bottom, but the diagram could not even commute at the level of homology if $\iota$ induced $-\text{Id}$ and $\iota'$ induced $\text{Id}$. By the same argument, $$CFI_{-}(\mathcal{H}, \{\mathfrak{s}\})$$ also yields an invariant.
	
	Next consider a $\mathbb{Z}/ 2\mathbb{Z}$-orbit $\overline{\omega} = \{\mathfrak{s}, \overline{\mathfrak{s}}\}$ coming from a pair of non-invariant $ \text{spin}^{c}$ structures. We have two 
	homotopy equivalences,
	$ \iota_{\mathfrak{s} \rightarrow \overline{\mathfrak{s}}}: CF^{\circ}(\mathcal{H}, \mathfrak{s}) \rightarrow CF^{\circ}(\mathcal{H}, \overline{\mathfrak{s}})$
	and
	$ \iota_{\overline{\mathfrak{s}} \rightarrow \mathfrak{s}}: CF^{\circ}(\mathcal{H}, \overline{\mathfrak{s}}) \rightarrow CF^{\circ}(\mathcal{H}, \mathfrak{s}).$
	The total map
	$$(1 + \iota): CF^{\circ}(\mathcal{H}, \mathfrak{s}) \oplus CF^{\circ}(\mathcal{H}, \overline{\mathfrak{s}})  \rightarrow CF^{\circ}(\mathcal{H}, \mathfrak{s}) \oplus CF^{\circ}(\mathcal{H}, \overline{\mathfrak{s}})$$
	takes the form 
	$$ (x, y) \mapsto (x + \iota_{\overline{\mathfrak{s}} \rightarrow \mathfrak{s}}(y),  y + \iota_{\mathfrak{s} \rightarrow \overline{\mathfrak{s}}}(x)).$$
	
	Define the signs on these maps such that $ \iota_{\overline{\mathfrak{s}} \rightarrow \mathfrak{s}} \circ \iota_{\mathfrak{s} \rightarrow \overline{\mathfrak{s}}}$ induces $\text{Id}$ on $HF^{\infty}(Y, \mathfrak{s})$ and $\iota_{\mathfrak{s} \rightarrow \overline{\mathfrak{s}}} \circ \iota_{\overline{\mathfrak{s}} \rightarrow \mathfrak{s}}$ induces $\text{Id}$ on $HF^{\infty}(Y, \overline{\mathfrak{s}})$. As above, the choice of signs can be incorporated into the definition of the maps $\eta$. The proof of Theorem \ref{InvolutiveInvariants} again shows this gives a well defined invariant $$CFI_{+}(\mathcal{H}, \overline{\omega}).$$ Similarly, the choice where $\iota_{\mathfrak{s} \rightarrow \overline{\mathfrak{s}}} \circ \iota_{\overline{\mathfrak{s}} \rightarrow \mathfrak{s}}$ and $ \iota_{\overline{\mathfrak{s}} \rightarrow \mathfrak{s}} \circ \iota_{\mathfrak{s} \rightarrow \overline{\mathfrak{s}}}$ both induce $-\text{Id}$ gives a well defined invariant $$CFI_{-}(\mathcal{H}, \overline{\omega}). \qedhere$$
\end{proof}

\begin{remark}
	The two rational homology sphere invariants given in Corollary \ref{InvolutiveInvariantsModel} give rise to distinct involutive Heegaard Floer homologies $HFI^{\infty}(Y, \overline{\omega})$. Namely, one can compute that
	
	$$HFI^{\infty}_{+}(Y, \overline{\omega}) \cong \mathbb{Z}[U, U^{-1}, Q] / (Q^{2})$$
	while 
	
	$$HFI^{\infty}_{-}(Y, \overline{\omega}) \cong \mathbb{Z}/ 2\mathbb{Z}[U, U^{-1}].$$

	To see this, consider the short exact sequence of chain complexes that results from the definition of $CFI^{\infty}(Y, \overline{\omega}) = \text{Cone}(1 + \iota)$:
	
	\begin{equation*}
	\begin{tikzcd}
	0 \arrow{r} & CF^{\infty}(\mathcal{H}, \overline{\omega}) \arrow{r}{i} & CFI^{\infty}(Y, \overline{\omega}) \arrow{r}{p} &  CF^{\infty}(\mathcal{H}, \overline{\omega}) \arrow{r} & 0
	\end{tikzcd} 
	\end{equation*}
	This gives rise a to a long exact sequence in homology
	\begin{equation*}
	\begin{tikzcd}
	 & \cdots \arrow{r}{p_{*}} & HF^{\infty}(Y, \overline{\omega}) \arrow[out=-10, in=170, swap, overlay]{dll}{\delta}\\
	 HF^{\infty}(Y, \overline{\omega}) \arrow{r}{i_{*}} & HFI^{\infty}(Y, \overline{\omega}) \arrow{r}{p_{*}} &  HF^{\infty}(Y, \overline{\omega}) \arrow[out=-10, in=170, swap, overlay]{dll}{\delta} \\
	 HF^{\infty}(Y, \overline{\omega}) \arrow{r}{i_{*}}  & \cdots &
	\end{tikzcd} 
	\end{equation*}
	for which the connecting morphism $\delta$ is precisely the induced map $(1 + \iota)_{*}$. 
	
	Consider first the case of invariant $ \text{spin}^{c}$ structures. When $\iota$ is chosen such that $(1 + \iota)_{*} = 0$, we get a split short exact sequence, so
	
	$$ HFI^{\infty}_{+}(Y, \{\mathfrak{s}\}) \cong HF^{\infty}(Y, \mathfrak{s}) \oplus HF^{\infty}(Y, \mathfrak{s}) \cong \mathbb{Z}[U, U^{-1}]\oplus \mathbb{Z}[U, U^{-1}] .$$
	Tracing through the identification analogous to that in Equation \eqref{Q-action}, this gives
	$$HFI^{\infty}_{+}(Y, \{\mathfrak{s}\}) \cong \mathbb{Z}[U, U^{-1}, Q] / (Q^{2})$$
	as a module over $\mathbb{Z}[Q, U, U^{-1}] / (Q^{2})$.
	When $\iota$ is chosen such that $\delta = (1 + \iota)_{*} = 2$, we instead obtain
	
	$$ HFI^{\infty}_{-}(Y, \{\mathfrak{s}\}) \cong \text{Coker}(\delta )\oplus  \text{Ker}(\delta) \cong HF^{\infty}(Y, \mathfrak{s}) / 2 \cdot HF^{\infty}(Y, \mathfrak{s}) \cong \mathbb{Z} / 2\mathbb{Z}[U, U^{-1}].$$
	Here $HFI^{\infty}_{-}(Y, \mathfrak{s})$ is a $\mathbb{Z}[Q, U, U^{-1}] / (Q^{2})$-module where $Q$ acts by zero.

	In the case of non-invariant $ \text{spin}^{c}$ structures, we can use $\iota_{\mathfrak{s} \rightarrow \overline{\mathfrak{s}}}$ to identify $HF^{\infty}(Y, \mathfrak{s}) \cong HF^{\infty}(Y, \overline{s})$ and consider the map
	
	$$\phi: HF^{\infty}(Y, \mathfrak{s}) \oplus HF^{\infty}(Y, \mathfrak{s}) \rightarrow HF^{\infty}(Y, \mathfrak{s}) \oplus HF^{\infty}(Y, \mathfrak{s})$$
	defined by the composition $\phi = (1\oplus(\iota_{\mathfrak{s} \rightarrow \overline{\mathfrak{s}}})^{-1}) \circ (1 + \iota) \circ (1\oplus\iota_{\mathfrak{s} \rightarrow \overline{\mathfrak{s}}}).$
	More explicitly, we have: 
	$$ \phi(x, y) = (x + \iota_{\overline{\mathfrak{s}} \rightarrow \mathfrak{s}} \circ \iota_{\mathfrak{s} \rightarrow \overline{\mathfrak{s}}}(y), (\iota_{\mathfrak{s} \rightarrow \overline{\mathfrak{s}}})^{-1} \circ  \iota_{\mathfrak{s} \rightarrow \overline{\mathfrak{s}}} (x + y)
	).$$
	For $CFI^{\infty}_{+}$, we defined the constituent maps such that $\iota_{\overline{\mathfrak{s}} \rightarrow \mathfrak{s}} \circ \iota_{\mathfrak{s} \rightarrow \overline{\mathfrak{s}}}$ induces $\text{Id}$, so this becomes:
	$$\phi(x, y) = (x + y, x + y)$$
	and
	$$ HFI^{\infty}_{+}(Y, \{\mathfrak{s}, \overline{\mathfrak{s}}\}) \cong \text{Coker}(\delta )\oplus  \text{Ker}(\delta) \cong \mathbb{Z}[U, U^{-1}] \oplus \mathbb{Z}[U, U^{-1}].$$	
	For $CFI^{\infty}_{-}$, we defined the constituent maps such that $\iota_{\overline{\mathfrak{s}} \rightarrow \mathfrak{s}} \circ \iota_{\mathfrak{s} \rightarrow \overline{\mathfrak{s}}}$ induces $-\text{Id}$, so this becomes:
	$$\phi(x, y) = (x - y, x + y)$$
	and 
	$$ HFI^{\infty}_{-}(Y, \{\mathfrak{s}, \overline{\mathfrak{s}}\}) \cong \text{Coker}(\delta )\oplus  \text{Ker}(\delta) \cong \mathbb{Z} / 2\mathbb{Z}[U, U^{-1}].$$
	The claimed structures as modules over $\mathbb{Z}[Q, U, U^{-1}] / (Q^{2})$ follows as above.
\end{remark}

\begin{remark}
	It is plausible that Corollary \ref{InvolutiveInvariantsModel} actually extends to the general case of closed, connected, oriented 3-manifolds. To specify an individual invariant in this general case would require a method by which one could naturally make a choice for signs on $\iota$. An approach here would be to make an argument like the one in the proof of Corollary \ref{InvolutiveInvariantsModel}, but by taking advantage of the standard form for the totally twisted module $\underline{HF^{\infty}}(Y)$, rather than the standard form for $HF^{\infty}(Y)$ for rational homology spheres. Indeed, by \cite[Theorem 10.12]{Disks2} the totally twisted module associated with any $\text{Spin}^{\text{c}}$ structure is isomorphic to $\mathbb{Z}[U, U^{-1}]$ (as a $\mathbb{Z}[U, U^{-1}]$-module). Using this fact, one could presumably again pick out particular models for $CFI_{+}$ and $CFI_{-}$. What would remain to be shown is that there are analogs to Theorem \ref{Functoriality} and Corollary \ref{TransitiveSystemResult} for the totally twisted complexes, and that these results could be used to carry over the argument used in the proof of Theorem \ref{InvolutiveInvariants}. We expect that the main results in this paper do carry over to the totally twisted complexes, but we leave investigation of this subtlety to the interested reader.
\end{remark}

\subsection{Organization of the Paper}
We begin in Section \ref{background} by recalling the notion of sutured $3$-manifolds and sutured Heegaard diagrams, as all of the results in \cite{Naturality} are phrased in this setting. We discuss a correspondence between sutured and closed $3$-manifolds, and use the correspondence to translate a graph of sutured diagrams central to setting of \cite{Naturality} into an equivalent graph of closed diagrams which we use throughout the remainder of the paper. In Section \ref{HeegaardInvariants} we introduce and rephrase the notions of weak and strong Heegaard invariants defined in \cite{Naturality}. Section \ref{TransitiveSystemSection} deals with setting up the algebraic framework in which our main results are phrased, and in particular includes the definitions of the projectivizations and categories of transitive systems appearing in Theorems \ref{Functoriality} and \ref{StrongHeegaard2}. In Section \ref{ProjectiveNaturalityFromStrong}, we deduce Theorem \ref{Functoriality} and Corollary \ref{TransitiveSystemResult} from Theorem \ref{StrongHeegaard2} and Corollary \ref{StrongHeegaard}. In Sections \ref{WeakHeegaardFloerInvariantsSection} and \ref{MainProof} we recall the constructions involved in defining the integral Heegaard Floer chain complexes, and establish that these constructions yield suitably defined weak Heegaard invariants. In Section \ref{MainProof}, we check that these weak Heegaard invariants satisfy all but one of the axioms required of a strong Heegaard invariant. In Section \ref{Moduli} we carry out the main work and establish that these weak Heegaard invariants also satisfy the last axiom, known as simple handleswap invariance. Finally, in Section \ref{surgerytrianglesection} we explain that the construction of the surgery exact triangle works without modification in our setting.

\subsection{Acknowledgements}
I would like to thank the anonymous referee for providing many invaluable and detailed comments and suggestions, and in particular for their help in pointing out errors in the original manuscript and proposing alternative approaches where necessary. It is also my pleasure to thank Robert Lipshitz for his support and encouragement throughout the course of the writing of this paper, for many helpful conversations, and for all of his help as my graduate advisor.

\section{Background}
\label{background}

In order to introduce notation and terminology for the remainder of the paper, we give a quick summary of some relevant background on sutured manifolds and Heegaard diagrams. To unify the approach, the results in \cite{Naturality} are most often phrased in terms of sutured manifolds. Since we are interested here in the closed variants of Heegaard Floer homology, we will set up some background in order to be able to rephrase the results we use from \cite{Naturality} in language more typically used for the closed invariants. 

To begin, we will describe how moves on sutured Heegaard diagrams relate to the typical Heegaard moves one considers on Heegaard diagrams for closed 3 manifolds. Next we will recall the definition of the graph of sutured isotopy diagrams $\mathcal{G}(\mathcal{S}_{\text{man}})$ introduced in Section \ref{Introduction}, and describe an isomorphism to a graph $\mathcal{G}_{\text{man}}$ of closed isotopy diagrams which we will consider instead of $\mathcal{G}(\mathcal{S}_{\text{man}})$ throughout the remainder of the paper. We refer the reader to \cite[Section 2.1]{Naturality} for a more detailed treatment of all of the background in this section. 

\subsection{Background on Sutured Manifolds}

In this paper we will be concerned primarily with closed $3$ manifolds, but we will need to refer to numerous results about sutured $3$ manifolds along the way. In particular, our results depend on notions of sutured $3$ manifolds, sutured diagrams and embedded sutured diagrams for such manifolds, various notions of equivalence of such diagrams, and sutured Heegaard moves. While these notions may be standard, some inequivalent definitions certainly exist, so we explicitly refer the reader to \cite{Naturality} for background on the definitions we will use throughout this paper. We note that the sutured Heegaard moves play a role analogous to that of pointed Heegaard moves on Heegaard diagrams for closed $3$-manifolds. There are moves called $\balpha$ and $\bbeta$ equivalences (which correspond to sequences of handleslides), as well as stablizations and destablizations, isotopies, and diffeomorphisms. Finally, we note that by restricting attention to the isotopy class of attaching curves on a diagram, one obtains a well-defined notion of a sutured \emph{isotopy diagram}, and one can make sense of sutured Heegaard moves considered as moves on the isotopy diagrams (eg there is a well-defined notion of a diffeomorphism of isotopy diagrams). We again refer the reader to \cite{Naturality} for the relevant definitions of such sutured Heegaard moves; the main relevance here will be their relation to Heegaard moves on diagrams for closed $3$ manifolds. 

\subsection{A Correspondence Between Closed and Sutured Manifolds}

Our goal in this paper is to ultimately establish facts about the Heegaard Floer invariants for closed 3 manifolds, so we need a way to translate between sutured and closed manifolds in the cases of interest. Furthermore, certain properties of this correspondence are needed to ensure that the techniques used to obtain functoriality in \cite{Naturality} which we import can be applied to the closed setting of interest here. For our purposes, it will be sufficient to note that there is a correspondence between closed, oriented and based $3$ manifolds and sutured manifolds, and that under this correspondence:
\begin{enumerate}
	\item Isotopies of attaching curves in the sutured diagram yield \emph{pointed} isotopies (ie\ isotopies which do not cross the basepoint) of attaching curves in the closed diagram.
	\item Diffeomorphisms of sutured isotopy diagrams yield pointed diffeomorphisms of pointed closed isotopy diagrams.
	\item Stabilizations of sutured isotopy diagrams correspond to stabilizations of pointed isotopy diagrams.
	\item Two sutured isotopy diagrams $H_{1} = (\Sigma, \balpha_{1}, \bbeta_{1})$ and $H_{2}= (\Sigma, \balpha_{2}, \bbeta_{2})$ are $\balpha$-equivalent if and only if the curves $\balpha_{1}$ and $\balpha_{2}$ are related by a sequence of handleslides in the corresponding pointed isotopy diagrams, where the handleslides never cross the basepoint. The analogous statement holds for $\bbeta$-equivalent sutured isotopy diagrams.
\end{enumerate}

Since these last sorts of equivalences will play a prominent role throughout the paper, we introduce terminology introduced in \cite{FourManifoldInvariants} to describe them:
\begin{defn}
	Given two closed, pointed Heegaard diagrams $\mathcal{H}_{1} = (\Sigma, \balpha_{1}, \bbeta_{1}, z)$ and $\mathcal{H}_{2} = (\Sigma, \balpha_{2}, \bbeta_{2}, z)$ we say they are \emph{strongly equivalent} if they are related by a sequence of isotopies and handleslides which do not cross the basepoint. If the diagrams are related by a sequence of isotopies, and handleslides which occur only among the $\balpha$ curves, we say the diagrams are \emph{strongly $\balpha$-equivalent}. If the diagrams are related by a sequence of isotopies, and handleslides which occur only among the $\bbeta$ curves, we say the diagrams are \emph{strongly $\bbeta$-equivalent}.
\end{defn}

\subsection{Graphs of Heegaard Diagrams}
\label{GraphsofHeegaardMoves}
Following \cite[Definition 2.22]{Naturality}, construct a directed graph $\mathcal{G}$ as follows. The class of vertices, $|\mathcal{G}|$, of $\mathcal{G}$ is given by the class of isotopy diagrams of sutured manifolds. Given two isotopy diagrams $H_{1},H_{2} \in |\mathcal{G}|$, the oriented edges from $H_{1}$ to $H_{2}$ come in four flavors $$\mathcal{G}(H_{1}, H_{2}) = \mathcal{G}_{\alpha}(H_{1},H_{2}) \cup  \mathcal{G}_{\beta}(H_{1},H_{2})  \cup \mathcal{G}_{\text{stab}}(H_{1},H_{2})   \cup \mathcal{G}_{\text{diff}}(H_{1},H_{2}). $$
Here \begin{enumerate}
	\item $\mathcal{G}_{\alpha}(H_{1},H_{2})$ consists of a single edge if the diagrams are $\balpha$-equivalent.
	\item$\mathcal{G}_{\beta}(H_{1},H_{2})$ consists of a single edge if the diagrams are $\bbeta$-equivalent.
	\item $\mathcal{G}_{\text{stab}}(H_{1},H_{2})$ consists of a single edge if the diagrams are related by a stabilization or destabilization.
	\item$\mathcal{G}_{\text{diff}}(H_{1},H_{2})$ consists of a collection of edges, with one edge for each diffeomorphism between the isotopy diagrams.
\end{enumerate}

We denote by $\mathcal{G}_{\alpha}, \mathcal{G}_{\beta}, \mathcal{G}_{\text{stab}}$ and $\mathcal{G}_{\text{diff}}$ the subgraphs of $\mathcal{G}$ arising from only considering the corresponding edges on the class of vertices $|\mathcal{G}|$.

There is an analog of the Reidemeister Singer theorem for sutured manifolds (applied to sutured \textit{diagrams}):

\begin{prop}\cite[Proposition 2.23]{Naturality}
	\label{ReidemeisterSinger1}
	Two isotopy diagrams $H_{1}$, $H_{2} \in |\mathcal{G}|$ can be connected by an oriented path in $\mathcal{G}$ if and only if they define diffeomorphic sutured manifolds. 
\end{prop}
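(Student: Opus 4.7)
The statement breaks into two directions. The direction $(\Rightarrow)$ --- that each edge type preserves the sutured diffeomorphism type of $S(H)$ --- I would verify type by type. Diffeomorphism edges are immediate. For an $\alpha$- or $\beta$-equivalence, $S(H)$ can be described as the union of the compression bodies $C(\balpha)$ and $C(\bbeta)$ glued along $\Sigma$, and compression-equivalent attaching sets produce diffeomorphic compression bodies rel the fixed boundary directly from Definition~\ref{CompressionEquivalance}. A stabilization attaches a standard punctured-torus summand with two curves meeting transversely in a point, which corresponds topologically to connect-summing $S(H)$ with $S^{3}$ (in a ball disjoint from $\gamma$), so the resulting sutured manifolds are diffeomorphic.

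The substantive content is the converse $(\Leftarrow)$. Given a diffeomorphism $\phi: S(H_{1}) \to S(H_{2})$, I would first compose with an edge in $\mathcal{G}_{\text{diff}}$ representing $\phi$, reducing to the case that $H_{1}$ and $H_{2}$ are both embedded isotopy diagrams in the same sutured manifold $(M,\gamma)$. I would then apply Cerf theory adapted to the sutured setting. A self-indexing Morse function $f: M \to [-1,3]$ with $f^{-1}(-1) = R^{-}(\gamma)$, $f^{-1}(3) = R^{+}(\gamma)$, and $f|_{\gamma}$ of product type on $s(\gamma) \times [-1,3]$, together with a compatible gradient-like vector field, determines an embedded Heegaard surface $\Sigma = f^{-1}(3/2)$ with $\alpha$- and $\beta$-attaching sets arising from the descending and ascending disks of the index $1$ and $2$ critical points; this is an embedded sutured diagram for $(M,\gamma)$ in the sense of Definition~\ref{EmbeddedSuturedDiagram}. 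The space $\mathcal{F}$ of such adapted Morse data is nonempty and path-connected, and a generic path in $\mathcal{F}$ crosses only handle-slide and birth/death singularities in the interior of $M$. By standard arguments these translate exactly into edges in $\mathcal{G}_{\alpha} \cup \mathcal{G}_{\beta}$ (via Claim~\ref{CompressionHandleslide}) and in $\mathcal{G}_{\text{stab}}$, respectively. The endpoints of the path yield embedded sutured diagrams that agree with $H_{1}$ and $H_{2}$ up to an ambient isotopy of $(M,\gamma)$, which can be absorbed into additional edges in $\mathcal{G}_{\text{diff}}$.

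The principal obstacle is arranging all the Morse-theoretic data to respect the sutured boundary. One must set up $\mathcal{F}$ carefully so that every function and every generic one-parameter family is genuinely compatible with $\gamma$ --- the critical points stay in $\text{int}(M)$, the level sets $R^{\pm}(\gamma)$ are preserved, and the function and gradient remain of the product form on $\gamma$ --- and then verify that no new codimension-one degenerations appear near $\gamma$ beyond the handle-slide and birth/death crossings familiar from the closed case. This is a routine relative-Cerf adaptation, but the bookkeeping near $\partial M$ requires care. As a sanity check and as an independent derivation in the special case central to the rest of the paper --- namely $\partial M \cong S^{2}$ with a single suture --- the statement reduces to the classical closed Reidemeister--Singer theorem under the correspondence $(M,\gamma) \leftrightarrow M/S^{2}$ of Definition~\ref{SuturedAssociatedtoClosed}, since all moves on sutured diagrams with that boundary type transparently match pointed Heegaard moves on the associated closed diagrams.
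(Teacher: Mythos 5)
Your proposal is correct and follows essentially the same route as the proof of the cited result: this paper only quotes \cite[Proposition 2.23]{Naturality}, and the argument there is exactly your two-step scheme---an edge-by-edge check that $\alpha$/$\beta$-equivalences, stabilizations and diffeomorphisms preserve the diffeomorphism type of $S(H)$, and for the converse a relative Morse/Cerf-theoretic argument with sutured Morse functions compatible with $\gamma$ (critical points interior, product form near the sutures), where generic one-parameter families cross only handleslide and birth/death singularities, translated into $\mathcal{G}_{\alpha}\cup\mathcal{G}_{\beta}$ and $\mathcal{G}_{\text{stab}}$ edges via Lemma \ref{CompressionHandleslide}, with the identification $S(H_{1})\cong S(H_{2})$ and residual ambient isotopies absorbed into $\mathcal{G}_{\text{diff}}$ edges. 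The only step worth stating explicitly is that each of the two given embedded diagrams must first be realized by a point of your space $\mathcal{F}$ of adapted Morse data before you connect them by a generic path, but this is standard and is implicit in your setup.
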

\begin{remark}
	By the definition of $\mathcal{G}$, if there is an unoriented path from $H_{1}$ to $H_{2}$ then there is also an oriented path from $H_{1}$ to $H_{2}$. 
\end{remark}

Let $S(H)$ denote the sutured manifold associated to the isotopy diagram $H$. Given any set $\mathcal{S}$ of diffeomorphism types of sutured manifolds, denote by $\mathcal{G}(\mathcal{S})$ the full subgraph of $\mathcal{G}$ spanned by those isotopy diagrams $H$ for which $S(H) \in \mathcal{S}$. For our purposes, the case of interest will be $\mathcal{S} = \mathcal{S}_{\text{man}}$. This is the set of diffeomorphism types of sutured manifolds which arise as the images of closed, oriented, based 3-manifolds under the correspondence discussed above.

Let $\mathcal{G}_{\text{man}}$ be the oriented graph with vertices given by pointed isotopy Heegaard diagrams of closed, connected $3$ manifolds, and with the edges from an isotopy diagram $H_{1}$ to an isotopy diagram $H_{2}$ given by
$$\mathcal{G}_{\text{man}}(H_{1}, H_{2}) = \mathcal{G}_{\text{man}}^{\alpha}(H_{1},H_{2}) \cup  \mathcal{G}_{\text{man}}^{\beta}(H_{1},H_{2})  \cup \mathcal{G}_{\text{man}}^{\text{stab}}(H_{1},H_{2})   \cup \mathcal{G}_{\text{man}}^{\text{diff}}(H_{1},H_{2})$$
where \begin{enumerate}
	\item $\mathcal{G}_{\text{man}}^{\alpha}(H_{1},H_{2})$ consists of a single edge if the diagrams are strongly $\balpha$-equivalent.
	\item$\mathcal{G}_{\text{man}}^{\beta}(H_{1},H_{2})$ consists of a single edge if the diagrams are strongly $\bbeta$-equivalent.
	\item $\mathcal{G}_{\text{man}}^{\text{stab}}(H_{1},H_{2})$ consists of a single edge if the diagrams are related by a stabilization or destabilization.
	\item$\mathcal{G}_{\text{man}}^{\text{diff}}(H_{1},H_{2})$ consists of a collection of edges, with one edge for each pointed diffeomorphism between the isotopy diagrams.
\end{enumerate}
We provide a sketch of a piece of the graph $\mathcal{G}_{\text{man}}$ in Figure \ref{GmanGraph} below. The following analog of Proposition \ref{ReidemeisterSinger1} holds in the closed and pointed setting:

\begin{prop}\cite[Proposition 7.1]{Disks1}
	\label{ReidemeisterSinger2}
	Two isotopy diagrams $H_{1}$, $H_{2} \in |\mathcal{G}_{man}|$ can be connected by an oriented path in $\mathcal{G}_{man}$ if and only if they define diffeomorphic pointed manifolds. 
\end{prop}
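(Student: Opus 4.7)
The plan is to deduce Proposition \ref{ReidemeisterSinger2} from the sutured version, Proposition \ref{ReidemeisterSinger1}, via the correspondence between closed pointed diagrams and sutured isotopy diagrams developed in the preceding subsections. The forward direction is straightforward: I would verify that each of the four edge types in $\mathcal{G}_{\text{man}}$ preserves the pointed diffeomorphism type of the underlying $3$-manifold. Strong $\balpha$- and $\bbeta$-equivalences are pointed handleslides and isotopies, which do not alter the Heegaard splitting and hence fix the pointed manifold; stabilizations and destabilizations are the classical genus-one summand moves, long known to preserve diffeomorphism type; and a pointed diffeomorphism of isotopy diagrams directly induces a pointed diffeomorphism of the underlying $3$-manifolds.

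For the reverse direction, given pointed isotopy diagrams $H_1, H_2 \in |\mathcal{G}_{\text{man}}|$ representing pointed-diffeomorphic manifolds $(Y_1, z_1)$ and $(Y_2, z_2)$, I would first pass to sutured isotopy diagrams $H_1^s, H_2^s$ by blowing up the basepoints. Choose arbitrary oriented $2$-planes $V_i \subset T_{z_i} Y_i$; the diagrams $H_i^s$ are then embedded sutured isotopy diagrams for the sutured manifolds $Y_i(z_i, V_i) \in \mathcal{S}_{\text{man}}$. Since $SO(3)$ is connected, any pointed diffeomorphism $(Y_1, z_1) \to (Y_2, z_2)$ can be modified in a small neighborhood of $z_1$ so that its differential at $z_1$ sends $V_1$ to $V_2$, and this modification then extends to a diffeomorphism of sutured manifolds $Y_1(z_1, V_1) \to Y_2(z_2, V_2)$. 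Proposition \ref{ReidemeisterSinger1} now provides an oriented path in $\mathcal{G}$ connecting $H_1^s$ to $H_2^s$, and since $\mathcal{G}(\mathcal{S}_{\text{man}})$ is a full subgraph containing both endpoints, the whole path lies in it.

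The final step is to transport this path to $\mathcal{G}_{\text{man}}$ using the graph isomorphism $\mathcal{G}(\mathcal{S}_{\text{man}}) \cong \mathcal{G}_{\text{man}}$ implicit in the discussion preceding the statement. Pointed isotopy diagrams correspond bijectively to sutured isotopy diagrams in $\mathcal{S}_{\text{man}}$ via the quotient/blow-up operation at the basepoint, and this correspondence is edge-type preserving: strong $\balpha$-equivalence corresponds to $\balpha$-equivalence by Lemma \ref{CompressionHandleslide} (and analogously for $\bbeta$); stabilizations correspond because the quotient identifies a small disc in the closed Heegaard surface with the boundary disc of the sutured Heegaard surface; and pointed diffeomorphisms of closed isotopy diagrams correspond to diffeomorphisms of sutured isotopy diagrams. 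Transporting the sutured path across this isomorphism produces the desired oriented path in $\mathcal{G}_{\text{man}}$ from $H_1$ to $H_2$.

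I do not anticipate a significant obstacle: the technical work has already been carried out in establishing the correspondence between closed pointed diagrams and sutured diagrams in $\mathcal{S}_{\text{man}}$. The only step requiring a bit of care is the matching of the $2$-planes $V_i$ when lifting a pointed diffeomorphism to a sutured one, and this is settled by the connectedness of $SO(3)$ together with a standard isotopy extension argument near the basepoint.
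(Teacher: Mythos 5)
Your argument is correct, but it takes a different route from the paper: the paper does not prove this statement at all, it simply cites Ozsv\'ath and Szab\'o's pointed Reidemeister--Singer theorem (Proposition 7.1 of \cite{Disks1}), which is established there directly in the closed, pointed setting. You instead deduce the closed statement from the sutured analogue, Proposition \ref{ReidemeisterSinger1} of \cite{Naturality}, by blowing up at the basepoint, matching the oriented $2$-planes $V_i$ using connectedness of the space of oriented planes together with a locally supported isotopy, observing that the resulting path in $\mathcal{G}$ stays in the full subgraph $\mathcal{G}(\mathcal{S}_{\text{man}})$ because every edge of $\mathcal{G}$ preserves the sutured diffeomorphism type, and then transporting the path across the edge-type-preserving correspondence $T:\mathcal{G}(\mathcal{S}_{\text{man}})\rightarrow\mathcal{G}_{\text{man}}$. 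This is not circular, since the dictionary between sutured and pointed diagrams (Lemma \ref{CompressionHandleslide} for $\balpha$/$\bbeta$-equivalences versus pointed handleslide sequences, and the stabilization and diffeomorphism correspondences) is established independently of the proposition. What your route buys is a single source of truth: both Reidemeister--Singer statements flow from the sutured one, which fits the paper's overall strategy of translating \cite{Naturality} into the closed setting; what the paper's citation buys is independence from the sutured formalism and from the graph isomorphism $T$, which the paper only records afterwards as a bookkeeping device. Either way the content is the same, and your treatment of the only delicate point, adjusting a pointed diffeomorphism so that its differential carries $V_1$ to $V_2$ before lifting it to the spherical blow-ups, is handled correctly.
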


Finally we note that the preceding arguments specify an isomorphism of graphs
\begin{equation}
\label{IsomorphismofGraphs}
T:\mathcal{G}(\mathcal{S}_{\text{man}}) \rightarrow \mathcal{G}_{\text{man}}
\end{equation}
which we will use implicitly in the remainder of the paper to rephrase certain results from \cite{Naturality} in terms of $\mathcal{G}_{\text{man}}$.

\begin{figure}[h!]
	\centering
	\begin{tikzpicture}
	\node[anchor=south west,inner sep=0] (image) at (0,0) {\includegraphics[width=\textwidth]{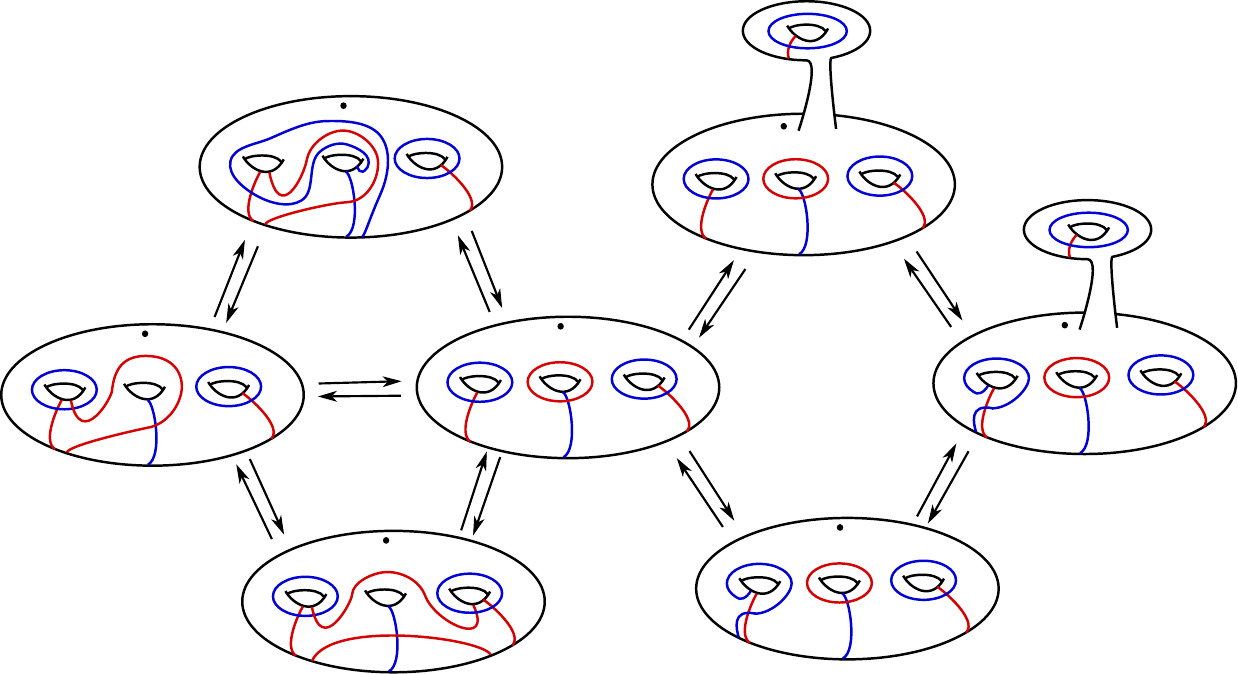}};
	\begin{scope}[x={(image.south east)},y={(image.north west)}]
	\node at (.165,.6) {$\bbeta$};
	\node at (.415,.61) {$d$};
	\node at (.29,.46) {$\balpha$};
	\node at (.418,.25) {$\balpha$};
	\node at (.18,.23) {$\balpha$};
	\node at (.55,.58) {$\sigma$};
	\node at (.55,.25) {$d$};
	\node at (.78,.25) {$\sigma$};
	\node at (.78,.59) {$d$};
	\end{scope}
	\end{tikzpicture}
	\caption{An illustration of a small subgraph in $\mathcal{G}_{\text{man}}$. The vertices are isotopy diagrams, which in the picture are depicted by particular Heegaard diagrams representing the isotopy class. We label each pair of edges with $\balpha, \bbeta, \sigma$ or $d$ according to whether the given pair of edges corresponds to a strong $\balpha$-equivalence, a strong $\bbeta$-equivalence, a stabilization/destabilization pair, or a diffeomorphism pair respectively. We use the convention that on each Heegaard diagram the collection of red attaching curves is denoted $\balpha$ while the collection of blue attaching curves is denoted $\bbeta$.}
	\label{GmanGraph}
\end{figure}

\section{Heegaard Invariants}
\label{HeegaardInvariants}
We now make precise two notions of what one might mean by a Heegaard invariant of closed $3$-manifolds. For the interested reader's convenience, we note that the definitions originally given in \cite{Naturality} apply to sutured manifolds and the graph $\mathcal{G}(\mathcal{S}_{\text{man}})$. Instead, we state here the equivalent definitions phrased in terms of closed manifolds and the graph $\mathcal{G}_{\text{man}}$.

Suppose we produce some assignment of algebraic objects to Heegaard diagrams (the vertices of the graph $\mathcal{G}_{\text{man}}$), and an assignment of maps between these algebraic objects to each Heegaard move between two diagrams (the edges of $\mathcal{G}_{\text{man}}$). Given Proposition \ref{ReidemeisterSinger2}, the minimal requirement we should ask of such an assignment to obtain an invariant of the underlying $3$-manifold is for edges in $\mathcal{G}_{\text{man}}$ to be assigned isomorphisms. Given any category $\mathcal{C}$, we have:

\begin{defn}\cite[Compare Definition 2.24]{Naturality} \label{ClosedWeakHeegaardInvariant}
	A \emph{weak Heegaard invariant of closed $3$-manifolds} is a morphism of graphs $F: \mathcal{G}_{\text{man}} \rightarrow \mathcal{C}$ for which $F(e)$ is an isomorphism for all edges $e \in \mathcal{G}_{\text{man}}$.
\end{defn}

Of course, this level of invariance was established for Heegaard Floer homology at the outset. 

\begin{thm}[\cite{Disks1}]
	\label{HeegaardFloerisWeak}
	The morphisms
	$$\widehat{HF}, HF^{-}, HF^{+}, HF^{\infty}: \mathcal{G}_{\text{man}} \rightarrow \mathbb{F}_{2}[U]\text{-}\text{Mod}$$
	and 
	$$\widehat{HF}, HF^{-}, HF^{+}, HF^{\infty}: \mathcal{G}_{\text{man}} \rightarrow \mathbb{Z}[U] \text{-}\text{Mod}$$
	are weak Heegaard invariants of closed $3$-manifolds.
\end{thm}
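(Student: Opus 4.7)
The plan is to verify that the invariance results of \cite{Disks1} (plus the orientation-system refinements of \cite{Disks2}) precisely match the four-edge structure of $\mathcal{G}_{\text{man}}$ appearing in Definition \ref{ClosedWeakHeegaardInvariant}. Unpacking what must be checked: first, for every vertex $H \in |\mathcal{G}_{\text{man}}|$ we must produce a well-defined object $HF^\circ(H)$ in the target category; second, for each of the four edge types (strong $\balpha$-equivalence, strong $\bbeta$-equivalence, stabilization/destabilization, pointed diffeomorphism) we must exhibit an isomorphism. Graph morphism structure (composition along paths, etc.) is automatic since there is no composition data in the source.

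For well-definedness on vertices, fix $H = (\Sigma, A, B)$ and pick a representative $\mathcal{H} = (\Sigma, \balpha, \bbeta, z)$ together with auxiliary data: a generic path $J$ of almost complex structures on $\text{Sym}^g(\Sigma)$, a (strongly) $\mathfrak{s}$-admissible perturbation obtained by isotoping $\balpha$ and $\bbeta$ away from $z$, and, over $\mathbb{Z}$, a coherent orientation system $\mathfrak{o}$ of the kind to be specified in Definition \ref{WeakHeegaardFloer1}. Form $CF^\circ(\mathcal{H}, J, \mathfrak{o})$ and pass to homology. Independence of these choices among representatives of the isotopy class is established by: continuation maps for admissibility-preserving isotopies of attaching curves disjoint from $z$; continuation maps for varying $J$; and the essential uniqueness of the canonical coherent orientation systems of \cite{Disks2}. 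These are the content of the invariance propositions of \cite[\S7]{Disks1} with the orientation refinements of \cite{Disks2}.

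For edges, the four cases correspond exactly to the invariance moves verified in \cite{Disks1}. Strong $\balpha$-equivalence: by Lemma \ref{CompressionHandleslide} the two diagrams are related by a sequence of handleslides among $\balpha$-curves; for each handleslide, form the triple diagram $(\Sigma, \balpha, \balpha', \bbeta, z)$ and define the map by counting holomorphic triangles with vertex at the top generator of the Floer homology of the small-isotopy pair $(\balpha, \balpha')$, which by a model computation is a tensor product of $\mathbb{Z}[U]$-summands carrying a canonical top class. Associativity of the polygon maps (proved in \cite{Disks1}) together with the model calculation shows the triangle map is a quasi-isomorphism. Strong $\bbeta$-equivalence is identical by symmetry. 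Stabilization is handled by a neck-stretching argument: inserting a long neck connecting $\Sigma$ to the attached genus-one piece carrying $(\alpha_2, \beta_2)$ degenerates all pseudo-holomorphic disks so that $CF^\circ(\mathcal{H}_2) \cong CF^\circ(\mathcal{H}_1) \otimes_{\mathbb{Z}[U]} CF^\circ(T^2_{\text{std}})$, and the latter factor is canonically a free module of rank one with trivial differential in the relevant Spin$^c$-summand. Finally, a pointed diffeomorphism $d: H_1 \to H_2$ carries generators to generators and gives a biholomorphism of $\text{Sym}^g$ identifying the moduli spaces for $J$ and $d_* J$; this yields a tautological chain isomorphism, which (after a continuation map to reconcile auxiliary choices) gives the required isomorphism on homology.

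The main obstacle, and the reason a proof sketch is worth writing at all given that these results are classical, is the careful tracking of coherent orientations over $\mathbb{Z}$. Each of the quasi-isomorphisms above is defined by a count of pseudo-holomorphic curves \emph{with signs} determined by the chosen orientation systems on the diagrams and on any intermediate triple or quadruple diagrams. To promote edge-wise isomorphisms to a bona fide graph morphism $\mathcal{G}_{\text{man}} \to \mathbb{Z}[U]\text{-}\text{Mod}$, one must fix a class of orientation systems transported coherently along each edge type; the precise requirements are encoded in Definition \ref{WeakHeegaardFloer1}. The existence and essential uniqueness of the canonical orientation systems from \cite{Disks2} makes this transport possible, but only up to the overall sign ambiguity that motivates the projectivization $P(\mathbb{Z}[U]\text{-}\text{Mod})$ used in the subsequent results. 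For the weak-invariant statement — that each edge individually induces an isomorphism — fixing orientations edge-by-edge suffices, and the proof reduces to the verifications above.
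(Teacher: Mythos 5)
Your proposal is correct and follows essentially the same route as the paper, which treats this theorem as the classical invariance package of \cite{Disks1} (continuation maps for isotopies and complex structures, triangle maps for handleslides, neck-stretching for stabilization, tautological pushforward for pointed diffeomorphisms), organized along the four edge types of $\mathcal{G}_{\text{man}}$ with orientation systems transported as later codified in Definition \ref{WeakHeegaardFloer1}. Your closing observation that fixing orientations edge-by-edge suffices for the weak statement, with the sign ambiguity only mattering for the strong axioms, matches the paper's framing exactly.
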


The above results also immediately yield

\begin{cor}
	The morphisms
	$$HF^{\circ}: \mathcal{G}_{\text{man}} \rightarrow P(\mathbb{Z}[U]\text{-}\text{Mod})$$
	are weak Heegaard invariants of closed $3$-manifolds.
\end{cor}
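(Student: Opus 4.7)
The plan is to deduce this corollary directly from the integral version of Theorem \ref{HeegaardFloerisWeak} via the projection functor to the projectivized category. First, I would observe that the quotient relation $f \sim -f$ on $\Hom_{\mathbb{Z}[U]\text{-}\text{Mod}}$ is compatible with composition (since $(-f)\circ g = -(f \circ g) = f \circ (-g)$), so the projectivization $P(\mathbb{Z}[U]\text{-}\text{Mod})$ is genuinely a category and the obvious assignment
\[
\pi: \mathbb{Z}[U]\text{-}\text{Mod} \longrightarrow P(\mathbb{Z}[U]\text{-}\text{Mod})
\]
sending each module to itself and each morphism to its equivalence class is a functor. This is the only categorical fact needed; the details will be laid out formally in Section \ref{TransitiveSystemSection}, but what matters here is that $\pi$ is identity on objects and is defined on morphisms.

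Next, I would note that any functor preserves isomorphisms: if $f\colon A \to B$ has inverse $g\colon B \to A$ in $\mathbb{Z}[U]\text{-}\text{Mod}$, then $\pi(g)$ is an inverse to $\pi(f)$ in $P(\mathbb{Z}[U]\text{-}\text{Mod})$. Therefore, precomposing a morphism of graphs $F\colon \mathcal{G}_{\text{man}} \to \mathbb{Z}[U]\text{-}\text{Mod}$ that sends edges to isomorphisms with $\pi$ yields a morphism of graphs $\pi \circ F\colon \mathcal{G}_{\text{man}} \to P(\mathbb{Z}[U]\text{-}\text{Mod})$ that also sends edges to isomorphisms.

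Finally, by Theorem \ref{HeegaardFloerisWeak}, each of the integral morphisms $\widehat{HF}, HF^{-}, HF^{+}, HF^{\infty}\colon \mathcal{G}_{\text{man}} \to \mathbb{Z}[U]\text{-}\text{Mod}$ is a weak Heegaard invariant. Composing each of these with $\pi$ produces the morphisms $HF^{\circ}\colon \mathcal{G}_{\text{man}} \to P(\mathbb{Z}[U]\text{-}\text{Mod})$ appearing in the statement, which by the previous paragraph send each edge to an isomorphism. There is no substantive obstacle here; the corollary is a purely formal consequence of functoriality of the projectivization and the already-established weak invariance over $\mathbb{Z}$.
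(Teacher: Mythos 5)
Your proposal is correct and matches the paper's reasoning: the paper derives this corollary immediately from the integral case of Theorem \ref{HeegaardFloerisWeak}, with the (unstated but intended) justification being exactly your observation that post-composing with the projection functor to $P(\mathbb{Z}[U]\text{-}\text{Mod})$ preserves the property that edges map to isomorphisms. No gaps.
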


In Section \ref{WeakHeegaardFloerInvariantsSection} we will recall the definition of these morphisms of graphs precisely. In particular, since the vertices of $\mathcal{G}_{\text{man}}$ are isotopy diagrams, we will need to explain the meaning of $HF^{\circ}(H)$ when $H$ is an isotopy diagram rather than a particular Heegaard diagram representing the isotopy class.

\begin{remark}
	For the reader referencing the corresponding results stated in \cite{Naturality}, we note that in \cite[Theorem 2.26]{Naturality}, Theorem \ref{HeegaardFloerisWeak} is instead phrased as ``$HF^{\circ}: \mathcal{G}(\mathcal{S}_{\text{man}}) \rightarrow \mathbb{F}_{2}[U]\text{-}\text{Mod}$ are weak Heegaard invariants". Of course, as they were originally defined $HF^{\circ}$ are invariants assigned to closed, pointed Heegaard diagrams; the meaning of $HF^{\circ}(H)$ for $H$ a sutured isotopy diagram in this statement is interpreted as follows. Recall that vertices of $\mathcal{G}(\mathcal{S}_{\text{man}})$ correspond to isotopy diagrams $H$ of sutured manifolds corresponding to closed, oriented $3$-manifolds Y. Given an actual sutured diagram $\mathcal{H} = (\Sigma, \balpha, \bbeta)$ (not up to isotopy) for such a $3$-manifold, the boundary of the Heegaard surface $\Sigma$ is $S^{1}$, so it can be capped off with a disk to obtain a closed surface $\overline{\Sigma}$ and a pointed Heegaard diagram $\overline{\mathcal{H}} = (\overline{\Sigma}, \balpha, \bbeta, z)$ for $Y$, where the basepoint $z$ is chosen to lie in the disk. Thus given a sutured diagram $\mathcal{H}$ representing the isotopy diagram $H$, we define $CF^{\circ}(\mathcal{H}):= CF^{\circ}(\overline{\mathcal{H}})$. Finally, we will describe how the collection $\{ CF^{\circ}(\mathcal{H}) \}$ gives rise to $CF^{\circ}(H)$ in Section \ref{WeakHeegaardFloerInvariants}. Equivalently, using the isomorphism of graphs $T$ specified in Equation \eqref{IsomorphismofGraphs}, the definitions above will amount to defining $HF^{\circ}(H) := HF^{\circ}(T(H))$ for $H$ a sutured isotopy diagram.
\end{remark}

Let $\text{Man}_{*}$ be the category whose class of objects consists of closed, connected, oriented and based 3-manifolds, and whose morphisms are basepoint preserving diffeomorphisms. In \cite{Disks1} and \cite{FourManifoldInvariants}, significant progress was made towards showing that the weak Heegaard invariants in the theorem above can in fact be assembled into functors from $\text{Man}_{*}$ to $\mathbb{F}_{2}[U] \text{-} \text{Mod}$. However, there was a gap in the proof. In \cite{Naturality}, the authors carefully analyzed the dependence of such a result on the nature of embedded (versus abstract) Heegaard diagrams, and basepoints, and set up a framework which allowed them to finish this program. To do so, they introduced a stronger notion of a Heegaard invariant which we now describe. 

To begin, we introduce some terminology for particular subgraphs in $\mathcal{G}_{\text{man}}$ (or more generally in $\mathcal{G}$) which will serve as minimal data on which this new notion of invariance will rely. 

\begin{defn}\cite[Definition 2.29]{Naturality}
	\label{distinguishedrectangle}
	A \emph{distinguished rectangle} is a subgraph of $\mathcal{G}_{\text{man}}$ of the form
	\begin{equation*}
	\begin{tikzcd}
	H_1  \arrow{r}{e} \arrow{d}{f} & H_{2} \arrow{d}{g}   \\
	H_3 \arrow{r}{h} & H_4   \\
	\end{tikzcd} 
	\end{equation*}
	which satisfies one of the following conditions:
	
	\begin{enumerate}
		\item The arrows $e$ and $h$ are strong $\balpha$-equivalences, and the arrows $f$ and $g$ are strong $\bbeta$-equivalences.
		\item The arrows $e$ and $h$ are either both  strong $\balpha$-equivalences or both strong $\bbeta$-equivalences, and the arrows $f$ and $g$ are stabilizations.
		\item The arrows $e$ and $h$ are either both  strong $\balpha$-equivalences or both strong  $\bbeta$-equivalences, and the arrows $f$ and $g$ are diffeomorphisms. Furthermore, $f=g$ (Note in this case $\Sigma_{1}=\Sigma_{2}$, and $\Sigma_{3}= \Sigma_{4}$, so this requirement makes sense).
		\item All of the arrows $e,f,g$ and $h$ are stabilizations. Furthermore, there are disjoint disks $D_{1},D_{2} \subset \Sigma_{1}$ and disjoint punctured tori $T_{1},T_{2} \subset \Sigma_{4}$ such that $\Sigma_{1} \setminus (D_{1} \cup D_{2})= \Sigma_{4} \setminus (T_{1} \cup T_{2})$, $\Sigma_{2} = (\Sigma_{1} \setminus D_{1}) \cup T_{1}$, and $\Sigma_{3} = (\Sigma_{1} \setminus D_{2}) \cup T_{2}$.
		\item The arrows $e$ and $h$ are stabilizations, and the arrows $f$ and $g$ are diffeomorphisms. Furthermore, the diffeomorphism $g$ is an extension of the diffeomorphism $f$ in the following sense. There are disks $D_{1} \subset \Sigma_{1}$, $D_{3} \subset \Sigma_{3}$ and punctured tori $T_{2} \subset \Sigma_{2}$, $T_{4} \subset \Sigma_{4}$ such that $\Sigma_{1} \setminus D_{1} = \Sigma_{2} \setminus T_{2}$, $\Sigma_{3} \setminus D_{3} = \Sigma_{4} \setminus T_{4}$, $f(D_{1}) = D_{2}$, $g(T_{3}) = T_{4}$ and $f |_{\Sigma_{1} \setminus D_{1}} = g |_{\Sigma_{2} \setminus T_{2}}$.
	\end{enumerate}
	
\end{defn}

We illustrate cases 4 and 5 schematically in Figures \ref{Distinguishedrectanglecase4} and \ref{Distinguishedrectanglecase5} below.

\begin{figure}[h]
	\centering
	\begin{tikzpicture}
	\node[anchor=south west,inner sep=0] (image) at (0,0) {\includegraphics[width=0.7\textwidth]{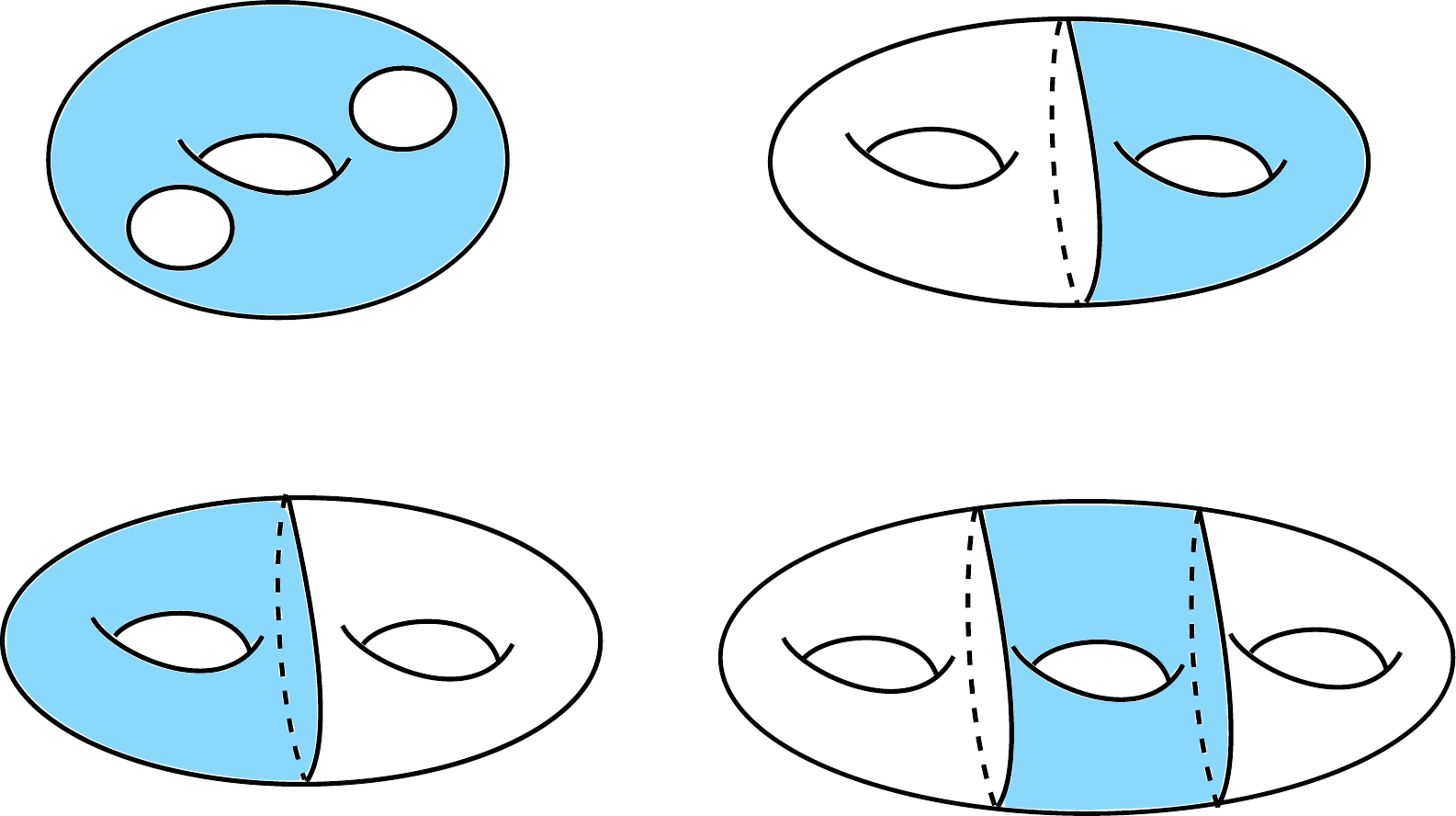}};
	\begin{scope}[x={(image.south east)},y={(image.north west)}]
	\node at (.12,.72) { $D_{1}$};
	\node at (.275,.86) { $D_{2}$};
	\node at (.67,.7) { $T_{1}$};
	\node at (.27,.3) { $T_{2}$};
	\node at (.63,.09) { $T_{1}$};
	\node at (.88,.29) { $T_{2}$};
	\node (A) at (.2,.58) {};
	\node (B) at (.2,.42) {};
	\draw [->] (A) -- (B) node[midway, right] {$f$};
	\node (C) at (.75,.58) {};
	\node (D) at (.75,.42) {};
	\draw [->] (C) -- (D) node[midway, right] {$g$};
	\node (E) at (.38,.35) {};
	\node (F) at (.52,.35) {};
	\draw [->] (E) to [out = 30, in = 150] (F);
	\node (G) at (.38,.7) {};
	\node (H) at (.52,.7) {};
	\node at (.45,.71) {$e$};
	\node at (.45,.43) {$h$};
	\draw [->] (G) to [out = -30, in = 210] (H);
	\end{scope}
	\end{tikzpicture}
	\caption{A schematic illustrating case 4 in the definition of a distinguished rectangle. The blue regions indicate the identifications specified in case 4. For ease of visualization, we suppress the attaching curve data in the initial diagram and in the stabilizations.}
	\label{Distinguishedrectanglecase4}
\end{figure}

\begin{figure}[h!]
	\centering
	\begin{tikzpicture}
	\node[anchor=south west,inner sep=0] (image) at (0,0) {\includegraphics[width=0.7\textwidth]{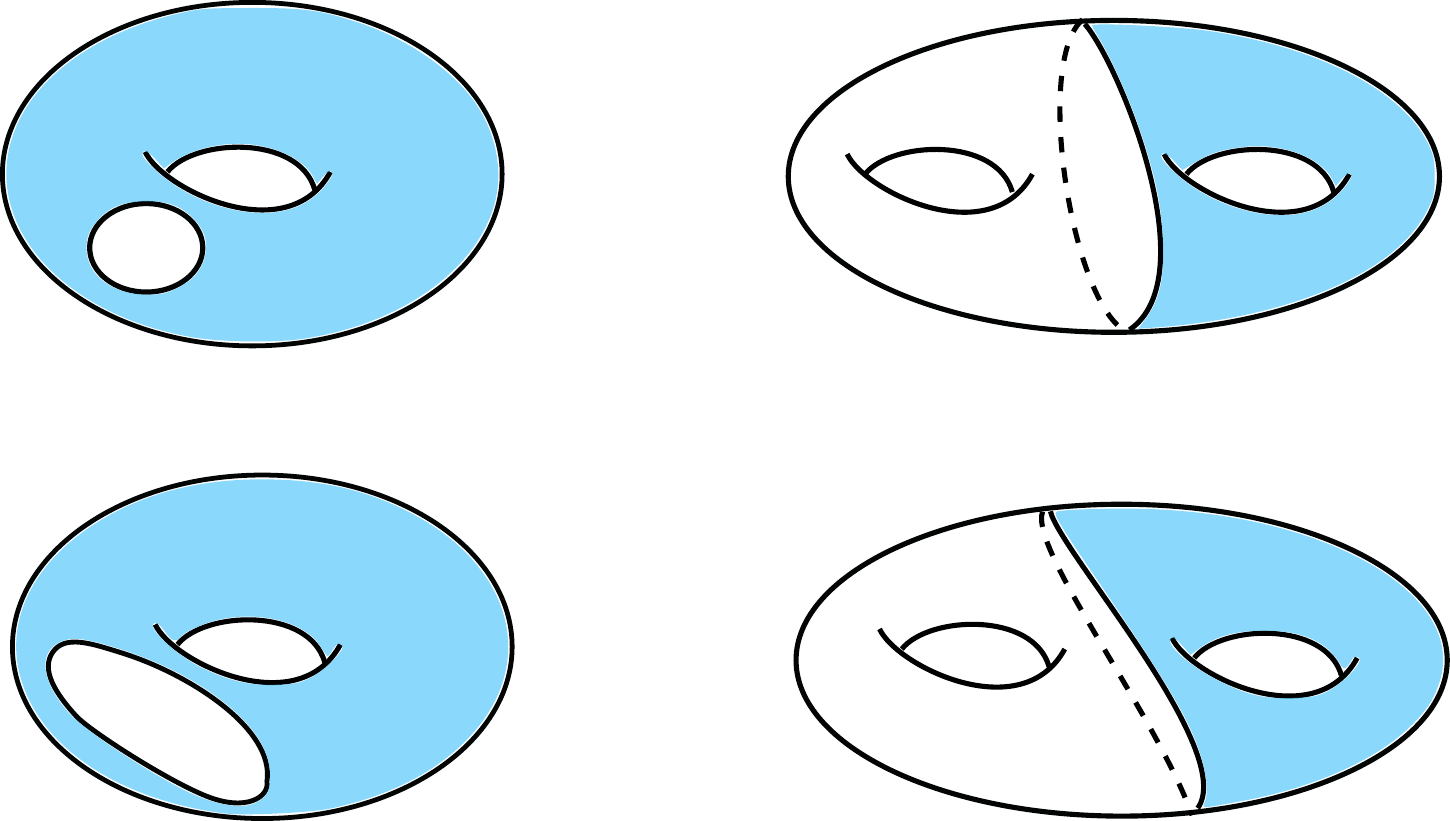}};
	\begin{scope}[x={(image.south east)},y={(image.north west)}]
	\node at (.1,.7) { $D_{1}$};
	\node at (.11,.12) { $D_{3}$};
	\node at (.67,.68) { $T_{2}$};
	\node at (.7,.1) { $T_{4}$};
	\node (A) at (.2,.58) {};
	\node (B) at (.2,.42) {};
	\draw [->] (A) -- (B) node[midway, right] {$f$};
	\node (C) at (.75,.58) {};
	\node (D) at (.75,.42) {};
	\draw [->] (C) -- (D) node[midway, right] {$g$};
	\node (E) at (.38,.35) {};
	\node (F) at (.52,.35) {};
	\draw [->] (E) to [out = 30, in = 150] (F);
	\node (G) at (.38,.7) {};
	\node (H) at (.52,.7) {};
	\node at (.45,.71) {$e$};
	\node at (.45,.43) {$h$};
	\draw [->] (G) to [out = -30, in = 210] (H);
	\end{scope}
	\end{tikzpicture}
	\caption{A schematic illustrating case 5 in the definition of a distinguished rectangle. The blue regions indicate the identifications of the regions specified in case 5. For ease of visualization, we suppress the attaching curve data in each diagram.}
	\label{Distinguishedrectanglecase5}
\end{figure}

\begin{defn}\cite[Definition 2.31]{Naturality}\label{SimpleHandleswap}
	A \emph{simple handleswap} is a subgraph of $\mathcal{G}_{\text{man}}$ of the form
	\begin{equation*}
	\begin{tikzcd}
	H_1  \arrow{rd}{e} &     \\
	H_3 \arrow{u}{g} & H_2 \arrow{l}{f}   \\
	\end{tikzcd} 
	\end{equation*}
	
	such that:
	\begin{enumerate}
		\item The isotopy diagrams $H_{i}$ are given by $H_{i} = (\Sigma \# \Sigma_{0}, [\balpha_{i}], [\bbeta_{i}])$, where $\Sigma_{0}$ is a genus two surface.
		\item $e$ is a strong $\balpha$-equivalence, $f$ is a strong $\bbeta$-equivalence, and $g$ is a diffeomorphism.
		\item In the punctured genus two surface $P = (\Sigma \# \Sigma_{0}) \setminus \Sigma$, the above triangle is equivalent to the triangle in Figure \ref{handleswappic} in the following sense. There are diffeomorphisms from $P \cap H_{i}$ to the green discs labeled $H_{i}$ in the figure, such that the image of the $\alpha$ curves are the red circles in the figures, and the image of the $\beta$ curves are the blue circles in the figures.
		\item The diagrams $H_{1}, H_{2}$ and $H_{3}$ are identical when restricted to $\Sigma$.
	\end{enumerate}
\end{defn}

\begin{figure}[h!]
	\centering
	\begin{tikzpicture}
	\node[anchor=south west,inner sep=0] (image) at (0,0) {\includegraphics[width=\textwidth]{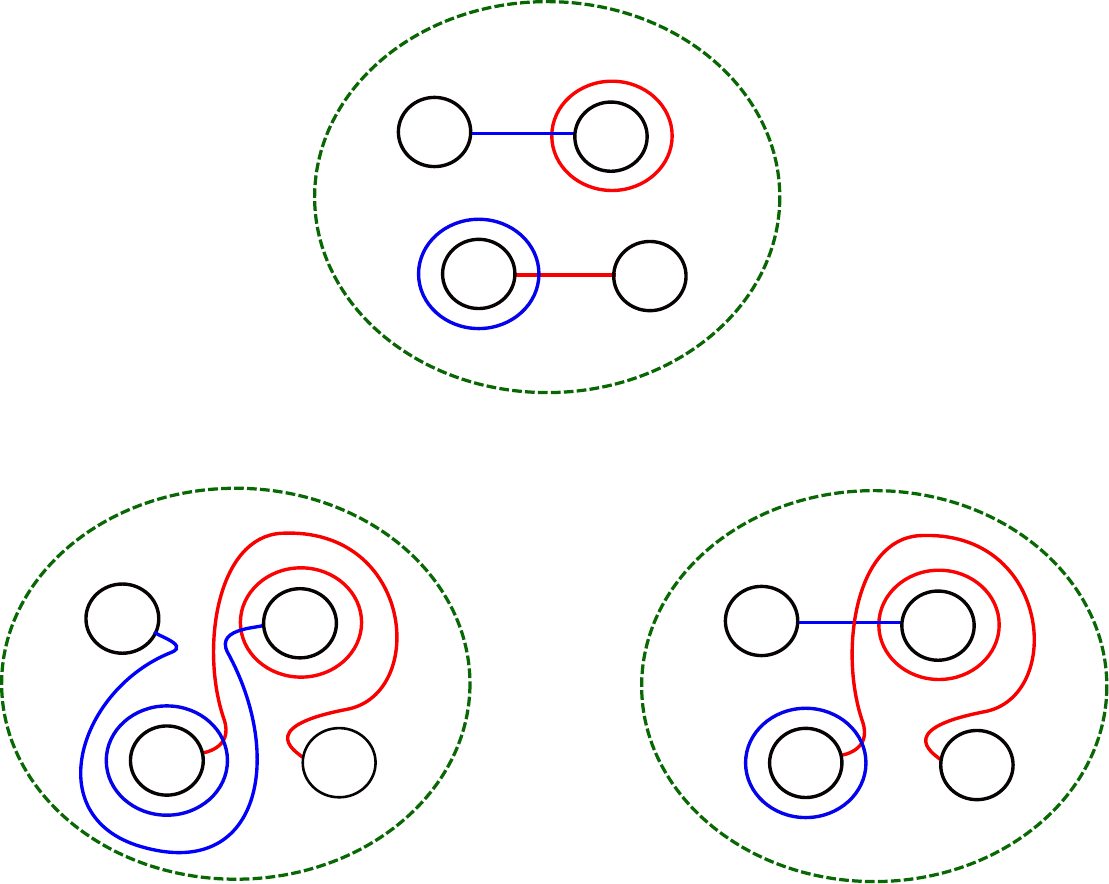}};
	\begin{scope}[x={(image.south east)},y={(image.north west)}]
	\node at (.71,.9) { \huge $H_{1}$};
	\node at (.965,.4) {\huge $H_{2}$};
	\node at (.41,.07) {\huge $H_{3}$};
	\node (A) at (.65,.58) {};
	\node (B) at (.7,.45) {};
	\draw [->] (A) -- (B) node[midway, right] { \LARGE $e$};
	\node (C) at (.57,.3) {};
	\node (D) at (.43,.3) {};
	\draw [->] (C) -- (D) node[midway, above] {\LARGE $f$};
	\node (E) at (.31,.45) {};
	\node (F) at (.36,.58) {};
	\draw [->] (E) -- (F) node[midway, right] {\LARGE $g$};
	\node[color = red] at (.52,.66) {\large $\alpha_{1}$};
	\node[color = red] at (.63,.85) {\large $\alpha_{2}$};
	\node[color = blue] at (.36,.7) {\large $\beta_{1}$};
	\node[color = blue] at (.47,.87) {\large $\beta_{2}$};
	\node[color = red] at (.96,.25) {\large $\alpha_{1}'$};
	\node[color = blue] at (.05,.15) {\large $\beta_{1}'$};
	\node at (.39,.85) {\Huge F};
	\node at (.555,.845) {\reflectbox{ \Huge F}};
	\node at (.43,.69) {\Huge R};
	\node at (.59,.69) {\reflectbox{ \Huge R}};
	\node at (.68,.29) {\Huge F};
	\node at (.85,.29) {\reflectbox{ \Huge F}};
	\node at (.72,.14) {\Huge R};
	\node at (.885,.13) {\reflectbox{ \Huge R}};
	\node at (.11,.29) {\Huge F};
	\node at (.27,.29) {\reflectbox{ \Huge F}};
	\node at (.15,.14) {\Huge R};
	\node at (.31,.13) {\reflectbox{ \Huge R}};

	\end{scope}
	\end{tikzpicture}
	\caption{The standard simple handleswap.}
	\label{handleswappic}
\end{figure}

With these notions in hand, the stronger sense of invariance we will ask of our Heegaard invariants is as follows.

\begin{defn}\cite[Definition 2.32]{Naturality} \label{StrongHeegaardInvariant}
	A \emph{strong Heegaard invariant of closed $3$-manifolds} is a weak Heegaard invariant $F: \mathcal{G}_{\text{man}} \rightarrow \mathcal{C}$ that additionally satisfies the following axioms:
	\begin{enumerate}
		\item \textbf{Functoriality}: The restriction of $F$ to $\mathcal{G}^{\alpha}_{\text{man}}$, $\mathcal{G}^{\beta}_{\text{man}}$ and $\mathcal{G}^{\text{diff}}_{\text{man}}$ are functors to $\mathcal{C}$. If $e: H_1 \rightarrow H_2$ is a stabilization and $e': H_2 \rightarrow H_1$ is the corresponding destabilization, then $F(e') = F(e)^{-1}$.
		\item \textbf{Commutativity}: For every distinguished rectangle in $\mathcal{G}_{\text{man}}$,
		\begin{equation*}
		\begin{tikzcd}
		H_1 \arrow{r}{e} \arrow{d}{f} & H_2 \arrow{d}{g}    \\
		H_3 \arrow{r}{h} & H_4     \\
		\end{tikzcd}
		\end{equation*}
		
		we have $F(g) \circ F(e) = F(h) \circ F(f)$.
		
		\item \textbf{Continuity}: If $H \in | \mathcal{G}_{\text{man}}|$ and $e \in \mathcal{G}^{\text{diff}}_{\text{man}}(H, H)$ is a diffeomorphism isotopic to $\text{Id}_{\Sigma}$, then $F(e) = \text{Id}_{F(H)}$.
		
		\item \textbf{Handleswap Invariance}: For every simple handleswap in $\mathcal{G}_{\text{man}}$,
		\begin{equation*}
		\begin{tikzcd}
		H_1  \arrow{rd}{e} &     \\
		H_3 \arrow{u}{g} & H_2 \arrow{l}{f}   \\
		\end{tikzcd} 
		\end{equation*}
		
		we have $F(g) \circ F(f) \circ F(e) = \text{Id}_{F(H_1)}$.
		
	\end{enumerate}
\end{defn}

As we will summarize in Section \ref{ProjectiveNaturalityFromStrong}, it was shown in \cite{Naturality} that for any weak Heegaard invariant the axioms required above are sufficient to ensure the images of the invariant, when restricted to a particular subgraph of $\mathcal{G}_{\text{man}}$ whose vertices represent a fixed $3$-manifold, form a transitive system in the given category. For certain categories $\mathcal{C}$, this in turn is enough to ensure that the assignments of the invariants can be understood as a functor from an appropriate category of $3$-manifolds.

\section{Transitive Systems of Chain Complexes and Projectivization}
\label{TransitiveSystemSection}
In this section we describe the algebraic framework which will be necessary to phrase our projective functoriality results. To begin with, we recall the following fundamental notions.

\begin{defn}
	\label{DirectedSetDefinition}
	A \emph{directed set} $(I, \leq)$ is a set $I$ together with a reflexive and transitive binary relation $\leq$, such that for every pair of elements $a,b \in I$ there is an element $c \in I$ with $a \leq c$ and $b \leq c$.
	
\end{defn}

\begin{defn}
	\label{TransitiveSystem}
	Let $\mathcal{C}$ be a category, and $(I, \leq)$ be a directed set. Given a collection of objects $\{O_{i}\}$ in $\mathcal{C}$ indexed by $I$, and a collection of morphisms $\{f_{i,j}: O_{i} \rightarrow O_{j}\}$ for all $i, j \in I$ with $i \leq j$, we say the collections are a \emph{transitive system in $\mathcal{C}$ (indexed by I)} if they satisfy:
	\begin{enumerate}
		\item $f_{i,i} = \text{Id}_{O_{i}}$
		\item $f_{i,k} = f_{j,k} \circ f_{i,j}$
	\end{enumerate}
\end{defn}
We also have the following notion of morphisms between transitive systems:
\begin{defn}
	\label{MapOfTransitiveSystems}
	Given two transitive systems $T_{1} = \{I_{1}, \leq, \{O_{i}\},  \{f_{i,j}\}\}$ and $T_{2} = \{I_{2}, \leq, \{P_{i}\}, \{g_{i,j}\}\}$ in a category $\mathcal{C}$, a \emph{morphism of transitive systems} $(M, \{n_{i}\})$  from $T_{1}$ to $T_{2}$ consists of a map of directed sets $M: I_{1} \rightarrow I_{2}$ and a collection of morphisms $\{n_{i}: O_{i} \rightarrow P_{M(i)} \}$ in $\mathcal{C}$ such that for all $i,j \in I_{1}$ with $i \leq j$  the squares
	\begin{equation*}
	\begin{tikzcd}
	O_{i}  \arrow{r}{n_{i}} \arrow{d}{f_{i,j}} &  P_{M(i)} \arrow{d}{g_{M(i),M(j)}}   \\
	O_{j} \arrow{r}{n_{j}} & P_{M(j)}   \\
	\end{tikzcd} 
	\end{equation*}
	commute in $\mathcal{C}$.
	We denote the resulting category of transitive systems in $\mathcal{C}$ by $\text{Trans}(\mathcal{C})$.
\end{defn}

Finally, given a transitive system in $\text{Trans}(\mathcal{C})$ indexed by J, we obtain what one might call a two dimensional transitive system. Such a two dimensional transitive system naturally has the structure of a transitive system in $\mathcal{C}$ indexed by $I \times J$, where $(i,j) \leq (i',j')$ if and only if $i \leq i'$ and $j \leq j'$.

We now explain how these notions will arise in the context of our results. We will begin by considering the category $\text{Kom}(\mathbb{Z}[U]\text{-}\text{Mod})$, the homotopy category of chain complexes of $\mathbb{Z}[U]$-modules. To each pointed isotopy diagram $H$, corresponding to a vertex of $\mathcal{G}_{\text{man}}$, we will assign a transitive system $CF^{-}(H) \in \text{Trans}(\text{Kom}(\mathbb{Z}[U]\text{-}\text{Mod}))$. To be more explicit about the nature of this construction and bridge the gap to the language defined above, given an isotopy diagram $H$ we consider the directed set $(I, \leq)$ with $I$ the set of Heegaard diagrams in the given isotopy class, and $\leq$ the (in this case trivial, equivalence) relation on the set indicating existence of an isotopy between two elements. Then $CF^{-}(H)$ will be a transitive system in $\text{Kom}(\mathbb{Z}[U]\text{-}\text{Mod})$ indexed by $(I, \leq)$, with the objects in the transitive system being the Heegaard Floer chain complexes associated to individual diagrams in the fixed isotopy class, and the morphisms in the transitive systems being certain continuation maps between such complexes. The details of precisely how these assignments are made will be specified throughout the course of Section \ref{WeakHeegaardFloerInvariantsSection}. To a diffeomorphism, strong $\balpha$-equivalence, strong $\bbeta$-equivalence, or stabilization between two such isotopy diagrams $H_{1}$ and $H_{2}$ we will then associate a morphism of transitive systems from $CF^{-}(H_{1})$ to $CF^{-}(H_{2})$. Together, these assignments will yield a morphism of graphs
$$CF^{-}: \mathcal{G}_{\text{man}} \rightarrow \text{Trans}(\text{Kom}(\mathbb{Z}[U]\text{-}\text{Mod})).$$
This morphism of graphs may not be a strong Heegaard invariant. We will however be able to establish that this morphism of graphs satisfies the axioms required of a strong Heegaard invariant up to an overall sign in each of the axioms (2), (3) and (4) appearing Definition \ref{StrongHeegaardInvariant}. 

Equivalently, we will phrase this result in terms of an appropriate projectivization. Recall that given any category $\mathcal{C}$, with an equivalence relation $\sim$ on every hom set which furthermore respects composition, we may form the quotient category $\mathcal{C} = \mathcal{C} / \sim$. This is the category whose objects are those of $\mathcal{C}$, and whose morphisms are equivalence classes of morphisms with respect to $\sim$. Given an additive category $\mathcal{C}$, we define the \textit{projectivization of $\mathcal{C}$}, $P(\mathcal{C})$, to be the quotient category of $\mathcal{C}$ with respect to the relation $f \sim -f$ for all morphisms $f$. The last statement in the preceding paragraph is then given precisely by the following statement: considering now the category of transitive systems in the projectivized homotopy category, $\text{Trans}(P(\text{Kom}(\mathbb{Z}[U]\text{-}\text{Mod})))$, we will show that the morphism of graphs above yields a strong Heegaard invariant
$$CF^{-}: \mathcal{G}_{\text{man}} \rightarrow \text{Trans}(P(\text{Kom}(\mathbb{Z}[U]\text{-}\text{Mod}))).$$

\begin{remark} While the proliferation of transitive systems may seem undesirable, we were unable to produce another framework in which our naturality results could be phrased. There appear to be two issues that arise if one tries to use the same framework developed in \cite{Naturality} to phrase our projective results. 
	
	The first issue comes from the fact that the statement in Theorem \ref{StrongHeegaard2} is concerned with the Floer chain complexes. If one wanted to dispense with the category of transitive systems appearing in that statement, one would need to assign a \emph{single} chain complex $CF^{\circ}(H)$ of $\mathbb{Z}[U]\text{-}\text{modules}$ to an isotopy diagram $H$. As we will recall in the next section, what the Heegaard Floer construction actually produces for each isotopy diagram $H$ is a transitive system of chain homotopy equivalences between chain complexes of $\mathbb{Z}[U]\text{-}\text{modules}$. In general, it is not clear how one should define an object like a colimit of such a transitive system of chain complexes to obtain a single chain complex. We note that it seems likely that this issue is in fact a non-issue, for the following reason. We expect our transitive system of chain homotopy equivalences is homotopy coherent in the sense of \cite{HomotopyLimits}, which if true would allow one to define a single chain complex $CF^{\circ}(H)$ via a homotopy colimit. Indeed, that our transitive systems are homotopy coherent in this sense seems likely to follow from the results in \cite{FlexibleConstruction}.    
	
	However, even if one could assign to each isotopy diagram a single chain complex $CF^{\circ}(H)$, there is another key obstruction to phrasing Theorem \ref{Functoriality} without the use of transitive systems. In the proof of Theorem \ref{Functoriality}, which will be given in Section \ref{ProjectiveNaturalityFromStrong}, we will associate to each closed, pointed $3$-manifold a transitive system in $P(\mathbb{Z}[U]\text{-}\text{Mod})$. The author is unaware of a notion of a colimit in $P(\mathbb{Z}[U]\text{-}\text{Mod})$ which would allow Theorem \ref{Functoriality} to be stated without transitive systems, in such a way that it is also not merely reduced to a statement about the $\mathbb{F}_{2}$ invariants.
\end{remark}

\section{Projective Naturality from Strong Heegaard Floer Invariants}
\label{ProjectiveNaturalityFromStrong}
In this section we prove Theorem \ref{Functoriality} assuming Corollary \ref{StrongHeegaard}, which we will prove in turn in Section \ref{MainProof}. Our argument will follow the same logical structure as that used to prove the analogous result over $\mathbb{F}_{2}$ appearing in \cite[Theorem 1.5]{Naturality}. We provide the argument here for the reader's convenience, but note that the scheme is essentially the same.

In \cite{Naturality} Juh\'asz, Thurston and Zemke show that the images of any strong Heegaard invariant, appropriately restricted, fit into a transitive system. To make this precise, we introduce a few more definitions.

\begin{defn}
	Suppose $H_{1}$ and $H_{2}$ are embedded isotopy diagrams for a closed, oriented, pointed $3$-manifold $(Y, z)$, with Heegaard surfaces $\iota_{1}, \iota_{2}: (\Sigma_{1},z) , (\Sigma_{2},z) \hookrightarrow (Y,z)$. We say a diffeomorphism of isotopy diagrams $d: H_{1} \rightarrow H_{2}$ is \emph{isotopic to the identity in M} if $\iota_{2} \circ d: \Sigma_{1} \rightarrow (Y,z)$ is isotopic to $\iota_{1}: \Sigma_{1} \rightarrow (Y,z)$ relative to the basepoint.
\end{defn}

\begin{defn}
	Given $(Y, z)$, let $(\mathcal{G}_{\text{man}})_{(Y,z)}$ be the following subgraph of $\mathcal{G}_{\text{man}}$ whose vertices are embedded isotopy diagrams for $(Y,z)$. The edges $e \in (\mathcal{G}_{\text{man}})_{(Y,z)}(H_{1}, H_{2})$ between two isotopy diagrams again come in four flavors:
	
	$$(\mathcal{G}_{\text{man}})_{(Y,z)}(H_{1}, H_{2}) = \mathcal{G}_{\text{man}}^{\alpha}(H_{1},H_{2}) \cup  \mathcal{G}_{\text{man}}^{\beta}(H_{1},H_{2})  \cup \mathcal{G}_{\text{man}}^{\text{stab}}(H_{1},H_{2})   \cup (\mathcal{G}_{\text{man}}^{\text{diff}})^{0}(H_{1},H_{2}) $$
	Here $\mathcal{G}_{\text{man}}^{\alpha}$, $\mathcal{G}_{\text{man}}^{\beta}$ and $\mathcal{G}_{\text{man}}^{\text{stab}}$ are the same collections as in the definition of $\mathcal{G}_{\text{man}}$, while $(\mathcal{G}_{\text{man}}^{\text{diff}})^{0}(H_{1},H_{2})$ consists of one edge for each element in the set of diffeomorphisms from $H_{1}$ to $H_{2}$ which are isotopic to the identity in $M$. 
\end{defn}

With these notions in hand, we have a stronger version of Proposition \ref{ReidemeisterSinger2} which applies now to embedded diagrams for some fixed $(Y, z)$:

\begin{prop}\cite[Proposition 2.36]{Naturality}
	Given $(Y, z)$, any two vertices in the graph $(\mathcal{G}_{\text{man}})_{(Y,z)}$ can be connected by an oriented path in $(\mathcal{G}_{\text{man}})_{(Y,z)}$. 
\end{prop}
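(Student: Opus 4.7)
The plan is to reduce the problem to the case where $H_1$ and $H_2$ are two embedded isotopy diagrams sitting on the \emph{same} embedded Heegaard surface in $(Y,z)$, at which point strong $\balpha$- and $\bbeta$-equivalences will suffice. First I would deal with the surfaces, then with the attaching curves. Throughout, I would rely on the fact that edges of type $\mathcal{G}_{\text{man}}^{\alpha}$, $\mathcal{G}_{\text{man}}^{\beta}$ and $\mathcal{G}_{\text{man}}^{\text{stab}}$ are unchanged upon passing from $\mathcal{G}_{\text{man}}$ to $(\mathcal{G}_{\text{man}})_{(Y,z)}$, so that the only genuine new restriction concerns the diffeomorphism edges.

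For the surfaces, I would invoke a pointed version of the Reidemeister--Singer theorem: any two embedded Heegaard surfaces for $(Y,z)$ become ambient isotopic relative to $z$ after sufficiently many stabilizations. Concretely, given $H_1,H_2$ I would choose sequences of stabilization edges (realized by attaching model punctured tori in small balls in $Y$ chosen disjoint from $z$) producing $H_1',H_2'$ whose underlying Heegaard surfaces $\Sigma_1',\Sigma_2'$ are carried one to the other by an ambient isotopy $\phi_t$ of $Y$ with $\phi_t(z)=z$. The terminal diffeomorphism $\phi_1|_{\Sigma_1'}:\Sigma_1'\to \Sigma_2'$ transports $H_1'$ to a new embedded isotopy diagram $H_1''$ supported on $\Sigma_2'$; because $\phi_1$ extends to an ambient isotopy fixing $z$, this gives an edge in $(\mathcal{G}_{\text{man}}^{\text{diff}})^{0}(H_1',H_1'')$. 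After this reduction, $H_1''$ and $H_2'$ are two embedded isotopy diagrams on the common Heegaard surface $\Sigma_2'\subset Y$ presenting the same $(Y,z)$. The $\balpha$-sets of both diagrams then bound compressing disks for the same handlebody on the negative side, so they are compression equivalent in $\Sigma_2'$; the analogous statement holds for the $\bbeta$-sets. By Lemma \ref{CompressionHandleslide} these compression equivalences are realized by sequences of handleslides, and since $z$ is a point while the guiding arcs are one-dimensional, these slides can be taken disjoint from $z$. Hence $H_1''$ and $H_2'$ are connected by a finite path of strong $\balpha$- and $\bbeta$-equivalence edges in $(\mathcal{G}_{\text{man}})_{(Y,z)}$. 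Concatenating the stabilization paths $H_i\to H_i'$, the diffeomorphism edge $H_1'\to H_1''$, and the handleslide path $H_1''\to H_2'$, and appealing to the remark following Proposition \ref{ReidemeisterSinger1} to replace any unoriented path by an oriented one, produces the required oriented path.

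The main obstacle is the pointed Reidemeister--Singer input used in the surface step. The classical Reidemeister--Singer theorem ensures that after enough stabilizations the two Heegaard surfaces are ambient isotopic in $Y$; the additional requirement here is that such an isotopy can be arranged to fix $z$. Since $\Sigma$ is codimension one in the three-dimensional $Y$, a generic perturbation of any given ambient isotopy already avoids the point $z$, and a further small perturbation supported in a neighborhood of $z$ makes $z$ pointwise fixed throughout. One also needs to know that each stabilization can be realized locally, inside $Y$, with the connect-sum region placed in an open set disjoint from $z$ (and from previously added handles); this is routine, since the standard genus-one model for stabilization can be inserted in any prescribed disk on $\Sigma$. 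Together with Lemma \ref{CompressionHandleslide}, these ingredients supply all the technical input required by the plan above.
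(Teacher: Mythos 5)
Your skeleton (stabilize until the embedded surfaces agree up to ambient isotopy rel $z$, use that terminal diffeomorphism as an edge of $(\mathcal{G}_{\text{man}}^{\text{diff}})^{0}$, then finish on the common surface with Lemma \ref{CompressionHandleslide}) is the same architecture as the argument in \cite{Naturality} — note that this paper does not reprove the proposition, it imports it. The problem is that the step you treat as "routine" is exactly where the content of the proposition lives, and your justification of it fails. First, the general-position claim is dimensionally wrong: the track of an isotopy of a closed surface in a $3$-manifold is a map from the $3$-dimensional $\Sigma\times[0,1]$ into the $3$-dimensional $Y$, so it cannot be made to miss a point by a generic perturbation. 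Worse, in the conventions of this paper the basepoint $z$ lies \emph{on} the Heegaard surface (the vertices of $(\mathcal{G}_{\text{man}})_{(Y,z)}$ are embedded diagrams with $\iota_i:(\Sigma_i,z)\hookrightarrow (Y,z)$), so "avoiding $z$" is not even the right condition — you need the ambient isotopy to fix $z$ throughout. Your proposed fix, a small perturbation supported near $z$, does not work in general: if the trajectory $t\mapsto\psi_t(z)$ is a homotopically nontrivial loop, no correction supported near $z$ makes it constant; the natural correction is a point-pushing ambient isotopy along that loop, but then the time-one map no longer carries $\Sigma_1'$ onto $\Sigma_2'$, and you are back where you started. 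Establishing the pointed (relative) Reidemeister--Singer statement — that after stabilization the embedded surfaces are isotopic \emph{rel} the basepoint — is precisely what \cite{Naturality} accomplishes, by working in the sutured model where the basepoint is replaced by fixed boundary data and the relative Morse/Cerf-theoretic argument builds the constraint in from the start; it cannot be obtained by soft perturbation of the unpointed statement.

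A secondary gap of the same flavor occurs in your final step: the classical fact that two complete disk systems of a handlebody are slide-equivalent produces handleslides \emph{and isotopies} in $\Sigma$, and the isotopies sweep two-dimensional regions, so they can very well cross $z$; observing that the guiding arcs are one-dimensional does not address this. The standard repair is to trade any isotopy of an attaching curve across the basepoint for a sequence of handleslides over the other curves supported away from $z$ (as in the proof of \cite[Proposition 7.1]{Disks1}), and this needs to be invoked explicitly. With the pointed Reidemeister--Singer input correctly sourced and this basepoint-trading lemma cited, your outline does assemble into the intended proof; as written, both pointedness claims are asserted rather than proved, and the first is supported by an argument that is false.
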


The salient feature of a strong Heegaard invariant, $F$, is that the the isomorphisms $F(e)$ associated to edges $e$ in $(\mathcal{G}_{\text{man}})_{(Y,z)}$ fit into a transitive system. This follows from the fact that the isomorphism associated to a path depends only on the endpoints:  

\begin{thm}[Theorem 2.38 in \cite{Naturality}]\label{StrongImpliesTransitive}
	Let $F: \mathcal{G}_{\text{man}} \rightarrow \mathcal{C}$ be a strong Heegaard invariant. Given two isotopy diagrams $H, H' \in |(\mathcal{G}_{\text{man}}){(Y,z)}|$ and any two oriented paths $\eta$ and $\nu$ in $(\mathcal{G}_{\text{man}}){(Y,z)}$ from $H$ to $H'$, we have 
	$$F(\eta) = F(\nu)$$
\end{thm}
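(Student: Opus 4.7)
The plan is to reduce the claim about two paths to the statement that every oriented loop $\ell$ based at any vertex $H \in |(\mathcal{G}_{\text{man}})_{(Y,z)}|$ is sent by $F$ to $\text{id}_{F(H)}$. Since $F$ is a weak Heegaard invariant, every edge is assigned an isomorphism, so each oriented edge has a well-defined inverse in $\mathcal{C}$; given paths $\eta$ and $\nu$ from $H$ to $H'$, the equality $F(\eta) = F(\nu)$ is therefore equivalent to $F(\eta) \circ F(\nu)^{-1} = \text{id}_{F(H')}$, which is what is obtained by applying $F$ to the concatenated loop $\eta \cdot \bar\nu$. The only subtlety is that $\bar\nu$ is the reverse of an oriented path, but since $F$ assigns isomorphisms, each reversed edge is interpreted as the inverse of $F$ applied to the forward edge (for stabilization/destabilization pairs the axiom of functoriality explicitly guarantees this).

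Next, I would organize the loops into four classes corresponding to the four axioms of a strong Heegaard invariant: (i) trivial back-and-forth loops of the form $e \cdot \bar e$, killed by the functoriality axiom; (ii) boundary loops of distinguished rectangles, killed by the commutativity axiom; (iii) loops lying in $(\mathcal{G}_{\text{man}}^{\text{diff}})^0$ that come from a diffeomorphism of $H$ which is isotopic to $\text{id}$ in $Y$, killed by continuity together with the functoriality of $F|_{\mathcal{G}_{\text{man}}^{\text{diff}}}$; and (iv) simple handleswap triangles, killed by the handleswap invariance axiom. The goal is to prove that every loop in $(\mathcal{G}_{\text{man}})_{(Y,z)}$ is a product of conjugates of loops of these four types.

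The main obstacle — and the heart of the proof — is this last combinatorial/topological assertion. I would argue it by constructing a 2-complex $X_{(Y,z)}$ whose 1-skeleton is $(\mathcal{G}_{\text{man}})_{(Y,z)}$ and whose 2-cells are precisely the four relations above, and then showing $X_{(Y,z)}$ is simply connected. The simple-connectedness is established by identifying $X_{(Y,z)}$ (up to a homotopy equivalence of its realization) with a suitable space of embedded Heegaard surfaces and attaching curves in $(Y,z)$, and invoking the Cerf-theoretic analysis carried out in \cite{Naturality}: the distinguished rectangles correspond to the generic codimension-two strata where two independent Heegaard moves commute, and the simple handleswaps capture precisely the monodromy along the one remaining family of codimension-two singularities that is not already accounted for by the rectangle and diffeomorphism relations. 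Granted this simple-connectedness, any loop in the 1-skeleton bounds in $X_{(Y,z)}$ and is thus a product of conjugates of 2-cell boundaries, each of which lies in the kernel of $F$ by the axioms in Definition \ref{StrongHeegaardInvariant}. Hence $F(\ell) = \text{id}$.

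The hardest step, as in the original argument, is the verification that simple handleswaps together with the distinguished rectangle and diffeomorphism relations really do generate all monodromy in the space of embedded Heegaard diagrams; the reduction to this fact is otherwise formal manipulation of the axioms of Definition \ref{StrongHeegaardInvariant}. Since this topological input is exactly what is proved in \cite{Naturality} and depends only on the shape of $\mathcal{G}_{\text{man}}$ and not on the target category $\mathcal{C}$, I would cite it directly rather than reprove it, and present the argument above as the categorical translation that turns the $\pi_1$-triviality of $X_{(Y,z)}$ into the stated path-independence in $\mathcal{C}$.
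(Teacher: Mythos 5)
Your proposal is correct and takes essentially the same route as the paper: the paper does not prove this statement at all but imports it verbatim from \cite{Naturality} (where it is Theorem 2.38), precisely because the argument depends only on the graph $\mathcal{G}_{\text{man}}$ and the axioms of Definition \ref{StrongHeegaardInvariant} and not on the target category $\mathcal{C}$. Your outline---reduce path-independence to loops, kill elementary loops (back-and-forth edges, distinguished rectangles, continuity loops, simple handleswaps) with the four axioms, and cite \cite{Naturality} for the fact that these relations generate all monodromy in the space of embedded diagrams---is a fair summary of the cited proof, and, like the paper, it ultimately rests the substantive topological content on that citation.
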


Now, for any two isotopy diagrams $H,H'$ and an oriented path $\eta$ from $H$ to $H'$, we can define the map $F_{H,H'} = F(\eta)$. 

\begin{cor}[Corollary 2.41 in \cite{Naturality}]
	\label{StrongImpliesNoMonodromy}
	Suppose that $H, H', H'' \in |(\mathcal{G}_{\text{man}})_{(Y,z)}|$. Then 
	$$ F_{H,H''} = F_{H',H''} \circ F_{H,H'}$$
\end{cor}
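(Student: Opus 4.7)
The plan is to deduce this corollary as an essentially formal consequence of Theorem \ref{StrongImpliesTransitive}, which establishes that $F(\eta)$ depends only on the endpoints of $\eta$. The content of the corollary is just that the well-defined assignment $(H,H') \mapsto F_{H,H'}$ behaves compositionally, which is forced by path concatenation.

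More precisely, I would proceed as follows. First, I would invoke the preceding proposition to fix oriented paths $\eta_{1}$ from $H$ to $H'$ and $\eta_{2}$ from $H'$ to $H''$ inside $(\mathcal{G}_{\text{man}})_{(Y,z)}$. Their concatenation $\eta_{1} \cdot \eta_{2}$ is then an oriented path in $(\mathcal{G}_{\text{man}})_{(Y,z)}$ from $H$ to $H''$. Since $F$ is a morphism of graphs into a category $\mathcal{C}$, the image $F(\eta_{1} \cdot \eta_{2})$ is by definition the composition of the images of the constituent edges, so we have the identity
\begin{equation*}
F(\eta_{1} \cdot \eta_{2}) \;=\; F(\eta_{2}) \circ F(\eta_{1}).
\end{equation*}

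Next, I would use Theorem \ref{StrongImpliesTransitive} to conclude that for any oriented path $\nu$ in $(\mathcal{G}_{\text{man}})_{(Y,z)}$ from $H$ to $H''$ we have $F(\nu) = F(\eta_{1} \cdot \eta_{2})$. Consequently the quantity $F_{H,H''}$, defined as $F(\nu)$ for any such path $\nu$, is well defined and equals $F(\eta_{1} \cdot \eta_{2})$. Unwinding the definitions $F_{H,H'} = F(\eta_{1})$ and $F_{H',H''} = F(\eta_{2})$, the preceding display immediately gives
\begin{equation*}
F_{H,H''} \;=\; F(\eta_{2}) \circ F(\eta_{1}) \;=\; F_{H',H''} \circ F_{H,H'},
\end{equation*}
which is the desired identity.

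There is no real obstacle here beyond verifying that the definitions line up; the hard work has already been done in Theorem \ref{StrongImpliesTransitive}. The only subtlety one should note in writing this up is to check that the concatenated path $\eta_{1} \cdot \eta_{2}$ actually lies in the restricted graph $(\mathcal{G}_{\text{man}})_{(Y,z)}$ and not merely in $\mathcal{G}_{\text{man}}$, so that Theorem \ref{StrongImpliesTransitive} applies. This is immediate since both $\eta_{1}$ and $\eta_{2}$ already live in $(\mathcal{G}_{\text{man}})_{(Y,z)}$, and the intermediate vertex $H'$ is a vertex of this subgraph by hypothesis.
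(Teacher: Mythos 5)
Your argument is correct and is exactly the intended one: the identity follows from concatenating paths, noting that $F$ applied to a path is by definition the composition of its values on edges, and invoking the path-independence established in Theorem \ref{StrongImpliesTransitive} (this is precisely how Corollary 2.41 is deduced in \cite{Naturality}, which the paper cites rather than reproves). Your remark that the concatenated path stays inside $(\mathcal{G}_{\text{man}})_{(Y,z)}$ is the right point to check, and it holds for the reason you give.
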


These results should provide some intuitive justification for the appearance of the notion of a strong Heegaard invariant. At the very least, the notion is enough to ensure such invariants fit into a transitive system. In particular, applying Corollary \ref{StrongImpliesNoMonodromy} to the strong Heegaard invariants
$$CF^{\circ}: \mathcal{G}_{\text{man}} \rightarrow \text{Trans}(P(\text{Kom}(\mathbb{Z}[U]\text{-}\text{Mod})))$$
of Theorem \ref{StrongHeegaard2} immediately yields Corollary \ref{TransitiveSystemResult}. We now show that this transitivity is also enough for the functoriality ends we seek in Theorem \ref{Functoriality}.

\begin{proof}[Proof of Theorem \ref{Functoriality}]
	Assuming Corollary \ref{StrongHeegaard}, the Heegaard Floer invariants $$HF^{\circ}: \mathcal{G}_{\text{man}} \rightarrow P(\mathbb{Z}[U]\text{-}\text{Mod})$$
	are strong Heegaard invariants. Let $\textbf{Man}_{*}$ be the category of closed, connected, oriented, and based $3$-manifolds with based diffeomorphisms. Using the strong Heegaard invariants above, we can obtain functors:
	
	$$HF_{1}^{\circ}: \textbf{Man}_{*} \rightarrow \text{Trans}(P(\mathbb{Z}[U]\text{-}\text{Mod}))$$
	as follows. Given a manifold $(Y, z) \in \text{Ob}(\textbf{Man}_{*})$, Corollary \ref{StrongImpliesNoMonodromy} ensures that the modules $HF^{\circ}(H)$ for isotopy diagrams $H \in |(\mathcal{G}_{\text{man}})_{(Y,z)}|$, along with the isomorphisms $HF^{\circ}_{H, H'}$, form a transitive system. We denote this transitive system by $HF_{1}^{\circ}(Y,z) \in \text{Trans}(P(\mathbb{Z}[U]\text{-}\text{Mod}))$.

	To a pointed diffeomorphism  $\phi: (Y,z) \rightarrow (Y', z')$, the functor $HF_{1}^{\circ}$ will assign a morphism of transitive systems
	$$HF_{1}^{\circ}(\phi): HF_{1}^{\circ}(Y,z) \rightarrow  HF_{1}^{\circ}(Y',z')$$
	defined as follows. Given any isotopy diagram $H=(\Sigma, A, B,z)$ for $(Y,z)$, let $\phi_{H} = \phi |_{\Sigma}$ and $H'$ be the isotopy diagram $\phi(H)$ for $(Y',z')$. By virtue of being a strong Heegaard invariant, $HF^{\circ}$ associates a morphism $HF^{\circ}(\phi_{H}): HF^{\circ}(H) \rightarrow HF^{\circ}(H')$ in $P(\mathbb{Z}[U]\text{-}\text{Mod})$ to any such diffeomorphism of isotopy diagrams $\phi_{H}$. The collection of morphisms $\{\phi_{H}\}$ for $H \in |(\mathcal{G}_{\text{man}})_{(Y,z)}|$ will thus yield a collection of morphisms $\{HF^{\circ}(\phi_{H})\}$. We claim that this collection of morphisms is in fact a morphism of transitive systems 
	$$HF_{1}^{\circ}(\phi): HF_{1}^{\circ}(Y,z) \rightarrow  HF_{1}^{\circ}(Y',z')$$
	as desired. According to Definition \ref{MapOfTransitiveSystems}, we must check that for any path of edges $\gamma$ in $(\mathcal{G}_{\text{man}})_{(Y,z)}$ from $H_{1}$ to $H_{2}$, we have $HF^{\circ}(\phi_{H_{2}}) \circ HF^{\circ}(\gamma) = HF^{\circ}(\gamma ') \circ HF^{\circ}(\phi_{H_{1}})$, for some path $\gamma '$  in $(\mathcal{G}_{\text{man}})_{(Y',z')}$ from $H_{1}'$ to $H_{2}'$. If $\gamma$ is given by the path of edges 
	\begin{equation*}
	\begin{tikzcd}
	D_{0} \arrow{r}{e_{1}}& D_{1} \arrow {r}{e_{2}} & \cdots \arrow{r}{e_{n-1}} & D_{n-1} \arrow{r}{e_{n}} & D_{n} \\
	\end{tikzcd} 
	\end{equation*}
	in $(\mathcal{G}_{\text{man}})_{(Y,z)}$ from $D_{0} = H_{1}$ to $D_{n} = H_{2}$, we pick out a path $\gamma '$ in $(\mathcal{G}_{\text{man}})_{(Y',z')}$  from $H_{1}'$ to $H_{2} '$ given by
	\begin{equation*}
	\begin{tikzcd}
	D_{0}' \arrow{r}{e_{1}'}& D_{1}' \arrow {r}{e_{2}'} & \cdots \arrow{r}{e_{n-1}'} & D_{n-1} \arrow{r}{e_{n}'} & D_{n}' \\
	\end{tikzcd} 
	\end{equation*}
	as follows. We define the intermediate isotopy diagrams in the path $\gamma '$ by $D_{i} ' = \phi(D_{i})$. If the edge $e_{i}$ is given by a strong $\balpha$-equivalence, a strong $\bbeta$-equivalence, or a (de)stabilization, we let $e_{i'}$ denote the corresponding strong $\balpha$-equivalence, strong $\bbeta$-equivalence, or (de)stabilization. If $e_{i}$ corresponds to a diffeomorphism $e_{i}: D_{i-1} \rightarrow D_{i}$ isotopic to the identity, we set $e_{i}' = \phi_{D_{i}} \circ e_{i} \circ \phi_{D_{i-1}}^{-1}$. We then have a subgraph in $\mathcal{G}_{\text{man}}$ given by
	
	\begin{equation*}
	\begin{tikzcd}
	D_{0} \arrow{r}{e_{1}} \arrow{d}{\phi_{H_{1}}}& D_{1} \arrow {r}{e_{2}} \arrow{d}{\phi_{D_{1}}} & \cdots \arrow{r}{e_{n-1}} &  D_{n-1} \arrow{r}{e_{n}} \arrow{d}{\phi_{D_{n-1}}} & D_{n} \arrow{d}{\phi_{H_{2}}} \\
	D_{0}' \arrow{r}{e_{1}'}& D_{1}' \arrow {r}{e_{2}'} & \cdots \arrow{r}{e_{n-1}'} &  D_{n-1} \arrow{r}{e_{n}'} & D_{n}' \\
	\end{tikzcd} 
	\end{equation*}
	The condition that needs to be verified is that the image under $HF^{\circ}$ of the outer rectangle in this subgraph commutes. By construction of the path $\gamma'$, each small square in the diagram is either a distinguished rectangle (recall Definition \ref{StrongHeegaardInvariant}) or a commuting square of diffeomorphisms. Commutativity of the large rectangle now follows by virtue of $HF^{\circ}$ being a strong Heegaard invariant. Since the restriction of $HF^{\circ}$ to $\mathcal{G}_{\text{man}}^{\text{diff}}$ is a functor, the image under $HF^{\circ}$ of the commuting square of diffeomorphisms also commutes. Since the image under $HF^{\circ}$ of any distinguished rectangle also commutes, we thus see that the morphism of transitive systems
	$$HF_{1}^{\circ}(\phi): HF_{1}^{\circ}(Y,z) \rightarrow  HF_{1}^{\circ}(Y',z')$$
	associated to a pointed diffeomorphism $\phi$ is well defined. 
	
	The assignments above thus define the functor $HF_{1}^{\circ}$; we note that composition of morphisms in $\textbf{Man}_{*}$ are respected under $HF_{1}^{\circ}$ because $HF^{\circ}$ is a strong Heegaard invariant, and in particular must be a functor when restricted to $\mathcal{G}_{\text{man}}^{\text{diff}}$ (see Axiom 1 in Definition \ref{StrongHeegaard}).
	
	Finally, we note that isotopic diffeomorphisms in $\textbf{Man}_{*}$ induce identical maps under $HF_{1}^{\circ}$. To see this, suppose $\phi: (Y,z) \rightarrow (Y, z)$ is isotopic to $\text{Id}_{(Y,z)}$, and fix an isotopy diagram $H=(\Sigma, A, B,z)$ for $(Y,z)$. Then $\phi_{H}=\phi |_{H}$ is isotopic to $\text{Id}_{H}$ and $H' = \phi(H) = H$, so by virtue of $HF^{\circ}$ being a strong Heegaard invariant we must have $HF^{\circ}(\phi_{H}) = \text{Id}_{HF^{\circ}(H)}$. Thus $HF_{1}^{\circ}(\phi)$ is the map of transitive systems defined by the data $\{HF^{\circ}(\phi_{H}) = \text{Id}_{HF^{\circ}(H)}\}$ for $H \in (\mathcal{G}_{\text{man}})_{(Y,z)}$, and is thus an identity morphism in $\text{Trans}(P(\mathbb{Z}[U]\text{-}\text{Mod}))$.

\end{proof}

\section{Heegaard Floer Homology as a Weak Heegaard Invariant}\label{WeakHeegaardFloerInvariantsSection}

In this section we very briefly recall numerous maps defined on the Heegaard Floer chain complexes, and then use these maps to define the underlying morphisms of graphs of the strong Heegaard invariants appearing in Theorem \ref{StrongHeegaard2}. For the most part we just seek to establish notation in Sections \ref{SpinStructuressection} - \ref{trianglemapssection}, and refer the reader to \cite{Disks1}, \cite{Cylindrical} and \cite{Naturality} for detailed descriptions of the constructions involved in the definitions appearing there. 

For concreteness and ease of notaton, we will phrase the results in this section in terms of $CF^{-}$, however we note that the definitions vary in a cosmetic way, and analogous results hold, for all of the variants $CF^{\circ}$. In particular, the proof of Theorem \ref{StrongHeegaard2} for $CF^{\circ}$ will follow by the same arguments given here for $CF^{-}$. In fact, one could also obtain the results for the other variants directly from those we prove, as $\widehat{CF}, CF^{+}$ and $CF^{\infty}$ can all be obtained by taking suitable tensor products with $CF^{-}$ and quotients thereof.

Finally, we note at the outset that we will use $\sim$ to indicate homotopic chain maps.

\subsection{$\text{Spin}^{\text{c}}$ Structures and Strong Admissibility}
\label{SpinStructuressection}
We must first address the fact that while the graph $\mathcal{G}_{\text{man}}$ that we have been considering thus far contains arbitrary Heegaard diagrams, the Heegaard Floer chain complexes defined in \cite{Disks1} are defined only with respect to certain \emph{admissible} diagrams. Since we will focus on the case of $CF^{-}$ in this section, the admissibility we will need is given by the notion of \emph{strong admissibility}, which we now summarize.

We begin by recalling the setting of Heegaard Floer homology, and the role of $\text{Spin}^{c}$ structures in the construction of the Heegaard Floer chain complexes. Given a genus $g$ based Heegaard diagram $$\mathcal{H} = (\Sigma, \balpha = (\alpha_{1}, \alpha_{2}, \ldots, \alpha_{g}), \bbeta = (\beta_{1}, \beta_{2}, \ldots, \beta_{g}), z)$$ for a closed, connected, oriented and based $3$-manifold $(Y,z)$, one considers the tori
$$\mathbb{T}_{\balpha} = \alpha_{1} \times \alpha_{2} \times \cdots \times \alpha_{g},\hspace{.2cm} \mathbb{T}_{\bbeta} = \beta_{1} \times \beta_{2} \times \cdots \times \beta_{g}$$
in the symmetric product $\text{Sym}^{g}(\Sigma) := (\Sigma \times \cdots \times \Sigma)/S_{g}$. A choice of complex structure on $\Sigma$ induces an almost complex structure on $\text{Sym}^{g}(\Sigma)$, and with respect to such an induced structure the tori $\mathbb{T}_{\balpha}$ and $\mathbb{T}_{\bbeta}$ are totally real. The Heegaard Floer homology is then defined as a variation of Lagrangian intersection Floer homology applied to these tori. To define the chain complexes one must fix a complex structure $j$ on $\Sigma$, and a choice of generic path $J_{s}$ of almost complex structures on $\text{Sym}^{g}(\Sigma)$ through $\text{Sym}^{g}(j)$ (see \cite{Disks1}). 

The basepoint $z$ induces a map
$$s_{z}: \mathbb{T}_{\balpha} \cap \mathbb{T}_{\bbeta} \rightarrow \text{Spin}^{c}(Y)$$
which associates to each intersection point a $\text{Spin}^{c}$-structure. One first defines a chain complex 
$$CF^{-}(\mathcal{H}, \mathfrak{s})$$ which is freely generated as an abelian group by $[\boldsymbol{x}, i]$, for $\boldsymbol{x} \in \mathbb{T}_{\balpha} \cap \mathbb{T}_{\bbeta}$ with $s_{z}(\boldsymbol{x}) = \mathfrak{s}$ and for $i \in \mathbb{Z}$ with $i <0$. Given two intersection points $\boldsymbol{x}, \boldsymbol{y} \in \mathbb{T}_{\balpha} \cap \mathbb{T}_{\bbeta}$, we let $\pi_{2}(\boldsymbol{x}, \boldsymbol{y})$ denote the set of homtopy classes of Whitney disks connecting $\boldsymbol{x}$ to $\boldsymbol{y}$ in $\text{Sym}^{g}(\Sigma)$, with the usual boundary conditions. Given a homotopy class $\phi \in \pi_{2}(\boldsymbol{x}, \boldsymbol{y})$, we denote by $\mathcal{M}_{J_{s}}(\phi)$ the moduli space of $J_{s}$-holomorphic disks in the class $\phi$, and write $\widehat{\mathcal{M}_{J_{s}}}(\phi) = \mathcal{M}_{J_{s}}(\phi) / \mathbb{R}$ for the quotient with respect to the $\mathbb{R}$-action coming from the translation action on the disks. We let $\mu(\phi)$ denote the Maslov index of the class $\phi$, and let $n_{z}(\phi)$ denote the algebraic intersection number of $\phi$ with $z \times \text{Sym}^{g-1}(\Sigma)$. We then have a well defined relative (in general cyclic) grading on the generators defined above, given by the formula
$$\text{gr}([\boldsymbol{x},i], [\boldsymbol{y},j]) = \mu(\phi) - 2n_{z}(\phi) + 2i - 2j,$$
where $\phi$ is any class $\phi \in \pi_{2}(\boldsymbol{x}, \boldsymbol{y})$. This grading is only integral if $c_{1}(\mathfrak{s})=0$. Finally, the differential 
$$\partial: CF^{-}(\mathcal{H}, \mathfrak{s}) \rightarrow CF^{-}(\mathcal{H}, \mathfrak{s})$$
is defined by the formula
$$ \partial([\boldsymbol{x}, i]) = \sum_{\{\boldsymbol{y} \in \mathbb{T}_{\balpha} \cap \mathbb{T}_{\bbeta} | s_{z}(\boldsymbol{y}) = \mathfrak{s}\} } \sum_{ \{\phi \in \pi_{2}(\boldsymbol{x}, \boldsymbol{y}) | \mu(\phi) =1 \}} \#\widehat{\mathcal{M}_{J_{s}}}(\phi) \cdot [\boldsymbol{y}, i - n_{z}(\phi)].$$
There is an action of the polynomial ring $\mathbb{Z}[U]$ on the complex $CF^{-}(\mathcal{H}, \mathfrak{s})$, where
$$ U \cdot [\boldsymbol{x}, i] = [\boldsymbol{x}, i-1]$$
decreases the relative grading by $2$. We will always consider $CF^{-}(\mathcal{H}, \mathfrak{s})$ as a complex of $\mathbb{Z}[U]$-modules. Finally, the total chain complex associated to $\mathcal{H}$ then splits by definition as 
$$CF^{-}(\mathcal{H}) = \bigoplus_{\mathfrak{s} \in \text{Spin}^{c}(Y)} CF^{-}(\mathcal{H}, \mathfrak{s}).$$

Given a $\text{Spin}^{c}$ structure $\mathfrak{s}$, we call a pointed Heegaard diagram \emph{$\mathfrak{s}$-realized} if there is an intersection point $\boldsymbol{x} \in \mathbb{T}_{\balpha} \cap \mathbb{T}_{\bbeta}$ with $s_{z}(\boldsymbol{x}) = \mathfrak{s}$. We note that for any $\mathfrak{s} \in \text{Spin}^{c}(Y,z)$ there is an $\mathfrak{s}$-realized pointed Heegaard diagram for $(Y,z)$ by \cite[Lemma 5.2]{Disks1}.

The chain complex $CF^{-}(\mathcal{H}, \mathfrak{s})$ can in fact only be defined for Heegaard diagrams $\mathcal{H} = (\Sigma, \balpha, \bbeta, z)$ which satisfy an admissibility hypothesis. Given $\mathfrak{s} \in \text{Spin}^{c}(Y)$, we say the diagram $\mathcal{H}$ is \emph{strongly $\mathfrak{s}$-admissible} if every nontrivial periodic domain $D$ on $\mathcal{H}$ satisfying $\langle c_{1}(\mathfrak{s}), H(D) \rangle = 2n \geq 0$ has some coefficient that is greater than $n$. Here $H(D) \in H_{2}(Y; \mathbb{Z})$ is the homology class naturally associated to the periodic domain $D$. It turns out that this notion of admissibility is enough to ensure that differential $\partial$ given above consists of a finite sum and is well defined on $CF^{-}(\mathcal{H}, \mathfrak{s})$, and to ensure that it in fact yields a chain complex. It is shown in \cite[Lemma 5.4]{Disks1} that given any $\mathfrak{s} \in \text{Spin}^{c}(Y)$, there is an $\mathfrak{s}$-realized, strongly $\mathfrak{s}$-admissible pointed diagram for $(Y,z)$. 

To define triangle maps on the Floer chain complexes, we will need an analogous notion of admissibility for Heegaard triple diagrams. A pointed triple diagram $\mathcal{T} = (\Sigma, \balpha, \bbeta, \bgamma, z)$ specifies a $4$-manifold with boundary, which we denote by $X_{\balpha, \bbeta, \bgamma}$. Given now a $\text{Spin}^{c}$-structure $\mathfrak{s}$ on $X_{\balpha, \bbeta, \bgamma}$, denote by $\mathfrak{s}_{\balpha, \bbeta}$ the restriction of $\mathfrak{s}$ to the boundary component $Y_{\balpha, \bbeta}$. We will say the triple diagram $\mathcal{T}$ is \emph{strongly $\mathfrak{s}$-admissible} if any triply periodic domain $D$ which is the sum of doubly periodic domains, 
$$D = D_{\balpha, \bbeta} + D_{\bbeta, \bgamma} + D_{\balpha, \bgamma}$$
and which furthermore satisfies 
$$\langle c_{1}(\mathfrak{s}_{\balpha, \bbeta}), H(D_{\balpha, \bbeta}) \rangle + \langle c_{1}(\mathfrak{s}_{\bbeta, \bgamma}), H(D_{\bbeta, \bgamma}) \rangle + \langle c_{1}(\mathfrak{s}_{\balpha, \bgamma}), H(D_{\balpha, \bgamma}) \rangle = 2n \geq 0 $$
has some coefficient greaer than $n$. It is shown in \cite[Lemma 8.11]{Disks1} that given any pointed triple diagram $\mathcal{T}$ and a $\text{Spin}^{c}$ structure $\mathfrak{s}$ on $X_{\balpha, \bbeta, \bgamma}$, there is a pointed triple diagram isotopic to $\mathcal{T}$ which is strongly $\mathfrak{s}$-admissible.

\subsection{Orientation Systems}
\subsubsection{Coherent Orientation Systems of Disks}
We recall that to define the differential on the Heegaard Floer chain complexes with coefficients in $\mathbb{Z}$, one must perform signed counts of the points in certain moduli spaces of psuedo-holomorphic disks. To do so, one must ensure that on a pointed Heegaard diagram $\mathcal{H} = (\Sigma, \balpha,\bbeta,z)$ the moduli spaces of holomorphic disks in a homotopy class $A \in \pi_{2}(\boldsymbol{x},\boldsymbol{y})$, which we denote by $\mathcal{M}^{A}$ or $\mathcal{M}(A)$, are orientable. By \cite[Proposition 3.10]{Disks1} (or \cite[Proposition 6.3]{Cylindrical} for the reader more comfortable in the cylindrical setting), these moduli spaces are orientable whenever they are smoothly cut out. There this is shown by trivializing the determinant line bundle $\mathcal{L}$ of the virtual index bundle of the linearized $\bar{\partial }$-equation defining the moduli space in question, so when necessary we will specify our orientations by specifying sections of these determinant line bundles.

In order for these orientations to allow for the structure of a chain complex on the Heegaard Floer chain modules, we actually need somewhat more: we want the moduli spaces for different homotopy classes of disks to be oriented coherently. To make this precise, Ozsv\'ath and Szab\'o used the notion of a \textit{coherent orientation system} for the moduli spaces of holomorphic disks in a Heegaard diagram $\mathcal{H} = (\Sigma, \balpha, \bbeta, z)$. Such an orientation system consists of a collection $\mathfrak{o}_{\mathcal{H}} = \mathfrak{o}_{\balpha, \bbeta} := \{ \mathfrak{o}_{\balpha, \bbeta}^{A}\}$ of sections $\mathfrak{o}_{\balpha, \bbeta}^{A}$ of the determinant line bundle $\mathcal{L}$ over all possible homotopy classes of disks $A \in \pi_{2}(\boldsymbol{x},\boldsymbol{y})$ (ranging over all $\boldsymbol{x}, \boldsymbol{y} \in \mathbb{T}_{\balpha} \cap \mathbb{T}_{\bbeta})$. Roughly, the coherence condition amounts to requiring that these sections are compatible with a process of glueing holomorphic disks together. We refer the reader to \cite{Disks1} for the precise definition of the coherence condition, or to Section \ref{Orientations} where we will formulate a precise version of the notion in the cylindrical setting. For our purposes in this section, we mainly just want to recall the fact that every pointed Heegaard diagram equipped with complex structure data achieving transversality admits a coherent orientation system by \cite[Remarks following Definition 3.12]{Disks1}. We also want to make explicit the folowing equivalence relation on orientation systems.
\begin{defn}\label{orientation equivlance}
	Fix two coherent orientation systems $\mathfrak{o}_{\balpha, \bbeta}$ and $\mathfrak{o}'_{\balpha, \bbeta}$ on a diagram $\mathcal{H} = (\Sigma, \balpha, \bbeta, z)$. We say the orientation systems are \emph{equivalent} if there is a function $$\epsilon: \mathbb{T}_{\balpha} \cap \mathbb{T}_{\bbeta} \rightarrow \{\pm 1\}$$ so that for each $\bx,\by \in \mathbb{T}_{\balpha} \cap \mathbb{T}_{\bbeta}$ we have $$\mathfrak{o}_{\balpha, \bbeta}^{A} = \epsilon(\bx) \cdot \epsilon(\by) \cdot {\mathfrak{o}'}_{\balpha, \bbeta}^{A}$$ for all $A \in \mathcal{M}(\bx, \by)$.
\end{defn}

It follows directly from the definition of the differential on $CF^{-}$ that equivalent orientation systems give rise to isomorphic Heegaard Floer chain complexes. In what follows, we will often be concerned with specifying orientation systems which are unique up to equivalence. For these discussions, it will be useful to explicitly recall one more definition from the literature.

\begin{defn}\cite[Definition 3.12]{Disks1}
	Given a $\text{Spin}^{c}$ structure $\mathfrak{s}$, a strongly $\mathfrak{s}$-admissible diagram $\mathcal{H} = (\Sigma, \balpha, \bbeta, z)$, and an intersection point $\bx_{0} \in \mathbb{T}_{\balpha} \cap \mathbb{T}_{\bbeta}$, we will say a collection of classes $\{A_{\by}\}$ where $A_{\by} \in \pi_{2}(\bx_{0}, \by)$ and $\by$ ranges over the intersection points in $(\mathbb{T}_{\balpha} \cap \mathbb{T}_{\bbeta}) \setminus \{\bx_{0}\}$ which represent $\mathfrak{s}$, is a \emph{complete set of paths (based at $\bx_{0}$)} for $(\mathcal{H}, \mathfrak{s})$.
\end{defn}

\subsubsection{Coherent Orientation Systems of Triangles}
\label{coherentsection}
Given a pointed Heegaard triple diagram $\mathcal{T} = (\Sigma, \balpha, \bbeta, \bgamma, z)$, we also note that moduli spaces of holomorphic triangles in a homotopy class $\psi$, which we denote by $\mathcal{M}^{\psi}$ or $\mathcal{M}(\psi)$, are also orientable when they are smoothly cut out, by \cite[Section 8.2]{Disks1} (or \cite[Proposition 10.3]{Cylindrical}). Given a collection $\mathfrak{o}_{\mathcal{T}} := \{\mathfrak{o}_{\balpha, \bbeta, \bgamma}, \mathfrak{o}_{\balpha, \bbeta}, \mathfrak{o}_{\bbeta, \bgamma}, \mathfrak{o}_{\balpha, \bgamma}\}$, where $\mathfrak{o}_{\balpha, \bbeta, \bgamma}$ is a collection of sections of the determinant line bundle over all homotopy classes of triangles, and $\mathfrak{o}_{\balpha, \bbeta}$, $\mathfrak{o}_{\bbeta, \bgamma}$, and $\mathfrak{o}_{\balpha, \bgamma}$ are collections of sections of the determinant line bundle over all homotopy classes of disks in the respective double diagrams, we will consider a related notion of coherence (see \cite[Definition 8.6]{Disks1}). Roughly, the coherence condition here will amount to the requirement that each collection of orientations of the moduli spaces of strips on the respective double diagrams are coherent, and that all possible pregluings of triangles with strips satisfy the analogous glueing condition (this coherence condition will also be spelled out precisely in Section \ref{Orientations}). The existence of such coherent orientation systems is guaranteed by the following result.

\begin{claim}\cite[Lemma 8.7]{Disks1}
	\label{ExistenceOfCoherentTriangles}
	Fix a pointed Heegaard triple diagram $(\Sigma, \balpha, \bbeta, \bgamma, z)$, and let $\mathfrak{s}$ be a $\text{Spin}^{c}$ structure on $X_{\balpha, \bbeta, \bgamma}$ whose restriction to each boundary component is realized by an intersection point in the corresponding Heegaard diagram. Then for any coherent orientation systems $\mathfrak{o}_{\balpha, \bbeta}$ and $\mathfrak{o}_{\bbeta, \bgamma}$ for two of the boundary components, there exists at least one coherent orientation system $\mathfrak{o}_{\balpha, \bgamma}$ for the remaining boundary component and a coherent orientation system $\mathfrak{o}_{\balpha, \bbeta, \bgamma}$ such that the entire collection of orientations is coherent.
\end{claim}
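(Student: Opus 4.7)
The plan is to construct both orientation systems simultaneously, by fixing a small amount of initial data and then bootstrapping via the pregluing operation that relates triangles with strips. Concretely, I would begin by choosing base intersection points $\boldsymbol{x}_{0} \in \mathbb{T}_{\balpha} \cap \mathbb{T}_{\bbeta}$, $\boldsymbol{y}_{0} \in \mathbb{T}_{\bbeta} \cap \mathbb{T}_{\bgamma}$, and $\boldsymbol{w}_{0} \in \mathbb{T}_{\balpha} \cap \mathbb{T}_{\bgamma}$ realizing the three boundary restrictions of $\mathfrak{s}$ (these exist by hypothesis). The assumption that these restrictions all extend to the same $\mathfrak{s}$ on $X_{\balpha,\bbeta,\bgamma}$ guarantees the existence of a homotopy class of triangle $\psi_{0} \in \pi_{2}(\boldsymbol{x}_{0}, \boldsymbol{y}_{0}, \boldsymbol{w}_{0})$ with $\mathfrak{s}_{z}(\psi_{0}) = \mathfrak{s}$. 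I would pick, arbitrarily, a section $\mathfrak{o}^{\psi_{0}}$ of the determinant line bundle $\mathcal{L}$ over $\mathcal{M}(\psi_{0})$, and, for each $\boldsymbol{w} \in \mathbb{T}_{\balpha} \cap \mathbb{T}_{\bgamma}$ with $s_{z}(\boldsymbol{w}) = \mathfrak{s}|_{Y_{\balpha,\bgamma}}$, a reference disk $\phi_{\boldsymbol{w}}^{0} \in \pi_{2}(\boldsymbol{w}_{0}, \boldsymbol{w})$ equipped with an arbitrary orientation $\mathfrak{o}^{\phi_{\boldsymbol{w}}^{0}}_{\balpha,\bgamma}$. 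These constitute the seed data.

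I would then propagate the orientations in two directions. First, extend $\mathfrak{o}_{\balpha,\bgamma}$ to all homotopy classes of disks: any class $\phi \in \pi_{2}(\boldsymbol{w}, \boldsymbol{w}')$ can, up to addition of an $(\balpha,\bgamma)$-periodic domain, be written as a juxtaposition $(-\phi_{\boldsymbol{w}}^{0}) \ast \phi_{\boldsymbol{w}'}^{0}$, and the pregluing formula for determinant lines dictates the induced orientation; it remains to make an arbitrary choice of orientation on the classes of periodic domains and extend by linearity. Second, extend $\mathfrak{o}_{\balpha,\bbeta,\bgamma}$ from $\psi_{0}$ to an arbitrary triangle class $\psi \in \pi_{2}(\boldsymbol{x}, \boldsymbol{y}, \boldsymbol{w})$ representing $\mathfrak{s}$ using the decomposition
\[
\psi \;=\; \phi_{1} \ast \phi_{2} \ast \phi_{3} \ast \psi_{0}
\]
where $\phi_{1} \in \pi_{2}(\boldsymbol{x}_{0}, \boldsymbol{x})$, $\phi_{2} \in \pi_{2}(\boldsymbol{y}_{0}, \boldsymbol{y})$, $\phi_{3} \in \pi_{2}(\boldsymbol{w}_{0}, \boldsymbol{w})$ are disks in the three double diagrams (plus possibly a triply periodic domain), and orient $\mathcal{L}_{\psi}$ by the corresponding pregluing isomorphism applied to the already-chosen orientations $\mathfrak{o}^{\phi_{1}}_{\balpha,\bbeta}$, $\mathfrak{o}^{\phi_{2}}_{\bbeta,\bgamma}$, $\mathfrak{o}^{\phi_{3}}_{\balpha,\bgamma}$, and $\mathfrak{o}^{\psi_{0}}$.

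The main obstacle, and the step that will occupy the bulk of the work, is verifying that these assignments are both well defined and coherent. Well-definedness amounts to showing that two different decompositions of the same disk (or triangle) into the pieces described above produce the same section of $\mathcal{L}$; modulo the juxtaposition/pregluing identities, this reduces to a single compatibility on triply periodic domains arising as sums $D = D_{\balpha,\bbeta} + D_{\bbeta,\bgamma} + D_{\balpha,\bgamma}$, which I will have the freedom to enforce by exploiting the arbitrary initial choice of orientations on the doubly periodic classes in the $(\balpha,\bgamma)$ diagram. Coherence of $\mathfrak{o}_{\balpha,\bgamma}$ as an orientation system in its own right then follows from the coherence of $\mathfrak{o}_{\balpha,\bbeta}$ and $\mathfrak{o}_{\bbeta,\bgamma}$ together with the associativity of the pregluing isomorphism for determinant lines, since any two disks in the $(\balpha,\bgamma)$ diagram can be ``sandwiched'' between triangles in a way that transports the known coherence on one side across to the other.

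Finally, I would organize the verification along the lines of \cite[Lemma 8.7]{Disks1}, but phrased in the cylindrical setup of \cite{Cylindrical} that the paper adopts, so that the relevant determinant line bundles and pregluing maps are those attached to the linearized $\bar{\partial}$-operator on embedded holomorphic curves rather than on maps to $\text{Sym}^{g}(\Sigma)$; this translation is routine once the precise coherence axioms are in place, which is deferred to Section \ref{Orientations}. No new geometric input beyond the seed data and the standard pregluing formula is required; the content of the claim is entirely the sign bookkeeping described above.
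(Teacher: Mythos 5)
The paper itself gives no proof of this claim: it is imported verbatim as \cite[Lemma 8.7]{Disks1}, so the only thing to compare against is that citation. Your outline --- an arbitrary orientation on a base triangle class for each $\text{Spin}^{c}$ class, propagation by splicing with the given systems $\mathfrak{o}_{\balpha,\bbeta}$ and $\mathfrak{o}_{\bbeta,\bgamma}$, and absorbing the resulting consistency constraints on sums of doubly periodic domains into the free choice of $\mathfrak{o}_{\balpha,\bgamma}$ --- is essentially the argument Ozsv\'ath and Szab\'o give for that lemma, so it matches the intended source.
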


\begin{remark}\label{uniquenessremark}
	
	We note here that this lemma does not guarantee that the orientation systems $\mathfrak{o}_{\balpha, \bgamma}$ and $\mathfrak{o}_{\balpha, \bbeta, \bgamma}$ are unique, as can be seen from inspection of the proof provided in \cite[Lemma 8.7]{Disks1}. We mainly provide the reference to this lemma as it is stated for background context on the existence of coherent orientation systems. In what follows we will actually be interested in using a strengthened version of this lemma that applies in a particular situation to produce a unique induced coherent orientation system, which we will specify more precisely when the time comes. We note in particular that we only cite Lemma \ref{ExistenceOfCoherentTriangles} in two places in this paper (in Sections \ref{trianglemapssection} and \ref{monodromyoforientationsystems}), and in both cases an additional argument is used to explain why the induced orientation system is unique in the context under consideration.
	
	It will be useful later to have a clear understanding of the indeterminacy in the orientation systems furnished by this lemma, and to have terminology with which we can refer to the sources of indeterminacy. To do so, we will now describe a high level outline of the proof of the above lemma, and point out explicitly where in the proof the indeterminacies arise. For details of the proof, we just point to the original source, since we have no new perspectives or value to add in reproducing them.

	Assume we have fixed $\mathfrak{o}_{\balpha, \bbeta}$ and $\mathfrak{o}_{\bbeta, \bgamma}$ as in the statement of the lemma. The way to produce $\mathfrak{o}_{\balpha, \bgamma}$ and the coherent orientation system $\mathfrak{o}_{\balpha, \bbeta, \bgamma}$ on the triple diagram guaranteed by the lemma can be summarized as follows:
	\begin{enumerate}
		\item First choose an arbitrary orientation over a single class of triangle $\psi_{0} \in \pi_{2}(\bx_{0}, \by_{0}, \bz_{0})$ connecting intersection points $\bx_{0}, \by_{0}$ and $\bz_{0}$.
		\item Next, fix orientations over all periodic classes $\phi_{\balpha, \bgamma}  \in \Pi_{\bz_{0}} \subset \pi_{2}(\bz_{0}, \bz_{0})$ as follows:
		\begin{enumerate}
			\item Define a subgroup $K \subset \Pi_{\bz_{0}}$ by 
			\begin{equation*}
			\begin{aligned}
			K = \{\phi_{\balpha, \bgamma} \in  \pi_{2}(\bz_{0}, \bz_{0}) \hspace{1em} | \hspace{.5em} \exists \hspace{.5em}  &\phi_{\balpha, \bbeta} \in  \pi_{2}(\bx_{0}, \bx_{0}),\\ &  \phi_{\bbeta, \bgamma} \in  \pi_{2}(\by_{0}, \by_{0})\\ &\text{ such that} \\ &\psi_{0} + \phi_{\balpha, \bgamma} = \psi_{0} + \phi_{\balpha, \bbeta} + \phi_{\bbeta, \bgamma} \}
			\end{aligned}
			\end{equation*}
			\item Show that the periodic classes split as:
			$$\Pi_{\bz_{0}} = K \oplus Q$$
			for some free abelian group Q.
			\item Using the defining property of K and a small lemma, extend $\mathfrak{o}_{\balpha, \bgamma}$ over all periodic classes in K such that the resulting orientations are consistent with $\mathfrak{o}_{\balpha, \bbeta}$ and $\mathfrak{o}_{\bbeta, \bgamma}$.
			\item Choose the orienations $\mathfrak{o}_{\balpha, \bgamma}$ arbitrarily over basis a for Q. We will call this collection of choices the \emph{indeterminacy over Q}.
			\item Obtain orientations over all classes of triangles $\psi \in \pi_{2}(\bx_{0}, \by_{0}, \bz_{0})$ by bootstrapping from the above.
		\end{enumerate}
		
		\item Next, choose a complete set of paths for $Y_{\balpha, \bgamma}$, and choose orientations for $\mathfrak{o}_{\balpha, \bgamma}$ over the classes defining the complete set of paths. We will call this collection of choices the \emph{indeterminacy over a complete set of paths}.
		\item The previously defined orientations together uniquely determine a coherent orientation system for the triple diagram. We see that, up to a sign, the indeterminacy in the orientation systems $\mathfrak{o}_{\balpha, \bgamma}$ and $\mathfrak{o}_{\balpha, \bbeta, \bgamma}$ furnished by the lemma is due to the indeterminacy over $Q$ and the indeterminacy over a complete set of paths.
	\end{enumerate}
	Finally, we note that the indeterminacy over a complete set of paths mentioned above does in fact vanish in general, so long as we we consider orientation systems up to equivalence. For given a complete set of paths and two orientation systems $\mathfrak{o}_{\balpha, \bgamma}$, $\mathfrak{o}^{'}_{\balpha, \bgamma}$ which differ on the complete set of paths, it is straightforward to construct a third orientation system $\mathfrak{o}''_{\balpha, \bgamma}$ which is equivalent to $\mathfrak{o}^{'}_{\balpha, \bgamma}$ and which agrees with $\mathfrak{o}_{\balpha, \bgamma}$ on the complete set of paths. Indeed, if the complete set of paths is denoted $\{A_{\by}\}$, construct an equivlance function $\epsilon$ by declaring for each $\by$ that $\epsilon(\by) = -1$ if $\mathfrak{o}_{\balpha, \bgamma}(A_{\by}) \neq \mathfrak{o}^{'}_{\balpha, \bgamma}(A_{\by})$. Altering $\mathfrak{o}^{'}_{\balpha, \bgamma}$ by any such equivalence function will yield an orientation system $\mathfrak{o}''_{\balpha, \bgamma}$ as desired. Thus up to equivalence and sign, we see that the indeterminacy in the orientation systems $\mathfrak{o}_{\balpha, \bgamma}$ and $\mathfrak{o}_{\balpha, \bbeta, \bgamma}$ furnished by the lemma is solely due to the indeterminacy over $Q$ (coming from the indeterminacy in the orientations over the periodic classes).
\end{remark}

\subsection{Change of Almost Complex Structures}
Next, we recall the dependence of the construction of the Heegaard Floer invariants on the choices of almost complex structures involved. The definition of the Heegaard Floer chain complex associated to a pointed Heegaard diagram $(\Sigma, \balpha, \bbeta, z)$ in fact requires a choice of complex structure $j$ on $\Sigma$, and a generic path of almost complex structures $J_{s} \subset \mathcal{U}$ on $\text{Sym}^{g}(\Sigma)$ going through the structure $\text{Sym}^{g}(j)$ induced by $j$. Here $g$ is the genus of $\Sigma$ and $\mathcal{U}$ is a particular contractible set of almost complex structures specified by Ozsv\'ath and Szab\'o in \cite[Theorem 3.15 and Section 4.1]{Disks1}. Given a strongly $\mathfrak{s}$-admissible pointed Heegaard diagram $\mathcal{H} = (\Sigma, \balpha, \bbeta,z)$, a coherent orientation $\mathfrak{o}$ on $\mathcal{H}$, and two choices of such almost complex structure data $(j,J_{s})$ and $(j',J_{s}')$, there is a chain homotopy equivalence
$$ \Phi_{J_{s} \rightarrow J_{s}'}: CF^{-}_{J_{s}}(\Sigma, \balpha, \bbeta, z, \mathfrak{s}, \mathfrak{o}) \rightarrow CF^{-}_{J_{s}'}(\Sigma, \balpha, \bbeta, z, \mathfrak{s}, \mathfrak{o}').$$
Here $\mathfrak{o}'$ is an orientation system uniquely determined  by $\mathfrak{o}$, as described in \cite[Beginning of Section 9]{Cylindrical}. These equivalences fit into a transitive system in the homotopy category of chain complexes of $\mathbb{Z}[U]$-modules, in the sense that $\Phi_{J_{s} \rightarrow J_{s}} \sim \text{id}_{CF^{-}(\Sigma, \balpha, \bbeta)}$ and $\Phi_{J_{s}' \rightarrow J_{s}''} \circ \Phi_{J_{s} \rightarrow J_{s}'} \sim \Phi_{J_{s} \rightarrow J_{s}''}$. This is shown in \cite[Lemma 2.11]{FourManifoldInvariants}. We denote this transitive system in the homotopy category of complexes of $\mathbb{Z}[U]$-modules by
$$CF^{-}(\Sigma, \balpha, \bbeta, z, \mathfrak{s}, \mathfrak{o}).$$

Of course we also obtain from the maps $\Phi_{J_{s} \rightarrow J_{s}'}$ a transitive system of isomorphisms on homology. We will denote the colimit of the $\mathbb{Z}[U]$-modules $HF^{-}_{J_{s}}(\Sigma, \balpha, \bbeta, z, \mathfrak{s}, \mathfrak{o})$ with respect to this transitive system by $$HF^{-}(\Sigma, \balpha, \bbeta, z, \mathfrak{s}, \mathfrak{o}).$$ 


\subsection{Triangle Maps and Continuation Maps}
\label{trianglemapssection}
Given a pointed Heegaard triple diagram $\mathcal{T} = (\Sigma, \balpha, \bbeta, \bgamma, z)$ which is strongly $\mathfrak{s}$-admissible for a $\text{Spin}^{c}$ structure $\mathfrak{s}$ on $X_{\balpha, \bbeta, \bgamma}$, as well as a coherent orientation system $\mathfrak{o}_{\balpha, \bbeta, \bgamma}$ compatible with coherent orientation systems $\mathfrak{o}_{\balpha, \bbeta}$, $\mathfrak{o}_{\bbeta, \bgamma}$ and $\mathfrak{o}_{\balpha, \bgamma}$, there are $\mathbb{Z}[U]$-module chain maps $\mathcal{F}_{\balpha,\bbeta,\bgamma}(\cdot, \mathfrak{s}, \mathfrak{o}_{\balpha, \bbeta, \bgamma})$ of the form:
$$ CF^{-}_{J_{s}}(\Sigma,\balpha,\bbeta, \mathfrak{s}_{\balpha, \bbeta}, \mathfrak{o}_{\balpha, \bbeta}) \otimes_{\mathbb{Z}[U]} CF^{-}_{J_{s}}(\Sigma,\bbeta,\bgamma, \mathfrak{s}_{\bbeta, \bgamma}, \mathfrak{o}_{\bbeta, \bgamma}) \rightarrow CF^{-}_{J_{s}}(\Sigma,\balpha,\bgamma, \mathfrak{s}_{\balpha, \bgamma}, \mathfrak{o}_{\balpha,\bgamma})$$
defined in \cite[Theorem 8.12]{Disks1}. Here, and throughout this section, we sometimes suppress the basepoint $z$ from the chain complex notation for brevity, but the dependence is always implied. Put simply, these chain maps count pseudoholomorpic triangles on the triple diagram. In fact, the homotopy class of the chain map $F_{\balpha, \bbeta, \bgamma}$ does not depend on the choice of almost complex structure data. More precisely, for two choices of almost complex structure data the maps above commute up to homotopy with the change of almost complex structure maps, by \cite[Proposition 8.13]{Disks1}. Thus with respect to the transitive systems $CF^{-}(\Sigma, \balpha, \bbeta, z, \mathfrak{s}, \mathfrak{o})$, the map $F_{\balpha, \bbeta, \bgamma}$ is a morphism in $\text{Trans}(\text{Kom}(\mathbb{Z}[U]\text{-}\text{Mod}))$, ie a morphism between two transitive systems in the homotopy category of $\mathbb{Z}[U]$ modules. We denote this morphism by $\mathcal{F}_{\balpha,\bbeta,\bgamma}(\cdot, \mathfrak{s}, \mathfrak{o}_{\balpha, \bbeta, \bgamma})$ and it takes the form: 
$$CF^{-}(\Sigma,\balpha,\bbeta, \mathfrak{s}_{\balpha, \bbeta}, \mathfrak{o}_{\balpha, \bbeta}) \otimes_{\mathbb{Z}[U]} CF^{-}(\Sigma,\bbeta,\bgamma, \mathfrak{s}_{\bbeta, \bgamma}, \mathfrak{o}_{\bbeta, \bgamma}) \rightarrow CF^{-}(\Sigma,\balpha,\bgamma, \mathfrak{s}_{\balpha, \bgamma}, \mathfrak{o}_{\balpha,\bgamma})$$
We also obtain induced maps of $\mathbb{Z}[U]$-modules $\mathcal{F}_{\balpha,\bbeta,\bgamma}(\cdot, \mathfrak{s}, \mathfrak{o}_{\balpha, \bbeta, \bgamma})$ of the form: 
$$HF^{-}(\Sigma,\balpha,\bbeta, \mathfrak{s}_{\balpha, \bbeta}, \mathfrak{o}_{\balpha, \bbeta}) \otimes_{\mathbb{Z}[U]} HF^{-}(\Sigma,\bbeta,\bgamma, \mathfrak{s}_{\bbeta, \bgamma}, \mathfrak{o}_{\bbeta, \bgamma}) \rightarrow HF^{-}(\Sigma,\balpha,\bgamma,  \mathfrak{s}_{\balpha, \bgamma}, \mathfrak{o}_{\balpha,\bgamma})$$
The triangle maps above allow one to define maps associated to handleslides. To describe the handleslide maps, we first recall the following fact.

\begin{claim}\cite[Lemma 9.4, Remark 9.2 and Section 9.1]{Disks1} (cf. \cite[Lemma 9.2]{Naturality})
	\label{StandardHandleslide}
	Let $(\Sigma, \bbeta, \bgamma', z)$ be a pointed genus $g$ Heegaard diagram such that $\bgamma'$ can be obtained from $\bbeta$ by performing a sequence of handleslides among the curves in $\bbeta$. Then the diagram represents $\#^{g}(S^{1} \times S^{2})$. There is a unique $\text{Spin}^{c}$ structure $\mathfrak{s}_{0} \in \text{Spin}^{c}(\#^{g}(S^{1} \times S^{2}))$ such that $c_{1}(\mathfrak{s}_{0}) = 0$, and upon performing a particular small Hamiltonian isotopy of $\bgamma '$ (specified in \cite{Disks1}) to obtain $(\Sigma, \bbeta, \bgamma, z)$ one can ensure this new diagram is strongly $\mathfrak{s}_{0}$-admissible. Furthermore, there is a choice of coherent orientation system $\mathfrak{o}_{\bbeta, \bgamma}$ on this diagram such that
	$$\widehat{HF}(\Sigma, \bbeta, \bgamma, z,  \mathfrak{s}_{0}, \mathfrak{o}_{\bbeta, \bgamma}) \cong H_{*}(T^{g}; \mathbb{Z}) $$
	$$HF^{-}(\Sigma, \bbeta, \bgamma, z,  \mathfrak{s}_{0}, \mathfrak{o}_{\bbeta, \bgamma}) \cong \mathbb{Z}[U] \otimes H_{*}(T^{g}; \mathbb{Z}) $$
	In this case it follows that in the highest nontrivial relative homological grading $HF^{-}(\Sigma, \bbeta, \bgamma, z,  \mathfrak{s}_{0}, \mathfrak{o}_{\bbeta, \bgamma})$  is isomorphic to $\mathbb{Z} =: \langle \theta_{\bbeta, \bgamma} \rangle $, for a generator we denote $\theta_{\bbeta, \bgamma}$. Finally, there is only one equivalence class of orientation system with these properties.
\end{claim}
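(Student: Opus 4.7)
The plan is to adapt the argument of \cite[Lemma 9.4 and Section 9.1]{Disks1} from the $\mathbb{F}_{2}$ setting to the $\mathbb{Z}$ setting, being careful to track the orientation data that is suppressed in the mod $2$ treatment. I would proceed in three stages.

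First, I would dispense with the purely topological claims. That the diagram $(\Sigma, \bbeta, \bgamma', z)$ represents $\#^{g}(S^{1} \times S^{2})$ is immediate from Lemma \ref{CompressionHandleslide}: since $\bgamma'$ is obtained from $\bbeta$ by a sequence of handleslides, the compression bodies $C(\bbeta)$ and $C(\bgamma')$ are diffeomorphic rel boundary, so the $3$-manifold represented by the diagram is the double of a genus $g$ handlebody. Since $H^{2}(\#^{g}(S^{1}\times S^{2});\mathbb{Z}) \cong \mathbb{Z}^{g}$ is torsion-free, the constraint $c_{1}(\mathfrak{s}_{0})=0$ picks out a unique element $\mathfrak{s}_{0}$. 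The small Hamiltonian isotopy is the standard one: perturb each $\gamma_{i}'$ to $\gamma_{i}$ so that $\gamma_{i}\cap\beta_{i}$ consists of two transverse points and $\gamma_{i}$ is disjoint from $\beta_{j}$ for $j \neq i$, creating a pair of small bigons per handle. A direct inspection of periodic domains shows strong $\mathfrak{s}_{0}$-admissibility, exactly as in \cite{Disks1}.

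Second, I would identify the generator in the top grading. The perturbed diagram has $2^{g}$ intersection points, indexed by a choice of upper or lower intersection point in each handle. In the relative grading one checks that the tuple $\theta$ consisting of all upper points lies strictly above every other intersection point, so the summand of $CF^{-}(\Sigma,\bbeta,\bgamma,z,\mathfrak{s}_{0})$ in the top grading is the free $\mathbb{Z}$-module generated by $\theta$ (with no $U$-shifts, since $U$ lowers grading). For grading reasons there is no Maslov-index-one homotopy class from $\theta$ to any generator in the same grading, so $\partial \theta = 0$ automatically, and there is likewise no generator in strictly higher grading mapping to $\theta$. Hence in the top nontrivial grading the homology is freely generated by the class of $\theta$.

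Third, I would construct the coherent orientation system $\mathfrak{o}_{\bbeta,\bgamma}$. Using the product structure of the perturbed diagram near each handle, trivialize the determinant line bundle over the small bigons locally; these local choices assemble, via the gluing formula for determinant bundles, into a coherent orientation system on all of $\pi_{2}$. Equivalently, starting from any coherent orientation system provided by \cite[Definition 3.12]{Disks1}, one may twist by a cohomology class so that $\theta$ represents an honest $\mathbb{Z}$-class; this is possible because the twisting acts transitively on the finite set of orientation systems up to equivalence. Either construction yields the desired $\mathfrak{o}_{\bbeta,\bgamma}$, and we define $\theta_{\bbeta,\bgamma}$ to be $[\theta]$ with respect to it. The main obstacle in the above is the third step: ensuring that the chosen orientation system is simultaneously coherent, gives a nonvanishing integer class in the top grading, and is compatible with subsequent triangle counts in Section \ref{MainProof}. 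Localizing the orientation data in the small bigons near each handle reduces this to the genus $1$ calculation, where a direct check on the two bigons determines the sign uniquely up to global equivalence.
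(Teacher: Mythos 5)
A remark on context first: this paper does not prove the statement at all --- it is imported directly from \cite[Lemma 9.4 and Section 9.1]{Disks1} (cf.\ \cite[Lemma 9.2]{Naturality}) --- so your proposal is really being measured against the Ozsv\'ath--Szab\'o argument, not against anything written here. Your Step 1 (the identification of the $3$-manifold via Lemma \ref{CompressionHandleslide}, uniqueness of $\mathfrak{s}_{0}$, the standard small perturbation, and strong admissibility) is fine. The problem is Step 2, and it sits exactly at the point where the integral statement differs from the $\mathbb{F}_{2}$ one. The differential $\partial\theta$ counts Maslov index one classes from $\theta$ to generators in relative grading one \emph{lower}, and such classes exist in abundance: in each handle there are two small bigons from $\theta_{i}^{+}$ to $\theta_{i}^{-}$ with $n_{z}=0$. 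Whether their signed counts cancel is precisely a property of the coherent orientation system: the two bigons differ by the periodic domain of that handle, and among the $2^{g}$ equivalence classes of orientation systems only one makes all of these pairs cancel. For a ``wrong'' choice one gets $\partial\theta=\pm 2\cdot(\text{generator one grading down})\neq 0$, the top-grading homology vanishes, and the highest nontrivial grading carries $\mathbb{Z}/2$ rather than $\mathbb{Z}$. So the assertion that ``$\partial\theta=0$ automatically for grading reasons'' is false; it conflates ``no terms of $\partial\theta$ in the same grading'' (trivially true) with ``no terms at all,'' and the conclusion of Step 2 cannot be drawn before the orientation system is fixed.

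Step 3 gestures at the correct fix but does not close it. Transitivity of the twisting action of $H^{1}(\#^{g}(S^{1}\times S^{2});\mathbb{Z}/2)$ on equivalence classes of orientation systems does not by itself produce a system for which $\theta$ is a cycle: you must actually compute how twisting interacts with the bigon counts, namely that twisting by the class dual to the periodic domain of the $i$-th handle reverses the relative sign of the two bigons in that handle and leaves the other handles unaffected, so that the $g$ cancellation conditions can be arranged simultaneously and coherently. That local-to-global sign computation (essentially the genus one calculation for $S^{1}\times S^{2}$ together with its extension to the connected sum) is the real content of \cite[Section 9.1]{Disks1}, and it is the piece your outline leaves as an assertion. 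With it in place, the correct logical order is: first fix $\mathfrak{o}_{\bbeta,\bgamma}$ so that the differential vanishes (the complex then has rank $2^{g}$ equal to that of its homology, as the remark following Lemma \ref{StandardHandleslide} uses), and only then conclude that the top nontrivial grading is $\mathbb{Z}=\langle\theta_{\bbeta,\bgamma}\rangle$.
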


\begin{remark}
	For such a diagram, we can also identify a particular intersection point $\theta_{\bbeta, \bgamma} \in CF^{-}(\Sigma, \bbeta, \bgamma, z, \mathfrak{s}_{0}, \mathfrak{o}_{\bbeta, \bgamma})$ representing this element of homology. Indeed, the strongly admissible diagram referred to in the lemma statement yields a chain complex whose rank is the same as that of its homology, and which has a unique intersection point realizing $\mathfrak{s}_{0}$ in the maximal relative grading.
\end{remark}

\begin{remark}
	All of the statements in the lemma other than the last sentence are explicitly proved in the cited references. The last sentence is also contained implicitly in the references cited, but since it is particularly relevant to our arguments we provide a sketch of the proof below.
\end{remark}

The last sentence in Lemma \ref{StandardHandleslide} follows from the next result.
\begin{claim}
	\label{EquivalenceClassesonS1xS2}
	Equivalence classes of coherent orientation systems over the diagram $(\Sigma, \bbeta, \bgamma, z)$ for $(S^{1} \times S^{2})^{\# g}$ from Lemma \ref{StandardHandleslide} are in bijection with morphisms $\pi_{1}(T^{g}) \rightarrow \text{Aut}(\mathbb{Z})$ where $T^{g}$ is a torus. Furthermore, for a corresponding orientation system $\mathfrak{o}$ and morphism $\mathcal{L}$ we have:
	$$ \widehat{HF}((S^{1} \times S^{2})^{\# g}, \mathfrak{o}) \cong H_{*}(T^{g}; \mathcal{L})$$
\end{claim}
\begin{proof}[Proof Sketch]
	Fix a diagram $(\Sigma, \bbeta, \bgamma, z)$ for $(S^{1} \times S^{2})^{\# g}$ as described in Lemma \ref{StandardHandleslide}, an intersection point $\bx_{0} \in \mathbb{T}_{\bbeta} \cap \mathbb{T}_{\bgamma}$, and a complete set of paths based at $\bx_{0}$. As described in Remark \ref{uniquenessremark}, all coherent orientation systems on the diagram agree on the complete set of paths up to equivalence. Thus equivalence classes of coherent orientation systems are determined by their values on a basis for the periodic domains based at $\bx_{0}$. Note that specifying values in $\{\pm 1\}$ for each class in a basis for the periodic domains based at $\bx_{0}$ is the same as specifying a morphism $\pi_{1}(T^{g}) \rightarrow \text{Aut}(\mathbb{Z})$, since the group of periodic classes is identified with $H^{1}((S^{1} \times S^{2})^{\# g})$. This establishes the first sentence in the lemma. 
	
	The second statement in the lemma follows from a direct comparison of the contributions to homology (Heegaard Floer or singular) in the diagrams in question for a given choice of values over a basis for the periodic domains based at $\bx_{0}$. For example, assigning $1$ to each periodic domain corresponds to the isomorphism class of local system over $T^{g}$ specified by the trivial homomorphism $\mathcal{L}:\pi_{1}(T^{g}) \rightarrow \mathbb{Z}/ 2\mathbb{Z}$, and to some equivalence class of coherent orientation system on $(\Sigma, \bbeta, \bgamma, z)$. Using the local picture and calculations developed in \cite[Lemma 9.4]{Disks1}, one can establish an identification between generators of $\pi_{1}$ and generators of $\widehat{HF}$.
\end{proof}

Now to establish the last sentence in Lemma \ref{StandardHandleslide}, just note that there is a single local system $\mathcal{L}$ over the torus $T^{g}$ for which the singular homology is $H_{*}(T^{g}; \mathbb{Z})$ (namely the trivial local system).

Given a strongly  $\mathfrak{s}$-admissible triple diagram $(\Sigma, \balpha, \bbeta, \bgamma, z)$ with $\bgamma$ related to $\bbeta$ as in the statement of Lemma \ref{StandardHandleslide}, we will write 
$$\Psi_{\bbeta \rightarrow \bgamma}^{\balpha}( \cdot, \mathfrak{s}, \mathfrak{o}_{\balpha, \bbeta, \bgamma}) := F_{\balpha,\bbeta,\bgamma}(\cdot \otimes \theta_{\bbeta, \bgamma}, \mathfrak{s}, \mathfrak{o}_{\balpha, \bbeta, \bgamma})$$
where 
 $$F_{\balpha,\bbeta,\bgamma}(\cdot \otimes \theta_{\bbeta, \bgamma}, \mathfrak{s}, \mathfrak{o}_{\balpha, \bbeta, \bgamma}): CF^{-}(\Sigma,\balpha,\bbeta,z, \mathfrak{s}_{\balpha, \bbeta}, \mathfrak{o}_{\balpha, \bbeta}) \rightarrow CF^{-}(\Sigma, \balpha, \bgamma, z, \mathfrak{s}_{\balpha, \bgamma}, \mathfrak{o}_{\balpha, \bgamma})$$ 
Here we have used an arbitrary coherent orientation system $\mathfrak{o}_{\balpha, \bbeta}$ and the coherent orientation system $\mathfrak{o}_{\bbeta, \bgamma}$ of Lemma \ref{StandardHandleslide}, and enlarged them to a coherent orientation system $\mathfrak{o}_{\balpha, \bbeta, \bgamma}$. That this can be done in some way is ensured by Lemma \ref{ExistenceOfCoherentTriangles}; in fact, though, this enlargement is unique up to equivalence in this particular case, as we now explain. Recall we have seen in Remark \ref{uniquenessremark} that the indeterminacy in the equivalence classes of the orientation systems furnished by Lemma \ref{ExistenceOfCoherentTriangles} is due solely to the indeterminacy over the group $Q$. It is shown in the proof of \cite[Lemma 8.7]{Disks1} that this group $Q$ is the image of the composition $q\circ i$:
\begin{equation*}
\begin{tikzcd}
H_{2}(Y_{\balpha, \bgamma}) \arrow{r}{i} &  H_{2}(X_{\balpha, \bbeta, \bgamma}) \arrow{r}{q} & H_{2}(X_{\balpha, \bbeta, \bgamma}, Y_{\balpha, \bbeta} \cup Y_{\bbeta, \bgamma} ) 
\end{tikzcd} 
\end{equation*}
where $i$ is induced by inclusion and $q$ comes from the relative long exact sequence for the relevant pair. In the case at hand, we have $Y_{\balpha, \bbeta} \cong Y_{\balpha, \bgamma}$ are arbitrary $3$ manifolds, and $X_{\balpha, \bbeta, \bgamma}$ is  $Y_{\balpha, \bbeta} \times I$ with a neighborhood of a bouquet of $g$ circles removed. Thus we have $i(H_{2}(Y_{\balpha, \bbeta})) = i(H_{2}(Y_{\balpha, \bgamma}))$, and $Q = 0$. This establishes that the coherent orientation systems used in our definition of the map $\Psi_{\bbeta \rightarrow \bgamma}^{\balpha}$ above are well defined. Similarly if instead $\bbeta$ is related to $\balpha$ as in the statement of Lemma \ref{StandardHandleslide}, we will write 
$$\Psi_{ \bgamma}^{\balpha \rightarrow \bbeta}(\cdot, \mathfrak{s}, \mathfrak{o}_{\bbeta, \balpha, \bgamma}) := F_{\bbeta, \balpha, \bgamma}( \theta_{\bbeta, \balpha} \otimes \cdot, \mathfrak{s}, \mathfrak{o}_{\bbeta, \balpha, \bgamma})$$
where
$$ F_{\bbeta, \balpha, \bgamma}( \theta_{\bbeta, \balpha} \otimes \cdot, \mathfrak{s}, \mathfrak{o}_{\bbeta, \balpha, \bgamma}): CF^{-}(\Sigma,\balpha,\bgamma, z, \mathfrak{s}_{\balpha, \bgamma}, \mathfrak{o}_{\balpha, \bgamma}) \rightarrow CF^{-}(\Sigma, \bbeta, \bgamma, z, \mathfrak{s}_{\bbeta, \bgamma}, \mathfrak{o}_{\bbeta, \bgamma})$$
These can be thought of as maps on the Floer invariants associated to (small variations of) sequences of handleslides on diagrams. These maps are in fact homotopy equivalences according to the following result:
\begin{claim}\cite[Theorem 9.5 and Section 9.1]{Disks1}
	\label{HandleslideMapProperties}
	\begin{enumerate}
		\item If $(\Sigma, \balpha, \bbeta, \bgamma, z)$ is a strongly $\mathfrak{s}$-admissible triple diagram and $\bbeta$ is related to $\bgamma$ as in the statement of Lemma \ref{StandardHandleslide}, then $\Psi_{\bbeta \rightarrow \bgamma}^{\balpha}$ is a chain homotopy equivalence.
		\item Furthermore, such equivalences are transitive: for two triples satisfying the conditions above we have $$\Psi_{\bbeta \rightarrow \bgamma}^{\balpha} \sim \Psi_{\boldsymbol{\delta} \rightarrow \bgamma }^{\balpha} \circ \Psi_{\bbeta \rightarrow \boldsymbol{\delta}}^{\balpha}.$$
		\item The analogous results hold for the maps induced by changing the $\balpha$ curves.
	\end{enumerate}
	
\end{claim}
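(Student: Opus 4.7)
The plan is to follow the template of the proof of \cite[Theorem 9.5]{Disks1}, but to carefully track coherent orientation systems so that the homotopies hold over $\mathbb{Z}$ rather than merely over $\mathbb{F}_{2}$. I would first establish the transitivity statement (2), and then deduce the chain homotopy equivalence in statement (1) as a consequence. Statement (3) will then follow by the symmetric argument with the roles of the $\balpha$ and $\bbeta$ curves interchanged.

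For (2), given a strongly $\mathfrak{s}$-admissible triple $(\Sigma, \balpha, \bbeta, \bgamma, z)$ together with an intermediate attaching set $\bdelta$ satisfying the same hypotheses, I would extend the data to a strongly $\mathfrak{s}$-admissible quadruple diagram $(\Sigma, \balpha, \bbeta, \bdelta, \bgamma, z)$ (using the isotopy-based admissibility result cited in Section \ref{SpinStructuressection}) and invoke Lemma \ref{ExistenceOfCoherentTriangles} to select a coherent orientation system restricting to the given ones on each sub-triple. Counting ends of the one-dimensional moduli spaces of holomorphic quadrilaterals in the relevant homotopy classes then yields the usual associativity of triangle maps,
$$F_{\balpha, \bdelta, \bgamma}\bigl(F_{\balpha, \bbeta, \bdelta}(\cdot \otimes \theta_{\bbeta, \bdelta}) \otimes \theta_{\bdelta, \bgamma}\bigr) \sim F_{\balpha, \bbeta, \bgamma}\bigl(\cdot \otimes F_{\bbeta, \bdelta, \bgamma}(\theta_{\bbeta, \bdelta} \otimes \theta_{\bdelta, \bgamma})\bigr).$$
It remains to identify $F_{\bbeta, \bdelta, \bgamma}(\theta_{\bbeta, \bdelta} \otimes \theta_{\bdelta, \bgamma})$ with $\theta_{\bbeta, \bgamma}$ on the chain level. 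I would do this by reducing to the standard model in which $\bbeta, \bdelta, \bgamma$ are mutually small Hamiltonian isotopies of a fixed attaching set; a dimension and energy count shows that exactly one small holomorphic triangle contributes to the relevant coefficient of $\theta_{\bbeta, \bgamma}$, and I would verify that the coherent orientation system on the quadruple can be arranged so that the signed count of this contribution is $+1$.

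For (1), given $(\Sigma, \balpha, \bbeta, \bgamma, z)$ as in the hypothesis, I would introduce an attaching set $\bdelta$ obtained by running the handleslide sequence taking $\bbeta$ to $\bgamma$ in reverse from $\bgamma$, followed by the small Hamiltonian perturbation of Lemma \ref{StandardHandleslide}, so that $\bdelta$ differs from $\bbeta$ only by a small isotopy. Applying (2) on a suitable strongly $\mathfrak{s}$-admissible quadruple yields $\Psi_{\bgamma \rightarrow \bdelta}^{\balpha} \circ \Psi_{\bbeta \rightarrow \bgamma}^{\balpha} \sim \Psi_{\bbeta \rightarrow \bdelta}^{\balpha}$, and the right-hand side is the nearest-point chain map between two diagrams whose attaching sets differ only by a small isotopy, which is visibly a chain isomorphism once its sign is controlled. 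The symmetric construction on the other side produces the homotopy inverse, so $\Psi_{\bbeta \rightarrow \bgamma}^{\balpha}$ is a chain homotopy equivalence.

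The main obstacle is the sign bookkeeping. In the $\mathbb{F}_{2}$ setting of \cite{Disks1} these subtleties disappear, but here one must exhibit preferred sections of the determinant line bundles over the moduli spaces of small triangles and of the nearest-point bigons so that the two identifications above hold on the nose rather than merely up to an overall sign. This in turn requires verifying that the chosen sections are consistent with the pregluing maps that encode the coherence of the orientation system on the quadruple diagram. The detailed verification of this sign compatibility is precisely what forces the careful discussion of coherent orientation systems developed in Sections \ref{WeakHeegaardFloerInvariantsSection} and \ref{MainProof}, and it is the price we pay for working over $\mathbb{Z}$ rather than $\mathbb{F}_{2}$.
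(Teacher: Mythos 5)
The first thing to note is that the paper does not prove this statement at all: Lemma \ref{HandleslideMapProperties} is imported verbatim from \cite[Theorem 9.5 and Section 9.1]{Disks1}, and that source already works over $\mathbb{Z}$ with coherent orientation systems. Your framing that ``in the $\mathbb{F}_{2}$ setting of \cite{Disks1} these subtleties disappear'' misattributes the coefficient issue: the handleslide maps, their transitivity, and the orientation-system bookkeeping you propose to redo (existence of compatible coherent systems on the triple and quadruple diagrams, the preferred system of Lemma \ref{StandardHandleslide}, compatibility with pregluing) are precisely what \cite{Disks1} establishes integrally; the place where the literature was restricted to $\mathbb{F}_{2}$ is the strong Heegaard invariance argument of \cite{Naturality} (in particular handleswap invariance), which is what this paper actually reworks. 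So a new proof of this lemma is not needed, and your broad outline --- associativity of triangle maps via holomorphic quadrilateral counts, a model computation identifying $F_{\bbeta,\bdelta,\bgamma}(\theta_{\bbeta,\bdelta}\otimes\theta_{\bdelta,\bgamma})$ with $\theta_{\bbeta,\bgamma}$, and deducing (1) from (2) by composing with the reverse handleslide sequence --- is essentially the argument of the cited source rather than a new route.

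As a standalone proof, however, your sketch has a genuine gap at the model computation. In the transitivity statement the collections $\bbeta$, $\bdelta$, $\bgamma$ are related by \emph{sequences of handleslides} (composed with small perturbations for admissibility), not by small Hamiltonian isotopies of one another, so you cannot ``reduce to the standard model in which $\bbeta,\bdelta,\bgamma$ are mutually small Hamiltonian isotopies of a fixed attaching set'' and count a single small triangle. Handleslid curves are not isotopic to the originals, and the identification $F_{\bbeta,\bdelta,\bgamma}(\theta_{\bbeta,\bdelta}\otimes\theta_{\bdelta,\bgamma})\sim\theta_{\bbeta,\bgamma}$ in this generality is the real content of the argument in \cite{Disks1} (a computation in the diagram for $\#^{g}(S^{1}\times S^{2})$ using the top relative grading of $\btheta$, admissibility, and the module structure); note also that it suffices to verify it on homology, since $F_{\balpha,\bbeta,\bgamma}$ is a chain map in each slot and replacing the second input by a homologous cycle changes the induced map only by a chain homotopy. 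Similarly, in your deduction of (1) the map $\Psi^{\balpha}_{\bbeta\rightarrow\bdelta}$ for a small isotopy is not ``visibly a chain isomorphism'': it is a triangle count, and its invertibility up to homotopy rests on identifying it with the continuation map (as in Lemmas \ref{IsotopyMapProperties} and \ref{ContinuationTriangleRelation}) or on an energy-filtration argument. These are exactly the two points where the cited proof does nontrivial work, and your proposal currently treats both as immediate.
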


There are also maps associated to special Hamiltonian isotopies of diagrams \cite[Proof of Theorem 7.3]{Disks1}. Given strongly $\mathfrak{s}$-admissible diagrams $(\Sigma, \balpha, \bbeta,z)$ and $(\Sigma, \balpha', \bbeta ', z)$ and an exact Hamiltonian isotopy on $(\Sigma, \omega)$ taking $\balpha$ to $\balpha '$ and $\bbeta$ to $\bbeta '$, which furthermore never crosses the basepoint, we claim that each coherent orientation system $\mathfrak{o}_{\balpha, \bbeta}$ for the first diagram determines a unique equivalence class of coherent orientation system $\mathfrak{o}_{\balpha ', \bbeta '}$ for the second. This is part of the statement of \cite[Theorem 7.3]{Disks1}, and can be understood as follows. First note that it will suffice to show that there is a correspondence $\pi_{2}(\bx, \bx)_{\mathcal{H}_{1}} \cong \pi_{2}(\by, \by)_{\mathcal{H}_{2}}$ between homotopy classes of periodic disks based at some intersection point $\bx$  on $\mathcal{H}_{1}$ and homotopy classes of periodic disks based at some intersection point $\by$ on $\mathcal{H}_{2}$. With this fact established, a coherent orientation system on the first diagram uniquely determines an equivalence class on the second diagram, since as we have already observed equivlance classes of orientation systems on $\mathcal{H}_{2}$ are determined by their values on the periodic domains based at a single intersection point. The correspondence $\pi_{2}(\bx, \bx)_{\mathcal{H}_{1}} \cong \pi_{2}(\by, \by)_{\mathcal{H}_{2}}$ is realized by a certain concatenation with a homotopy class with varying boundary conditions, as we now explain. 

Following \cite[Proof of Theorem 7.3]{Disks1}, let us denote our isotopy by $\Psi_{t}: \Sigma \rightarrow \Sigma$ and set $\balpha_{t}  = \Psi_{t}(\balpha)$ and $\bbeta_{t}  = \Psi_{t}(\bbeta)$. Define $\pi_{2}^{\Psi_{t}}(\bx, \by)$ to be the set of homotopy classes of Whitney disks which connect $\boldsymbol{x} \in \mathbb{T}_{\balpha} \cap \mathbb{T}_{\bbeta}$ to $\boldsymbol{y} \in \mathbb{T}_{\balpha '} \cap \mathbb{T}_{\bbeta '}$ and have boundary conditions $u(0,t) \in \balpha_{t}$, $u(1,t) \in \bbeta_{t}$. We now explain how a single class $\phi \in \pi_{2}^{\Psi_{t}}(\bx, \by)$ establishes the desired correspondence $\pi_{2}(\bx, \bx)_{\mathcal{H}_{1}} \cong_{\phi} \pi_{2}(\by, \by)_{\mathcal{H}_{2}}$ via a certain conjugation. Given $u$ representing $A \in \pi_{2}(\bx, \bx)_{\mathcal{H}_{1}}$ and a disk $v$ representing the class $\phi \in \pi_{2}^{\Psi_{t}}(\bx, \by)$, we can construct a disk $\bar{v} \natural u \natural v$ by concatenation. Such a disk lies in $\pi_{2}^{\Psi_{1-t}*\text{Id}*\Psi_{t}}(\by, \by)$, which is the set of homotopy classes of Whitney disks which connect $\boldsymbol{y} \in \mathbb{T}_{\balpha'} \cap \mathbb{T}_{\bbeta'}$ to itself, and have boundary conditions matching $\bar{\balpha}* \balpha_{0}* \balpha$ and $\bar{\bbeta}* \bbeta_{0}* \bbeta$ on it's two sides, where $\bar{\balpha}$ and $\bar{\bbeta}$ are the curves traversed in the opposite direction. We now claim two things:
\begin{enumerate}
	\item This correspondence establishes a bijection $$\pi_{2}(\bx, \bx) \cong \pi_{2}^{\Psi_{1-t}*\text{Id}*\Psi_{t}}(\by, \by).$$
	\item There is also a bijection $$\pi_{2}^{\Psi_{1-t}*\text{Id}*\Psi_{t}}(\by, \by) \cong \pi_{2}(\by,\by).$$
\end{enumerate}
We omit the proofs of these facts, but note that both can be understood by thinking of the space of periodic domains at $\bx$ as a subspace of the fundamental group of the path space between the Heegaard curves, based at the constant path $\bx$. In this context, one can show that an isotopy of the Heegaard curves gives rise to an identification between path spaces, and that the class $\phi$ yields an identification between the corresponding loop spaces. This line of reasoning can be used to establish both bijections. For the interested reader, a precise argument explaining related facts in a more general setting can be found in \cite[Section 3.3]{simplicial}. Finally, one should note that a class $\phi \in \pi_{2}^{\Psi_{t}}(\bx, \by)$ does in fact exist for $\by =\Psi_{1}(\bx)$, because given an intersection point $\bx \in \mathbb{T}_{\balpha} \cap \mathbb{T}_{\bbeta}$, we may just follow it with the isotopy to obtain a disk $u(s,t) = \Psi_{t}(\bx)$ which satisfies the requirements for a disk with varying boundary conditions between $\bx$ and $\by = \Psi_{1}(\bx)$. This completes the explanation of the identification between equivalence classes of coherent orientation systems on $(\Sigma, \balpha, \bbeta,z)$ and $(\Sigma, \balpha', \bbeta ', z)$.

With respect to the aforementioned orientation systems there is an induced chain homotopy equivalence
$$\Gamma_{\bbeta \rightarrow \bbeta '}^{\balpha \rightarrow \balpha '}: CF^{-}(\Sigma, \balpha, \bbeta, z, \mathfrak{s}, \mathfrak{o}_{\balpha, \bbeta}) \rightarrow CF^{-}(\Sigma, \balpha', \bbeta ', z, \mathfrak{s}, \mathfrak{o}_{\balpha ', \bbeta '})$$
which we call a continuation map associated to the Hamiltonian isotopy $\phi_{t}$. We will also use the notation
$$\Gamma_{\bbeta}^{\balpha \rightarrow \balpha '} = \Gamma_{\bbeta \rightarrow \bbeta }^{\balpha \rightarrow \balpha '}$$
and
$$\Gamma_{\bbeta \rightarrow \bbeta'}^{\balpha} = \Gamma_{\bbeta \rightarrow \bbeta' }^{\balpha \rightarrow \balpha }$$
By \cite[Lemma 2.12]{FourManifoldInvariants}, these equivalences compose naturally under concatenation of isotopies in the sense that
$$ \Gamma^{\balpha \rightarrow \balpha''}_{\bbeta} \sim  \Gamma^{\balpha' \rightarrow \balpha''}_{\bbeta} \circ \Gamma^{\balpha \rightarrow \balpha'}_{\bbeta}$$
and
$$\Gamma^{\balpha \rightarrow \balpha'}_{\bbeta \rightarrow \bbeta '} \sim \Gamma^{\balpha \rightarrow \balpha'}_{\bbeta '} \circ \Gamma^{\balpha}_{\bbeta \rightarrow \bbeta '} \sim  \Gamma^{\balpha'}_{\bbeta \rightarrow \bbeta '} \circ \Gamma^{\balpha \rightarrow \balpha'}_{\bbeta }.$$
Furthermore, by their definition in \cite[Proof of Theorem 7.3]{Disks1}, they satisfy $\Gamma_{\bbeta \rightarrow \bbeta}^{\balpha \rightarrow \balpha} = \text{id}_{CF^{-}(\Sigma, \balpha, \bbeta, z, \mathfrak{s}, \mathfrak{o}_{\balpha, \bbeta})}$. 

As suggested by the notation, we note that while the continuation map is a priori associated to a Hamiltonian isotopy between the isotopic attaching curves, in the cases of interest for us its chain homotopy class will actually be independent of the choice of isotopy. To see this, we recall:

\begin{claim}\cite[Lemma 9.1 and Section 9.1]{Disks1}
	\label{SmallIsotopy}
	Let $(\Sigma, \bbeta, \bbeta', z)$ be a pointed diagram such that each curve  $\beta_{i} '$ in $\bbeta'$ is obtained from the curve $\beta_{i}$ in $\bbeta$ by performing a small Hamiltonian isotopy which introduces two transverse intersection points between $\beta_{i}$ and $\beta_{i} '$, and no intersection points between $\beta_{i} '$ and $\beta_{j}$ for $j \neq i$.  Then the diagram represents $\#^{g}(S^{1} \times S^{2})$. There is a unique $\text{Spin}^{c}$ structure $\mathfrak{s}_{0} \in \text{Spin}^{c}(\#^{g}(S^{1} \times S^{2}))$ such that $c_{1}(\mathfrak{s}_{0}) = 0$, and the diagram $(\Sigma, \bbeta, \bbeta ', z)$  is strongly $\mathfrak{s}_{0}$-admissible. Furthermore, there is a choice of coherent orientation system $\mathfrak{o}_{\bbeta, \bbeta '}$ on this diagram such that in the highest nontrivial relative homological grading $HF^{-}(\Sigma, \bbeta, \bbeta' , z,  \mathfrak{s}_{0}, \mathfrak{o}_{\bbeta, \bbeta '})$  is isomorphic to $\mathbb{Z} =: \langle \theta_{\bbeta, \bbeta '} \rangle $ for a generator we denote $\theta_{\bbeta, \bbeta '}$.
\end{claim}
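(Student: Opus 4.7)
The plan is to verify each of the four assertions in turn, following closely the strategy used to prove the analogous handleslide result (Lemma \ref{StandardHandleslide}). First, since $\bbeta'$ is obtained from $\bbeta$ by an isotopy, the two attaching sets determine the same handlebody $H_{\bbeta}$ with boundary $\Sigma$, and so the diagram $(\Sigma, \bbeta, \bbeta', z)$ represents the double $H_{\bbeta} \cup_{\Sigma} \overline{H_{\bbeta}}$, which is precisely $\#^{g}(S^{1} \times S^{2})$. The uniqueness of $\mathfrak{s}_{0}$ with $c_{1}(\mathfrak{s}_{0})=0$ is immediate from the fact that $H^{2}(\#^{g}(S^{1} \times S^{2});\mathbb{Z}) \cong \mathbb{Z}^{g}$ is torsion-free and $\#^{g}(S^{1} \times S^{2})$ is spin.

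Second, for strong $\mathfrak{s}_{0}$-admissibility: since $c_{1}(\mathfrak{s}_{0}) = 0$, the admissibility condition reduces to the requirement that every nontrivial periodic domain have both positive and negative multiplicities. The hypothesis that $\beta_{i}'$ meets only $\beta_{i}$ (in exactly two points) among the curves of $\bbeta$ implies that the periodic domains on this diagram are generated by the $g$ thin ``strip'' domains $P_{i}$ associated to each pair $(\beta_{i}, \beta_{i}')$, together with the periodic domains coming from $(\Sigma, \bbeta)$ alone. Each $P_{i}$ has coefficients of both signs, and for sufficiently small and generic Hamiltonian isotopies one can arrange that every nontrivial integer combination of the generating periodic domains retains this property.

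Third, I would compute $HF^{-}$ in the top relative grading by taking advantage of the product-like structure forced by the hypothesis. The intersection points $\mathbb{T}_{\bbeta} \cap \mathbb{T}_{\bbeta'}$ are exactly the $2^{g}$ tuples $\mathbf{x} = (x_{1},\ldots,x_{g})$ with $x_{i} \in \beta_{i} \cap \beta_{i}'$, and all of these realize $\mathfrak{s}_{0}$. The two intersection points of each pair $\beta_{i} \cap \beta_{i}'$ are connected by two small bigons of Maslov index one, so the ``upper'' choice in each pair assembles into a single generator $\theta_{\bbeta,\bbeta'}$ sitting strictly above every other generator in the relative grading. By shrinking the isotopies further if necessary and invoking the transversality and energy estimates of \cite{Cylindrical}, the only index-one $J$-holomorphic disks with small area are the $2g$ thin bigons just described, localized in each neighborhood of $\beta_{i} \cup \beta_{i}'$.

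The main obstacle is to exhibit a coherent orientation system $\mathfrak{o}_{\bbeta,\bbeta'}$ for which these paired bigons are counted with opposite signs, so that $\theta_{\bbeta,\bbeta'}$ is a cycle whose class generates the top grading of $HF^{-}$. For this I would work one genus-$1$ region at a time: within each region, fix signs on the two generating bigons so that they cancel in $\partial \theta_{\bbeta,\bbeta'}$, and then extend these choices to a coherent orientation system on the full diagram using the standard existence argument (analogous to that invoked in Lemma \ref{ExistenceOfCoherentTriangles} for triples). Once this orientation system is in place, the chain complex in the top grading is $\mathbb{Z}\langle \theta_{\bbeta,\bbeta'}\rangle$ with vanishing incoming and outgoing differential, and the claim follows.
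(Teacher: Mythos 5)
First, there is nothing in the paper to compare against: this statement is imported verbatim from Ozsv\'ath--Szab\'o (\cite[Lemma 9.1 and Section 9.1]{Disks1}) and the paper offers no proof of it, so your sketch is essentially a reconstruction of the standard model computation from that source. Most of it is sound: the identification of the manifold as the double $\#^{g}(S^{1}\times S^{2})$, the uniqueness of $\mathfrak{s}_{0}$, and the admissibility argument all work. (A minor quibble: there are no periodic domains ``coming from $(\Sigma,\bbeta)$ alone,'' since the $\beta$-curves are linearly independent in $H_{1}(\Sigma)$; the thin domains $P_{i}$ already generate all periodic domains, and since their supports are pairwise disjoint, every nontrivial integral combination automatically has coefficients of both signs --- no genericity of the isotopy is needed.)

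The genuine gap is in the top-grading computation. To get $HF^{-}\cong\mathbb{Z}\langle\theta_{\bbeta,\bbeta'}\rangle$ in the top grading you need $\partial\theta_{\bbeta,\bbeta'}=0$ exactly, and your argument only controls index-one disks \emph{of small area}; the differential on $CF^{-}$ counts all Maslov index one classes admitting holomorphic representatives, including classes with $n_{z}>0$ or large domains, and shrinking the isotopy does not by itself exclude these. The correct (and cleaner) argument is combinatorial: a class in $\pi_{2}(\theta_{\bbeta,\bbeta'},\boldsymbol{y})$, where $\boldsymbol{y}$ differs from $\theta_{\bbeta,\bbeta'}$ in $k$ coordinates, has the form (sum of $k$ thin bigons) $+\sum_{j}a_{j}P_{j}+s[\Sigma]$ and Maslov index $k+2s$; index one forces $k=1$, $s=0$, and then nonnegativity of the local multiplicities forces $a_{j}=0$ for $j\neq i$ and $a_{i}\in\{0,-1\}$, so the two thin bigons in the $i$-th region are the \emph{only} index-one classes that can support holomorphic curves --- no energy estimate or further shrinking required. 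Relatedly, the orientation step is phrased too loosely: you cannot freely prescribe both bigon signs and then ``extend''; in any coherent system the relative sign of the two bigons over a fixed pair of generators is dictated by the orientation chosen over the periodic class $P_{i}=B^{(i)}_{1}-B^{(i)}_{2}$. The precise statement is that the $P_{i}$ form part of a generating set over which orientations may be chosen freely, and for each $i$ exactly one of the two choices makes the pair of bigon counts cancel; making these $g$ independent choices produces the desired system $\mathfrak{o}_{\bbeta,\bbeta'}$, and then $\partial\theta_{\bbeta,\bbeta'}=0$ while nothing lies above it, giving the claim.
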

Using the generator $\theta_{\bbeta, \bbeta '}$ we have an analogous triangle map to that defined above, which is also shown to be an equivalence:

\begin{claim}\cite[Theorem 9.8 and Section 9.1]{Disks1} 
	\label{IsotopyMapProperties}
	If $(\Sigma, \balpha, \bbeta, \bbeta ', z)$ is a strongly $\mathfrak{s}$-admissible triple diagram and $\bbeta'$ is related to $\bbeta$ as in the statement of Lemma \ref{SmallIsotopy} by a sufficiently small isotopy, then $$F_{\balpha, \bbeta, \bbeta '} ( \cdot \otimes \theta_{\bbeta, \bbeta '} ): CF^{-}(\Sigma, \balpha, \bbeta, z, \mathfrak{s}_{\balpha, \bbeta}, \mathfrak{o}_{\balpha, \bbeta}) \rightarrow CF^{-}(\Sigma, \balpha, \bbeta ', z, \mathfrak{s}_{\balpha, \bbeta '}, \mathfrak{o}_{\balpha, \bbeta '})$$ is a chain homotopy equivalence.

\end{claim}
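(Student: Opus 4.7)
The plan is to adapt the nearest-point strategy Ozsv\'ath--Szab\'o use for small Hamiltonian isotopies. Because the isotopy is small, each $\beta_i'$ meets $\beta_i$ in exactly two transverse points bounding a small bigon $B_i \subset \Sigma$ and is disjoint from $\beta_j$ for $j \neq i$. First, I would show that each intersection point $\boldsymbol{x} \in \mathbb{T}_{\balpha} \cap \mathbb{T}_{\bbeta}$ has a canonical nearest counterpart $\boldsymbol{x}' \in \mathbb{T}_{\balpha} \cap \mathbb{T}_{\bbeta'}$, obtained by replacing each component $x_i \in \alpha_{j(i)} \cap \beta_i$ with the unique nearby intersection in $\alpha_{j(i)} \cap \beta_i'$. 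Moreover, I would identify a distinguished homotopy class $\psi_0(\boldsymbol{x}) \in \pi_2(\boldsymbol{x}, \theta_{\bbeta,\bbeta'}, \boldsymbol{x}')$ of Maslov index zero whose underlying domain is a disjoint union of small triangles living inside the bigons $B_i$, and which is rigidly represented by a single holomorphic triangle.

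Second, I would establish that for a sufficiently small isotopy these canonical small triangles are the only classes contributing to $F_{\balpha,\bbeta,\bbeta'}(\cdot \otimes \theta_{\bbeta,\bbeta'})$. Any other Maslov index zero class in $\pi_2(\boldsymbol{x}, \theta_{\bbeta,\bbeta'}, \boldsymbol{y})$ within the same $\mathrm{Spin}^c$ structure on $X_{\balpha,\bbeta,\bbeta'}$ differs from $\psi_0(\boldsymbol{x})$ (composed with a strip) by a nontrivial triply periodic domain; since strong $\mathfrak{s}$-admissibility together with the fact that $\theta_{\bbeta,\bbeta'}$ lies in the top relative grading forces such periodic domains to contribute strictly positive symplectic area bounded below by a constant independent of the isotopy, while the area of $\psi_0(\boldsymbol{x})$ is $O(\epsilon)$, Gromov compactness rules out nonconstant holomorphic representatives of any other class for sufficiently small $\epsilon$. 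The result, after fixing the coherent orientation system $\mathfrak{o}_{\balpha,\bbeta'}$ compatibly with $\mathfrak{o}_{\balpha,\bbeta}$ and $\mathfrak{o}_{\bbeta,\bbeta'}$ via Lemma \ref{ExistenceOfCoherentTriangles}, is that on the level of modules the triangle map is $\pm$ the nearest-point bijection.

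Third, to conclude that the resulting map is a chain homotopy equivalence rather than merely an isomorphism of abelian groups, I would identify it up to homotopy with the continuation map $\Gamma^{\balpha}_{\bbeta \rightarrow \bbeta'}$ associated to the isotopy $\phi_t$: this can be done either by a direct degeneration argument, or more efficiently by invoking associativity of triangle maps on the quadruple $(\Sigma, \balpha, \bbeta, \bbeta', \bbeta, z)$ with $\bbeta$ recovered via the reverse isotopy. The main obstacle is the second step: excluding the proliferation of Maslov-index-zero classes requires a careful interplay between strong admissibility, the index-plus-area formula in the triangle setting, and the fact that $\theta_{\bbeta,\bbeta'}$ occupies an extremal grading so that no bubbling of $\alpha\beta$- or $\beta\beta'$-strips into $\theta_{\bbeta,\bbeta'}$ can raise the count. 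Once this control is in place, tracking signs is routine given the coherent choice of sections of the determinant line bundles, and the identification with $\Gamma^{\balpha}_{\bbeta \rightarrow \bbeta'}$ upgrades the isomorphism of chain complexes to a chain homotopy equivalence that is well-defined in the homotopy category.
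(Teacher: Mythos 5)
This claim is not proved in the paper at all: it is quoted from Ozsv\'ath--Szab\'o (Theorem 9.8 and Section 9.1 of the cited source), where the argument runs through associativity of triangle maps for a quadruple involving a second small pushoff together with the model computation $F_{\bbeta,\bbeta',\bbeta''}(\theta_{\bbeta,\bbeta'}\otimes\theta_{\bbeta',\bbeta''})=\pm\theta_{\bbeta,\bbeta''}$, and through the identification of the small-isotopy triangle map with the continuation map. Your step 3 is essentially that route, and in the context of this paper it already suffices by itself: Lemma \ref{ContinuationTriangleRelation} says $F_{\balpha,\bbeta,\bbeta'}(\cdot\otimes\theta_{\bbeta,\bbeta'})\sim\Gamma^{\balpha}_{\bbeta\rightarrow\bbeta'}$, and the continuation maps are chain homotopy equivalences (the reverse isotopy gives a homotopy inverse via the concatenation relations), so the triangle map is one too. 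One small repair there: in the quadruple you cannot literally reuse $\bbeta$ as the fourth set of curves, since the attaching sets must be pairwise transverse; you need a second Hamiltonian pushoff $\bbeta''$, which is the standard fix.

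The genuine gap is in your step 2, which is the part you present as the heart of the proof. First, strong $\mathfrak{s}$-admissibility does not force periodic domains to have ``strictly positive symplectic area bounded below'': admissibility is a condition on the coefficients of periodic domains, not on their areas, and in fact weak admissibility is equivalent to the existence of an area form for which every periodic domain has signed area zero. The positivity that is actually available comes from the nonnegativity of the domain of any class carrying a holomorphic representative. Second, and more seriously, Gromov compactness cannot ``rule out nonconstant holomorphic representatives of any other class for sufficiently small $\epsilon$'': energy bounds exclude classes whose area lies \emph{below} a positive threshold, never classes of large area, and the other Maslov-index-zero classes have domains supported mostly away from the isotopy region, so their areas are bounded below independently of $\epsilon$ and do not disappear as the isotopy shrinks. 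The correct statement is the filtered one: for a sufficiently small isotopy, $F_{\balpha,\bbeta,\bbeta'}(\cdot\otimes\theta_{\bbeta,\bbeta'})$ is the nearest-point identification plus terms that strictly decrease the area filtration, and one then needs a separate finiteness/invertibility argument (or, better, the identification with $\Gamma^{\balpha}_{\bbeta\rightarrow\bbeta'}$) to conclude. Note also an internal tension in your write-up: if steps 1--2 were literally true, step 3 would be superfluous, since a chain map that is a signed bijection on the free generators is already a chain isomorphism, hence a chain homotopy equivalence. As written, the proposal stands or falls on step 3, which is fine, but then the orientation bookkeeping should be routed through Lemma \ref{ExistenceOfCoherentTriangles} and the conventions of Lemma \ref{SmallIsotopy} exactly as in the cited sources rather than through the unproved exact triangle count.
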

Furthermore, we have
\begin{claim}\cite[Proposition 11.4]{Cylindrical}
	\label{ContinuationTriangleRelation}
	If the triple diagram $(\Sigma, \balpha, \bbeta, \bbeta ',z)$ is strongly $\mathfrak{s}$-admissible and $\bbeta'$ is related to $\bbeta$ as in the statement of Lemma \ref{SmallIsotopy} by a sufficiently small isotopy, then the continuation map associated to any Hamiltonian isotopy $\phi_{t}$ between $\bbeta$ and $\bbeta '$ satisfies
	$$ \Gamma_{\bbeta \rightarrow \bbeta '} ^{\balpha} \sim  F_{\balpha, \bbeta, \bbeta '} ( \cdot \otimes \theta_{\bbeta, \bbeta '} )$$
\end{claim}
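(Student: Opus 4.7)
The plan is to identify the triangle count $F_{\balpha,\bbeta,\bbeta'}(\,\cdot\,\otimes\theta_{\bbeta,\bbeta'})$ with the continuation count $\Gamma_{\bbeta\to\bbeta'}^{\balpha}$ by a neck-stretching/degeneration argument that exhibits the two moduli spaces as two ends of a single parametrized moduli space. First, I would fix an explicit chain-level representative of $\theta_{\bbeta,\bbeta'}$: by Lemma \ref{SmallIsotopy}, after the small Hamiltonian perturbation, $\beta_i$ and $\beta_i'$ meet transversely in exactly two points, and the product of the ``top'' intersection points $\theta_i^{+}\in\beta_i\cap\beta_i'$ gives a cycle representing the generator of $HF^{-}$ in the highest nontrivial grading, unique in its $\text{Spin}^{c}$-structure. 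I would also record, following \cite[Section 9.1]{Disks1}, that such small-isotopy triples and the standard-handleslide triples share the same chain-level normalization of coherent orientation, so that no auxiliary sign choice enters.

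Next, I would set up a parametrized family of pointed triples by varying the conformal parameter of the reference triangle (equivalently, by stretching along a neck in $\text{Sym}^{g}(\Sigma)$ that separates the $\bbeta$-$\bbeta'$ corner from the remaining two corners). For each $R\in[0,\infty)$ one obtains a chain map $F_R$ defined by counting triangles with this neck length, all of which are chain homotopic by a standard continuation-of-complex-structures argument, so in particular $F_{0}\sim F_{\infty}$. At $R=0$ the map is the usual triangle map $F_{\balpha,\bbeta,\bbeta'}(\,\cdot\,\otimes\theta_{\bbeta,\bbeta'})$. The goal is to identify the $R\to\infty$ end with $\Gamma_{\bbeta\to\bbeta'}^{\balpha}$.

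The heart of the argument is the compactness and gluing statement for the $R\to\infty$ limit: a sequence of $J_R$-holomorphic triangles in a fixed class $\psi$ with $\mu(\psi)=0$ and with one input at the chosen representative of $\theta_{\bbeta,\bbeta'}$ must Gromov-converge to a broken object consisting of a constant triangle at the $\theta$-corner glued to a pseudo-holomorphic strip with $\balpha$-$\bbeta$ and $\balpha$-$\bbeta'$ boundary conditions and with time-dependent almost complex structure determined by the Hamiltonian isotopy $\phi_t$. Conversely, every such strip glues to a unique triangle once $R$ is large enough, with signs matching because the gluing sign at the constant factor is trivial and the coherent orientation $\mathfrak{o}_{\balpha,\bbeta,\bbeta'}$ was chosen (via Lemma \ref{ExistenceOfCoherentTriangles}) to be compatible with $\mathfrak{o}_{\balpha,\bbeta}$ and with the canonically induced $\mathfrak{o}_{\balpha,\bbeta'}$ under the small Hamiltonian isotopy. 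These strip counts are by definition the matrix coefficients of $\Gamma_{\bbeta\to\bbeta'}^{\balpha}$, so $F_{\infty}=\Gamma_{\bbeta\to\bbeta'}^{\balpha}$ on the nose, giving $F_{\balpha,\bbeta,\bbeta'}(\,\cdot\,\otimes\theta_{\bbeta,\bbeta'})\sim\Gamma_{\bbeta\to\bbeta'}^{\balpha}$. Independence of the homotopy class from the choice of $\phi_t$ then follows from the composition/invariance properties of $\Gamma$ recorded after Lemma \ref{SmallIsotopy}.

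The main obstacle is the sign-bookkeeping in the $R\to\infty$ gluing: over $\mathbb{F}_{2}$ one only needs a bijection of moduli spaces, but over $\mathbb{Z}$ one must verify that the coherent orientation of the triangle moduli space and the coherent orientation of the continuation strip moduli space agree under the gluing map. I would isolate this as a local computation near the constant triangle component, reducing it to a linear-algebra identification of determinant lines of the index bundles; the fact that this identification is $+1$ rather than $-1$ is exactly what is forced by the coherence axiom on $\mathfrak{o}_{\balpha,\bbeta,\bbeta'}$, rather than something that must be adjusted by hand. Transversality of the relevant moduli spaces is standard for generic almost complex structures, and compactness follows from the $n_{z}=0$ hypothesis inherent in the choice of homotopy class paired with strong $\mathfrak{s}$-admissibility, ruling out bubbling off of the basepoint.
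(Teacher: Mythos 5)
The paper does not actually prove this lemma: it is imported wholesale as \cite[Proposition 11.4]{Cylindrical}, so the only meaningful comparison is with the argument intended there, and against that standard your sketch has a genuine gap at its central step. You claim that, after stretching a neck at the $\bbeta$--$\bbeta'$ corner, holomorphic triangles with input $\theta_{\bbeta,\bbeta'}$ Gromov-converge to a constant triangle glued to a strip with \emph{dynamic} boundary conditions determined by the Hamiltonian isotopy $\phi_{t}$, and hence that $F_{\infty}=\Gamma^{\balpha}_{\bbeta\rightarrow\bbeta'}$ ``on the nose.'' This cannot be right as stated: the triangle moduli problem involves only the fixed Lagrangians $\mathbb{T}_{\balpha}$, $\mathbb{T}_{\bbeta}$, $\mathbb{T}_{\bbeta'}$, and the isotopy $\phi_{t}$ is simply not part of the data, so no Gromov limit of such triangles can produce the continuation equation, whose boundary condition $u(1,t)\in\phi_{t}(\mathbb{T}_{\bbeta})$ depends on the chosen $\phi_{t}$. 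What degenerating the conformal structure of $\Delta$ (note: the neck is inserted in the domain, not ``in $\text{Sym}^{g}(\Sigma)$'') actually yields is either breaking off of $\bbeta$--$\bbeta'$ strips at that corner, or, in the opposite degeneration, disks on $[0,1]\times\mathbb{R}$ whose boundary condition jumps from $\mathbb{T}_{\bbeta}$ to $\mathbb{T}_{\bbeta'}$ at a puncture asymptotic to $\theta_{\bbeta,\bbeta'}$. Identifying such jump-boundary counts with the continuation count --- i.e.\ smoothing a jump into the moving boundary condition given by $\phi_{t}$ --- is precisely the analytic content of the cited proposition, and it requires its own interpolating family of boundary-value problems with its own transversality, compactness, gluing, and (over $\mathbb{Z}$) orientation analysis, of the kind carried out for parametrized continuation data in the proof of Theorem \ref{ContinuityAxiom}. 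Your proposal replaces exactly this step by assertion, and the sign discussion you give is attached to the incorrect gluing picture, so it does not yet address the integral content of the statement.

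Two further points. Your closing appeal to ``independence of the homotopy class from the choice of $\phi_{t}$'' via the properties recorded after Lemma \ref{SmallIsotopy} is circular in the context of this paper, since that independence is there deduced \emph{from} the present lemma; a correct write-up must either prove the homotopy for an arbitrary $\phi_{t}$ directly or prove independence of $\phi_{t}$ separately by a parametrized-moduli argument. Finally, for $CF^{-}$ one does not restrict to classes with $n_{z}=0$, so compactness should be attributed to strong $\mathfrak{s}$-admissibility and energy bounds for each power of $U$, not to ``ruling out bubbling off of the basepoint.''
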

We thus see that the continuation maps associated to small Hamiltonian isotopies of the attaching curves are independent of the choice of isotopy.

Finally, we introduce notation for a composition of triangle maps and continuation maps associated to strong $\balpha$-equivalences and strong $\bbeta$-equivalences.

\begin{defn}\cite[Section 2 and Lemma 2.13]{FourManifoldInvariants} 
	\label{StrongEquivalenceMap}
	Given two strongly $\mathfrak{s}$-admissible diagrams $(\Sigma, \balpha_{1}, \bbeta_{1}, z)$ and $(\Sigma, \balpha_{2}, \bbeta_{2},z)$ which are strongly equivalent, one can construct another pointed diagram $(\Sigma, \balpha_{1}', \bbeta_{1}', z)$ such that:
	\begin{enumerate}
		\item $\balpha_{1}'$ and $\bbeta_{1}'$ are obtained respectively from $\balpha_{1}$ and $\bbeta_{1}$ by special isotopies.
		\item $\balpha_{2}$ and $\bbeta_{2}$ are obtained respectively from $\balpha_{1}'$ and $\bbeta_{1}'$ by (small variations of) sequences of handleslides as in Lemma \ref{StandardHandleslide}.
		\item The quadruple diagram $(\Sigma, \balpha_{1}', \bbeta_{1}', \balpha_{2}, \bbeta_{2})$ is strongly $\mathfrak{s}$-admissible for the unique $\text{Spin}^{c}$-structure on $X_{\balpha_{1}', \bbeta_{1}', \balpha_{2}, \bbeta_{2}}$ which restricts to $\mathfrak{s}$ on $Y_{\balpha_{1}', \bbeta_{2}}$ and $\mathfrak{s}_{0}$ on $Y_{\balpha_{1}', \balpha_{2}}$ and $Y_{\bbeta_{1}', \bbeta_{2}}$.
	\end{enumerate}
	We define a map,
	$$\Phi_{\bbeta_{1} \rightarrow \bbeta_{2}}^{\balpha_{1} \rightarrow \balpha_{2}}(\cdot, \mathfrak{s}): CF^{-}(\Sigma, \balpha_{1}, \bbeta_{1}, z, \mathfrak{s}) \rightarrow CF^{-}(\Sigma, \balpha_{2}, \bbeta_{2}, z, \mathfrak{s})$$
	associated to two such strongly equivalent diagrams by the formula:
	$$ \Phi_{\bbeta_{1} \rightarrow \bbeta_{2}}^{\balpha_{1} \rightarrow \balpha_{2}}(\cdot, \mathfrak{s}) = \Psi_{\bbeta_{1}' \rightarrow \bbeta_{2}}^{\balpha_{2}} \circ \Psi_{\bbeta_{1}'}^{\balpha_{1}' \rightarrow \balpha_{2}} \circ \Gamma_{\bbeta_{1} \rightarrow \bbeta_{1}'}^{\balpha_{1} \rightarrow \balpha_{1}'}.$$
	We will sometimes use the notation
	$$ \Phi_{\bbeta \rightarrow \bbeta '}^{\balpha} =  \Phi_{\bbeta \rightarrow \bbeta '}^{\balpha \rightarrow \balpha}$$
	and 
	$$ \Phi_{\bbeta}^{\balpha \rightarrow \balpha '} =  \Phi_{\bbeta \rightarrow \bbeta }^{\balpha \rightarrow \balpha'}.$$
	
\end{defn}

\subsection{The Weak Heegaard Floer Invariants}
\label{WeakHeegaardFloerInvariants}
Using the previous two subsections, we are now in position to define the value on vertices of the morphism of graphs 
$$CF^{-}: \mathcal{G}_{\text{man}} \rightarrow \text{Trans}(P(\text{Kom}(\mathbb{Z}[U]\text{-}\text{Mod})))$$ which will partially define the weak invariants underlying the maps in Theorem \ref{StrongHeegaard2}. In doing so, we will also define the value on vertices of the morphism of graphs 
$$HF^{-}: \mathcal{G}_{\text{man}} \rightarrow P(\mathbb{Z}[U]\text{-}\text{Mod})$$ appearing in Corollary \ref{StrongHeegaard}.

\begin{defn}
	\label{WeakHeegaardFloer1}
	Fix some pointed isotopy diagram $H = (\Sigma, A, B, z)$ (corresponding to a vertex in $\mathcal{G}_{\text{man}}$) representing the pointed $3$-manifold $(Y,z)$. For $\mathfrak{s} \in \text{Spin}^{c}(Y)$, let
	$$\text{Admiss}_{(\Sigma, A, B,z)}(\mathfrak{s}) = \{ \text{strongly $\mathfrak{s}$-admissible diagrams } (\Sigma, \balpha, \bbeta,z) | [\balpha] = A, [\bbeta] = B \}$$
	be the set of strongly $\mathfrak{s}$-admissible diagrams representing $H$. By \cite[Proofs of Lemma 5.2 and Lemma 5.4]{Disks1}, this is nonempty for all $\mathfrak{s} \in \text{Spin}^{c}(Y)$. Choose any diagram $\mathcal{H} = (\Sigma, \balpha, \bbeta, z) \in \text{Admiss}_{(\Sigma, A, B,z)}(\mathfrak{s})$, and fix a coherent orientation system $\mathfrak{o}_{\balpha, \bbeta}$ on it. By \cite[Lemma 7.3]{Disks1}, the transitive system $CF^{-}(\Sigma, \balpha, \bbeta, z, \mathfrak{s}, \mathfrak{o}_{\balpha, \bbeta})$ can be used along with the continuation maps $\Gamma$ to induce coherent orientation systems for all strongly $\mathfrak{s}$-admissible diagrams representing the isotopy diagram $H$. Then by \cite[Lemma 2.12]{FourManifoldInvariants}, the transitive systems  $CF^{-}(\Sigma, \balpha, \bbeta, z, \mathfrak{s}, \mathfrak{o}_{\balpha, \bbeta})$ ranging over all $(\Sigma, \balpha, \bbeta, z) \in \text{Admiss}_{(\Sigma, A, B,z)}(\mathfrak{s})$ fit into a transitive system (of morphisms between transitive systems) with respect to the continuation maps $\Gamma_{\bbeta \rightarrow \bbeta '}^{\balpha \rightarrow \balpha '}$. We can therefore define a single transitive system (see Section \ref{TransitiveSystemSection}) in $\text{Kom}(\mathbb{Z}[U]\text{-}\text{Mod})$, which we denote by
	$$CF^{-}(H, \mathfrak{s}).$$
	Finally, we define the value of the weak Heegaard invariant $CF^{-}$ on the isotopy diagram $H$ by
	$$CF^{-}(H) = \bigoplus_{\mathfrak{s} \in \text{Spin}^{c}(Y)} CF^{-}(H, \mathfrak{s}).$$
	
	Passing to homology, we obtain instead that the $\mathbb{Z}[U]$-modules $HF^{-}(\Sigma, \balpha, \bbeta, z, \mathfrak{s}, \mathfrak{o}_{\balpha, \bbeta})$ for $(\Sigma, \bbeta, \balpha,z) \in \text{Admiss}_{(\Sigma, A, B,z)}(\mathfrak{s})$ fit into a transitive system of isomorphisms with respect to the  continuation maps. We denote the colimit of this transitive system by
	$$HF^{-}(H, \mathfrak{s})$$
	and define
	$$HF^{-}(H) = \bigoplus_{\mathfrak{s} \in \text{Spin}^{c}(Y)} HF^{-}(H, \mathfrak{s}).$$

	We now proceed to fix the data of the underlying coherent orientation systems we will use to define $CF^{-}(H')$ for all other isotopy diagrams $H'$ in $\mathcal{G}_{\text{man}}$. First consider the path component of $\mathcal{G}_{\text{man}}$ containing the fixed isotopy diagram $H$ chosen above. We note that by Proposition \ref{ReidemeisterSinger2}, the collection of vertices in this path component corresponds to the collection of all isotopy diagrams representing the fixed $3$-manifold $(Y,z)$. Given another isotopy diagram $H'$ in this path component, choose a sequence of edges $\gamma$ in $(\mathcal{G}_{\text{man}})_{(Y,z)}$ from $H$ to $H'$. For any diagrams $\mathcal{H} \in H$ and $\mathcal{H}'$ in $H'$, the constructions described in the previous subsections yield a composition of maps associated to $\gamma$ on the underlying chain complexes: $$CF^{-}(\gamma): CF^{-}(\mathcal{H}) \rightarrow CF^{-}(\mathcal{H}').$$ Here the sequence of maps $CF^{-}(\gamma)$ of course depends on our previously fixed choice of coherent orientation system for $\mathcal{H}$; we described in the previous subsections how each of the possible constituent maps in the composition $CF^{-}(\gamma)$ induces a coherent orientation system on the target given a coherent orientation system on the domain, and it is this induced orientation system that we fix on $\mathcal{H}'$. One can check that this induced orientation on $\mathcal{H}'$ is independent of the choice of path $\gamma$, by verifying the commutativity of the induced orientations occurring in each of the five types of distinguished rectangle, and in a simple handleswap. We will verify this commutativity in Section \ref{monodromyoforientationsystems}. We thus see that our specification of the coherent orientation systems $\mathfrak{o}_{\balpha, \bbeta}$ on all diagrams $\mathcal{H}$ representing $H$ actually yields a choice of coherent orientation systems for all diagrams in the same path component as $H$. Repeating this entire procedure for all path components in $\mathcal{G}_{\text{man}}$, we have thus defined 
	$$CF^{-}(H) = \bigoplus_{\mathfrak{s} \in \text{Spin}^{c}(Y)} CF^{-}(H, \mathfrak{s})$$
	and 
	$$HF^{-}(H) = \bigoplus_{\mathfrak{s} \in \text{Spin}^{c}(Y)} HF^{-}(H, \mathfrak{s})$$
	for all isotopy diagrams $H$ in $\mathcal{G}_{\text{man}}$.
\end{defn}

\begin{remark}
	We interpret the role of coherent orientations in the definition above loosely as follows. If one fixes any Heegaard diagram for a $3$-manifold, there are numerous inequivalent choices of coherent orientation system (in fact there are $2^{b_{1}(Y)}$ such choices, see \cite[Lemma 4.16]{Disks1}). The above definition just says one should fix whichever choice they prefer, and then take care to use the maps induced by the standard Heegaard moves (or diffeomorhisms isotopic to the identity) to carry this choice around when considering different Heegaard diagrams for the same $3$-manifold.   
\end{remark}

To finish defining the weak Heegaard invariants, we need to associate isomorphisms to all edges in $\mathcal{G}_{\text{man}}$. We begin by assigning maps to edges corresponding to strong $\balpha$-equivalences and strong $\bbeta$-equivalences. 

\begin{defn}
	\label{AlphaEquivalenceMap}
	Given two strongly $\balpha$-equivalent isotopy diagrams $H_{1}= (\Sigma, A, B,z)$, $H_{2} = (\Sigma, A', B,z) \in |\mathcal{G}_{\text{man}}|$ representing $(Y,z)$, and $\mathfrak{s} \in \text{Spin}^{c}(Y)$, fix strongly $\mathfrak{s}$-admissible diagrams $(\Sigma, \balpha, \bbeta,z)$ and $(\Sigma, \balpha ', \bbeta ,z)$ representing them. As above, this is possible by \cite[Section 5]{Disks1}. Then by \cite[Theorem 2.3 and Lemma 2.13]{FourManifoldInvariants}, the chain homotopy equivalences $\Phi_{\bbeta}^{\balpha \rightarrow \balpha '}$ fit into a morphism of transitive systems between the transitive systems $CF^{-}(H, \mathfrak{s})$ appearing in Definition \ref{WeakHeegaardFloer1}. Thus for the edge $e \in \mathcal{G}_{\text{man}}^{\balpha}(H_{1}, H_{2})$ corresponding to the strong $\balpha$-equivalence, we can associate this collection of chain homotopy equivalences (or equivalently, this collection of isomorphisms in $\text{Kom}(\mathbb{Z}[U]\text{-}\text{Mod})$) to obtain a morphism
	$$\Phi_{e} := \Phi_{B}^{A \rightarrow A'}: CF^{-}(H_{1}) \rightarrow CF^{-}(H_{2})$$
	We note that such a collection of chain homotopy equivalences is precisely the notion of an isomorphism in $\text{Trans}(\text{Kom}(\mathbb{Z}[U]\text{-}\text{Mod}))$. We define the chain homotopy equivalences associated to a strong $\bbeta$-equivalence analogously.
\end{defn}

To finish defining the weak Heegaard invariants, we assign isomorphisms to stablizations and diffeomorphisms in the next two subsections.

\subsection{Stabilization Maps}
We recall maps on the Heegaard Floer chain complexes which can be associated to stabilizations. Given a strongly $\mathfrak{s}$-admissible diagram $\mathcal{H} = (\Sigma, \balpha, \bbeta, z)$ and a stablization thereof, $\mathcal{H} ' = (\Sigma \# \Sigma_{0}, \balpha ', \bbeta ', z)$, each coherent orientation system $\mathfrak{o}$ on $\mathcal{H}$ induces a coherent orientation system $\mathfrak{o}'$ on $\mathcal{H}'$. With respect to these orientation systems, there is a $\mathbb{Z}[U]\text{-}\text{equivariant}$ chain isomorphism $$\sigma_{\mathcal{H} \rightarrow \mathcal{H}'}: CF^{-}_{J_{s}}(\Sigma, \balpha, \bbeta,z, \mathfrak{s}, \mathfrak{o}) \rightarrow CF^{-}_{J_{s}'(T)}(\Sigma \# \Sigma_{0}, \balpha ', \bbeta ',z, \mathfrak{s}, \mathfrak{o}')$$ defined for sufficiently large values of a parameter $T$.
This is established in \cite[Theorems 10.1 and 10.2]{Disks1}. 

The curves $\balpha ' \cup \bbeta '$ are obtained as the disjoint union of $\balpha \cup \bbeta$ along with a pair of closed curves $\alpha '$, $\beta '$ contained in $\Sigma_{0}$ which intersect transversally in a single point we will denote by $c$. We can identify the intersection points in the two diagrams above by assigning to an intersection point $\boldsymbol{x} \in \mathbb{T}_{\balpha} \cap \mathbb{T}_{\bbeta}$ the intersection point $\sigma_{\mathcal{H} \rightarrow \mathcal{H}'}(\boldsymbol{x}) = \boldsymbol{x} \times c \in \mathbb{T}_{\balpha '} \cap \mathbb{T}_{\bbeta '}$. Fix complex structures $j_{\Sigma}$ on $\Sigma$ and $j_{\Sigma_{0}}$ on $\Sigma_{0}$, and let $j'(T)$ denote the complex structure on $\Sigma \# \Sigma_{0}$ defined by inserting a neck of length $T$ between $(\Sigma,j_{\Sigma})$ and $(\Sigma_{0}, j_{\Sigma_{0}})$. Then one can associate to a perturbation $J_{s}$ of $\text{Sym}^{g}(j_{\Sigma})$ on $\text{Sym}^{g}(\Sigma)$ and a perturbation $J^{0}_{s}$ of $j_{\Sigma_{0}}$,  a perturbation $J_{s}'(T)$ of $\text{Sym}^{g+1}(j'(T))$ on $\text{Sym}^{g+1}(\Sigma \# \Sigma_{0})$. The key argument needed to establish the above chain isomorphism then comes in the form of a neck stretching argument which yields the following glueing result: for sufficiently large values of $T$, a homotopy class of Whitney disk $\phi \in \pi_{2}(\boldsymbol{x}, \boldsymbol{y})$ on $\Sigma$ with Maslov index 1, and the corresponding homotopy class $\phi ' \in \pi_{2}(\boldsymbol{x} \times c, \boldsymbol{y} \times c)$ on $\Sigma \# \Sigma_{0}$ with Maslov index 1, there is an identificaton of moduli spaces $\mathcal{M}_{J_{s}}(\phi) \cong \mathcal{M}_{J_{s}'(T)}(\phi ')$. From this it follows readily that the above map is a $\mathbb{Z}[U]\text{-}\text{equivariant}$ chain isomorphism.

\begin{defn}
	\label{StabilizationMaps}
	Given \textit{isotopy} diagrams $H$ and $H'$, with $H'$ obtained from $H$ via a stabilization, we can associate a morphism of transitive systems $$\sigma_{H \rightarrow H'}: CF^{-}(H) \rightarrow CF^{-}(H')$$
	as follows. Fixing any $\text{Spin}^{c}$-structure $\mathfrak{s}$, strongly $\mathfrak{s}$-admissible representatives $\mathcal{H}$ and $\mathcal{H}'$ which realize the stabilization, and almost complex structure data on $\mathcal{H}$, there is some choice of almost complex structure data on $\mathcal{H}'$ for which the stabilization ismorphism is defined. As described in \cite[Lemma 2.15]{FourManifoldInvariants}, the stabilization maps $\sigma_{\mathcal{H} \rightarrow \mathcal{H}'}$ commute with the change of almost complex structure maps, and with the strong equivalence maps. This implies that the chain isomorphisms $\{\sigma_{\mathcal{H} \rightarrow \mathcal{H}'} \}$, when the complex structures are chosen so that they are defined, satisfy the commutativity requirements required of a morphism of transitive systems as in Definition \ref{MapOfTransitiveSystems}. We can complete this partially defined morphism of transitive systems for other choices of complex structure data by declaring the stabilization map $\sigma_{\mathcal{H} \rightarrow \mathcal{H}'}$ to be computed for allowable complex structure data, followed by the appropriate change of almost complex structure homotopy equivalence $\Phi_{J_{s} \rightarrow J_{s}'}$. We define the morphism of transitive systems associated to the corresponding destablization to be the inverse of $\sigma_{H \rightarrow H'}$.

	On the level of homology, we obtain via the colimit construction in Definition \ref{WeakHeegaardFloer1} canonical isomorphisms $i_{\mathcal{H}}: HF^{-}(\mathcal{H}) \rightarrow HF^{-}(H)$ and $i_{\mathcal{H}'}: HF^{-}(\mathcal{H}') \rightarrow HF^{-}(H')$. We set $\sigma_{H \rightarrow H'} =i_{\mathcal{H}'} \circ \sigma_{\mathcal{H} \rightarrow \mathcal{H}'} \circ  i_{\mathcal{H}}^{-1}$ for any choice of such $\mathcal{H}$, $\mathcal{H}'$. This is independent of the choice of diagrams $\mathcal{H}$ and $\mathcal{H'}$ by the aforementioned result \cite[Lemma 2.15]{FourManifoldInvariants}
\end{defn}
\subsection{Diffeomorphism Maps}
Finally, we need to discuss how diffeomorphisms of Heegaard surfaces lead to maps on the associated chain complexes. We use the following definition:

\begin{defn}\cite[Definition 9.23]{Naturality}
	\label{MapInducedByDiffeo}
	Fix a strongly $\mathfrak{s}$-admissible diagram $(\Sigma, \balpha, \bbeta, z)$, with $|\balpha| = | \bbeta | = k$. Let $j$ be an almost complex structure on $\Sigma$, and $J_{s}$ be a perturbation of the almost complex structure $\text{Sym}^{k}(j)$ on $\text{Sym}^{k}(\Sigma)$. Let $\mathfrak{o}$ be a coherent orientation system on the diagram.  Fix a diffeomorphism $d: \Sigma \rightarrow \Sigma' $, and set $d(\balpha) = \balpha '$, $d(\bbeta) = \bbeta '$. We define an associated map
	as follows. First, the almost complex structure $j$ and perturbation $J_{s}$ can be conjugated via the differential of $d$ to obtain $j' = d_{*}(j)$ on $\Sigma$ and $J_{s}' = d_{*}(J_{s})$ a perturbation of $d_{*}(j)$ on $\text{Sym}^{k}(\Sigma')$. The diffeomorphism $d$ provides an identification between periodic classes $\pi_{2}(\boldsymbol{x}, \boldsymbol{x}) \cong \pi_{2}(\boldsymbol{x}', \boldsymbol{x}')$ for $\boldsymbol{x} \in \mathbb{T}_{\balpha} \cap \mathbb{T}_{\bbeta}$ and $\boldsymbol{x}' \in \mathbb{T}_{\balpha '} \cap \mathbb{T}_{\bbeta '}$. We use this identification to push forward the coherent orientation system $\mathfrak{o}$ to obtain an induced orientation system $\mathfrak{o}'$. This yields a chain isomorphism
	$$d_{J_{s}, J_{s}'}: CF^{-}_{J_{s}}( \Sigma, \balpha, \bbeta, z, \mathfrak{s}, \mathfrak{o}) \rightarrow CF^{-}_{J_{s}'}(\Sigma ', \balpha ' , \bbeta ', z',  d(\mathfrak{s}), \mathfrak{o}')$$ as can be seen easily by a direct argument pushing forward all intersection points, and holomorphic discs connecting two such, via $d$. We note that the change of complex structure maps commute with the maps $d_{J_{s}, J_{s}'}$ (by a direct check), so there is also an induced map of transitive systems
	$$d_{*}: CF^{-}( \Sigma, \balpha, \bbeta, z, \mathfrak{s}) \rightarrow CF^{-}(\Sigma' , \balpha ', \bbeta ', z', d(\mathfrak{s}))$$
	Finally, by Lemma \ref{ContinuationTriangleRelation} and \cite[Lemma 9.24]{Naturality} the maps $d_{*}$ commute with the maps $\Gamma_{\bbeta \rightarrow \bbeta ' }^{\balpha \rightarrow \balpha '}$ appearing in Definition \ref{WeakHeegaardFloer1}. Thus by using the continuation maps the maps $d_{*}$ can be extended to a morphism of the transitive systems in Definition \ref{WeakHeegaardFloer1}
	$$d_{*}: CF^{-}(H, \mathfrak{s}) \rightarrow  CF^{-}(H', d(\mathfrak{s}))$$
	where $H = (\Sigma, [\balpha], [\bbeta], z)$ and $H'  = (\Sigma ', [\balpha '], [\bbeta '], z')$.
	
	On the level of homology, the above definitions give a well defined map of the $\mathbb{Z}[U]$-modules in Definition \ref{WeakHeegaardFloer1},
	$$d_{*}: HF^{-}(H, \mathfrak{s}) \rightarrow HF^{-}(H', d(\mathfrak{s})).$$
	
\end{defn}

\subsection{Monodromy of Orientation Systems}
\label{monodromyoforientationsystems}
We now establish the claim made in Definition \ref{WeakHeegaardFloer1} that there is no monodromy of induced orientations systems around loops of diagrams. This will finish the proof that the Heegaard Floer invariants are weak Heegaard invariants, and will also establish in particular that there is no monodromy of induced orientation systems around the special loops relevant to strong Heegaard invariance. We will show:

\begin{claim}
	\label{monodromy}
	There is no monodromy of coherent orientation systems around loops composed of isomorphisms associated to isotopies, handleslides, stabilizations and diffeomorphisms of diagrams.
\end{claim}

\begin{cor}
	\label{specialmonodromy}
	There is no monodromy of coherent orientation systems around the loops determined by simple handleswaps and distinguished rectangles.
\end{cor}

\begin{rem}
	\label{inducedorientation}
	Note that we have already described how each type of Heegaard move induces a map on the Heegaard Floer chain complex which is defined with respect to an orientation system induced on the codomain from one on the domain. See Section \ref{trianglemapssection}, Definition \ref{StabilizationMaps} and Definition \ref{MapInducedByDiffeo} for the relevant definitions and references.
\end{rem}

To prove Lemma \ref{monodromy}, it will be useful to think first about the canonical orientation systems, introduced in \cite{Disks2}, in particular. We first note the following fact about those orientation systems:

\begin{claim}
	\label{canmonodromy}
	The maps associated to isotopies, handleslides, stabilizations and diffeomorphisms take canonical orientations to canonical orientations.	
\end{claim}
\begin{proof}
	By \cite[Section 8]{Disks2}, each such map induces an isomorphism on the totally twisted module $\underline{HF}^{\infty}$. By \cite[Theorem 10.1]{Disks2}, the canonical orientation system is characterized by the resulting isomorphism type of $\underline{HF}^{\infty}$. \qedhere
\end{proof}

\begin{cor}
	\label{canmonodromy2}
	The maps associated to the loops defining simple handleswaps and distinguished rectangles take canonical orientations to canonical orientations.
\end{cor}

Having established the statement of Lemma \ref{monodromy} for canonical orientation systems, we now turn to proving it for general orientation systems. Fix a Heegaard diagram $\mathcal{H}$ for $Y$, and let
$$ \mathcal{O}_{\mathcal{H}} = \{\text{equivalence classes of coherent orientation systems on } \mathcal{H} \}$$
and $\mathfrak{o}_{c} \in \mathcal{O}_{\mathcal{H}}$ be the canonical orientation system. There is a map
$$\text{diff}_{\mathfrak{o}_{c}}: \mathcal{O}_{\mathcal{H}} \rightarrow \text{Hom}(\Pi_{\bx}^{\mathcal{H}}, \mathbb{Z}/2\mathbb{Z}),$$
where $\Pi_{\bx}^{\mathcal{H}}$ is the group of periodic domains based at $\bx$ in $\mathcal{H}$, defined by measuring the difference between an orientation system and $\mathfrak{o}_{c}$ on each periodic domain. In symbols, for an orientation system $\mathfrak{o}$, $\text{diff}_{\mathfrak{o}_{c}}(\mathfrak{o})$ is a map:
$$ \text{diff}_{\mathfrak{o}_{c}}(\mathfrak{o}): \Pi_{\bx}^{\mathcal{H}} \rightarrow \mathbb{Z}/2\mathbb{Z}$$
satisfying
\[ \text{diff}_{\mathfrak{o}_{c}}(\mathfrak{o})[D] = \begin{cases} 
0 & \text{if } \mathfrak{o}|_{D} = \mathfrak{o}_{c}|_{D} \\
1 & \text{if } \mathfrak{o}|_{D} \neq \mathfrak{o}_{c}|_{D} 
\end{cases}\]
We note that the analogous map $\text{diff}_{\mathfrak{o}}$ can be defined for any coherent orientation system $\mathfrak{o}$ on $\mathcal{H}$.

\begin{claim}
	$$\text{diff}_{\mathfrak{o}_{c}}: \mathcal{O}_{\mathcal{H}} \rightarrow \text{Hom}(\Pi_{\bx}^{\mathcal{H}}, \mathbb{Z}/2\mathbb{Z})$$ is a well-defined bijection.
\end{claim}
\begin{proof}
	We've already seen in the proof of Lemma \ref{EquivalenceClassesonS1xS2} that equivalence classes of orientation systems are determined by their values on a basis for the periodic domains based at $\bx$. Thus if $\text{diff}_{\mathfrak{o}_{c}}(\mathfrak{o}) = \text{diff}_{\mathfrak{o}_{c}}(\mathfrak{o}')$, then $\mathfrak{o}$ and $\mathfrak{o}'$ agree with $\mathfrak{o}_{c}$ on the same set of domains, and $\mathfrak{o} = \mathfrak{o}'$, so $\text{diff}_{\mathfrak{o}_{c}}$ is injective.
	
	Given a morphism $\phi: \Pi_{\bx}^{\mathcal{H}} \rightarrow \mathbb{Z}/2\mathbb{Z}$, define $\mathfrak{o}$ to satisfy
	\[ \mathfrak{o}|_{D} = \begin{cases} 
	\mathfrak{o}_{c}|_{D} & \text{if } \phi(D) = 0 \\
	- \mathfrak{o}_{c}|_{D} & \text{if } \phi(D) = 1 
	\end{cases}\]
	By the comments in Remark \ref{uniquenessremark} , $\mathfrak{o}$ can then be extended over a complete set of paths to obtain a coherent orientation system satisfying $\text{diff}_{\mathfrak{o}_{c}}(\mathfrak{o}) = \phi$.
\end{proof}

Note that by Remark \ref{inducedorientation}, any loop L composed of Heegaard moves induces a map
$$\text{L}: \mathcal{O}_{\mathcal{H}} \rightarrow \mathcal{O}_{\mathcal{H}}.$$
With this notation, proving Lemma \ref{monodromy} amounts to showing that $\text{L}(\mathfrak{o})=\mathfrak{o}$ for each diagram $\mathcal{H}$ and each coherent orientation system $\mathfrak{o} \in \mathcal{O}_{\mathcal{H}}$, while Corollary \ref{canmonodromy2} says $\text{L}(\mathfrak{o}_{c})=\mathfrak{o}_{c}$. To prove Lemma \ref{monodromy}, we will show that the maps on orientation systems induced by Heegaard moves commute with the $\text{diff}$ maps in the following sense. Given a Heegaard move from a diagram $\mathcal{H}_{1}$ to a diagram $\mathcal{H}_{2}$, let $f:\mathcal{O}_{\mathcal{H}_{1}} \rightarrow \mathcal{O}_{\mathcal{H}_{2}}$ be the induced map of coherent orientation systems. Similarly, let $\widetilde{f}: \text{Hom}(\Pi_{\bx}^{\mathcal{H}_{1}}, \mathbb{Z}/2\mathbb{Z}) \rightarrow \text{Hom}(\Pi_{\bx'}^{\mathcal{H}_{2}}, \mathbb{Z}/2\mathbb{Z})$ be the map induced from precomposition with the identifications $ \Pi_{\bx'}^{\mathcal{H}_{2}} \cong H_{2}(Y) \cong \Pi_{\bx}^{\mathcal{H}_{1}}$ described in \cite[Proposition 2.15 and Lemma 2.17]{Disks1}. We then have the following result:

\begin{claim}
	For each of the maps $f$ on coherent orientation systems induced  by Heegaard moves, we have 
	$$\widetilde{f} \circ \text{diff}_{\mathfrak{o}} = \text{diff}_{f(\mathfrak{o})} \circ \text{f}$$
	for all coherent orientation systems $\mathfrak{o}$.
\end{claim}

\begin{proof}
	For each type of Heegaard move, the definition of the map $f$ (which specifies how a coherent orientation on the starting diagram determines one on the target diagram) can be described by an identification $$\phi:  \Pi_{\bx'}^{\mathcal{H}_{2}} \rightarrow \Pi_{\bx}^{\mathcal{H}_{1}}.$$
	If we let $\widetilde{\phi}$ be the map
	$$\widetilde{\phi}: \text{Hom}(\Pi_{\bx}^{\mathcal{H}_{1}}, \mathbb{Z}/2\mathbb{Z}) \rightarrow \text{Hom}(\Pi_{\bx'}^{\mathcal{H}_{2}}, \mathbb{Z}/2\mathbb{Z})$$ 
	induced by precomposition with $\phi$, then for each Heegaard move one can show that 
	\begin{equation}
	\label{obviouscommutativity}
	\widetilde{\phi} \circ \text{diff}_{\mathfrak{o}} = \text{diff}_{f(\mathfrak{o})} \circ \text{f}.
	\end{equation}
	For example, in the case of a handleslide this follows from an inspection of the proof of \cite[Lemma 8.7]{Disks1} (or see for comparison Remark \ref{uniquenessremark}), as we now explain. Let $(\Sigma, \balpha, \bbeta, \bgamma, z)$ be a triple diagram determining a handleslide, as in Lemma \ref{HandleslideMapProperties}, and $\mathcal{H}_{\balpha, \bbeta}$ and $\mathcal{H}_{\balpha, \bgamma}$ be the initial and final diagrams for the handleslide. Fix a homotopy class of triangle $\psi_{0} \in \pi_{2}(\bx, \by, \bz)$ in the triple diagram. The map $f: \mathcal{O}_{\mathcal{H}_{\balpha, \bbeta}} \rightarrow \mathcal{O}_{\mathcal{H}_{\balpha, \bgamma}}$ on orientation systems induced by the handleslide is defined by applying Lemma \ref{ExistenceOfCoherentTriangles} with the orientation $\mathfrak{o}_{\bbeta, \bgamma}$ on the intermediary diagram $\mathcal{H}_{\bbeta, \bgamma}$ chosen to be that of Lemma \ref{StandardHandleslide}. Recall that in this case the indeterminacy of Lemma \ref{ExistenceOfCoherentTriangles} disappears, as the group $Q$ from Remark \ref{uniquenessremark} is zero, so an orientation $\mathfrak{o}_{\balpha, \bgamma}$ is uniquely determined by an orientation $\mathfrak{o}_{\balpha, \bbeta}$. The key property we will need to recall from this particular application of the construction of Lemma \ref{ExistenceOfCoherentTriangles} is that for each periodic class, $A_{\balpha, \bgamma} \in \Pi_{\bx}^{\mathcal{H}_{\balpha, \bgamma}}$, there is a unique pair of classes $A_{\bbeta, \bgamma}$ and $A_{\balpha, \bbeta}$ such that
	\begin{equation} 
	\label{uniquedomain}
	\psi_{0} + A_{\balpha, \bgamma} = \psi_{0} + A_{\bbeta, \bgamma} + A_{\balpha, \bbeta},
	\end{equation}
	and furthermore the induced orientation is constructed such that this relation is respected by the orientations over these domains (see \cite[Proof of Lemma 8.7]{Disks1}). With this in mind, we proceed to establish Equation \eqref{obviouscommutativity} as follows. Fix a periodic domain $P_{\balpha, \bgamma} \in \Pi_{\bx}^{\mathcal{H}_{\balpha, \bgamma}}$, and two orientation systems $\mathfrak{o}_{\balpha, \bbeta}$ and $\mathfrak{o}_{\balpha, \bbeta}'$ on $\mathcal{H}_{\balpha, \bbeta}$. Let $\mathfrak{o}_{\balpha, \bgamma} = f(\mathfrak{o}_{\balpha, \bbeta})$ and $\mathfrak{o}_{\balpha, \bgamma}' = f(\mathfrak{o}_{\balpha, \bbeta}')$ be the corresponding orientations induced by the handleslide, as described above. Finally, let $\phi: \Pi_{\bx}^{\mathcal{H}_{\balpha, \bbeta}} \rightarrow \Pi_{\bx}^{\mathcal{H}_{\balpha, \bgamma}}$ be the map which sends a domain $P_{\balpha, \bbeta}$ to the unique class $P_{\balpha, \bgamma}$ specified by Equation \eqref{uniquedomain}. We then compare the two sides of Equation \eqref{obviouscommutativity}, and find
	\begin{equation*}
	(\widetilde{\phi} \circ \text{diff}_{\mathfrak{o}_{\balpha, \bbeta}}(\mathfrak{o}_{\balpha, \bbeta}'))[P_{\balpha, \bgamma}] = (\mathfrak{o}_{\balpha, \bbeta} - \mathfrak{o}_{\balpha, \bbeta}')[P_{\balpha, \bbeta}]\\
	\end{equation*} 
	while
	\begin{align*}
	(\text{diff}_{f(\mathfrak{o}_{\balpha, \bbeta})} \circ f(\mathfrak{o}_{\balpha, \bbeta}'))[P_{\balpha, \bgamma}] &= (\mathfrak{o}_{\balpha, \bgamma} - \mathfrak{o}_{\balpha, \bgamma}')[P_{\balpha, \bgamma}]\\
	&= (\mathfrak{o}_{\balpha, \bbeta} - \mathfrak{o}_{\balpha, \bbeta}')[P_{\balpha, \bbeta}] + (\mathfrak{o}_{\beta, \bgamma} - \mathfrak{o}_{\bbeta, \bgamma}')[P_{\bbeta, \bgamma}] \\
	&= (\mathfrak{o}_{\balpha, \bbeta} - \mathfrak{o}_{\balpha, \bbeta}')[P_{\balpha, \bbeta}]
	\end{align*}
	where the second equality uses the fact that the induced orientations respect Equation \eqref{uniquedomain} by construction, and the third equality uses the fact that $\mathfrak{o}_{\bbeta, \bgamma} = \mathfrak{o}_{\bbeta, \bgamma}'$. This completes the proof of Equation \eqref{obviouscommutativity} for the case of handleslides.
	
	With Equation \eqref{obviouscommutativity} understood, we further claim that for each Heegaard move the identification
	$$ \phi: \Pi_{\bx'}^{\mathcal{H}_{2}} \rightarrow \Pi_{\bx}^{\mathcal{H}_{1}}$$
	used to define $f$ and $\widetilde{\phi}$ agrees with the identification $$ \Pi_{\bx'}^{\mathcal{H}_{2}} \cong H_{2}(Y) \cong \Pi_{\bx}^{\mathcal{H}_{1}}$$
	used to define $\widetilde{f}$. In particular, this implies that $\widetilde{f} = \widetilde{\phi}$, which together with Equation \eqref{obviouscommutativity} proves the lemma. 
	
	In the case of a handleslide, the aforementioned claim follows from the following observations:
	\begin{enumerate}
		\item Each periodic domain is uniquely determined by the part of it's boundary that lies on one set of attaching curves.
		\item In the case at hand, the identification $\phi$ preserves the part of the boundary of domains that lies on one set of attaching circles, and can be characterized as the unique identification of periodic domains with that property.
		\item The identifications
		$$ \Pi_{\bx}^{\mathcal{H}_{1}} \cong H_{2}(Y)$$
		and
		$$ \Pi_{\bx'}^{\mathcal{H}_{2}} \cong H_{2}(Y)$$
		(described in \cite[Lemma 2.17 and subsequent remarks]{Disks1}) are determined by the part of the boundary of each periodic domain that lies on one set of attaching curves.
	\end{enumerate}
	The first two facts imply that for each periodic domain $D \in \Pi_{\bx}^{\mathcal{H}_{2}}$, $D$ and $\phi(D)$ share the part of their boundary that lies on one set of attaching curves, and the third fact then ensures that $D$ and $\phi(D)$ have the same image in $H_{2}(Y)$, which establishes the claim.
	
	The preceding argument proves that the lemma holds for handleslides. For the other Heegaard moves, Equation \eqref{obviouscommutativity} once again follows directly from the definition of the maps $f$ and $\phi$ (although in these cases the definitions are themselves simpler), and the second claim follows from considerations analogous  to those listed above. We leave the details of these cases to the reader. \qedhere
\end{proof}

\begin{cor}
	\label{comm}
	For any loop L of Heegaard moves, $\widetilde{L} \circ \text{diff}_{\mathfrak{o}} = \text{diff}_{L(\mathfrak{o})} \circ \text{L}$ for all coherent orientation systems $\mathfrak{o}$.
\end{cor}

\begin{proof}[Proof of Lemma \ref{monodromy}]
	Corollary \ref{comm} applied to $\mathfrak{o}_{c}$ yields:
	\[
	\widetilde{\text{L}} \circ \text{diff}_{\mathfrak{o}}(\mathfrak{o}_{c}) = \text{diff}_{L(\mathfrak{o})} \circ \text{L}(\mathfrak{o}_{c}) = \text{diff}_{L(\mathfrak{o})}(\mathfrak{o}_{c})
	\]
	where the last equality comes from Corollary \ref{canmonodromy2}. Since we are considering here a loop of diagrams, the map $\widetilde{L}$ is the identity, and the previous equation yields:
	\[
	\text{diff}_{\mathfrak{o}}(\mathfrak{o}_{c}) =  \text{diff}_{L(\mathfrak{o})}(\mathfrak{o}_{c})
	\]
	or, equivalently,
	\[
	\mathfrak{o} = L(\mathfrak{o})
	\]
	as desired \qedhere
\end{proof}

\section{Heegaard Floer Homology as a Strong Heegaard Invariant}\label{MainProof}

In the previous section we recalled the definition of the weak Heegaard invariants 
$$CF^{-}: \mathcal{G}_{\text{man}} \rightarrow \text{Trans}(P(\text{Kom}(\mathbb{Z}[U]\text{-}\text{Mod})))$$  
and 
$$HF^{-}: \mathcal{G}_{\text{man}} \rightarrow P(\mathbb{Z}[U]\text{-}\text{Mod})$$ 
underlying the strong Heegaard invariants appearing in Theorem \ref{StrongHeegaard2} and Corollary \ref{StrongHeegaard} respectively. To establish Theorem \ref{StrongHeegaard2} we need to check the four axioms required of a strong Heegaard invariant in Definition \ref{StrongHeegaardInvariant}. 

The proofs of axioms 1 and 2 given in \cite[Section 9.2, pg 131]{Naturality} for $\mathbb{F}_{2}[U]\text{-} \text{Mod}$ apply almost directly to establish axioms 1 and 2 for $CF^{-}$ and $HF^{-}$ as Heegaard invariants valued in $\text{Trans}(P(\text{Kom}(\mathbb{Z}[U]\text{-}\text{Mod})))$ and $P(\mathbb{Z}[U]\text{-} \text{Mod})$ respectively, as we now summarize for $CF^{-}$. 

For axiom 1, the functoriality of $CF^{-}$ restricted to $\mathcal{G}_{\text{man}}^{\alpha}$ and $\mathcal{G}_{\text{man}}^{\beta}$ follows from Lemma \ref{HandleslideMapProperties} and \cite[Theorem 2.3]{FourManifoldInvariants}. The functoriality of $CF^{-}$ restricted to $\mathcal{G}_{\text{man}}^{\text{diff}}$ is immediate from Definition \ref{MapInducedByDiffeo}. Finally, for a stabilization $e$ and the corresponding destabilization $e'$,  $CF^{-}(e') = CF^{-}(e)^{-1}$ by Definition \ref{StabilizationMaps}.

For axiom 2, we need to establish that the images under $CF^{-}$ of distinguished rectangles in $\mathcal{G}_{\text{man}}$ (recall Definition \ref{distinguishedrectangle}) form commuting rectangles. For a rectangle of type 1, commutativity follows from Lemma \ref{HandleslideMapProperties} and \cite[Theorem 2.3]{FourManifoldInvariants}. For a rectangle of type 2, commutativity follows from \cite[Lemma 2.15]{FourManifoldInvariants}. For a rectangle of type 3, commutativity follows from \cite[Lemma 9.24]{Naturality}. Finally, rectangles of type 4 and 5 can be seen to commute by directly applying the arguments in \cite[pg. 131]{Naturality}.

We now investigate axiom 3. Let $H = (\Sigma, A, B, z) \in |\mathcal{G}_{\text{man}}|$ be an isotopy diagram, $d: H \rightarrow H$ a diffeomorphism of isotopy diagrams which is isotopic to $\text{Id}_{\Sigma}$, and $d_{*} := CF^{-}(e)$ where $e \in \mathcal{G}^{\text{diff}}_{\text{man}}(H,H)$ is the edge corresponding to $d$. We need to show $d_{*} =  \text{Id}_{CF^{-}(H)}$ as morphisms of transitive systems in $P(\text{Kom}(\mathbb{Z}[U]\text{-}\text{Mod}))$.  We adapt and restate the argument given in \cite[Proposition 9.27]{Naturality} in order to explain why it can be applied to the case of (projective) integral coefficients. We show the following result.

\begin{thm}\label{ContinuityAxiom}
	Let $(\Sigma, \balpha, \bbeta, z)$ be a strongly $\mathfrak{s}$-admissible diagram with $|\balpha| = |\bbeta| = g$. Suppose that $d: \Sigma \rightarrow \Sigma$ is a diffeomorphism isotopic to $\text{Id}_{\Sigma}$, and let $\balpha ' = d(\balpha)$ and $\bbeta ' = d(\bbeta)$. Let $\mathfrak{o}_{\balpha, \bbeta}$ be a coherent orienation system on $(\Sigma, \balpha, \bbeta, z)$ and $\mathfrak{o}_{\balpha ', \bbeta '}$ be the coherent orientation system on $(\Sigma, \balpha ', \bbeta ', z)$ induced by $d$. Then with respect to these orienation systems, we have
	$$ d_{*} = \pm \Gamma^{\balpha \rightarrow \balpha'}_{\bbeta \rightarrow \bbeta'} : HF^{-}(\Sigma, \balpha, \bbeta,z, \mathfrak{s}, \mathfrak{o}_{\balpha, \bbeta}) \rightarrow HF^{-}(\Sigma, \balpha', \bbeta',z', \mathfrak{s}, \mathfrak{o}_{\balpha ', \bbeta '})$$
	Furthermore, as maps 
	$$d_{*}, \pm \Gamma^{\balpha \rightarrow \balpha'}_{\bbeta \rightarrow \bbeta'} : CF^{-}(\Sigma, \balpha, \bbeta,z, \mathfrak{s}, \mathfrak{o}_{\balpha, \bbeta}) \rightarrow CF^{-}(\Sigma, \balpha', \bbeta', z', \mathfrak{s}, \mathfrak{o}_{\balpha ', \bbeta '})$$
	$d_{*}$ is chain homotopic to one of $\pm \Gamma^{\balpha \rightarrow \balpha'}_{\bbeta \rightarrow \bbeta'}$. 
\end{thm}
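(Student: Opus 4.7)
The plan is to realize both $d_*$ and $\Gamma^{\balpha \rightarrow \balpha'}_{\bbeta \rightarrow \bbeta'}$ as arising from a single one-parameter family associated with an isotopy from $\text{Id}_\Sigma$ to $d$, and then compare them via a standard parametrized moduli-space argument. First, I would fix an ambient isotopy $\phi_t: \Sigma \rightarrow \Sigma$ with $\phi_0 = \text{Id}_\Sigma$ and $\phi_1 = d$, supported away from a neighborhood of $z$, and set $\balpha_t = \phi_t(\balpha)$, $\bbeta_t = \phi_t(\bbeta)$. After a small perturbation we may assume that the induced isotopies of attaching curves are exact Hamiltonian with respect to a chosen area form on $\Sigma$, so that the continuation map $\Gamma^{\balpha \rightarrow \balpha'}_{\bbeta \rightarrow \bbeta'}$ may be defined using precisely this isotopy.

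The next step is to recast $d_*$ as a ``tautological'' continuation map for the family $(\balpha_t, \bbeta_t, J^t_s)$, where $J^t_s = (\phi_t)_*(J_s)$: pushforward by $\phi_t$ gives a bijection between the $J_s$-holomorphic disks on $(\Sigma, \balpha, \bbeta)$ counted by $d_*$ and the $J^1_s$-holomorphic disks on $(\Sigma, \balpha', \bbeta')$, and more generally identifies the parametrized moduli spaces associated to the family with fixed boundary conditions on $(\Sigma, \balpha, \bbeta)$. On the other hand, $\Gamma^{\balpha \rightarrow \balpha'}_{\bbeta \rightarrow \bbeta'}$ is defined using the \emph{fixed} almost complex structure $J_s$ with the same moving boundary conditions $(\balpha_t, \bbeta_t)$. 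Interpolating through a two-parameter family of almost complex structures from $J^t_s$ to the constant family $J_s$ while retaining the boundary conditions $(\balpha_t, \bbeta_t)$ yields, via the standard analysis of parametrized moduli spaces, a chain homotopy between $d_*$ and $\Gamma^{\balpha \rightarrow \balpha'}_{\bbeta \rightarrow \bbeta'}$. Over $\mathbb{F}_2$ this is essentially the argument of \cite[Proposition 9.27]{Naturality}, and the same holomorphic-curve analysis applies verbatim over $\mathbb{Z}$.

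The new content is controlling the sign. The orientations on the interpolating moduli spaces are inherited from a single coherent orientation system on the total space of the interpolation, specialized to the two endpoints of the family; since coherent orientation systems on a fixed diagram form a discrete set on which the automorphism group acts by overall signs on each $\text{Spin}^c$-component (cf.\ \cite[Lemma 4.16]{Disks1}), the orientation system $d(\mathfrak{o}_{\balpha, \bbeta})$ used implicitly by $d_*$ and the continuation-induced system appearing in $\Gamma$ can disagree at most by an overall sign on the $\mathfrak{s}$-component, which accounts for the $\pm$ in the statement. The transitive-systems refinement then follows because both $d_*$ and $\Gamma^{\balpha \rightarrow \balpha'}_{\bbeta \rightarrow \bbeta'}$ commute up to homotopy with the continuation and change-of-almost-complex-structure maps appearing in Definition \ref{WeakHeegaardFloer1}, by \cite[Lemma 2.12]{FourManifoldInvariants} and Lemma \ref{ContinuationTriangleRelation}, so the chain-level identification descends to the transitive systems. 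The main obstacle is precisely the orientation bookkeeping through the two-parameter interpolation; the analytic core demands no new input beyond the $\mathbb{F}_2$ argument.
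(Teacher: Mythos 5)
Your overall architecture matches the paper's: recast the diffeomorphism map as a continuation map for the moving data $(\balpha_t, \bbeta_t, J^t_s)$ with $J^t_s = (\phi_t)_*(J_s)$, then compare it to $\Gamma^{\balpha \rightarrow \balpha'}_{\bbeta \rightarrow \bbeta'}$ by interpolating the defining data through a two-parameter family, with the $\pm$ absorbed into orientation bookkeeping for the parametrized moduli spaces. The second half of your sketch is essentially the paper's argument and is fine in outline.

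The gap is your second step, where you treat the identification of $d_*$ with the ``tautological'' continuation map as free. Pushforward by $\phi_t$ does identify the moving-boundary, moving-$J$ moduli spaces with moduli spaces of maps having boundary on the fixed tori $\mathbb{T}_{\balpha}, \mathbb{T}_{\bbeta}$, but because the conjugation is $t$-dependent the resulting equation picks up an inhomogeneous term involving the vector field generating $\phi_t$: it is a Hamiltonian-perturbed Floer equation, not the unperturbed $J_s$-equation, so its Maslov index zero count is not visibly the nearest-point/pushforward isomorphism $d_{J_s, J_s'}$ of Definition \ref{MapInducedByDiffeo}. Proving $d_{J_s,J_s'} \sim \Gamma_{d_t}$ is in fact the technically substantive half of the theorem: one must decompose $d$ into a composition of diffeomorphisms that are small Hamiltonian perturbations of the identity (with controlled intersections of the curves), use Gromov compactness to see that for each small piece the index zero solutions are nearly constant so the continuation map is a nearest point map, choose orientations over reference classes $\phi_{\bx} \in \pi_{2}^{d_t}(\bx, \bx')$ and run a gluing argument to check that the coherent orientation system this induces on the target agrees with the one pushed forward by $d$, and finally reassemble the pieces via neck-stretching to show $\Gamma_{d_t} \sim \Gamma_{d^{N}_t} \circ \cdots \circ \Gamma_{d^{1}_t}$. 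None of this is supplied by the bijection of moduli spaces you invoke. Relatedly, your sign bookkeeping is too coarse: coherent orientation systems on a fixed diagram do not in general differ by an overall sign (there are $2^{b_1(Y)}$ inequivalent choices), and in the statement the systems $\mathfrak{o}_{\balpha, \bbeta}$ and $\mathfrak{o}_{\balpha', \bbeta'}$ are fixed in advance; the residual $\pm$ arises from the freedom in orienting the auxiliary parametrized moduli spaces defining the continuation maps and the chain homotopy, not from re-choosing an orientation system on the diagram.
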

In fact, this theorem will establish axiom (3) in Definition \ref{StrongHeegaardInvariant} for the weak Heegaard invariants $CF^{-}$ and $HF^{-}$ above. Since $d$ is isotopic to $\text{Id}_{\Sigma}$ by hypothesis, we have $\balpha' $ is isotopic to $\balpha$ and $ \bbeta '$ is isotopic to $\bbeta$, so $H := (\Sigma, [\balpha], [\bbeta], z) = (\Sigma, [\balpha'], [\bbeta'], z')$. The induced map of transitive systems $d_{*}: CF^{-}(H) \rightarrow CF^{-}(H)$ defined in Definition \ref{MapInducedByDiffeo} is then computed by extending the following map by conjugation with the continuation maps:
$$CF^{-}(\Sigma, \balpha, \bbeta, z, \mathfrak{o}_{\balpha, \bbeta}) \xrightarrow{d_{*}} CF^{-}(\Sigma, \balpha ', \bbeta ',  z, \mathfrak{o}_{\balpha ', \bbeta '}) \xrightarrow{\Gamma_{\balpha ' \rightarrow \balpha}^{\bbeta ' \rightarrow \bbeta}} CF^{-}(\Sigma, \balpha, \bbeta, z, \mathfrak{o}_{\balpha, \bbeta}).$$ 
Since $\Gamma_{\balpha ' \rightarrow \balpha}^{\bbeta ' \rightarrow \bbeta} \sim (\Gamma_{\balpha  \rightarrow \balpha'}^{\bbeta  \rightarrow \bbeta '})^{-1}$ and $d_{*} \sim \pm \Gamma_{\balpha  \rightarrow \balpha'}^{\bbeta  \rightarrow \bbeta '}$ by Theorem \ref{ContinuityAxiom}, we see that $d_{*}: CF^{-}(H) \rightarrow CF^{-}(H)$ is the extension of a map $CF^{-}(\Sigma, \balpha, \bbeta, z, \mathfrak{o}_{\balpha, \bbeta}) \rightarrow CF^{-}(\Sigma, \balpha, \bbeta, z, \mathfrak{o}_{\balpha, \bbeta})$ which is homotopic to plus or minus the identity. Thus we see that $d_{*} = \text{Id}_{CF^{-}(H)}$ as morphisms in $\text{Trans}(P(\text{Kom}(\mathbb{Z}[U]\text{-}\text{Mod})))$.

\begin{proof}[Proof of Theorem \ref{ContinuityAxiom}]
	Since $d$ is isotopic to $\text{id}_{\Sigma}$, we may decompose it into a composition of diffeomorphisms $d_{i}$ on some diagrams $\mathcal{H}_{i} = (\Sigma, \balpha_{i}, \bbeta_{i})$, such that each $d_{i}$ is Hamiltonian isotopic to $id_{\Sigma}$ for some symplectic form $\omega_{i}$ on $\Sigma$, and the diagrams satisfy the intersection properties $|\alpha \cap d_{i}(\alpha) | = |\beta \cap d_{i}(\beta) | = 2$ for all $\alpha \in \boldsymbol{\alpha}_{i-1}$ and $\beta \in \boldsymbol{\beta}_{i-1}$. As described in \cite[Proposition 9.27]{Naturality}, it will suffice to prove the result for such a $d_{i}$. So let $d_{t}$ for $t \in \mathbb{R}$ be a Hamiltonian isotopy which is independent of $t$ for $t \in (-\infty, 0]$ and $t \in [1, \infty)$, and which connects $id_{\Sigma}$ to a diffeomorphism $d$ of $\mathcal{H} = (\Sigma, \balpha, \bbeta)$. Throughout the proof, we will use the notation $d_{t}(\balpha)= \balpha_{t}$, $d_{t}(\bbeta) = \bbeta_{t}$, and use primes to indicate the values of various quantities at $t =1$.
	
	Fix the data of a complex structure $j$ on $\Sigma$ and a perturbation $J_{s}$ of $\text{Sym}^{g}(j)$ on $\text{Sym}^{g}(\Sigma)$, and for $t \in \mathbb{R}$ let $j_{t} =(d_{t})_{*}(j)$ and $J_{s,t} = (\text{Sym}^{g}(d_{t}))_{*}(J_{s})$. As described in the sections above, there are numerous chain maps on the Heegaard Floer chain complexes we can associate with the isotopy $d_{t}$ and this induced almost complex structure data. We will be concerned here with the following three:
	
	\begin{enumerate}
		\item We can change the almost complex structure on $\text{Sym}^g(\Sigma)$ from $J_{s} = J_{s,0}$ to $J_{s}' = J_{s,1}$, while leaving the attaching curves unchanged, and consider the induced map
		$$\Phi_{J_{s} \rightarrow J_{s}'}: CF^{-}_{J_{s}}( \Sigma, \balpha, \bbeta, z, \mathfrak{o}_{\balpha, \bbeta}) \rightarrow CF^{-}_{J_{s}'}( \Sigma, \balpha , \bbeta, z, \mathfrak{o}_{\balpha, \bbeta}).$$
		We recall here that this map is defined (in \cite{Disks1}) by counting Maslov index 0 discs $u: [0,1] \times \mathbb{R} \rightarrow \text{Sym}^{g}(\Sigma)$ connecting some $\boldsymbol{x} \in \mathbb{T}_{\balpha} \cap \mathbb{T}_{\bbeta}$ to some $\boldsymbol{y} \in \mathbb{T}_{\balpha } \cap \mathbb{T}_{\bbeta }$, which satisfy $u(0,t) \in \balpha$, $u(1,t) \in \bbeta$ and $du/ds+ J_{s,t}(du/dt) =0$. 
		
		\item We can leave the almost complex structures $(j, J_{s})$ fixed, and consider the effect on the Floer complex of altering only the attaching curves via the map
		$$\Gamma_{\balpha \rightarrow \balpha '}^{\bbeta \rightarrow \bbeta '}: CF^{-}_{J_{s}}( \Sigma, \balpha, \bbeta, z, \mathfrak{o}_{\balpha, \bbeta}) \rightarrow CF^{-}_{J_{s}}( \Sigma, \balpha ', \bbeta ', z, \mathfrak{o}_{\balpha', \bbeta'})$$
		associated to the Hamiltonian isotopy $d_{t}$. In this case, the map is defined by counting Maslov index 0  discs $u$ connecting some $\boldsymbol{x} \in \mathbb{T}_{\balpha} \cap \mathbb{T}_{\bbeta}$ to some $\boldsymbol{y} \in \mathbb{T}_{\balpha '} \cap \mathbb{T}_{\bbeta '}$ as above, but with dynamic boundary conditions $u(0,t) \in \balpha_{t}$, $u(1,t) \in \bbeta_{t}$, and which satisfy $du/ds + J_{s}(du/dt) =0$.
		
		\item We define a new sort of continuation map associated with $d_{t}$,
		$$\Gamma_{d_{t}}: CF^{-}_{J_{s}}( \Sigma, \balpha, \bbeta, z, \mathfrak{o}_{\balpha, \bbeta}) \rightarrow CF^{-}_{J_{s}'}( \Sigma, \balpha ', \bbeta ', z, \mathfrak{o}_{\balpha ' , \bbeta '}) $$
		which combines the ideas from the previous two. This map is defined to count Maslov index 0 discs $u$ which connect some $\boldsymbol{x} \in \mathbb{T}_{\balpha} \cap \mathbb{T}_{\bbeta}$ to some $\boldsymbol{x}' \in \mathbb{T}_{\balpha '} \cap \mathbb{T}_{\bbeta '}$, have dynamic boundary conditions $u(0,t) \in \balpha_{t}$, $u(1,t) \in \bbeta_{t}$, and which satisfy $du/ds + J_{s,t}(du/dt) =0$. We will denote the set of homotopy classes of Whitney disks (not necessarily $J_{s,t}$-holomorphic) satisfying the boundary conditions above by $\pi_{2}^{d_{t}}(\boldsymbol{x},\boldsymbol{x}')$, and for $\phi \in \pi_{2}^{d_{t}}(\boldsymbol{x},\boldsymbol{x}')$ we will denote the moduli space of $J_{s,t}$-holomorphic maps representing $\phi$ by $\mathcal{M}^{d_{t}}(\phi)$.
		
	\end{enumerate}
	
	We claim that the third map in the list above is in fact chain homotopic to the map $d_{J_{s}, J_{s}'}$ from Definition \ref{MapInducedByDiffeo}. To see this, we first explain that if a diffeomorphism (which we also indicate by $d$, as an abuse of notation) $d: \Sigma \rightarrow \Sigma$ isotopic to the identity (via an isotopy $d_{t}$) is sufficiently close to $\text{Id}_{\Sigma}$, then the map defined in case (3) above satisfies $\Gamma_{d_{t}} = d_{J_{s}, J_{s}'}$ as chain maps. Indeed, by taking $d$ to be a sufficiently small perturbation of $\text{Id}_{\Sigma}$, we may ensure the isotopy $d_{t}$ is arbitrarily close to being constant in $t$. For an isotopy which is constant in $t$, the definition of the continuation map in (3) above counts Maslov index $0$ disks with fixed boundary conditions which are $J_{s}$-holomorphic. The only such maps are constant maps. Thus, by Gromov compactness, if the isotopy $d_{t}$ is sufficiently close to being constant, the Maslov index $0$ solutions to the equation appearing in the definition of $\Gamma_{d_{t}}$ will be close enough to constant disks to ensure that $\Gamma_{d_{t}}$ will be a nearest point map. 
	
	Next we note that the definition of $\Gamma_{d_{t}}$ depends on a choice of coherent orientation system for the moduli spaces $\mathcal{M}^{d_{t}}(\phi)$. As explained in \cite[Proof of Proposition 7.3]{Disks1}, when $\pi_{2}^{d_{t}}(\boldsymbol{x}, \boldsymbol{x}') \neq 0$  a single homotopy class $\phi \in \pi_{2}^{d_{t}}(\boldsymbol{x}, \boldsymbol{x}') \cong \mathbb{Z}$ yields via glueing an identification between periodic classes $\pi_{2}(\boldsymbol{x}, \boldsymbol{x}) \cong_{\phi} \pi_{2}(\boldsymbol{x}', \boldsymbol{x}')$ on the two diagrams, and a choice of orientation for $\mathcal{M}^{d_{t}}(\phi)$ then yields an identification between coherent orientation systems on the two diagrams. Thus given a coherent orientation system $\mathfrak{o}_{\balpha, \bbeta}$ on $(\Sigma, \balpha, \bbeta)$, and an orientation on $\mathcal{M}^{d_{t}}(\phi)$, we obtain an induced orientation $\mathfrak{o}_{\balpha ', \bbeta '}$ on $(\Sigma, \balpha ', \bbeta ')$ with respect to which the map is defined. We claim that we may arrange for this induced orientation to agree with that induced by $d_{J_{s}, J_{s}'}$. Indeed, fix for each $\boldsymbol{x} \in \mathbb{T}_{\balpha} \cap \mathbb{T}_{\bbeta}$ a homotopy class $\phi_{\boldsymbol{x}} \in \pi_{2}^{d_{t}}(\boldsymbol{x}, \boldsymbol{x}')$. We can choose orientations on all such $\mathcal{M}^{d_{t}}(\phi_{\boldsymbol{x}})$ freely such that $\Gamma_{d_{t}}$ is the positive nearest point map (with the generator corresponding to an intersection point being taken to the positive generator corresponding to the nearest intersection point after the isotopy is performed), and then extend these choices to a coherent system. The coherent orientation $\mathfrak{o}_{\balpha ', \bbeta '}$ on $( \Sigma, \balpha', \bbeta', z')$ induced by $\Gamma_{d_{t}}$ that results will then be the same as that induced by $d_{J_{s}, J_{s}'}$, as we now explain.  Fix $\boldsymbol{x}, \boldsymbol{y} \in \mathbb{T}_{\balpha} \cap \mathbb{T}_{\bbeta}$ and let $\boldsymbol{x} ' =d(\boldsymbol{x})$ and  $\boldsymbol{y}' =d(\boldsymbol{y})$ be the corresponding interesection points in $\mathbb{T}_{\balpha'} \cap \mathbb{T}_{\bbeta'}$. Given a homotopy class $\psi \in \pi_{2}(\boldsymbol{x}, \boldsymbol{y})$ and a positively oriented Whitney disk $u$ from $\boldsymbol{x}$ to $\boldsymbol{y}$ in the class $\psi$, the orientation system induced by $d_{J_{s}, J_{s}'}$ will positively orient the corresponding disk $d(u)$ representing the class $d(\psi) \in \pi_{2}(\boldsymbol{x}', \boldsymbol{y}')$ (see Definition \ref{MapInducedByDiffeo}). We need to show that the disk $d(u)$ is also positively oriented in the orientation system induced by $\Gamma_{d_{t}}$. As described above, the orientation on $d(u)$ induced by $\Gamma_{d_{t}}$ is specified as follows. We consider representative disks $v_{1}$ and $v_{2}$ for the classes $\phi_{\boldsymbol{x}} \in \pi_{2}^{d_{t}}(\boldsymbol{x}, \boldsymbol{x}')$ and  $\phi_{\boldsymbol{y}} \in \pi_{2}^{d_{t}}(\boldsymbol{y}, \boldsymbol{y}')$, which we may assume are both positively oriented by the choice we made for orientatations on $\mathcal{M}^{d_{t}}(\phi_{\boldsymbol{x}})$ and  $\mathcal{M}^{d_{t}}(\phi_{\boldsymbol{y}})$. We then consider the glued disk $v_{2} \natural u \natural \overline{v_{1}}$. Since an orientation has been specified on each constituent disk and our system is coherent, this glued disk also has a specified orientation, which is positive given our choices. Finally, we note that this disk is identified with $d(u)$ under the identification between coherent orientation systems in the two diagrams, and thus $d(u)$ must also be oriented positively. We thus see that both maps induce the same coherent orientation system on the target and both take the form of the positive nearest point map, so $\Gamma_{\phi_{t}} = \phi_{J_{s}, J_{s}'}$.
	
	Finally, we can decompose our original diffeomorphism $d: (\Sigma, \balpha_{0}, \bbeta_{0}) \rightarrow (\Sigma, \balpha_{1}, \bbeta_{1})$ into a sequence of diffeomorphisms $d^{1}, d^{2}, \cdots, d^{N}$, where $d^{i}: (\Sigma, \balpha_{(i-1)/N}, \bbeta_{(i-1)/N}) \rightarrow (\Sigma, \balpha_{i/N}, \bbeta_{i/N})$ and each $d^{i}$ is isotopic to $\text{Id}_{\Sigma}$ via isotopies $d^{i}_{t}$. For sufficiently large $N$, we can ensure that the continuation map $\Gamma_{d^{i}_{t}}$ associated to each consitituent isotopy satisfies 
	$$\Gamma_{d^{i}_{t}} = (d^{i})_{J_{s,(i-1)/N},J_{s,i/N}}$$ 
	by the argument in the preceding paragraphs. Furthermore, by inserting long necks one can see that the composition of the corresponding continuation maps is homotopic to the original continuation map:
	$$\Gamma_{d_{t}} \sim \left( \Gamma_{d^{N}_{t}} \circ \cdots \circ \Gamma_{d^{1}_{t}} \right).$$
	Since
	$$d_{J_{s}, J_{s}'} = d^{N}_{J_{s,(N-1)/N}, J_{s,1}} \circ \cdots \circ d^{1}_{J_{s,0}, J_{s,1/N}} $$
	we thus see that $d_{J_{s}, J_{s}'} \sim \Gamma_{d_{t}}$, which establishes the claim.
	
	Using Definition \ref{MapInducedByDiffeo} we have $d_{*} = \Phi_{J_{s}' \rightarrow J_{s}} \circ d_{J_{s}, J_{s}'}$. Thus to complete the proof it will in fact suffice to show that $\Phi_{J_{s}' \rightarrow J_{s}} \circ d_{J_{s}, J_{s}'} \sim \pm \Gamma^{\alpha \rightarrow \alpha '}_{\beta \rightarrow \beta'}$, or, since $d_{J_{s}, J_{s}'} \sim \Gamma_{d_{t}}$ and $\Phi_{J_{s}' \rightarrow J_{s}}^{-1} \sim \Phi_{J_{s} \rightarrow J_{s}'}$, to show that
	\begin{equation}\label{SufficientCondition}
	\Gamma_{d_{t}} \sim \pm \Phi_{J_{s} \rightarrow J_{s}'} \circ \Gamma^{\alpha \rightarrow \alpha '}_{\beta \rightarrow \beta'}.
	\end{equation}
	
	To see that Equation \eqref{SufficientCondition} is true, we consider the following generalized notion of a continuation map, of which each of the three maps involved are a special case. Consider a Hamiltonian isotopy $\phi_{t}$ and a generic two parameter family of almost complex structures $K_{s,t}$ on $\text{Sym}^{g}(\Sigma)$ which are perturbations of $\text{Sym}^{g}(k_{t})$ where $k_{t}$ is a one parameter family of complex structures on $\Sigma$. Here we assume for convenience as above that this data is independent of $t$ for $t \in (-\infty, 0]$ and $t \in [1, \infty)$. We set $\balpha_{t} = \phi_{t}(\balpha)$ and $\bbeta_{t}= \phi_{t}(\bbeta)$. Given such data we can associate the \textit{continuation map with respect to  $(\phi_{t}, K_{s,t})$}:
	\begin{equation}\label{ContinuationMapAlongPath}
	\Gamma_{(\phi_{t}, K_{s,t})} : CF^{-}_{K_{s, 0}}(\Sigma, \balpha_{0}, \bbeta_{0}) \rightarrow CF^{-}_{K_{s,1}}(\Sigma, \balpha_{1}, \bbeta_{1})
	\end{equation} 
	by counting Maslov index 0 discs $u$ connecting some $\boldsymbol{x} \in \mathbb{T}_{\balpha_{0}} \cap \mathbb{T}_{\bbeta_{0}}$ to some $\boldsymbol{y} \in \mathbb{T}_{\balpha_{1}} \cap \mathbb{T}_{\bbeta_{1}}$, with dynamic boundary conditions $u(0,t) \in \balpha_{t}$, $u(1,t) \in \bbeta_{t}$, and which satisfy $$\dfrac{du}{ds} + K_{s,t}(\dfrac{du}{dt}) =0.$$ 
	
	The maps $\Gamma_{d_{t}}, \Phi_{J_{s} \rightarrow J_{s}'}$ and $\Gamma^{\alpha \rightarrow \alpha '}_{\beta \rightarrow \beta'}$ above are then the continuation maps with respect to the data $(d_{t}, J_{s,t})$, $(\text{id}_{\Sigma}, J_{s,t})$ and $(d_{t}, J_{s,0})$ respectively. Furthermore, since the homotopy classes of such continuation maps are natural under concatenation and rescaling of the $\phi_{t}$ and $K_{s,t}$ by \cite[Lemma 2.12]{FourManifoldInvariants} (see also the argument below), the composite $ \Phi_{J_{s} \rightarrow J_{s}'} \circ \Gamma^{\alpha \rightarrow \alpha '}_{\beta \rightarrow \beta'}$ is homotopic to the continuation map defined with respect to the data 
	
	\[ (d_{t,1}, J_{s,t,1}) := \begin{cases} 
	(d_{2t}, J_{s,0}) & t \in [0,1/2] \\
	(\text{id}_{\Sigma}, J_{s,2t-1}) & t \in [1/2,1].
	\end{cases}
	\]

	\begin{figure}[h!]
		\centering
		\begin{tikzpicture}
		\node[anchor=south west,inner sep=0] (image) at (0,0) {\includegraphics[width= .7\textwidth]{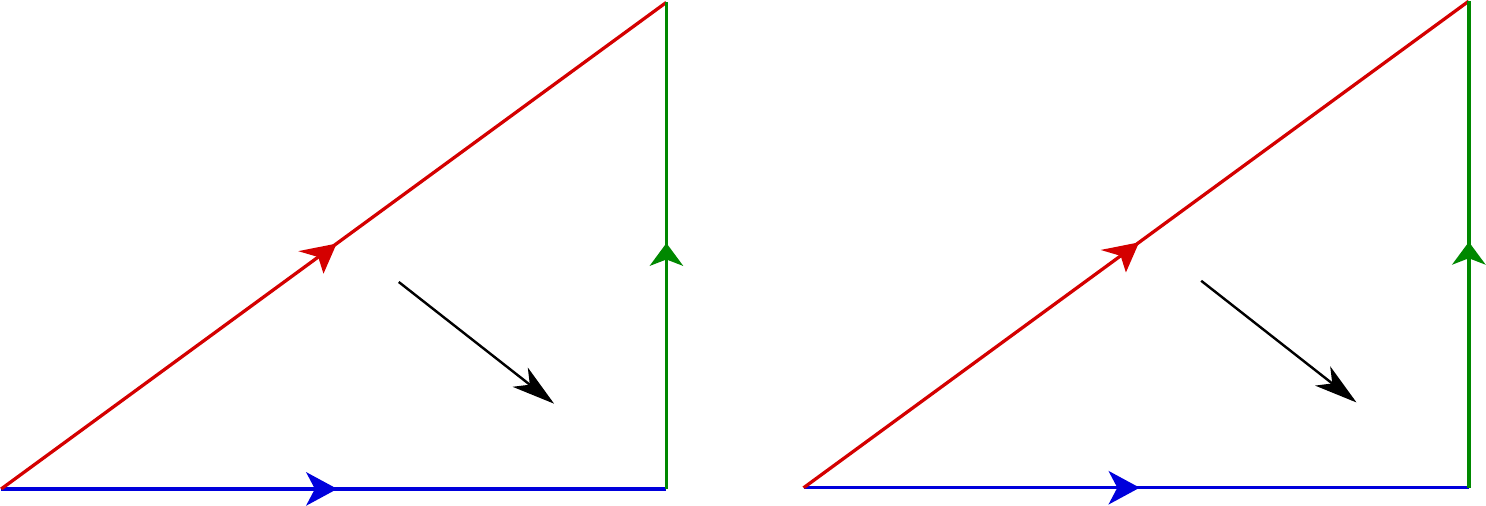}};
		\begin{scope}[x={(image.south east)},y={(image.north west)}]
		\node[text = red] at (.24,.66) {\large $J_{s,t}$};
		\node[text = red] at (.78,.65) {\large $d_{t}$};
		\node[text = blue] at (.27,-.04) {\large $J_{s}$};
		\node[text = blue] at (.8,-.04) {\large $d_{2t}$};
		\node[text = ForestGreen] at (.51,.41) {\large $J_{s,2t-1}$};
		\node[text = ForestGreen] at (1.02,.4) {\large $\text{Id}$};
		\node at (.36,.38) {\large $K_{s,t,\tau}$};
		\node at (.89,.38) {\large $\phi_{t,\tau}$};
		
		\end{scope}
		\end{tikzpicture}
		\caption{A schematic of the complex structure and isotopy data defining the continuation maps $\textcolor{red}{\Gamma_{d_{t}}}$ and (a continuation map homotopic to) $ \textcolor{ForestGreen}{\Phi_{J_{s} \rightarrow J_{s}'}} \circ \textcolor{blue}{\Gamma_{\bbeta \rightarrow \bbeta '}^{\balpha \rightarrow \balpha '}}$, and the homotopies between the two sets of data. The data defining $\Gamma_{d_{t}}$ is represented by the top edges of the two triangles, while the data defining $\Phi_{J_{s} \rightarrow J_{s}'} \circ \Gamma_{\bbeta \rightarrow \bbeta '}^{\balpha \rightarrow \balpha '}$ is represented by the bottom edges followed by the vertical edges.}
		\label{continuationmap}
	\end{figure}

	Consider now two Hamiltonian isotopies $\phi_{t,0}$ and $\phi_{t,1}$ with $\phi_{0,0} = \phi_{0,1} = \text{id}_{\Sigma}$ and $\phi_{1,0} = \phi_{1,1}$, and two generic two parameter families $K_{s,t,0}$ and $K_{s,t,1}$ with $K_{s,0,0} = K_{s,0,1}$ and $K_{s,1,0} = K_{s,1,1}$. We will complete the proof by showing that a generic homotopy $h =(\phi_{t,\tau}, K_{s,t,\tau})$ between $(\phi_{t,0}, K_{s,t,0})$ and $(\phi_{t,1}, K_{s,t,1})$  induces a chain homotopy between $\Gamma_{(\phi_{t,0}, K_{s,t,0})}$ and $\pm \Gamma_{(\phi_{t,1}, K_{s,t,1})}$. In particular, Equation \eqref{SufficientCondition} will follow, as the data $(d_{t}, J_{s,t})$ used to define $\Gamma_{d_{t}, J_{s,t}} =: \Gamma_{d_{t}} $ is homotopic to the data $(d_{t,1}, J_{s,t,1})$ used to define $\Gamma_{d_{t,1}, J_{s,t,1}} \sim \Phi_{J_{s} \rightarrow J_{s}'} \circ \Gamma^{\alpha \rightarrow \alpha '}_{\beta \rightarrow \beta'}$.
	
	Fixing $\tau$, let $\pi_{2}^{\tau}(\boldsymbol{x}, \boldsymbol{y})$ denote the homotopy classes of discs $u$ which connect $\boldsymbol{x}$ to $\boldsymbol{y}$, and which satisfy the boundary conditions  $u(0,t) \in \phi_{t,\tau}(\balpha)$, $u(1,t) \in \phi_{t,\tau}(\bbeta)$. Given a homotopy class $\phi \in \pi_{2}^{\tau}(\boldsymbol{x}, \boldsymbol{y})$, we denote by $\mathcal{M}_{\tau}(\phi)$ the moduli space of discs in the class $\phi$ satisfying 
	$$\dfrac{du}{ds} + K_{s,t,\tau}(\dfrac{du}{dt}) =0 $$
	We note that for fixed $\tau$, the definition of the continuation map with respect to $(\phi_{t,\tau}, K_{s,t,\tau})$ given above can be restated succinctly as counting Maslov index 0 discs in the moduli spaces $\mathcal{M}_{\tau}(\phi)$. For any $\tau$, the homotopy $h$ induces an identification between homotopy classes of discs $\pi_{2}^{0}(\boldsymbol{x}, \boldsymbol{y}) \cong \pi_{2}^{\tau}(\boldsymbol{x}, \boldsymbol{y})$. Using this identification, we may define for each $\phi \in \pi_{2}^{0}(\boldsymbol{x}, \boldsymbol{y})$ the moduli space
	
	\begin{equation}
	\mathcal{M}^{h}(\phi) = \bigcup_{\tau \in I} \mathcal{M}_{\tau}(\phi) \times \{\tau\}
	\end{equation}
	
	For a generic choice of homotopy $h$, this is a manifold of dimension $\mu(\phi) + 1$. We use this moduli space to define a chain homotopy $H^{h}: CF^{-}_{K_{s,0}}(\Sigma, \balpha_{0}, \bbeta_{0}) \rightarrow CF^{-}_{K_{s,1}}(\Sigma, \balpha_{1}, \bbeta_{1})$ between $\Gamma_{(\phi_{t,0}, K_{s,t,0})}$ and $\Gamma_{(\phi_{t,1}, K_{s,t,1})}$ associated with the homotopy $h$. For $\boldsymbol{x} \in \mathbb{T}_{\balpha} \cap \mathbb{T}_{\bbeta}$ we set
	
	$$H^{h}( [\boldsymbol{x}, i ]) = \sum_{\boldsymbol{y} \in \mathbb{T}_{\balpha_{1}} \cap \mathbb{T}_{\bbeta_{1}}}  \sum_{\substack{\phi \in \pi_{2}^{0}(\boldsymbol{x}, \boldsymbol{y}) \\ \mu(\phi) = -1}} \# (\mathcal{M}^{h}(\phi)) [\boldsymbol{y}, i - n_{p}(\phi)].$$
	
	To see that this is a chain homotopy, we will consider the ends of the moduli spaces $\mathcal{M}^{h}(\psi)$ for $\psi$ with Maslov index $\mu(\psi) = 0$. Since such spaces $\mathcal{M}^{h}(\psi)$ are smooth 1 dimensional manifolds for generic choices of almost complex structure data, and since they are orientable, the signed count of the ends is zero for any choice of orientation. 
	
	The ends can be partitioned into three types: those corresponding to $\tau = 0$, those corresponding to $\tau = 1$, and those corresponding to strips breaking off for values $ 0  < \tau <1$. For the ends corresponding to $\tau =0$, the contribution to the count of the ends is given by the count of the zero dimensional moduli space $\# \mathcal{M}_{\tau=0}(\psi)$. Modulo signs, this is precisely the count occurring in the definition of $\Gamma_{(\phi_{t,0}, K_{s,t,0})}$. For $\tau =1$, the contribution to the count of the ends is similarly given by $\# \mathcal{M}_{\tau=1}(\psi)$, which is the count occurring in the definition of $\Gamma_{(\phi_{t,1}, K_{s,t,1})}$, modulo signs. We will discuss the signed contributions below. Finally, the ends corresponding to strip breaking come from the space
	
	$$ \left( \coprod_{\substack{\phi \ast \phi ' = \psi \\ \mu(\phi) = 0 , \mu(\phi ') = 1} } \mathcal{M}^{h}(\phi) \times \widehat{\mathcal{M}}(\phi ') \right) \coprod \left( \coprod_{\substack{\phi ' \ast \phi  = \psi \\ \mu(\phi) = 0 , \mu(\phi ') = 1}}  \widehat{\mathcal{M}}(\phi ') \times \mathcal{M}^{h}(\phi) \right)$$
	
	Supposing the orientations on the moduli spaces $\mathcal{M}^{h}$ are chosen to be coherent with respect to preglueings of strips, the count of the terms in the first parentheses is precisely the count occurring in the composition $\partial_{0}^{-} \circ H^{h}$, while the count of the terms in the second parentheses is precisely the count occurring in $H^{h} \circ (\partial_{1})^{-}$. Here $\partial_{0}^{-}$ indicates the differential on $CF^{-}_{K_{s,0}}(\Sigma, \balpha_{0}, \bbeta_{0})$ and $(\partial_{1})^{-}$ indicates the differential on $CF^{-}_{K_{s,1}}(\Sigma, \balpha_{1}, \bbeta_{1})$.
	
	Finally, we note that we may arrange for the spaces $\mathcal{M}^{h}(\phi)$ to be coherently oriented such that the total signed count of the ends of $\mathcal{M}^{h}(\psi)$ is given by 
	
	$$ 0 = \Gamma_{(\phi_{t,0}, K_{s,t,0})} - \Gamma_{(\phi_{t,1}, K_{s,t,1})} - ( (\partial_{1})^{-} \circ H^{h} + H^{h} \circ \partial_{0}^{-})$$
	
	Indeed, we have
	\begin{equation}
	\label{Moduli2}
	\mathcal{M}^{h}(\psi) = \bigcup_{\tau \in I} \mathcal{M}_{\tau}(\psi) \times \{\tau\} = \{ (u, \tau) \in C^{\infty}(I \times \mathbb{R}, \text{Sym}^{g}(\Sigma)) \times I | u \in \mathcal{M}_{\tau}(\psi) \}
	\end{equation}
	so for each homotopy class $\psi$ we may choose orientations on $\mathcal{M}_{\tau=0}(\psi)$ fitting together coherently, and obtain induced orientations on the spaces $\mathcal{M}^{h}(\psi)$ via the product structure in Equation \eqref{Moduli2}. Such an induced orientation will enjoy the property that the restrictions to the ends at $\tau =0$ and $\tau =1$ yield the counts $- \# \mathcal{M}_{\tau=0}(\psi)$  and $+ \# \mathcal{M}_{\tau=1}(\psi) $ respectively. We omit the technical details of this argument, and refer the interested reader to the proof of Lemma \ref{ParametrizedOrientation}, where an analogous argument dealing with holomorphic triangles is spelled out in detail. We have thus shown that a generic homotopy $h =(\phi_{t,\tau}, K_{s,t,\tau})$ between $(\phi_{t,0}, K_{s,t,0})$ and $(\phi_{t,1}, K_{s,t,1})$ induces a chain homotopy between $\Gamma_{(\phi_{t,0}, K_{s,t,0})}$ and $\pm \Gamma_{(\phi_{t,1}, K_{s,t,1})}$. 
	
	Finally, we note that since the homotopy $h$ is constant in $\tau$ for $t=0$ and $t=1$, the chain homotopy $H^{h}$, defined with respect to the orientations on $\mathcal{M}^{h}(\phi)$ specified above, is a chain homotopy between the continuation maps $\Gamma_{(\phi_{t,0}, K_{s,t,0})}$ and $\Gamma_{(\phi_{t,1}, K_{s,t,1})}$, which both take the form:
	$$CF^{-}_{K_{s, 0,0}=K_{s, 0,1}} (\Sigma, \balpha_{0}, \bbeta_{0}, z, \mathfrak{o}_{\balpha_{0}, \bbeta_{0}}) \rightarrow CF^{-}_{K_{s,1,0}=K_{s,1,1}}(\Sigma, \balpha_{1}, \bbeta_{1}, z, \mathfrak{o}_{\balpha_{1}, \bbeta_{1}})$$ and are defined with respect to the same coherent orientation systems on their domains, and the same coherent orientation systems on their targets. In particular, in the case of interest (ie Equation \eqref{SufficientCondition}) we may choose orientations on $\mathcal{M}_{\tau =0} = \mathcal{M}^{d_{t}} $ so that $d_{J_{s},J_{s}'} \sim \Gamma_{d_{t}}$ (which we established is possible earlier), which together with the above remarks establishes Equation \eqref{SufficientCondition}. This completes the proof of the theorem. \qedhere

\end{proof}

Finally, we relegate the proof of axiom 4, simple handleswap invariance, to Section \ref{Moduli} below. Given a simple handleswap in $\mathcal{G}_{\text{man}}$,
\begin{equation*}
\begin{tikzcd}
H_1  \arrow{rd}{e} &     \\
H_3 \arrow{u}{g} & H_2 \arrow{l}{f}   \\
\end{tikzcd} 
\end{equation*}
we will show that the composition of the induced maps in the category of transitive systems in the projectivized homotopy category yields the identity. We recall from Definition \ref{SimpleHandleswap} that here $H_{i} = (\Sigma \# \Sigma_{0}, \balpha_{i}, \bbeta_{i})$ are isotopy diagrams, $e$ is a strong $\balpha$-equivalence, $f$ is a strong $\bbeta$-equivalence, and $g$ is a diffeomorphism of isotopy diagrams. 

\begin{thm}[cf. Theorem 9.30 in \cite{Naturality}]\label{HandleswapInvariance}
	Let $(\{H_{i}\}, e, f, g)$ be data defining a simple handleswap as above. For the weak Heegaard invariants $CF^{\circ}$ defined in Definition \ref{WeakHeegaardFloer1}, the induced maps $g_{*}:= CF^{\circ}(g)$, $\Phi_{e} := CF^{\circ}(e)$, and $\Phi_{f} := CF^{\circ}(f)$ satisfy
	$$g_{*} \circ \Phi_{f} \circ \Phi_{e} = \text{Id}_{ CF^{-}(H_1)}$$
	Thus the weak Heegaard invariants $CF^{\circ}: \mathcal{G}_{\text{man}} \rightarrow \text{Trans}(P(\text{Kom}(\mathbb{Z}[U]\text{-} \text{Mod}) ))$ satisfy simple handleswap invariance.
\end{thm}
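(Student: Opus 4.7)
The plan is to follow the structure of the $\mathbb{F}_{2}$ proof in \cite[Theorem 9.30]{Naturality}, adapted to track signs over $\mathbb{Z}$. Because we are ultimately working in the projectivized category $P(\text{Kom}(\mathbb{Z}[U]\text{-}\text{Mod}))$, it will suffice to show that the composition $g_{*}\circ \Phi_{f}\circ \Phi_{e}$ is chain homotopic to $\pm\text{Id}$ on each $CF^{-}(\mathcal{H}_{1},\mathfrak{s})$ for a strongly $\mathfrak{s}$-admissible representative $\mathcal{H}_{1}$ of $H_{1}$, and that this identification is compatible with the transitive-system structure from Definition \ref{WeakHeegaardFloer1}.

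First, I would reduce to a local computation inside the connect-summand $\Sigma_{0}$. Since by Definition \ref{SimpleHandleswap} the three isotopy diagrams agree outside the punctured genus-two region $P = (\Sigma\#\Sigma_{0})\setminus\Sigma$, I would choose strongly $\mathfrak{s}$-admissible representatives and stretch the neck of the connect sum. A neck-stretching argument entirely parallel to the one used for the stabilization map $\sigma_{\mathcal{H}\rightarrow\mathcal{H}'}$ in Section \ref{WeakHeegaardFloerInvariantsSection} produces chain homotopy equivalences between $CF^{-}(\mathcal{H}_{i},\mathfrak{s})$ and a tensor product $CF^{-}(\Sigma,\balpha|_{\Sigma},\bbeta|_{\Sigma},\mathfrak{s}|_{\Sigma})\otimes_{\mathbb{Z}[U]} CF^{-}(\Sigma_{0},\balpha_{i}|_{\Sigma_{0}},\bbeta_{i}|_{\Sigma_{0}},\mathfrak{s}_{0})$. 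Under these identifications the maps $\Phi_{e},\Phi_{f}$ and the diffeomorphism $g_{*}$ respect the decomposition and act as the identity on the first factor (since the attaching data and the diffeomorphism are all supported in $P$), so the statement reduces to showing that the induced local composition on the $\Sigma_{0}$ factor is $\pm\text{Id}$ in $P(\text{Kom}(\mathbb{Z}[U]\text{-}\text{Mod}))$.

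Second, I would analyze the local composition directly in the standard handleswap model of Figure \ref{handleswappic}. The local chain complexes for $H_{1},H_{2},H_{3}$ are generated by a small and explicit set of intersection points inside the green disks, and all the triangles contributing to $\Phi_{e}^{\text{loc}}$ and $\Phi_{f}^{\text{loc}}$ are identified with distinguished small triangular domains in the picture, in the same manner as in \cite{Naturality}. The map $g_{*}^{\text{loc}}$ is a pushforward by a diffeomorphism realizing the symmetry of the handleswap and is described combinatorially by a permutation of these generators. In the $\mathbb{F}_{2}$ setting of \cite[Theorem 9.30]{Naturality} this composition is shown to equal the identity; my task is to verify that, once the induced coherent orientation systems specified in Definition \ref{WeakHeegaardFloer1} are propagated through each step, the signed integer count yields $\pm\text{Id}$.

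The main obstacle is this sign verification. I plan to handle it as follows. Each of the small triangular domains contributing to $\Phi_{e}^{\text{loc}},\Phi_{f}^{\text{loc}}$ has a moduli space that is a single transversely cut-out point, so the only data to pin down is the sign of that point with respect to the coherent orientation systems induced on the three local diagrams. Using Lemma \ref{ExistenceOfCoherentTriangles} together with the transport of orientations described in Section \ref{WeakHeegaardFloerInvariantsSection}, one can propagate a single initial choice on the local $(\balpha_{1},\bbeta_{1})$-diagram to unique coherent orientations on the other two local diagrams. With these induced orientations, all signs in the local triangle counts and in the diffeomorphism pushforward become determined. The key additional check, which I expect to be the technically delicate step, is that the same gluing/coherence relation which ensures the orientation systems on the various double diagrams in the handleswap triangle are mutually compatible also forces the three signed counts to multiply to $\pm 1$; combined with the fact that reduction modulo $2$ must recover the identity by \cite[Theorem 9.30]{Naturality}, this leaves $\pm\text{Id}$ as the only possibility. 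Once this chain-level equality is established for representatives, it propagates to the transitive-system level because each of $\Phi_{e},\Phi_{f},g_{*}$ was defined in Section \ref{WeakHeegaardFloerInvariantsSection} as a morphism of transitive systems, so the statement $g_{*}\circ\Phi_{f}\circ\Phi_{e}=\text{Id}$ in $\text{Trans}(P(\text{Kom}(\mathbb{Z}[U]\text{-}\text{Mod})))$ follows.
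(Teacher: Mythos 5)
Your overall strategy (localize near the connect sum, compute a local composition, control signs via the orientation systems, then pass to transitive systems) is the right shape, but two of your steps conceal precisely the content that makes this theorem hard, and one of them would fail as stated. First, the claim that a neck-stretching argument ``entirely parallel to the stabilization map'' yields a tensor decomposition under which $\Phi_{e}$, $\Phi_{f}$ and $g_{*}$ ``act as the identity on the first factor'' is not something you can quote: $\Phi_{e}$ and $\Phi_{f}$ are triangle maps on the glued triple diagrams, and showing that they split as (triangle map on $\mathcal{T}$) $\times$ (local generator) up to a \emph{uniform} sign is exactly the analogue of Proposition \ref{trianglecount} and Proposition \ref{trianglecount2}. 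Its proof is not a strip-stabilization argument nor a count of small index-$0$ triangular domains in the genus-two picture: after stretching the neck one must analyze broken holomorphic triangles, and the local contribution is a moduli space of triangles of Maslov index $2n_{p_{0}}(\psi_{0})$ constrained by a matching condition $\rho^{p_{0}}(u_{0})=\rho^{p}(u)$ with the divisor coming from the $\Sigma$ side; establishing that this matched count is $\pm 1$ independently of the divisor (Lemma \ref{SingleDivisorModuliCount}) requires the parametrized orientation analysis of Lemma \ref{ParametrizedOrientation} and Lemma \ref{BundleEquation}. Relatedly, even after localization the composition on the $\Sigma$ factor is not literally the identity: with representatives chosen so that the strong equivalences are single triangle maps, it is $(g|_{\Sigma})_{*}\circ\Gamma_{\bbeta\rightarrow\bbeta'}^{\balpha'}\circ\Gamma_{\bbeta}^{\balpha\rightarrow\balpha'}$, which is only homotopic to $\pm\mathrm{Id}$ by Theorem \ref{ContinuityAxiom}; your proposal never invokes anything playing this role.

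Second, your sign argument is a genuine gap: knowing that the coherence relations are satisfied and that the mod-$2$ reduction of the composition is homotopic to the identity does not force the integral composition to be homotopic to $\pm\mathrm{Id}$. Mod-$2$ information cannot rule out a map acting by $+1$ on some generators (or $\mathrm{Spin}^{c}$ summands, or homology classes of triangles) and $-1$ on others, nor, at the level of homology, odd multiples of the identity; the entire difficulty over $\mathbb{Z}$ is to show the sign is a single overall constant. In the paper this uniformity is exactly what the statement ``the constant is independent of $\boldsymbol{d}$'' in Lemma \ref{SingleDivisorModuliCount} delivers, and it is obtained by a cobordism-of-matched-moduli-spaces argument with coherently chosen orientations, followed by a degeneration sending the divisor to a vertex of $\Delta$ so that the count reduces to a stabilized bigon count. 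Without an argument of this kind, the step ``this leaves $\pm\mathrm{Id}$ as the only possibility'' does not follow.
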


\begin{cor}\label{HandleswapInvarianceHomology}
	The weak Heegaard invariants $HF^{-}: \mathcal{G}_{\text{man}} \rightarrow P(\mathbb{Z}[U]\text{-}\text{Mod})$ satisfy simple handleswap invariance.
\end{cor}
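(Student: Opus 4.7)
The plan is to deduce this corollary directly from Theorem \ref{HandleswapInvariance} by applying the homology functor and then passing to colimits of transitive systems. The key observation is that taking homology yields a functor $H_{*}: \text{Kom}(\mathbb{Z}[U]\text{-}\text{Mod}) \rightarrow \mathbb{Z}[U]\text{-}\text{Mod}$ which respects negation of morphisms, since $H_{*}(-f) = -H_{*}(f)$, and therefore descends to a functor $\bar{H}_{*}: P(\text{Kom}(\mathbb{Z}[U]\text{-}\text{Mod})) \rightarrow P(\mathbb{Z}[U]\text{-}\text{Mod})$. Applying $\bar{H}_{*}$ objectwise yields in turn an induced functor $\text{Trans}(P(\text{Kom}(\mathbb{Z}[U]\text{-}\text{Mod}))) \rightarrow \text{Trans}(P(\mathbb{Z}[U]\text{-}\text{Mod}))$ which sends the transitive system $CF^{-}(H, \mathfrak{s})$ of Definition \ref{WeakHeegaardFloer1} to the corresponding transitive system of homology groups $HF^{-}(H, \mathfrak{s})$ appearing there.

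Next, since every structure map in $HF^{-}(H, \mathfrak{s})$ is an isomorphism in $P(\mathbb{Z}[U]\text{-}\text{Mod})$, the colimit of this transitive system exists and recovers the $\mathbb{Z}[U]$-module $HF^{-}(H, \mathfrak{s})$ constructed in Definition \ref{WeakHeegaardFloer1}. This colimit assignment is functorial on morphisms between transitive systems whose structure maps are isomorphisms: any morphism $(M, \{n_{i}\})$ of such systems induces a well-defined morphism on colimits, and identity morphisms of transitive systems induce identity morphisms on colimits. Applying this colimit construction to the morphisms of transitive systems $\Phi_{e}$, $\Phi_{f}$, $g_{*}$ produced from Definitions \ref{AlphaEquivalenceMap}, \ref{StabilizationMaps}, and \ref{MapInducedByDiffeo} recovers precisely the homology-level maps $HF^{-}(e)$, $HF^{-}(f)$, $HF^{-}(g)$ attached to the edges of the handleswap.

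Combining these two functorial passages with Theorem \ref{HandleswapInvariance}, the equality $g_{*} \circ \Phi_{f} \circ \Phi_{e} = \text{Id}_{CF^{-}(H_{1})}$ in $\text{Trans}(P(\text{Kom}(\mathbb{Z}[U]\text{-}\text{Mod})))$ descends to the equality $HF^{-}(g) \circ HF^{-}(f) \circ HF^{-}(e) = \text{Id}_{HF^{-}(H_{1})}$ in $P(\mathbb{Z}[U]\text{-}\text{Mod})$, which is exactly simple handleswap invariance for $HF^{-}$. All substantive geometric and analytic work has been completed in the proof of Theorem \ref{HandleswapInvariance}, so no new ingredients are needed; the only thing left to verify is the routine compatibility of the colimit construction with morphisms of transitive systems of isomorphisms, and this follows directly from Definitions \ref{TransitiveSystem} and \ref{MapOfTransitiveSystems}. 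The main (and very minor) obstacle is simply confirming that identity morphisms of transitive systems in $P(\text{Kom}(\mathbb{Z}[U]\text{-}\text{Mod}))$ really do produce identity morphisms on colimits after homology, which is immediate once one unwinds the definitions.
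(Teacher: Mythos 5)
Your proposal is correct in substance, and it follows the same core route the paper takes implicitly: Corollary \ref{HandleswapInvarianceHomology} is obtained from Theorem \ref{HandleswapInvariance} by passing to homology, and the paper offers no separate argument beyond this. The one place where your packaging diverges from the paper is the appeal to colimits of transitive systems in $P(\mathbb{Z}[U]\text{-}\text{Mod})$. The paper deliberately avoids this: the remark at the end of Section \ref{TransitiveSystemSection} states that the author is unaware of a colimit notion in $P(\mathbb{Z}[U]\text{-}\text{Mod})$ adequate for these purposes, and Definition \ref{WeakHeegaardFloer1} instead takes the colimit in $\mathbb{Z}[U]\text{-}\text{Mod}$ itself. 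This is possible because on homology the continuation and change-of-almost-complex-structure maps compose strictly (the chain homotopies of \cite[Lemma 2.12]{FourManifoldInvariants} become equalities), so the modules $HF^{-}(\Sigma,\balpha,\bbeta,z,\mathfrak{s},\mathfrak{o}_{\balpha,\bbeta})$ form an honest transitive system in $\mathbb{Z}[U]\text{-}\text{Mod}$, not merely one up to sign, and $HF^{-}(H,\mathfrak{s})$ together with the edge maps $HF^{-}(e)$ is defined by conjugating with the canonical colimit isomorphisms $i_{\mathcal{H}}$. With that observation your argument simplifies: the chain-level relation $g_{*}\circ\Phi_{f}\circ\Phi_{e}\sim\pm\text{Id}_{CF^{-}(\mathcal{H}_{1})}$ for the chosen representatives induces $\pm\text{Id}$ on $HF^{-}(\mathcal{H}_{1},\mathfrak{s},\mathfrak{o})$, and since these maps commute with the colimit identifications, one gets $HF^{-}(g)\circ HF^{-}(f)\circ HF^{-}(e)=\pm\text{Id}_{HF^{-}(H_{1})}$, i.e.\ the identity in $P(\mathbb{Z}[U]\text{-}\text{Mod})$, with no need to discuss colimits in the projectivized category at all. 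Your colimit-in-$P$ construction can in fact be justified for transitive systems of isomorphisms over a directed set (any constituent object serves as the colimit, with cone maps determined up to sign), so I would not call this a gap; but verifying that universal property and its compatibility with Definitions \ref{AlphaEquivalenceMap}, \ref{StabilizationMaps} and \ref{MapInducedByDiffeo} is extra work the statement does not require, and it sits uneasily with the paper's own caveat, so the route through the honest colimit in $\mathbb{Z}[U]\text{-}\text{Mod}$ is preferable.
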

Theorem \ref{HandleswapInvariance} and Corollary \ref{HandleswapInvarianceHomology} will establish Theorem \ref{StrongHeegaard2} and Corollary \ref{StrongHeegaard}, which by Section \ref{ProjectiveNaturalityFromStrong} also establishes Theorem \ref{Functoriality}.

\section{Simple Handleswap Invariance}\label{Moduli}
In this section we prove Theorem \ref{HandleswapInvariance}. The key result which will need to be established is the integral analog of a triangle count proved in \cite[Proposition 9.31]{Naturality}. We will consider the pointed genus two Heegaard triple diagram $\mathcal{T}_{0}$ shown in Figure \ref{triplediagram} (compare the diagrams in Figure \ref{handleswappic}). Given any triple diagram $\mathcal{T}$ we will show that triangle maps on the stabilized diagram $\mathcal{T} \# \mathcal{T}_{0}$, endowed with a sufficiently stretched neck, are determined by triangle maps on the unstabilized diagram $\mathcal{T}$. 

\begin{figure}[h!]
	\centering
	\begin{tikzpicture}
	\node[anchor=south west,inner sep=0] (image) at (0,0) {\includegraphics[width=0.9\textwidth]{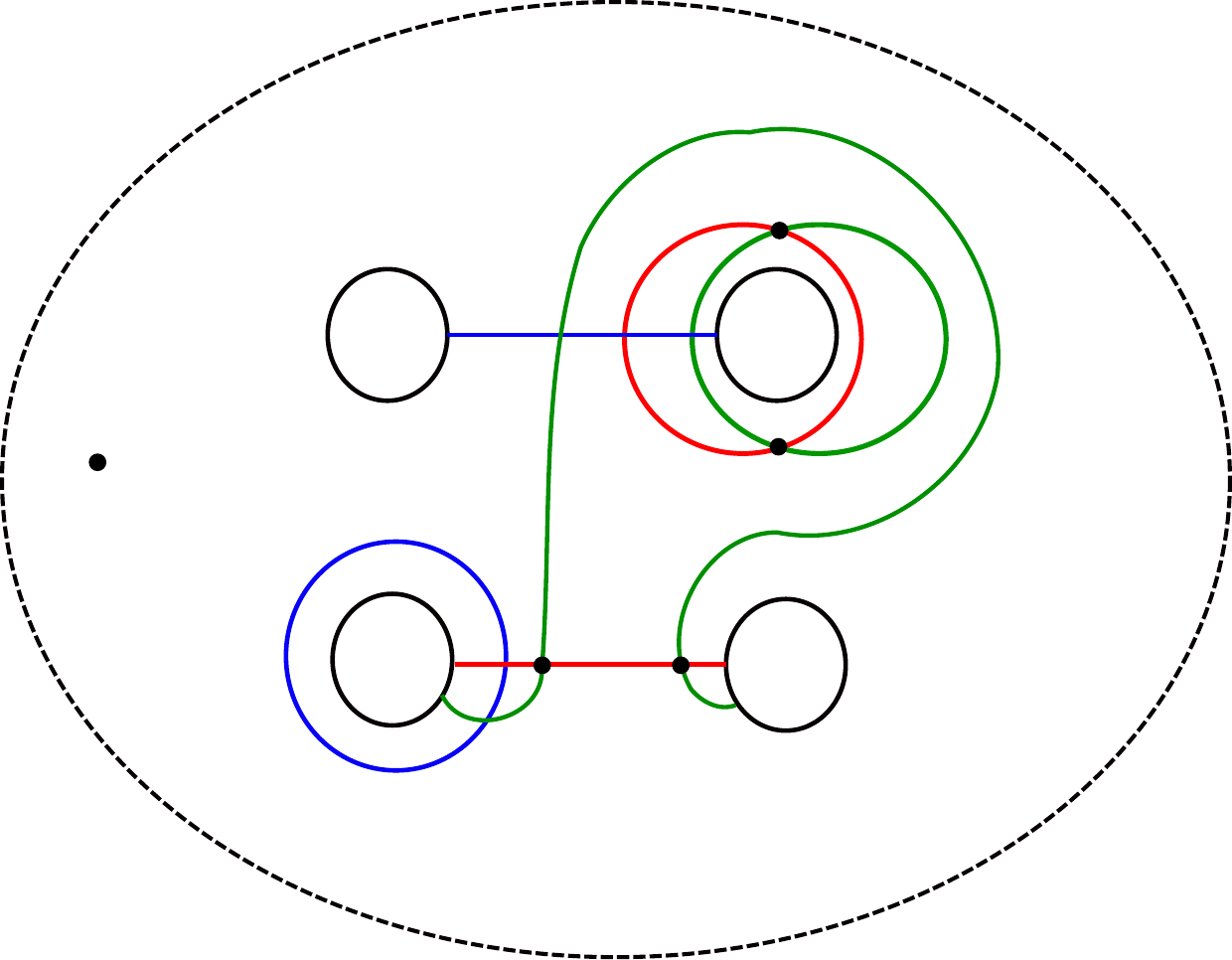}};
	\begin{scope}[x={(image.south east)},y={(image.north west)}]
	\node[text= blue] at (.3,.16) {$\beta_{1}$};
	\node[text= blue] at (.41,.68) {$\beta_{2}$};
	\node[text= OliveGreen] at (.8,.5) {$\alpha_{1} '$};
	\node[text= OliveGreen] at (.7,.795) {$\alpha_{2} '$};
	\node[text= red] at (.52,.27) {$\alpha_{1}$};
	\node[text= red] at (.55,.515) {$\alpha_{2}$};
	\node at (.47,.34) {$\theta_{1}^{-}$};
	\node at (.53,.34) {$\theta_{1}^{+}$};
	\node at (.6305,.49) {$\theta_{2}^{+}$};
	\node at (.6305,.80) {$\theta_{2}^{-}$};
	\node at (.11,.5) {$p_{0}$};
	\node at (.32,.31) {\Huge F};
	\node at (.64,.31) {\reflectbox{ \Huge F}};
	\node at (.32,.65) {\Huge R};
	\node at (.64,.65) {\reflectbox{ \Huge R}};
	\end{scope}
	\end{tikzpicture}
	\caption{The pointed triple diagram $\mathcal{T}_{0}$, with the curves $\balpha_{0}' = (\alpha_{1}', \alpha_{2}')$, $\balpha_{0} = (\alpha_{1}, \alpha_{2})$, $\bbeta_{0} = (\beta_{1}, \beta_{2})$, and the $\theta$ intersection points, labeled. }
	\label{triplediagram}
\end{figure}

We now fix some notation regarding the intersection points in the triple diagram $\mathcal{T}_{0} = (\Sigma, \balpha_{0}', \balpha_{0}, \bbeta_{0}, p_{0})$. We write $\mathbb{T}_{\balpha_{0} } \cap \mathbb{T}_{\bbeta_{0}} = \{\ba\}$ , $\mathbb{T}_{\balpha_{0} '} \cap \mathbb{T}_{\bbeta_{0}} = \{\bb\}$, and $\mathbb{T}_{\balpha_{0} '} \cap \mathbb{T}_{\balpha_{0}} = \{\theta_{1}^{+} \theta_{2}^{+} , \theta_{1}^{+} \theta_{2}^{-}, \theta_{1}^{-} \theta_{2}^{+}, \theta_{1}^{-} \theta_{2}^{-} \}$. Here the intersection points $\theta_{1}^{\pm} \in \alpha_{1}' \cap \alpha_{1}$ and $\theta_{2}^{\pm} \in \alpha_{2}' \cap \alpha_{2}$ are those labeled in Figure \ref{triplediagram}. We write $\btheta := \theta_{1}^{+} \theta_{2}^{+} $. We will show:

\begin{prop}(compare \cite[ Proposition 9.31]{Naturality}) \label{trianglecount}
	Fix a strongly $\mathfrak{s}$-admissible Heegaard triple $\mathcal{T} = (\Sigma, \boldsymbol{\alpha '}, \boldsymbol{\alpha}, \boldsymbol{\beta}, p)$, and consider the diagram $\mathcal{T} \# \mathcal{T}_{0}$, where $\mathcal{T}_{0} = (\Sigma, \balpha_{0}', \balpha_{0}, \bbeta_{0}, p_{0})$ is the diagram in Figure \ref{triplediagram} and the connect sum is taken at the basepoints $p$ and $p_{0}$. Then for a generic and sufficiently stretched almost complex structure there is a coherent orientation system $\mathfrak{o}_{\mathcal{T}_{0}}$ on $\mathcal{T}_{0}$, which together with any coherent orientation system $\mathfrak{o}_{\mathcal{T}}$ on $\mathcal{T}$ induces a coherent orientation system $\mathfrak{o}_{\mathcal{T} \# \mathcal{T}_{0}}$ on $\mathcal{T} \# \mathcal{T}_{0}$. Furthermore, with respect to these orientations,
	$$ \mathcal{F}_{\mathcal{T} \# \mathcal{T}_{0}}((\boldsymbol{x} \times \boldsymbol{\Theta}) \otimes (\boldsymbol{y} \times \boldsymbol{a}), \mathfrak{s}) = \pm \mathcal{F}_{\mathcal{T}}(\boldsymbol{x} \otimes \boldsymbol{y}, \mathfrak{s}) \times \boldsymbol{b} $$
	for any $\boldsymbol{x} \in \mathbb{T}_{\balpha '} \cap \mathbb{T}_{\balpha}$ and $\boldsymbol{y} \in \mathbb{T}_{\balpha} \cap \mathbb{T}_{\bbeta}$.
\end{prop}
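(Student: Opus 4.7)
The plan is to adapt the $\mathbb{F}_2$ argument from \cite[Proposition 9.31]{Naturality} by layering a careful sign/orientation bookkeeping on top of the neck-stretching and gluing scheme used there. I would first set up the geometry: insert a long neck of length $T$ at the connect-sum point $p \# p_0$ in $\mathcal{T} \# \mathcal{T}_0$, and study holomorphic triangles with respect to the resulting almost complex structure $J(T)$ as $T \to \infty$. For sufficiently admissible data, Gromov compactness forces any sequence of $J(T)$-holomorphic triangles representing a fixed homotopy class $\psi$ on $\mathcal{T} \# \mathcal{T}_0$ in the $\text{Spin}^c$-structure $\mathfrak{s}$ to degenerate, in the limit, into a pair of holomorphic triangles $(\psi_\Sigma, \psi_{\Sigma_0})$ on the two sides, joined by a degenerate strip at the neck. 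Writing $\psi = \psi_\Sigma \# \psi_{\Sigma_0}$, the basepoint constraints (the class must have $n_{p \# p_0} = 0$) together with the intersection-point structure on $\mathcal{T}_0$ force $\psi_{\Sigma_0}$ to lie among a finite list of ``small'' homotopy classes connecting $\btheta \otimes \ba$ to $\bb$, while $\psi_\Sigma$ can be any class connecting $\bx \otimes \by$ to the corresponding output intersection point.

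Next I would carry out the local count on $\mathcal{T}_0$. The diagram in Figure \ref{triplediagram} has exactly one ``small'' homotopy class $\psi_0 \in \pi_2(\btheta, \ba, \bb)$ that is supported in the two small bigon/triangle regions visible in the picture and that satisfies $n_{p_0}(\psi_0) = 0$ and $\mu(\psi_0) = 0$; any other class violates either positivity of domains or the basepoint constraint after tensoring with a class on $\mathcal{T}$ that has $n_p = 0$. In the class $\psi_0$ the moduli space is a single transverse point, as can be checked by an explicit holomorphic model (it is the product of two small holomorphic triangles in disjoint punctured tori). Choose the coherent orientation system $\mathfrak{o}_{\mathcal{T}_0}$ so that this unique point is counted with sign $+1$; this uses that a single homotopy class orientation determines the rest of a coherent system on each pair-of-attaching-sets subdiagram, and Lemma \ref{ExistenceOfCoherentTriangles} lets us extend to an orientation on the triple.

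With the local count in hand, the gluing theorem and neck-stretching analysis give a bijection, for $T$ sufficiently large, between rigid $J(T)$-holomorphic triangles in $\pi_2(\bx \times \btheta, \by \times \ba, \bw \times \bb)$ and pairs $(u_\Sigma, u_0)$ where $u_\Sigma$ is a rigid holomorphic triangle on $\mathcal{T}$ in a class $\psi_\Sigma$ with $\mathcal{F}_{\mathcal{T}}$-output $\bw$, and $u_0$ lies in the class $\psi_0$ above. Taking $\mathfrak{o}_{\mathcal{T} \# \mathcal{T}_0}$ to be the coherent orientation system produced by this gluing procedure from $\mathfrak{o}_{\mathcal{T}}$ and $\mathfrak{o}_{\mathcal{T}_0}$, the signed counts multiply under gluing. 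Because the $\mathcal{T}_0$ factor contributes $+1$ in every rigid glued configuration (the $\psi_0$ count was normalized to $+1$, and the gluing sign between transversely cut-out moduli with these chosen orientations is constant in $\bx, \by, \bw$), the signed sum collapses to $\mathcal{F}_{\mathcal{T}}(\bx \otimes \by, \mathfrak{s}) \times \bb$, up to the single overall sign allowed by the statement.

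The main obstacle will be the orientation bookkeeping in the final paragraph above: verifying that the gluing sign is genuinely independent of $(\bx, \by, \bw)$ and compatible with \emph{all} possible preglued strip breakings, so that $\mathfrak{o}_{\mathcal{T} \# \mathcal{T}_0}$ is honestly coherent in the sense of Section \ref{coherentsection}. I would handle this by working at the level of determinant line bundles of the linearized $\bar\partial$-operator, trivializing the line bundles over the gluing parameter, and using the fact that coherence is a cocycle condition that is preserved under products of orientations and under the standard gluing isomorphisms (as in the formalism underlying \cite[Proposition 6.3 and 10.3]{Cylindrical}). The remainder of the argument — admissibility of the stabilized diagram, finiteness of the sum, and independence of $J(T)$ for $T$ large — is essentially unchanged from the $\mathbb{F}_2$ case and can be cited from the corresponding steps in \cite{Naturality}.
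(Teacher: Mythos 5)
There is a genuine gap, and it sits exactly where the real work of the paper lies. Your reduction assumes that on the $\mathcal{T}_{0}$ side only a single ``small'' class $\psi_{0} \in \pi_{2}(\btheta, \ba, \bb)$ with $n_{p_{0}}(\psi_{0}) = 0$ and $\mu(\psi_{0}) = 0$ can appear, so that the local contribution is a rigid point normalized to $+1$. But the statement concerns the full triangle maps $\mathcal{F}^{\circ}$ (in particular on $CF^{-}$), which count classes weighted by $U^{n_{p}}$: classes $\psi$ on $\mathcal{T}$ with $\mu(\psi)=0$ and $n_{p}(\psi) = k > 0$ contribute, and the matching across the connect sum forces $n_{p_{0}}(\psi_{0}) = k$, hence $\mu(\psi_{0}) = 2k > 0$ by Lemma \ref{Lemma950}. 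For such classes the moduli space on the $\Sigma_{0}$ side is $2k$-dimensional, not rigid; it only becomes zero-dimensional after imposing the matching condition $\rho^{p_{0}}(u_{0}) = \rho^{p}(u)$ with the divisor in $\text{Sym}^{k}(\Delta)$ cut out by the curve $u$ on the $\Sigma$ side. There is no constraint ``$n_{p\# p_{0}} = 0$'' to invoke, so your argument covers only the $U^{0}$ part of the map and omits the classes where the difficulty is concentrated.

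Consequently the key step you would still need is precisely the content of Lemma \ref{SingleDivisorModuliCount}: that the matched count $\# \mathcal{M}_{(\btheta,\ba,\bb)}(\boldsymbol{d})$ equals $\pm 1$ with a sign independent of the divisor $\boldsymbol{d}$ (and hence of which curve $u$ on $\mathcal{T}$ it is glued to), for a suitably chosen coherent orientation system on $\mathcal{T}_{0}$. Over $\mathbb{F}_{2}$ this is the parity statement of \cite[Proposition 9.31]{Naturality}; over $\mathbb{Z}$ it requires orienting the parametrized matched moduli spaces $\bigcup_{t}\mathcal{M}_{(\btheta,\ba,\bb)}(p(t))$ compatibly with coherence (this is Lemma \ref{ParametrizedOrientation}, whose proof goes through an index-bundle comparison in $K$-theory), showing the strip-breaking ends cancel in signed count using the vanishing of the differential on $\widehat{CF}(\Sigma_{0},\balpha_{0}',\balpha_{0})$ (Lemma \ref{hatdifferential}), and then evaluating the count at one degenerate family of divisors pushed to the vertex $\nu_{\balpha\bbeta}$, where the answer reduces to the count $\#\mathcal{M}_{(\ba,\ba)}(c) = \pm1$ of matched index-two strips, pinned down via the twice-stabilized $S^{1}\times S^{2}$ diagram. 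Your final paragraph's concern about gluing signs and coherence of $\mathfrak{o}_{\mathcal{T}\#\mathcal{T}_{0}}$ is legitimate and is handled in the paper much as you suggest (determinant lines commuting with the two gluings), but without the matched-moduli-space analysis above your overall argument does not compute the map the proposition is about.
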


In fact when we prove handleswap invariance the diagram $\mathcal{T}_{0}$ and the triangle count just stated will be relevant only to the consideration of the strong $\balpha$-equivalence involved in the statement. We will need an analogous result which pertains to the strong $\bbeta$-equivalence map occurring in the statement. We now state the precise result we will need for this. Let $\mathcal{T}_{0} '= (\Sigma_{0}, \balpha_{0}', \bbeta_{0},\bbeta_{0} ', p_{0})$ denote the pointed genus two triple diagram shown in Figure \ref{triplediagram2}, where $\balpha_{0} ' = \{\alpha_{1}', \alpha_{2}' \}$, $\bbeta_{0} ' = \{\beta_{1}, \beta_{2} \}$ and $\bbeta_{0} ' = \{\beta_{1}', \beta_{2}' \}$ (again compare the diagrams in Figure \ref{handleswappic}). 

\begin{figure}[h!]
	\centering
	\begin{tikzpicture}
	\node[anchor=south west,inner sep=0] (image) at (0,0) {\includegraphics[width=0.9\textwidth]{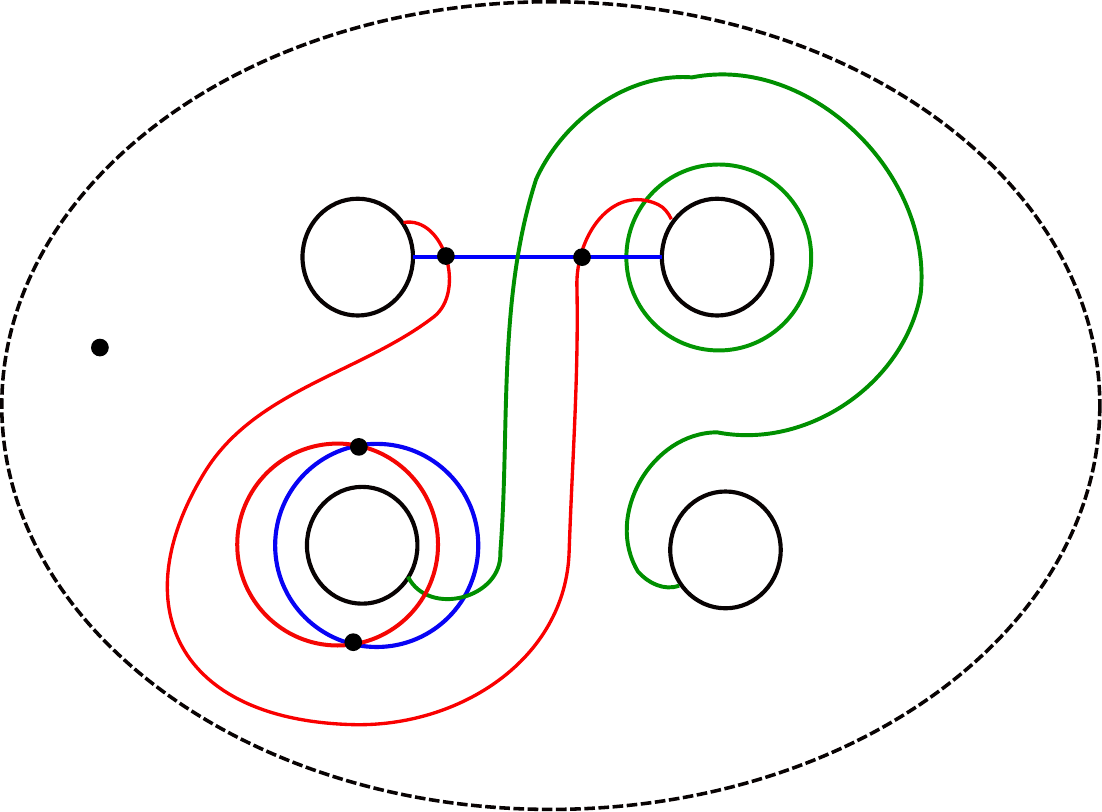}};
	\begin{scope}[x={(image.south east)},y={(image.north west)}]
	\node[text= blue] at (.42,.19) {$\beta_{1}$};
	\node[text= blue] at (.44,.65) {$\beta_{2}$};
	\node[text= OliveGreen] at (.81,.5) {$\alpha_{1} '$};
	\node[text= OliveGreen] at (.72,.79) {$\alpha_{2} '$};
	\node[text= red] at (.23,.19) {$\beta_{1} '$};
	\node[text= red] at (.12,.28) {$\beta_{2} '$};
	\node at (.34,.50) {$(\theta_{1}^{-})'$};
	\node at (.33,.15) {$(\theta_{1}^{+})'$};
	\node at (.516,.73) {$(\theta_{2}^{+})'$};
	\node at (.43,.75) {$(\theta_{2}^{-})'$};
	\node at (.12,.55) {$p_{0}$};
	\node at (.33,.33) {\Huge F};
	\node at (.66,.33) {\reflectbox{ \Huge F}};
	\node at (.33,.685) {\Huge R};
	\node at (.66,.685) {\reflectbox{ \Huge R}};
	\end{scope}
	\end{tikzpicture}
	\caption{The pointed triple diagram $\mathcal{T}_{0}'$, with the curves $\balpha_{0}' = (\alpha_{1}', \alpha_{2}')$, $\bbeta_{0} = (\beta_{1}, \beta_{2})$, and $\bbeta_{0} ' = (\beta_{1} ', \beta_{2} ')$, and the $\theta '$ intersection points, labeled.}
	\label{triplediagram2}
\end{figure}

We further fix the following notation for intersection points in the diagram: we let $\mathbb{T}_{\balpha_{0}'} \cap \mathbb{T}_{\bbeta_{0}} = \{\textbf{b}\}$, $\mathbb{T}_{\balpha_{0}'} \cap \mathbb{T}_{\bbeta_{0}'} = \{\textbf{c}\}$, and $\btheta '$ denote the generator in $\mathbb{T}_{\bbeta_{0}} \cap \mathbb{T}_{\bbeta_{0} '}$ with the highest relative grading. Let $\mathcal{T} ' = (\Sigma, \balpha', \bbeta, \bbeta ',p)$ be another pointed Heegaard triple, and consider the diagram $\mathcal{T}' \# \mathcal{T}_{0}'$, where the connect sum is taken at the basepoints $p$ and $p_{0}$. Then we will have an analogous triangle count:

\begin{prop}(compare \cite[ Proposition 9.32]{Naturality}) \label{trianglecount2}
	Fix a strongly $\mathfrak{s}$-admissible Heegaard triple $\mathcal{T} ' = (\Sigma, \boldsymbol{\alpha '}, \boldsymbol{\beta}, \boldsymbol{\beta '}, p)$, and consider the diagram $\mathcal{T}' \# \mathcal{T}_{0}'$ as above. Then for a generic and sufficiently stretched almost complex structure there is a coherent orientation system $\mathfrak{o}_{\mathcal{T}_{0}'}$ on $\mathcal{T}_{0}'$, which together with any coherent orientation system $\mathfrak{o}_{\mathcal{T}'}$ on $\mathcal{T}'$ induces a coherent orientation system $\mathfrak{o}_{\mathcal{T}' \# \mathcal{T}_{0}'}$ on $\mathcal{T} ' \# \mathcal{T}_{0} '$. Furthermore, with respect to these orientations, 
	$$ \mathcal{F}_{\mathcal{T} ' \# \mathcal{T}_{0}'}((\boldsymbol{x} \times \boldsymbol{b}) \otimes (\boldsymbol{y} \times \btheta '), \mathfrak{s}) = \pm \mathcal{F}_{\mathcal{T}'}(\boldsymbol{x} \otimes \boldsymbol{y}, \mathfrak{s}) \times \boldsymbol{c} $$
	for any $\boldsymbol{x} \in \mathbb{T}_{\balpha '} \cap \mathbb{T}_{\bbeta}$ and $\boldsymbol{y} \in \mathbb{T}_{\bbeta} \cap \mathbb{T}_{\bbeta '}$.
\end{prop}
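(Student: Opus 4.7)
The plan is to follow the same neck-stretching strategy used to prove Proposition \ref{trianglecount}, with the roles of $\balpha$-type and $\bbeta$-type curves appropriately permuted in the stabilizing piece. First I would fix a generic admissible almost complex structure on $\mathcal{T}' \# \mathcal{T}_0'$ with a long neck at the connect sum point $p = p_0$. By Gromov compactness together with the standard gluing analysis of holomorphic triangles developed in \cite{Disks1}, any holomorphic representative of the relevant homotopy class on the stretched diagram degenerates, as the neck length tends to infinity, into a matched pair consisting of a holomorphic triangle on $\mathcal{T}'$ and one on $\mathcal{T}_0'$, joined at a matching point in the neck region. The same compactness/transversality package that works for Proposition \ref{trianglecount} applies here essentially unchanged, because the stabilizing surface is again a closed genus two surface with the same connect-sum profile.

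Next I would carry out the local count on $\mathcal{T}_0'$. For the Maslov indices to cooperate with a Maslov index zero triangle on $\mathcal{T}'$, the triangle on $\mathcal{T}_0'$ must have corners at $(\bb, \btheta', \bc)$ and be of Maslov index zero. Inspection of Figure \ref{triplediagram2} shows that in each of the two punctured-torus summands of $\Sigma_0$ there is exactly one small embedded triangular domain with these corners and nonnegative multiplicities, bounded by arcs of $\balpha_0'$, $\bbeta_0$, and $\bbeta_0'$. Each such domain admits a unique holomorphic representative for a generic complex structure by the Riemann mapping theorem, and their product yields the unique contributing triangle, giving $\mathcal{F}_{\mathcal{T}_0'}(\bb \otimes \btheta', \mathfrak{s}_0) = \pm \bc$ for the $\mathrm{Spin}^c$-structure $\mathfrak{s}_0$ on $X_{\balpha_0', \bbeta_0, \bbeta_0'}$ determined by these corners. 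Combined with the matched-pair description from the neck stretching, this gives the claimed identity at the level of unsigned triangle counts.

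For the orientation bookkeeping, I would choose the coherent orientation system $\mathfrak{o}_{\mathcal{T}_0'}$ so that the unique small triangle identified above is positively oriented; by Lemma \ref{ExistenceOfCoherentTriangles}, any coherent $\mathfrak{o}_{\mathcal{T}'}$ extends together with $\mathfrak{o}_{\mathcal{T}_0'}$ to a coherent $\mathfrak{o}_{\mathcal{T}' \# \mathcal{T}_0'}$. The main obstacle, as in the proof of Proposition \ref{trianglecount}, is to check that the gluing identification of determinant line bundles under neck degeneration is compatible with this coherent extension up to an overall global sign that is independent of $\bx$ and $\by$. This compatibility follows from the coherence axioms together with the naturality of pregluing for determinant lines; the only subtlety is that the two $\bbeta$-type families in $\mathcal{T}_0'$ (as opposed to two $\balpha$-type families in $\mathcal{T}_0$) can change the relative sign convention, but because the resulting sign is homotopy-class-independent it is absorbed into the $\pm$ in the statement. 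As an alternative route, one can attempt to deduce Proposition \ref{trianglecount2} directly from Proposition \ref{trianglecount} via the involution of $\Sigma_0$ which interchanges the two torus summands in a manner identifying $(\balpha_0', \bbeta_0, \bbeta_0')$ with a relabeling of $(\balpha_0', \balpha_0, \bbeta_0)$, provided one verifies that the chosen coherent orientation systems transform predictably under this symmetry.
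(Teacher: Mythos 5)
There is a genuine gap, and it is exactly the point where the real difficulty of the paper's argument lives. You assume that after stretching the neck the limiting configuration is ``a holomorphic triangle on $\mathcal{T}'$ and one on $\mathcal{T}_0'$, joined at a matching point,'' and that index considerations force the local triangle on $\mathcal{T}_0'$ to have Maslov index zero, so that it can be found by inspecting small embedded triangular domains and invoking the Riemann mapping theorem. This is false whenever the class on $\mathcal{T}'$ has $n_p(\psi) = k > 0$, which cannot be excluded for $CF^-$ (such classes contribute with weight $U^{n_p}$). The correct index relation is $\mu(\psi \# \psi_0') = \mu(\psi) + \mu(\psi_0') - 2n_{p_0}(\psi_0')$ together with the matching $n_{p_0}(\psi_0') = n_p(\psi) = k$, so for $\mu(\psi)=0$ the local triangle has Maslov index $2k$, not $0$, and it must satisfy the much stronger constraint $\rho^{p_0}(u_0') = \rho^{p}(u) \in \mathrm{Sym}^k(\Delta)$, i.e.\ match the entire degree-$k$ divisor cut out by the big triangle at the connect sum point. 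The heart of the proof of Proposition \ref{trianglecount} (which the paper says carries over nearly verbatim to Proposition \ref{trianglecount2}) is precisely the analysis of these matched moduli spaces: transversality and orientability of $\mathcal{M}(\psi_0,\boldsymbol{d})$ (Proposition \ref{Prop9.47}, Lemma \ref{OrientabilityofMatchedModuliSpace}), the gluing statement of Proposition \ref{Prop9.49}, and above all the analog of Lemma \ref{SingleDivisorModuliCount} showing that the signed count $\# \mathcal{M}_{(\btheta',\bb,\bc)}(\boldsymbol{d}) = \pm 1$ with a sign independent of $\boldsymbol{d}$. That independence is established by a parametrized cobordism over paths of divisors, an analysis of strip-breaking ends using the vanishing of the relevant hat differential, and a limiting argument pushing the divisor into a corner of $\Delta$ that reduces the count to index-$2$ matched strips on a stabilized $S^1\times S^2$ diagram. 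None of this appears in your proposal; restricting attention to Maslov index zero local triangles simply computes the wrong moduli space.

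Your orientation discussion inherits the same problem: choosing $\mathfrak{o}_{\mathcal{T}_0'}$ so that ``the unique small triangle is positively oriented'' does not address the actual issue, which is that the sign of the matched count must be shown to be the same for every divisor $\boldsymbol{d}$ and every class $\psi$; in the paper this requires the careful construction of compatible coherent orientations on the parametrized matched moduli spaces (Lemma \ref{ParametrizedOrientation}, resting on the index-bundle computation of Lemma \ref{BundleEquation}). Your alternative suggestion, deducing Proposition \ref{trianglecount2} from Proposition \ref{trianglecount} by a symmetry of $\Sigma_0$ exchanging the roles of the attaching curves, is not what the paper does (it simply reruns the same matched-moduli argument with the roles of the corners permuted) and would additionally require verifying that the diffeomorphism intertwines the triangle maps, the distinguished generators $\btheta$ versus $\btheta'$ and $\ba,\bb$ versus $\bb,\bc$, and the coherent orientation systems up to sign — a verification you correctly note is missing, so as written it cannot substitute for the argument.
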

We will prove Proposition \ref{trianglecount} in the following subsection. Since a nearly identical proof can be used to establish Proposition \ref{trianglecount2}, we omit the proof of that result. We now assume Propositions \ref{trianglecount} and \ref{trianglecount2} and use them to establish Theorem \ref{HandleswapInvariance}.

\begin{proof}[Proof of Theorem \ref{HandleswapInvariance}]
	We consider a simple handleswap $(H_{1}, H_{2}, H_{3}, e, f, g)$ as in Definition \ref{SimpleHandleswap}. We first note that to prove the statement about transitive systems appearing in Theorem \ref{HandleswapInvariance}, it will suffice to find representatives $\mathcal{H}_{1}$, $\mathcal{H}_{2}$, and $\mathcal{H}_{3}$ for the isotopy diagrams, and show that for these representatives we have
	$$g_{*} \circ \Phi_{f} \circ \Phi_{e} = \pm \text{Id}_{CF^{-}(\mathcal{H}_{1})}$$
	in $\text{Kom}(\mathbb{Z}[U]\text{-}\text{Mod})$, or equivalently
	$$g_{*} \circ \Phi_{f} \circ \Phi_{e} =  \text{Id}_{CF^{-}(\mathcal{H}_{1})}$$
	in $P(\text{Kom}(\mathbb{Z}[U]\text{-}\text{Mod}))$.
	Indeed, since each of the maps $\Phi_{e}$, $\Phi_{f}$, and $g_{*}$ above are contained in the morphisms $\Phi_{e}, \Phi_{f}$ and $g_{*}$ of the transitive systems $CF^{-}(H)$, by the results in Sections \ref{WeakHeegaardFloerInvariantsSection} and \ref{MainProof}, this monodromy relation will automatically yield corresponding monodromy relation for all such triangles.
	
	Let $\mathcal{H}_{1} = (\Sigma \# \Sigma_{0}, \balpha_{1}, \bbeta_{2})$ be a representative for the first isotopy diagram in the collection of data specifying the simple handleswap. By definition, $\mathcal{H}_{1}$ decomposes as $\mathcal{H} \# \mathcal{H}_{0}$, where $\mathcal{H} =(\Sigma, \balpha, \bbeta)$ and $\mathcal{H}_{0} = (\Sigma_{0}, \balpha_{0}, \bbeta_{0})$ are as in Figure \ref{handleswappic} ($\mathcal{H}_{0}$ here is what we were denoting by $P \cap \mathcal{H}_{1}$ in Definition \ref{SimpleHandleswap}). 
	
	Fix two new curves $\balpha_{0}'$ on $\Sigma_{0}$ which are related to $\balpha_{0}$ as in the diagram $\mathcal{T}_{0}$ in the statement of Proposition \ref{trianglecount}. Fix also a collection of curves $\balpha ' $ on $\Sigma$ which are obtained by performing a small Hamiltonian isotopy on the curves in $\balpha$. The second isotopy diagram $H_{2}$ can then be represented as $H_{2} = (\Sigma \# \Sigma_{0}, \balpha ' \cup \balpha_{0} ', \bbeta \cup \bbeta_{0} )$, and the morphism associated to the strong $\balpha$-equivalence $e$ is given by the triangle map $\Phi_{e} := \Psi_{\bbeta \cup \bbeta_{0}}^{\balpha \cup \balpha_{0} \rightarrow \balpha ' \cup \balpha_{0} '}$. We note that our choices of representatives for the isotopy diagrams $H_{1}$ and $H_{2}$ ensure that the strong equivalence map of Definition \ref{StrongEquivalenceMap} applied to these representatives is computed using only a single triangle map, as opposed to a composition of triangle maps and continuation maps. As in the notation of Proposition \ref{trianglecount}, we set $\mathbb{T}_{\balpha_{0}} \cap \mathbb{T}_{\bbeta_{0}} = \{\boldsymbol{a}\}$ and $\mathbb{T}_{\balpha_{0} '} \cap \mathbb{T}_{\bbeta_{0}} = \{\boldsymbol{b}\}$. We then have for any $\boldsymbol{y} \times \boldsymbol{a} \in \mathbb{T}_{\balpha \cup \balpha_{0}} \cap \mathbb{T}_{\bbeta \cup \bbeta_{0}}$:
	\begin{align*}
	\Phi_{e}(\boldsymbol{y} \times \boldsymbol{a}) &= \Psi_{\bbeta \cup \bbeta_{0}}^{\balpha \cup \balpha_{0} \rightarrow \balpha ' \cup \balpha_{0} '}(\boldsymbol{y} \times \boldsymbol{a}) \\
	&= \mathcal{F}_{\balpha ' \cup \balpha_{0} ', \balpha \cup \balpha_{0}, \bbeta \cup \bbeta_{0}}(\btheta_{\balpha ' \cup \balpha_{0}', \balpha \cup \balpha_{0}} \otimes (\boldsymbol{y} \times \boldsymbol{a})) \\
	&= \mathcal{F}_{\balpha ' \cup \balpha_{0} ', \balpha \cup \balpha_{0}, \bbeta \cup \bbeta_{0}}((\btheta_{\balpha ', \balpha} \times \btheta) \otimes (\boldsymbol{y} \times \boldsymbol{a})) \\
	&= \pm \mathcal{F}_{\balpha ', \balpha, \bbeta}(\btheta_{\balpha ', \balpha} \times \boldsymbol{y}) \times \boldsymbol{b} \\
	&= \pm \Gamma_{\bbeta}^{\balpha \rightarrow \balpha'}(\boldsymbol{y}) \times \boldsymbol{b}
	\end{align*}
	Here we have used Proposition \ref{trianglecount} in the second to last equality, and Lemma \ref{ContinuationTriangleRelation} in the last equality. 
	
	We perform the analogous calculation for the strong $\bbeta$-equivalence. Fix two new curves $\bbeta_{0} '$ on $\Sigma_{0}$ which are related to $\bbeta_{0}$ as in the diagram $\mathcal{T}_{0}'$ in the statement of Proposition \ref{trianglecount2}.  Fix also a collection of curves $\bbeta ' $ on $\Sigma$ which are obtained by performing a small Hamiltonian isotopy on the curves in $\bbeta$. The third isotopy diagram $H_{3}$ can then be represented as $H_{3} = (\Sigma \# \Sigma_{0}, \balpha ' \cup \balpha_{0} ', \bbeta ' \cup \bbeta_{0} ')$, and the morphism associated to the strong $\bbeta$-equivalence $f$ is given by the triangle map $\Phi_{f} := \Psi_{\bbeta \cup \bbeta_{0} \rightarrow \bbeta ' \cup \bbeta_{0} '}^{\balpha ' \cup \balpha_{0} '}$. As in the notation of Proposition \ref{trianglecount2}, we set $\mathbb{T}_{\balpha_{0}'} \cap \mathbb{T}_{\bbeta_{0} '} = \{\textbf{c}\}$. By the same sequence of computations as in the previous case we then have for any $\boldsymbol{x} \times \boldsymbol{b} \in \mathbb{T}_{\balpha ' \cup \balpha_{0}'} \cap \mathbb{T}_{\bbeta \cup \bbeta_{0}}$:
	\begin{align*}
	\Phi_{f}(\boldsymbol{x} \times \boldsymbol{b}) &= \Psi_{\bbeta \cup \bbeta_{0} \rightarrow \bbeta ' \cup \bbeta_{0} '}^{ \balpha ' \cup \balpha_{0} '}(\boldsymbol{x} \times \boldsymbol{b}) \\
	&= \mathcal{F}_{\balpha ' \cup \balpha_{0} ', \bbeta \cup \bbeta_{0}, \bbeta ' \cup \bbeta_{0}'}((\boldsymbol{x} \times \boldsymbol{b}) \otimes \btheta_{\bbeta \cup \bbeta_{0}, \bbeta ' \cup \bbeta_{0}'}) \\
	&= \mathcal{F}_{\balpha ' \cup \balpha_{0} ', \bbeta \cup \bbeta_{0}, \bbeta ' \cup \bbeta_{0}'}( (\boldsymbol{x} \times \boldsymbol{b}) \otimes (\btheta_{\bbeta , \bbeta '} \times \btheta) ) \\
	&= \pm \mathcal{F}_{\balpha ', \bbeta, \bbeta '}(\boldsymbol{x} \times \btheta_{\bbeta, \bbeta'}) \times \boldsymbol{c} \\
	&= \pm \Gamma_{\bbeta \rightarrow \bbeta '}^{\balpha'}(\boldsymbol{x}) \times \boldsymbol{c}
	\end{align*}
	This time we have used Proposition \ref{trianglecount2} in the second to last equality, and again used Lemma \ref{ContinuationTriangleRelation} in the last equality.
	
	We note that in the collection of representatives for the isotopy diagrams in a simple handleswap one could leave the $\balpha$ and $\bbeta$ curves unchanged throughout the handleswap, which would necessitate the diffeomorphism $g$ restricting to the identity on $\Sigma$. Here we have altered $\balpha$ and $\bbeta$ slightly, so that the strong $\balpha$-equivalence and strong $\bbeta$-equivalence maps could each be computed via a single triangle map $\Psi$. Since our alteration of the curves $\balpha$ and $\bbeta$ on $\Sigma$ came from small Hamiltonian isotopies, we can however still ensure that for our representatives for the handleswap the diffeomorphism $g$ is isotopic to the identity when restricted to $\Sigma$. Furthermore, since $g$ is part of a simple handleswap it must satisfy $g(\balpha ') = g(\balpha)$ and $g(\bbeta ') = g(\bbeta)$. Thus, by definition of the maps induced by diffeomorphisms of diagrams, we have
	
	$$g_{*}(\boldsymbol{z} \times \boldsymbol{c}) = (g|_{\Sigma})_{*}(\boldsymbol{z}) \times \boldsymbol{a}$$
	for all $(\boldsymbol{z} \times \boldsymbol{c}) \in \mathbb{T}_{\balpha ' \cup \balpha_{0} '} \cap \mathbb{T}_{\bbeta ' \cup \bbeta_{0} '}$. 
	
	Putting these formulas for each of the induced maps together, we find that

	\begin{align*}
	g_{*} \circ \Phi_{f} \circ \Phi_{e}(\boldsymbol{y} \times \boldsymbol{a}) &= \left( g_{*} \circ \Psi_{\bbeta \cup \bbeta_{0} \rightarrow \bbeta ' \cup \bbeta_{0} '}^{ \balpha ' \cup \balpha_{0} '} \circ \Psi_{\bbeta \cup \bbeta_{0}}^{\balpha \cup \balpha_{0} \rightarrow \balpha ' \cup \balpha_{0} '} \right) (\boldsymbol{y} \times \boldsymbol{a})  \\
	&= \pm \left( (g|_{\Sigma})_{*} \circ \Gamma_{\bbeta \rightarrow \bbeta '}^{\balpha '} \circ \Gamma_{\bbeta}^{\balpha \rightarrow \balpha '} \right) (\boldsymbol{y}) \times \boldsymbol{a}
	\end{align*}
	
	Since the restiction of $g$ to $\Sigma$ is isotopic to the identity, Theorem \ref{ContinuityAxiom} ensures 
	
	$$(g|_{\Sigma})_{*} \circ \Gamma_{\bbeta \rightarrow \bbeta '}^{\balpha '} \circ \Gamma_{\bbeta}^{\balpha \rightarrow \balpha '} \sim \pm \text{Id}_{CF^{-}(\mathcal{H})}$$
	
	We thus have
	\begin{align*}
	g_{*} \circ \Phi_{f} \circ \Phi_{e} &= \pm \left( (g|_{\Sigma})_{*} \circ \Gamma_{\bbeta \rightarrow \bbeta '}^{\balpha '} \circ \Gamma_{\bbeta}^{\balpha \rightarrow \balpha '} \right)  \otimes \text{Id}_{CF^{-}(\mathcal{H}_{0})} \\
	&\sim \pm \text{Id}_{CF^{-}(\mathcal{H})} \otimes \text{Id}_{CF^{-}(\mathcal{H}_{0})} \\
	&\sim \pm \text{Id}_{CF^{-}(\mathcal{H}_{1})},
	\end{align*}
	which by the remarks at the beginning of the proof completes the argument.
\end{proof}
Having established the implication (Proposition \ref{trianglecount} and Proposition \ref{trianglecount2} $\implies$ Theorem \ref{HandleswapInvariance}), we now turn towards proving Proposition \ref{trianglecount}.

We employ the strategy used in \cite{Naturality} for proving the analog of Proposition \ref{trianglecount} appearing there. We import many results exactly as they are stated there, while in a few cases we make small modifications in order to be able to apply their results. For the reader's convenience we provide statements of some results from \cite{Naturality}, and provide proofs of any imported results which must be modified slightly for our purposes. We also provide sketches of proofs of certain statements from \cite{Naturality} which we do not need to modify, but whose exposition we hope will aid in the readibility of this paper. 

In the remainder of this section we work in the cylindrical formulation of Heegaard Floer homology introduced by Lipshitz in \cite{Cylindrical}.

\subsection{Moduli Spaces of Triangles}
We begin by recalling some notation and terminology regarding holomormphic triangles in the cylindrical setting of Heegaard Floer homology (see \cite{Cylindrical}). We denote by $\Delta$ the subset of $\mathbb{C}$ shown in Figure \ref{Delta} below, which has three cylindrical ends modeled on $[0,1] \times [0, \infty)$. We will think of this region as a triangle with its vertices removed. We also introduce in the figure notation we will use to indicate the boundary components and ends of this region.

\begin{figure}[h!]
	\centering
	\begin{tikzpicture}
	\node[anchor=south west,inner sep=0] (image) at (0,0) {\includegraphics[width=0.3\textwidth]{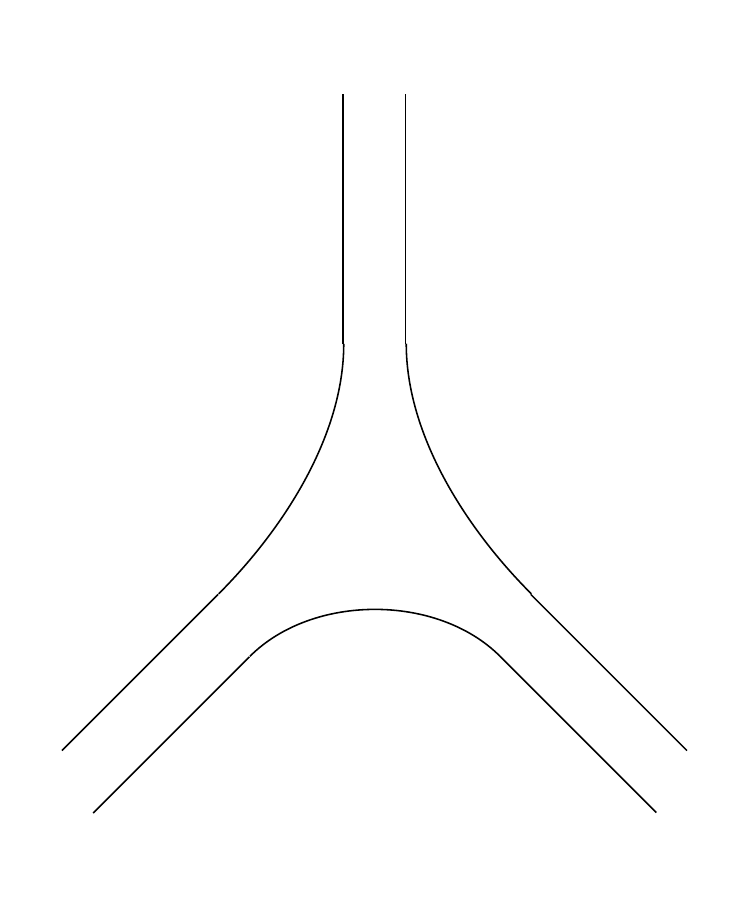}};
	\begin{scope}[x={(image.south east)},y={(image.north west)}]
	\node at (.5,.94) {\large $\nu_{\balpha ' \bbeta}$};
	\node at (.03,.12) {\large $\nu_{\balpha \bbeta}$};
	\node at (.99,.12) {\large $\nu_{\balpha ' \balpha}$};
	\node at (.5,.27) {\large $e_{\balpha}$};
	\node at (.32,.5) {\large $e_{\bbeta}$};
	\node at (.68,.5) {\large $e_{\balpha '}$};
	\end{scope}
	\end{tikzpicture}
	\caption{The region $\Delta$.}
	\label{Delta}
\end{figure}
We will consider almost complex structures $J$ on $\Sigma \times \Delta$ which satisfy the following conditions:
\begin{enumerate}[($J'1'$)]
	\item $J$ is tamed by the split symplectic form on $\Sigma \times \Delta$.
	\item On each component of $\Sigma \setminus (\boldsymbol{\alpha}' \cup \boldsymbol{\alpha} \cup \boldsymbol{\beta})$ there is at least one point at which $J = j_{\Sigma} \times j_{\Delta}$.
	\item On each cylindrical end $\Sigma \times [0,1] \times \mathbb{R}$ of $\Sigma \times \Delta$, there is a $2$-plane distribution $\eta$ on $\Sigma \times [0,1] \times \{0\}$ such that the restriction of $\omega$ to $\eta$ is non-degenerate, $J$ preserves $\eta$, and the restriction of $J$ to $\eta$ is compatible with $\omega$. Furthermore, $\eta$ is tangent to $\Sigma$ near $(\Sigma \times \{0,1\} \times \{0\}) \cup (\Sigma \times [0,1] \times \{0\})$.
	\item The planes $T_{d}(\{p\} \times \Delta)$ are complex lines of $J$ for all $(p,d) \in \Sigma \times \Delta$.
	\item There is an open set $U \subset \Delta$ containing $\partial \Delta \setminus \{ \nu_{\alpha ' \alpha}, \nu_{\alpha \beta}, \nu_{\alpha ' \beta} \}$ such that the planes $T_{p}( \Sigma \times \{d\})$ are complex lines of $J$ for all $(p,d)$ near $(\boldsymbol{\alpha}' \cup \boldsymbol{\alpha} \cup \boldsymbol \beta) \times \Delta$ and for all $(p,d) \in \Sigma \times U$.

\end{enumerate}
$J$-holomorphic curves in $\Sigma \times \Delta$ for almost complex structures $J$ of this sort enjoy the following property.

\begin{claim}[Lemma 3.1 in \cite{Cylindrical}] \label{Lemma9.37}
	Let $J$ be an almost complex structure on $\Sigma \times \Delta$ that satisfies the axioms $(J'1') - (J'5')$. If $u: S \rightarrow \Sigma \times \Delta$ is $J$-holomorphic and $\pi_{\Sigma} \circ u$ is nonconstant on a component $S_{0}$ of S, then $\pi_{\Sigma} \circ u |_{S_{0}}$ is an open map. Furthermore, there are coordinates near any critical point of $\pi_{\Sigma} \circ u |_{S_{0}}$  where $\pi_{\Sigma} \circ u$ takes the form $z \mapsto z^{k}$ for some $k > 0$. 
\end{claim}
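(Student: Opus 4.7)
The plan is to exploit axiom $(J'4')$ in order to reduce the statement to a classical fact about pseudoholomorphic maps from a Riemann surface into an almost complex manifold. First, I would observe that $(J'4')$ says that $J$ preserves the vertical tangent distribution $T^{v} := \ker d\pi_{\Sigma}$. Consequently, with respect to the smooth splitting $T(\Sigma \times \Delta) = T^{h} \oplus T^{v}$, where $T^{h} := \pi_{\Sigma}^{*}T\Sigma$, the endomorphism $J$ has block-lower-triangular form
\[
J \;=\; \begin{pmatrix} J_{\Sigma} & 0 \\ A & J_{\Delta} \end{pmatrix},
\]
and the identity $J^{2} = -\mathrm{Id}$ forces $J_{\Sigma}^{2} = -\mathrm{Id}$ (along with $J_{\Delta}^{2} = -\mathrm{Id}$ and $AJ_{\Sigma} + J_{\Delta}A = 0$). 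Under the canonical identification $T^{h} \cong \pi_{\Sigma}^{*}T\Sigma$, this $J_{\Sigma}$ defines a smooth family of honest almost complex structures on $T\Sigma$ parametrized by the points of $\Sigma \times \Delta$.

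Next, I would project the $J$-holomorphicity equation $J \circ du = du \circ j_{S}$ onto the $T\Sigma$-factor. Writing $u_{\Sigma} := \pi_{\Sigma} \circ u$ and applying $d\pi_{\Sigma}$ to both sides, the $J$-invariance of $T^{v}$ kills the vertical contribution and yields
\[
J_{\Sigma}\!\bigl(u(z)\bigr) \circ du_{\Sigma}(z) \;=\; du_{\Sigma}(z) \circ j_{S}
\]
pointwise on $S$. This is a first-order quasilinear Cauchy--Riemann-type equation for the map $u_{\Sigma}|_{S_{0}} : S_{0} \to \Sigma$, with respect to an almost complex structure on the target that depends smoothly on the auxiliary parameter $u(z) \in \Sigma \times \Delta$. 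Axioms $(J'2')$ and $(J'5')$ further ensure that $J_{\Sigma}$ agrees with $j_{\Sigma}$ on appropriate open subsets, but this is not needed for the present argument.

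To finish, I would invoke the standard local theory for solutions of such equations (see, e.g., McDuff--Salamon's \emph{J-holomorphic Curves and Symplectic Topology}, or the earlier work of Sikorav and Micallef--White). The similarity principle, derived from Aronszajn unique continuation applied to the linearized Cauchy--Riemann operator, implies that any nonconstant solution on a connected Riemann surface admits, at every point $z_{0} \in S_{0}$, coordinates on source and target in which the map takes the form $\zeta \mapsto \zeta^{k}$ for some integer $k \geq 1$. Since $u_{\Sigma}|_{S_{0}}$ is nonconstant by assumption, this immediately produces the local normal form near every critical point and thereby the openness of $u_{\Sigma}|_{S_{0}}$ as an open map.

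The main obstacle I anticipate is the first step: verifying rigorously that $(J'4')$ really does furnish the block decomposition of $J$, and, more importantly, that the resulting $J_{\Sigma}(p,d)$ assembles into a smooth family of almost complex structures on $\Sigma$ such that the projected equation for $u_{\Sigma}$ is genuinely Cauchy--Riemann type (rather than picking up an inhomogeneous $A$-term from the vertical derivatives). Once this block decomposition and the resulting equation are in hand, the rest of the proof is a direct application of classical results in the theory of pseudoholomorphic curves.
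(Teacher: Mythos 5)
Your proof is correct and takes essentially the same route as the paper, whose entire argument is the observation that the lemma follows immediately from the local structure theorem for pseudoholomorphic maps (Theorem 7.1 of the positivity-of-intersections reference, i.e.\ Micallef--White): axiom $(J'4')$ makes the vertical distribution $\ker d\pi_{\Sigma}$ $J$-invariant, so $\pi_{\Sigma}\circ u$ satisfies a Cauchy--Riemann-type equation with smoothly varying complex structure on the target, to which that classical result applies. The only nuance is that the similarity principle by itself yields only the leading-order behavior $\zeta \mapsto a\zeta^{k} + o(|\zeta|^{k})$, while the exact coordinates in which the map is $\zeta \mapsto \zeta^{k}$ are the content of the Micallef--White theorem itself, which you also cite, so your argument is complete as stated.
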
 
\noindent In fact, this result follows immediately from \cite[Theorem 7.1]{PositivityofIntersections}.

To understand Proposition \ref{trianglecount}, we will need to investigate the nature of triangle maps on the diagram $\mathcal{T} \# \mathcal{T}_{0}$. In the cylindrical setting, the notion of a holomorphic triangle in a Heegaard triple diagram takes the following form. 

\begin{defn}\label{Holomorphic Triangle}
	Let $\mathcal{T} =(\Sigma, \boldsymbol{\alpha ' }, \boldsymbol{ \alpha}, \boldsymbol{\beta})$ be a triple diagram, and set $d = | \boldsymbol{\alpha' } | = | \boldsymbol{\alpha} | = | \boldsymbol{\beta} |$. By a \emph{holomorphic triangle in the triple diagram $\mathcal{T}$} we will mean a $(j,J)$-holomorphic map $u: S \rightarrow \Sigma \times \Delta$ satsifying:
	
	\begin{enumerate}[(M1)]
		\item $(S,j)$ is a (possibly nodal) Riemann surface with boundary and $3d$ punctures on $\partial S$. 
		\item $u$ is locally nonconstant. 
		\item $u(\partial S) \subset ( \boldsymbol{\alpha '} \times e_{\balpha '}) \cup ( \boldsymbol{\alpha } \times e_{\balpha }) \cup ( \boldsymbol{\beta } \times e_{\bbeta})$.
		\item $u$ has finite energy.
		\item For each $i \in \{1, \ldots, d  \} $ and $\sigma \in \{ \balpha ' , \balpha, \bbeta  \}$, the preimage $u^{-1}( \sigma_{i} \times e_{\sigma})$ consists of exactly one component of the punctured boundary of $S$.
		\item As one approaches the punctures of $\partial S$, the map $u$ converges to a collection of intersection points on the Heegaard triple in the cylindrical ends of $\Sigma \times \Delta$.
	\end{enumerate}
	We will often ask holomorphic triangles to satisfy the following additional two requirements:
	
	\begin{enumerate}[(M1), resume]
		\item $\pi_{\Delta} \circ u$ is nonconstant on each component of $S$.
		\item $S$ is smooth, and $u$ is an embedding. 
	\end{enumerate}
\end{defn}

Unless otherwise specified, we will use the term holomorphic triangle to refer to maps satisfying axioms $(M1) - (M6)$, and explicitly note when we are considering curves satisfying the additional axioms $(M7)$ and $(M8)$.

For any homology class $\psi$ of triangles on a Heegaard triple diagram $\mathcal{T}$, we will denote by $\mathcal{M}(\psi)$ the moduli space of holomorphic triangles on $\mathcal{T}$ in the homology class $\psi$. Given a Riemann surface $S$, we will indicate by $\mathcal{M}(\psi, S)$ the subspace of $\mathcal{M}(\psi)$ consisting of holomorphic triangles with source $S$.

To obtain the triangle count we are after on a sufficiently stretched copy of $\mathcal{T} \# \mathcal{T}_{0}$, we will need to understand compactifications of these moduli spaces of triangles. These compactifications allow for a weaker notion of triangle which we refer to as broken:

\begin{defn}\label{Broken Holomorphic Triangle}
	Let $\mathcal{T} =(\Sigma, \boldsymbol{\alpha ' }, \boldsymbol{ \alpha}, \boldsymbol{\beta})$ and $d$ be as above. We say that a collection of $(j,J)$-holomorphic curves $BT = ( u_{1}, v_{1}, \ldots, v_{n}, w_{1}, \ldots, w_{m})$ is a \emph{broken holomorphic triangle on $\mathcal{T}$ representing the homology class $\psi$} if
	\begin{enumerate}[(BT1)]
		\item $u_{1}$ is a curve mapping to $\Sigma \times \Delta$ satisfying $(M1)$ and $(M3) - (M6)$.
		\item $v_{i}$ are curves mapping to $\Sigma \times I \times \mathbb{R}$ which satisfy the analogs of $(M1)$ and $(M3) - (M6)$, each representing some homology class of strips in one of the diagrams $(\Sigma, \boldsymbol{\alpha}, \boldsymbol{\alpha}')$, $(\Sigma, \boldsymbol{\alpha}', \boldsymbol{\beta})$ or  $(\Sigma, \boldsymbol{\alpha}, \boldsymbol{\beta})$.
		\item The $w_{i}$ are curves from Riemann surfaces with $d$ boundary components and a single puncture on each boundary component, and which map to $\Sigma \times I \times \mathbb{R} \coprod \Sigma \times \Delta$. For each $i$, the  boundary components of the curve $w_{i}$ all map to a single set of attaching curves. 
		\item The total homology class of the curves in $BT$ is equal to $\psi$.
	\end{enumerate}
\end{defn}

With this notion in hand, we can state the following compactness result which describes the behavior of triangles on $\mathcal{T} \# \mathcal{T}_{0}$ as we stretch the neck:

\begin{prop}[Proposition 9.40 in \cite{Naturality}] \label{Prop9.40}
	Let $\psi \# \psi_{0}$ be a homology class of triangles on $(\Sigma \# \Sigma_{0}) \times \Delta$, and $u_{T_{i}}$ be a sequence of holomorphic triangle representatives for $\psi \# \psi_{0}$  on $(\Sigma \# \Sigma_{0}) \times \Delta$, with respect to almost complex structures $J(T_{i})$ for neck lengths $T_{i} \rightarrow \infty$. Then there is a subsequence which converges to a triple $(U, V, U_{0})$ where $U$ and $U_{0}$ are broken holomorphic triangles on $\Sigma \times \Delta$ and $\Sigma_{0} \times \Delta$  representing $\psi$ and $\psi_{0}$ respectively, and $V$ is a collection of holomorphic curves on the neck regions $S^{1} \times \mathbb{R} \times \Delta$ or $S^{1} \times \mathbb{R} \times [0,1] \times \mathbb{R}$ which are asymptotic to (possibly multiply covered) Reeb orbits $S^{1} \times \{d \}$  for $d \in \Delta$ or $d \in [0,1] \times \mathbb{R}$.
	
\end{prop}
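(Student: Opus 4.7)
The plan is to prove this as a standard neck-stretching compactness statement in the cylindrical formulation of Heegaard Floer homology, adapting the SFT compactness framework to the triangle setting. First I would establish a uniform energy bound for the sequence $u_{T_i}$: since each $u_{T_i}$ represents the fixed homology class $\psi \# \psi_0$ and the almost complex structures $J(T_i)$ are all tamed by (essentially) the same split symplectic form on the pieces away from the neck, the standard calculation bounding the Hofer energy of $u_{T_i}$ in terms of intersection numbers with $\pi_\Delta$ and the homology class gives a uniform bound independent of $T_i$. The bound on $J(T_i)$ itself (apart from the neck modulus) also guarantees uniform control.

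With this energy bound in hand, the second step is to apply the SFT-style compactness theorem of Bourgeois–Eliashberg–Hofer–Wysocki–Zehnder, or more directly the version adapted to the cylindrical Heegaard Floer setting by Lipshitz (see Appendix A of \cite{Cylindrical}), to the sequence of maps $u_{T_i}$ viewed as maps into the neck-stretched targets. This extracts a subsequence that converges, in the SFT sense, to a holomorphic building. The building has levels coming from three types of regions: the part of $(\Sigma \# \Sigma_0) \times \Delta$ sitting over $\Sigma \setminus D$, which in the limit becomes a holomorphic curve in $\Sigma \times \Delta$ (possibly after also bubbling off strips over the double diagrams, yielding the broken triangle $U$); the part sitting over $\Sigma_0 \setminus D_0$, which analogously yields the broken triangle $U_0$; and an intermediate collection $V$ of curves in the neck cylinder $S^1 \times \mathbb{R} \times \Delta$ and in neck pieces $S^1 \times \mathbb{R} \times [0,1] \times \mathbb{R}$ arising from strip breaking. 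At each end of a neck level, the cylindrical structure at infinity is the standard contact structure on $S^1 \times \mathbb{R}$, whose Reeb orbits are precisely the slices $S^1 \times \{d\}$, so the asymptotic behavior of each neck piece is forced to be a (possibly multiply covered) Reeb orbit, as claimed.

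Next I would check that $U$, $U_0$, and $V$ satisfy the axioms $(BT1)$–$(BT4)$ of broken holomorphic triangles and matching conditions at the Reeb orbits. Axioms $(BT1)$ and $(BT2)$ follow from the standard fact that SFT convergence preserves boundary conditions on the $\balpha$, $\balpha'$, $\bbeta$ curves, that the limit curves are $J$-holomorphic on each level, and that strip breaking accounts for all limit components of types $v_i$. The fact that all $w_i$ of type $(BT3)$ have boundary on a single set of attaching curves is a consequence of the topology of the neck region: the gluing circle is disjoint from $\balpha$, $\balpha'$, $\bbeta$, so any component picked up over the neck that carries boundary must have it on a single one of the three attaching families. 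The total homology class condition $(BT4)$ is immediate from the SFT convergence, since homology classes are preserved under Gromov–SFT limits of bounded-energy sequences.

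The main obstacle I expect is bookkeeping at the neck: carefully verifying that the Reeb orbit asymptotics on the neck pieces in $V$ match the punctures introduced on the sources of $U$ and $U_0$, and that no energy is lost in the limit. The energy-loss issue is handled by the uniform Hofer energy bound together with the monotonicity lemma applied near each Reeb orbit; the matching issue is handled by the local structure of SFT convergence, which pairs each outgoing end of a neck level with a corresponding incoming end on the adjacent level. Once these matching conditions are in place, the triple $(U, V, U_0)$ is well-defined as described, and the subsequential convergence is established.
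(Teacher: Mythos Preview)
The paper does not prove this proposition. It is stated with the attribution ``Proposition 9.40 in \cite{Naturality}'' and is one of the results the author explicitly imports wholesale from Juh\'asz--Thurston--Zemke; the only thing the paper adds is the remark immediately following, which unpacks what ``asymptotic to a Reeb orbit'' means. So there is no proof in the paper to compare your proposal against.

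That said, your outline is the right shape for how such a result is actually established: uniform energy bounds from the fixed homology class, then SFT/Gromov compactness in the neck-stretching setting (as in \cite{Cylindrical}) to extract a holomorphic building with levels over $\Sigma$, over $\Sigma_0$, and over the cylindrical neck, with Reeb-orbit asymptotics forced by the contact structure on $S^1$.

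One point in your sketch is garbled. Your justification for axiom $(BT3)$ --- that the $w_i$ have boundary on a single attaching family ``because the gluing circle is disjoint from $\balpha$, $\balpha'$, $\bbeta$'' --- conflates the $w_i$ with neck components. The $w_i$ are not curves in the neck; by Definition~\ref{Broken Holomorphic Triangle} they map to $\Sigma \times I \times \mathbb{R}$ or $\Sigma \times \Delta$ and are boundary degenerations arising from bubbling along a boundary arc of the source. The reason all boundary components of a given $w_i$ land on a single family of attaching curves is that such a bubble pinches off along one arc of $\partial S$, which already lies on a single $e_\sigma$. This has nothing to do with the connect-sum neck.
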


\begin{rem}
	More precisely, the asymptotic condition on the curves appearing in $V$ in Proposition \ref{Prop9.40} above has the following meaning. By a ``Reeb orbit" in this context, we mean a periodic orbit $\gamma$ of the vector field $\dfrac{d}{d \theta}$ on $S^{1} \times \mathbb{R} \times \Delta$ or $S^{1} \times \mathbb{R} \times I \times \mathbb{R}$, where $\theta$ is the coordinate on $S^{1}$. The curves $v$ in $V$ have as sources punctured Riemann surfaces. Let $S$ be a connected component of such a source, $q$ a puncture of $S$, and $v: S \rightarrow S^{1} \times \mathbb{R} \times \Delta$. Write $(\theta, r,z)$ for coordinates on the target. Then $v$ is asymptotic to $\gamma$ at $q$ if:
	
	\begin{enumerate}
		\item There is a neighborhood $U$ of $q$ in $S$ and a biholomorphic diffeomorphism $ \phi: U \cong S^{1} \times (0, \infty)$. Write $(x,y)$ for coordinates on $S^{1} \times (0,\infty)$.
		\item $r \circ v \circ \phi^{-1} \rightarrow \infty$ as $y \rightarrow \infty$
		\item $ (\theta, z) \circ v \circ \phi^{-1}(x,y) \rightarrow \gamma(x)$ as $y \rightarrow \infty$ as maps $S^{1} \rightarrow S^{1} \times \Delta$ in $\mathcal{C}^{\infty}_{\text{loc}}$.
	\end{enumerate}

\end{rem}

\subsection{Matched Moduli Spaces and Orientations}
\label{Orientations}

Fix a triple diagram $\mathcal{T} = (\Sigma, \boldsymbol{\alpha '}, \boldsymbol{\alpha}, \boldsymbol{\beta})$ and a point $p \in \Sigma \setminus (\boldsymbol{\alpha '} \cup \boldsymbol{\alpha} \cup \boldsymbol{\beta}))$. Let $u: S \rightarrow \Sigma \times \Delta$ be a $J$-holomorphic curve satisfying (M1)-(M6), for some almost complex structure $J$ on $\Sigma \times \Delta$ satisfying $(J'1')$-$(J'5')$. Then $u$ is locally non-constant by condition (M2), so by Lemma \ref{Lemma9.37} $\pi_{\Sigma} \circ u$ is an open map on each component of $S$, and takes the form $ z \mapsto z^{k}$ near any critical point. Thus $(\pi_{\Sigma} \circ u)^{-1}(p)$ is a finite set of points. Furthermore, using property $(J'4')$ of the almost complex structure $J$, positivity of complex intersections for $J$-holomorphic curves (See eg \cite{PositivityofIntersections} or \cite{MS}) ensures that all intersections between $p \times \Delta$ and the image of $u$ are positive.

We will write $(\pi_{\Sigma} \circ u)^{-1}(p) = \{x_{1}, \dots, x_{n_{p}(u)}\} \in \text{Sym}^{n_{p}(u)}(S)$, and define 
\begin{equation*}\label{MatchingCondition}
\rho^{p}(u) := \{\pi_{\Delta} \circ u(x_{1}), \dots, \pi_{\Delta} \circ u(x_{n_{p}(u)})\} \in \text{Sym}^{n_{p}(u)}(\Delta)
\end{equation*}
We remark that our notation involving set braces is somewhat misleading, as there may of course be repetitions among the points $x_{i}$ in the symmetric product, corresponding to intersection points occuring with positive multiplicity greater than 1. 

To understand the triangle count, we will be concerned with holomorphic triangles $u$ for which $\rho^{p}(u)$ takes prescribed values. As a first step towards understanding the moduli spaces of such triangles, Juh\'asz, Thurston and Zemke show that, for any prescribed value outside the fat diagonal, such a triangle is somewhere injective.

\begin{claim}[Lemma 9.45 in \cite{Naturality}] \label{Lemma9.45}
	Let $(\Sigma, \boldsymbol{\alpha'}, \boldsymbol{\alpha}, \boldsymbol{\beta},p)$ be a triple diagram, and $\boldsymbol{d} \in \text{Sym}^{k}(\Delta) \setminus \text{Diag}(\Delta)$. If $u: S \rightarrow \Sigma \times \Delta$ is a $J$-holomorphic curve satsifying $(M1)-(M6)$ for an almost complex structure satisfying $(J'1')-(J'5')$, which furthermore has $\rho^{p}(u) = \boldsymbol{d}$, then every component of $u$ is somewhere injective. 
\end{claim}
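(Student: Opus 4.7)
The plan is to argue by contradiction using the standard factorization theorem for pseudoholomorphic curves together with positivity of intersections, as dictated by the condition $(J'4')$. Suppose for contradiction that some connected component $u_0 : S_0 \to \Sigma \times \Delta$ of $u$ fails to be somewhere injective. Then by the structure theorem for non-simple $J$-holomorphic curves (McDuff--Salamon's version, or its extension to curves with totally real boundary by Lazzarini; note that the boundary conditions coming from $(M3)$ send $\partial S_0$ to totally real $\boldsymbol{\alpha}' \cup \boldsymbol{\alpha} \cup \boldsymbol{\beta} \times e_\sigma$ inside $\Sigma \times \Delta$), there is a factorization $u_0 = v \circ \phi$, where $\phi : S_0 \to S_0'$ is a branched holomorphic covering of some degree $k \geq 2$ and $v : S_0' \to \Sigma \times \Delta$ is a somewhere injective $J$-holomorphic curve satisfying the same boundary conditions as $u_0$.

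The next step is to analyze how such a factorization interacts with the prescribed intersection data with $\{p\} \times \Delta$. By property $(J'4')$, each fiber $\{q\} \times \Delta$ is a $J$-complex submanifold, so positivity of intersections (as invoked in defining $n_p(u)$ via Lemma \ref{Lemma9.37}) gives that $v^{-1}(\{p\} \times \Delta)$ is a finite set of interior points of $S_0'$, each contributing positively. If $v^{-1}(\{p\} \times \Delta)$ is nonempty, fix any point $y$ in it: then the fiber $\phi^{-1}(y) \subset S_0$ consists of $k$ points counted with branching multiplicity, every one of which is mapped by $u_0$ to the single image point $v(y) \in \{p\} \times \Delta$. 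In particular, the corresponding $k$ entries of the tuple $\rho^p(u_0)$ all coincide with $\pi_\Delta(v(y)) \in \Delta$. Concatenating this with the entries coming from the other components yields $\rho^p(u) \in \mathrm{Sym}^k(\Delta) \cap \mathrm{Diag}(\Delta)$, contradicting $\boldsymbol{d} \notin \mathrm{Diag}(\Delta)$.

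It remains to eliminate the possibility that the offending non-somewhere-injective component $u_0$ avoids $\{p\} \times \Delta$ entirely, since then $v$ need not meet $\{p\} \times \Delta$ either and the contradiction above fails. I expect this to be the main obstacle. The plan to handle it is as follows: by $(M5)$, each attaching circle contributes a single boundary arc of $S$, so a component entirely disjoint from the other components is heavily constrained; combining this with $(M2)$ and Lemma \ref{Lemma9.37}, the projection $\pi_\Sigma \circ u_0$ is a nonconstant open map, so its image sweeps out an open subset $V \subset \Sigma$. Choosing any point $q \in V \setminus (\boldsymbol{\alpha}' \cup \boldsymbol{\alpha} \cup \boldsymbol{\beta})$ at which the almost complex structure is split (such $q$ exist in $V$ by $(J'2')$ after a small perturbation within the allowed set of $J$), one repeats the intersection argument above with $q$ in place of $p$: the factorization $u_0 = v \circ \phi$ forces $v^{-1}(\{q\} \times \Delta)$ to be nonempty, so $\rho^q(u_0)$ picks up a point of $\Delta$ with multiplicity at least $k \geq 2$.

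Finally, one needs to argue that such a diagonal coincidence at $q$ is incompatible with $\rho^p(u)$ being off-diagonal. The cleanest route is to observe that the matching $\rho^p$ and $\rho^q$ are related by composition with $\pi_\Delta$ applied to interior intersection points of $u$ with the respective divisors, and the homology class $\psi_0$ represented by $u_0$ determines $n_p(u_0)$ and $n_q(u_0)$ compatibly with such a branched cover factorization only when $k = 1$. Granting these technical points, the case reduces to the one already handled, completing the contradiction. The main obstacle, as noted, lies in the bookkeeping for components disjoint from $\{p\} \times \Delta$; everything else is a direct application of the factorization theorem and positivity of intersections.
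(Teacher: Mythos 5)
There is a genuine gap, and it sits exactly where you flagged it. First, the factorization $u_{0} = v \circ \phi$ with $\deg \phi \geq 2$ that you import from the ``structure theorem'' is not available for curves with boundary: Lazzarini-type results give a decomposition of the \emph{image} of a non-somewhere-injective disk into simple pieces, not a factorization of the map through a branched cover, so ``not somewhere injective $\Rightarrow$ multiply covered'' is unjustified in this setting (this could perhaps be repaired using the fibered structure $\pi_{\Delta}$, but you do not do so). Second, and more seriously, the case of a component whose image misses $\{p\} \times \Delta$ does not close: producing a repeated entry of $\rho^{q}(u_{0})$ for an auxiliary point $q$ contradicts nothing, because the hypothesis constrains only $\rho^{p}(u)$, the intersections with the fiber over the distinguished basepoint; a diagonal coincidence over $q$ is perfectly consistent with $\boldsymbol{d} \notin \text{Diag}(\Delta)$. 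The closing claim that the homology class of $u_{0}$ ``determines $n_{p}(u_{0})$ and $n_{q}(u_{0})$ compatibly with such a branched cover factorization only when $k=1$'' is not an argument, and I do not see how to make it one.

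What makes the lemma true (this is the content of the proof in \cite{Naturality}, which the present paper cites rather than reproves) is that the off-diagonal hypothesis is only needed for components on which $\pi_{\Delta} \circ u$ is \emph{constant}. Such a component maps into $\Sigma \times \{d_{0}\}$, with boundary (if any) on a single family of attaching curves; since the complement in $\Sigma$ of each full attaching set is connected, the open map $\pi_{\Sigma} \circ u$ sweeps out that entire complement and in particular covers $p$, and every resulting intersection with $\{p\} \times \Delta$ carries the same $\Delta$-coordinate $d_{0}$, so $\boldsymbol{d} \notin \text{Diag}(\Delta)$ forces the multiplicity over $p$ to be one, which gives somewhere injectivity. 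For components on which $\pi_{\Delta} \circ u$ is nonconstant --- precisely the ones your argument cannot reach when they avoid $\{p\} \times \Delta$ --- somewhere injectivity must be, and in \cite{Naturality} is, established with no reference to $\boldsymbol{d}$ at all, using the structural axioms: by $(M1)$ and $(M6)$ there are exactly $d$ punctures over each corner of $\Delta$, one asymptotic to each of the $d$ pairwise distinct chords of the limiting intersection point, and by $(M5)$ each $\sigma_{i} \times e_{\sigma}$ contains a single boundary arc of $u$; points sufficiently deep in a strip-like end (or on such a boundary arc, away from the isolated critical points of $\pi_{\Delta} \circ u$, interior points being excluded by openness of $\pi_{\Delta} \circ u$) are then injective points with nonvanishing differential. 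Your proposal never brings $(M1)$, $(M5)$, $(M6)$ into play, and without them the problematic case remains open.
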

Fix a Heegaard triple diagram $\mathcal{T} = (\Sigma, \balpha ', \balpha, \bbeta, p)$ and a homology class of triangle $\psi$, with $n_{p}(\psi) =k$. Given a subset $X \subset \text{Sym}^{k}(\Delta)$, we let
$$\mathcal{M}(\psi, S, X) = \{u \in \mathcal{M}(\psi,S) | \rho^{p}(u) \in X \}$$

and

$$\mathcal{M}(\psi, X) = \{u \in \mathcal{M}(\psi) | \rho^{p}(u) \in X \}.$$

Using techniques similar to those used in the standard setting, Juh\'asz, Thurston and Zemke prove the following result, which shows that generically these matched moduli spaces are smooth manifolds.

\begin{prop}[Proposition 9.47 in \cite{Naturality}] \label{Prop9.47}
	Let $(\Sigma, \boldsymbol{\alpha'}, \boldsymbol{\alpha}, \boldsymbol{\beta})$ be a triple diagram, and fix a point $p \in \Sigma \setminus (\boldsymbol{\alpha'} \cup \boldsymbol{\alpha} \cup \boldsymbol{\beta})$. Suppose $X \subset \text{Sym}^{k}(\Delta)$ for some $k \in \mathbb{N}$ is a nonempty submanifold that does not intersect the fat diagonal. Furthermore, suppose that for every $x \in X$, the $k$-tuple $x$ has no coordinate in the open set $U \subset \Delta$ from $(J'5')$. Then, for a generic choice of almost complex structure $J$, the set $\mathcal{M} ( \psi, S, X)$ is a smooth manifold of dimension 
	
	$$ \text{ind}(\psi, S) - \text{codim}(X) $$
	where $\text{ind}(\psi,S)$ denotes the Fredholm index of the linearized $\bar{\partial}$ operator at any representative $u: S \rightarrow \Sigma \times \Delta$ for $\psi$. For $X = \text{Sym}^{k}(\Delta)$, the same statement holds near any curve $u$ that has no component $T$ on which $\pi_{\Delta} \circ u |_{T}$ is constant and has image in $U$, and such that all components of $u$ are somewhere injective. 
	
\end{prop}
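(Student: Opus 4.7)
The plan is to follow the standard universal moduli space strategy from symplectic Floer theory, incorporating the matching condition $\rho^p(u) \in X$ via an evaluation map and applying the Sard--Smale theorem. First, I would equip the space $\mathcal{J}$ of almost complex structures on $\Sigma \times \Delta$ satisfying $(J'1')$--$(J'5')$ with a Banach manifold structure (via a Floer-type $C^\varepsilon$ completion of an appropriate space of perturbations of a fixed $J_0$), and consider the universal moduli space
$$\mathcal{M}^{\text{univ}}(\psi, S) := \{(u, J) : J \in \mathcal{J},\ u : S \to \Sigma \times \Delta\ \text{satisfies (M1)--(M6) and is $J$-holomorphic in class $\psi$}\}.$$
Standard arguments (McDuff--Salamon, adapted to the cylindrical setting in \cite{Cylindrical}) show that near any somewhere-injective curve $\mathcal{M}^{\text{univ}}(\psi, S)$ is a smooth Banach manifold and the projection to $\mathcal{J}$ is Fredholm of index $\text{ind}(\psi, S)$. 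On this space I would consider the evaluation map
$$\text{ev}^p : \mathcal{M}^{\text{univ}}(\psi, S) \to \text{Sym}^k(\Delta), \qquad (u, J) \mapsto \rho^p(u).$$
If $\text{ev}^p$ is transverse to $X$ on the universal moduli space, then $(\text{ev}^p)^{-1}(X)$ is a smooth Banach submanifold of codimension $\text{codim}(X)$, the projection to $\mathcal{J}$ remains Fredholm of index $\text{ind}(\psi, S) - \text{codim}(X)$, and Sard--Smale yields a comeager set of $J \in \mathcal{J}$ whose fibers are smooth manifolds of the asserted dimension.

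The technical heart of the argument, and the hardest step, is verifying transversality of $\text{ev}^p$ to $X$. Let $(u, J) \in (\text{ev}^p)^{-1}(X)$ with $\rho^p(u) = (d_1, \dots, d_k)$. Since $X$ avoids the fat diagonal the $d_i$ are pairwise distinct, and since no coordinate of a point in $X$ lies in $U$ each $d_i$ is outside $U$. Lemma \ref{Lemma9.45} then guarantees that all components of $u$ are somewhere injective, and positivity of complex intersections (combined with Lemma \ref{Lemma9.37} and the distinctness of the $d_i$) forces $\pi_\Sigma \circ u$ to be a local diffeomorphism at each preimage $x_i \in (\pi_\Sigma \circ u)^{-1}(p)$, with $u$ an embedding transverse to $\{p\} \times \Delta$ near $x_i$. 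I would then argue that, given any tangent vector $(v_1, \ldots, v_k) \in \bigoplus_i T_{d_i} \Delta \cong T_{\rho^p(u)} \text{Sym}^k(\Delta)$, one can construct a perturbation $Y \in T_J \mathcal{J}$ supported in a disjoint collection of small neighborhoods of the points $(p, d_i) \in \Sigma \times \Delta$. The condition $d_i \notin U$ ensures such a $Y$ is unconstrained by $(J'5')$, and the standard local calculation near a transverse complex intersection shows $Y$ can be chosen to contribute exactly $v_i$ to the linearization of the $i$-th factor of $\text{ev}^p$, independently across the $i$'s. Solving the linearized equation $D_u \xi = -(Y \circ du) \circ j$ for $\xi$ then yields $(\xi, Y) \in T_{(u,J)}\mathcal{M}^{\text{univ}}(\psi, S)$ with $d\,\text{ev}^p(\xi, Y) = (v_1, \dots, v_k)$, establishing the required transversality (in fact, full submersivity) of $\text{ev}^p$.

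For the final clause concerning $X = \text{Sym}^k(\Delta)$, no matching constraint is imposed and the dimension formula reduces to $\text{ind}(\psi, S)$. Since Lemma \ref{Lemma9.45} is no longer available, the hypotheses that all components of $u$ are somewhere injective and that no component has $\pi_\Delta \circ u$ constant with image in $U$ must be imposed by hand; these are precisely what is needed to perturb $J$ at a generic point on the image of each component (in the region of $\Sigma \times \Delta$ where $(J'5')$ does not constrain $J$), which is enough to run the usual Sard--Smale argument applied directly to the projection $\mathcal{M}^{\text{univ}}(\psi, S) \to \mathcal{J}$.
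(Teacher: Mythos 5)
Your proposal is correct in outline and takes essentially the same approach as the paper, which imports this statement from \cite{Naturality} and records exactly this argument in the proof of Lemma \ref{OrientabilityofMatchedModuliSpace}: a universal moduli space (there with $C^{\ell}$ structures and the source complex structure also varying), submersivity of the evaluation map $\rho^{p}$ away from the fat diagonal (which is where the hypotheses that $X$ misses the diagonal and avoids the set $U$ from $(J'5')$ enter), passage to the preimage of $X$, Sard--Smale applied to the projection to the space of almost complex structures, and a bootstrapping step back to smooth structures. The one caveat is the phrasing of your transversality step: one cannot in general fix $Y$ and then ``solve'' $D_{u}\xi = -(Y\circ du)\circ j$ for $\xi$, since the right-hand side need not lie in the image of $D_{u}$; the standard (and intended) formulation is that the combined linearized operator in $(\xi,Y)$, paired with the differential of $\rho^{p}$, is surjective, and your perturbations supported near the points $(p,d_{i})$, together with somewhere injectivity from Lemma \ref{Lemma9.45}, feed into that argument without changing the outline.
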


It will be important for our purposes to note that these moduli spaces are also orientable when they are smoothly cut out, which follows in a straightforward manner from the framework in which the proof of the previous proposition is carried out. We now provide a sketch of the argument.

\begin{claim}\label{OrientabilityofMatchedModuliSpace}
	For $J$ and $X$ satisfying the hypotheses of Proposition \ref{Prop9.47}, with $X \subset \text{Sym}^{k}(\Delta)$ furthermore assumed to be an orientable submanifold, $\mathcal{M} ( \psi, S, X)$ is orientable. 
\end{claim}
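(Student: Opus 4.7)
The plan is to exhibit $\mathcal{M}(\psi, S, X)$ as a transverse preimage inside an already-orientable parametrized moduli space, and then use the standard exact sequence of tangent bundles at a regular value to conclude orientability. More precisely, the proof of Proposition \ref{Prop9.47} is carried out by realizing $\mathcal{M}(\psi,S,X)$ as the preimage $(\rho^{p})^{-1}(X)$, where
$$\rho^{p}: \mathcal{M}(\psi,S) \longrightarrow \text{Sym}^{k}(\Delta)$$
is the evaluation-by-matching map $u \mapsto \rho^{p}(u)$, and transversality of $\rho^{p}$ to $X$ is achieved for generic $J$. Once this is in place, orientability will reduce to orientability of (i) the ambient moduli space $\mathcal{M}(\psi,S)$, (ii) the target $\text{Sym}^{k}(\Delta)$, and (iii) the submanifold $X$.

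First I would record that $\mathcal{M}(\psi,S)$ is orientable. In the cylindrical setting this is exactly the statement of \cite[Proposition 10.3]{Cylindrical} (the analog of \cite[Proposition 6.3]{Cylindrical} for triangles): the determinant line bundle of the virtual index bundle of the linearized $\bar{\partial}$-operator over the space of maps satisfying (M1)--(M6) is trivializable. This is ultimately because the totally real boundary conditions (coming from products of the attaching circles) are orientable, and the linearized operator can be homotoped through Fredholm operators to a complex-linear one on each stratum without changing the determinant line bundle up to isomorphism. Choosing any such trivialization fixes an orientation on $\mathcal{M}(\psi,S)$ wherever it is cut out smoothly; no modification of the argument in \cite{Cylindrical} is required.

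Next I would observe that $\Delta \subset \mathbb{C}$ inherits a complex structure, so the quotient $\text{Sym}^{k}(\Delta) = \Delta^{k}/S_{k}$ is a complex manifold (away from the diagonal, which $X$ avoids by the hypothesis $X \subset \text{Sym}^{k}(\Delta) \setminus \text{Diag}(\Delta)$), hence canonically oriented. Combined with the hypothesized orientability of $X$, this implies that the normal bundle $\nu_{X}$ of $X$ in $\text{Sym}^{k}(\Delta)$ is orientable (as the quotient of two orientable bundles).

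Finally, at any point $u \in \mathcal{M}(\psi,S,X)$, transversality of $\rho^{p}$ to $X$ yields a short exact sequence of real vector bundles
$$0 \longrightarrow T\mathcal{M}(\psi,S,X) \longrightarrow T\mathcal{M}(\psi,S)\big|_{\mathcal{M}(\psi,S,X)} \xrightarrow{\;d\rho^{p}\;} (\rho^{p})^{*}\nu_{X} \longrightarrow 0,$$
which induces an isomorphism of determinant line bundles
$$\det T\mathcal{M}(\psi,S,X) \;\cong\; \det T\mathcal{M}(\psi,S)\big|_{\mathcal{M}(\psi,S,X)} \otimes \det\bigl((\rho^{p})^{*}\nu_{X}\bigr)^{*}.$$
Since both factors on the right are trivializable by the previous two paragraphs, so is the left-hand side. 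Hence $\mathcal{M}(\psi,S,X)$ is orientable, and in fact acquires a preferred orientation once the orientations on $\mathcal{M}(\psi,S)$ and on $X$ have been fixed. The only mild obstacle is bookkeeping the compatibility of these choices with the coherent orientation systems on holomorphic disks; but that compatibility will only be invoked later when we actually glue and compare counts, and plays no role in the bare orientability statement stated here.
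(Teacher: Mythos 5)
Your proposal is correct and follows essentially the same route as the paper's proof: both realize $\mathcal{M}(\psi,S,X)$ as the transverse preimage $(\rho^{p})^{-1}(X)$ inside the orientable space $\mathcal{M}(\psi,S)$ (citing the orientability results of \cite{Cylindrical}), and then deduce orientability from orientability of $X$ and $\text{Sym}^{k}(\Delta)$ via the normal-bundle/determinant-line argument, which is exactly the paper's splitting $T\mathcal{M}(\psi,S) \cong T\mathcal{M}(\psi,S,X) \oplus N$ with $d\rho^{p}|_{N}$ identifying $N$ with the pullback of the normal bundle of $X$.
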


\begin{proof}
	Forgetting the matching condition (ie\ taking $X = \text{Sym}^{k}(\Delta)$) we consider $\mathcal{M} ( \psi, S, \text{Sym}^{k}(\Delta) )= \mathcal{M}(\psi, S)$. By \cite[Proposition 6.3 and Section 10.3]{Cylindrical}, whenever this space is transversely cut out it is an orientable smooth manifold. 
	
	For the case when $X \neq  \text{Sym}^{k}(\Delta)$, we briefly recall how one can establish the existence of a smooth manifold structure on $\mathcal{M}(\psi, S, X)$, as in the proof of \cite[Proposition 9.47]{Naturality}. Consider the map $\rho^{p}: \mathcal{M}(\psi, S) \rightarrow \text{Sym}^{k}(\Delta)$. To obtain the smooth manifold structure on $\mathcal{M}(\psi, S, X)$, one considers the universal moduli space $\mathcal{M}_{\text{univ}}^{\ell}(\psi, S)$. This consists of triples $(u, j, J)$, where $j$ is a $C^{\ell}$ complex structure on $S$, $J$ is a $C^{\ell}$ almost complex structure on $\Sigma \times \Delta$ satisying conditions $(J'1') - (J'5')$, and $u$ is a $(j,J)$-holomorphic map $u: S \rightarrow \Sigma \times \Delta$ in the homology class $\psi$, which furthermore satisfies certain regularity conditions (see \cite[pg 968]{Cylindrical}). It is shown in the proof of Proposition \ref{Prop9.47}, using the technique of \cite[Proposition 3.7]{Cylindrical}, that the universal moduli space $\mathcal{M}_{\text{univ}}^{\ell}(\psi, S)$ is a Banach manifold and the evaluation map $\rho^{p}: \mathcal{M}_{\text{univ}}^{\ell}(\psi, S) \rightarrow \text{Sym}^{k}(\Delta)$ is a submersion at all triples $(u, j, J)$ for which $\rho^{p}(u)$ is not in the fat diagonal. Thus for $X$ missing the fat diagonal, the universal matched moduli space $\mathcal{M}_{\text{univ}}^{\ell}(\psi, S,X) := (\rho^{p})^{-1}(X)$ is a Banach manifold. One can then apply the Sard-Smale theorem to the Fredholm map $\pi: \mathcal{M}_{\text{univ}}^{\ell}(\psi, S,X) \rightarrow \mathcal{J^{\ell}}$ to obtain a regular value $J \in J^{\ell}$ so that $\mathcal{M}^{\ell}(\psi, S, X) = \pi^{-1}(J)$ is a smooth manifold. Finally, one uses an approximating bootstrapping argument to obtain the same result for $C^{\infty}$ complex structures. More precisely, one obtains that for a generic choice of $J$ the space $\mathcal{M}(\psi, S)$ is a smooth manifold and the map
	$$\rho^{p}: \mathcal{M}(\psi, S) \rightarrow \text{Sym}^{k}(\Delta)$$
	is transverse to $X$. Thus for $X$ missing the fat diagonal $\mathcal{M}(\psi, S, X):= (\rho^{p})^{-1}(X)$ is a smooth manifold.
	
	Fixing $u \in \mathcal{M}(\psi, S, X)$ we have
	$$T_{u} \mathcal{M}(\psi, S) \cong T_{u} \mathcal{M}(\psi, S, X) \oplus N_{u}$$
	where $N$ is any choice of orthogonal complement.  Since $\mathcal{M}(\psi, S)$ is orientable, it will suffice to show $N$ is orientable to establish that $\mathcal{M}(\psi, S, X)$ is orientable. Since $\rho^{p}$ is transverse to $X$, we have 
	$$ d\rho^{p}( T_{u} \mathcal{M}(\psi, S)) + T_{\rho^{p}(u)} X = T_{\rho^{p}(u)} \text{Sym}^{k}(\Delta).$$
	Since $(d \rho^{p})^{-1}(TX) = T\mathcal{M}(\psi, S, X)$, the two equations above yield a direct sum decomposition
	$$ d \rho^{p}(N_{u}) \oplus T_{\rho^{p}(u)} X \cong T_{\rho^{p}(u)} \text{Sym}^{k}(\Delta).$$
	Finally, since $X$ and $\text{Sym}^{k}(\Delta)$ are orientable, and $d \rho^{p} |_{N}$ is an ismorphism on each fiber, the last equation establishes orientability of the complement $N$. Thus $\mathcal{M}(\psi, S, X)$ is orientable, as desired.
\end{proof}

We now turn to an investigation of the behavior of orientations on these moduli spaces. We recall again the notion of coherent orientation systems, and now provide the precise definitions in the cylindrical setting, as we will need them in some of our computations. We begin with the moduli space of holomorphic \textit{strips} in a homology class $A \in \pi_{2}(\boldsymbol{x},\boldsymbol{y})$, denoted $\mathcal{M}^{A}$, on some Heegaard (double) diagram $\mathcal{H} = (\Sigma, \balpha,\bbeta)$. We set  $\widehat{\mathcal{M}}^{A} =\mathcal{M}^{A} / \mathbb{R}$. As noted above, these moduli spaces are orientable whenever they are smoothly cut out by \cite[Proposition 6.3]{Cylindrical}. There this is shown by trivializing the determinant line bundle of the virtual index bundle of the linearized $\bar{\partial }$-equation. In fact, this line bundle is trivialized over a larger auxiliary space of curves which are not necessarily holomorphic, which we denote by $\mathcal{B}^{A}$, rather than over $\mathcal{M}^{A}$. We ask for the trivializations of these determinant lines $\mathcal{L}$ over $\mathcal{B}^{A}$ to satisfy the following compatibility under glueing.

\begin{defn}
	\label{coherentstrips}
	Given a Heegaard diagram $\mathcal{H}$, homology classes of strips $A,A'$ which are adjacent on the diagram (ie\ $A \in \pi_{2}(\boldsymbol{x},\boldsymbol{y})$, $A' \in \pi_{2}(\boldsymbol{y},\boldsymbol{z})$), and maps $u:S \rightarrow \Sigma \times I \times \mathbb{R}$ and $u':S' \rightarrow \Sigma \times I \times \mathbb{R}$ representing $A$ and $A'$ respectively, one can preglue the positive corners of $u$ to the negative corners of $u'$ (see \cite[Appendix A]{Cylindrical} for one such construction). In fact, there is a 1 parameter family of such preglueings $(u \natural_{r} u' : S \natural_{r} S' \rightarrow  \Sigma \times I \times \mathbb{R})$ in the class  $A + A'$, defined for sufficiently large values of the parameter $r$ . One can show that this map preserves the analogs of $(M1)$, $(M3)$ and $(M4)$ for strips, and the asymptotic conditions one asks of the strips. Denote the collection of maps of the form  $S \rightarrow \Sigma \times I \times \mathbb{R}$ in a given homology class $A$ which furthermore satisfy $(M1)$, $(M3)$, $(M4)$, and the asymptotic conditions by $\mathcal{B}^{A}(S)$.  We say a choice of orientations for all $\widehat{\mathcal{M}}^{A}$, specified by a collection of nonvanishing sections $\mathfrak{o}_{\mathcal{H}} = \mathfrak{o}_{\balpha, \bbeta} = \{\mathfrak{o}^{A}\}$ of $\mathcal{L}$ over all of the $\widehat{\mathcal{M}}^{A}$, is a \emph{coherent orientation system on $\mathcal{H}$} if the induced map of determinant lines covering the map $\natural_{r}: \mathcal{B}^{A}(S) \times \mathcal{B}^{A'}(S') \times (R, \infty) \rightarrow \mathcal{B}^{A+A'}(S \natural_{r} S')$  satisfies $(\natural_{r})_{*} ( \mathfrak{o}^{A} \times \mathfrak{o}^{A'}) = + \mathfrak{o}^{A +A'}$.

\end{defn}

That such coherent orientation systems exist is shown in numerous places.  One construction sufficient for our purposes can be found in \cite[Section 6]{Cylindrical}.

In the case of holomorphic triangles, the moduli spaces $\mathcal{M}(\psi)$ are also orientable. For a collection of orientations on $\mathcal{M}(\psi)$ for all homology classes $\psi$ of triangles in a triple diagram, we will consider a related notion of coherence.

\begin{defn}\label{coherenttriangles}
	Given a Heegaard triple diagram $\mathcal{T}$, we will say a choice of orientations for $\mathcal{M}^{\psi_{\balpha, \bbeta}}$, $\mathcal{M}^{\psi_{\bbeta, \bgamma}}$, $\mathcal{M}^{\psi_{\balpha, \bgamma}}$, and $\mathcal{M}(\psi)$ (for $\psi_{\balpha, \bbeta}$, $\psi_{\bbeta, \bgamma}$ and $\psi_{\balpha, \bgamma}$ ranging over all classes of strips in the respective double diagrams, and $\psi$ ranging over all classes of triangles in the triple diagram) specified by a collection of sections $\mathfrak{o}_{\mathcal{T}} = \{ \mathfrak{o}_{\balpha, \bbeta, \bgamma}, \mathfrak{o}_{\balpha,\bbeta}, \mathfrak{o}_{\bbeta, \bgamma}, \mathfrak{o}_{\balpha, \bgamma} \}$ is a \emph{coherent orientation system of triangles}, if each collection of orientations of the moduli spaces of strips on the respective double diagrams are coherent, and all possible pregluings of triangles with strips satisfy the analogous glueing condition. 
\end{defn}

Following \cite[Section 6]{Cylindrical}, given a homology class of triangles $\psi$ on the triple diagram $\mathcal{T}$, let $T(\psi)$ denote the space of pairs $(u,j)$, where $u: S \rightarrow \Sigma \times \Delta$ is a curve in the class $\psi$ satisfying $(M1)$, $(M3)$ and $(M4)$, and $j$ is a complex structure on $S$. We declare two such pairs $(u: S \rightarrow \Sigma \times \Delta, j)$ and $(u': S' \rightarrow \Sigma \times \Delta, j')$ to be equivalent if there is a biholomorphism $\phi: (S, j) \rightarrow (S', j')$ such that the diagram

\begin{equation}\label{teichmuller}
\begin{tikzcd}
S \arrow{rd}{u} \arrow{rr}{\phi} &   & S ' \arrow{ld}{u'}  \\
& \Sigma \times \Delta &  \\
\end{tikzcd} 
\end{equation}
commutes. We denote the quotient of $T(\psi)$ by this equivalence relation by $\mathcal{B}(\psi)$.

Let $p: I \rightarrow \text{Sym}^{k}(\Delta)$ be an embedded path missing the fat diagonal. We consider the following moduli spaces of holomorphic triangles associated to homology classes $\psi_{0} \in \pi_{2}(\btheta,\ba,\bb)$ in the triple diagram $\mathcal{T}_{0}$ from Proposition \ref{trianglecount}:

\begin{equation}
\mathcal{M}_{I}^{\psi_{0}} = \mathcal{M}(\psi_{0}, p(I)) = \{  (u,t) | u \in \mathcal{M}(\psi_{0}) \text{ such that } \rho^{p}(u) \in p(t) \text{ for some } t \in I \}
\label{MI}
\end{equation}
and 
\begin{equation}
\mathcal{M}_{t}^{\psi_{0}} = \mathcal{M}(\psi_{0}, p(t)) = \{ u \in \mathcal{M}(\psi_{0}) \text{ such that }  \rho^{p}(u) \in p(t)  \}
\end{equation}

By Proposition \ref{Prop9.47}, for a generic choice of almost complex structure on $\Sigma_{0} \times \Delta$ the moduli spaces $\mathcal{M}_{I}^{\psi_{0}}$ are smooth manifolds of dimension $\mu(\psi_{0}) - \text{codim}(p(I))$. By Lemma \ref{Lemma950}, we have $\mu(\psi_{0}) = 2n_{p_{0}}(\psi_{0})$, so the expected dimension becomes $2n_{p_{0}}(\psi_{0}) - (2k-1)$. In particular, when $k=n_{p_{0}}(\psi_{0})$ the moduli space $\mathcal{M}_{I}^{\psi_{0}}$ is a smooth 1 manifold when it is transversely cut out. Similarly, the expected dimension of $\mathcal{M}_{t}^{\psi_{0}}$ is 0 when $k=n_{p_{0}}(\psi_{0})$. Finally, we define the spaces

$$\mathcal{M} = \coprod_{\substack{\psi_{0} \in \pi_{2}(\btheta, \ba,\bb) \\ n_{p_{0}}(\psi_{0})=k}} \mathcal{M}^{\psi_{0}}$$

$$\mathcal{M}_{I} = \coprod_{\substack{\psi_{0} \in \pi_{2}(\btheta, \ba,\bb) \\ n_{p_{0}}(\psi_{0})=k}} \mathcal{M}_{I}^{\psi_{0}}$$

$$\mathcal{M}_{t} = \coprod_{\substack{\psi_{0} \in \pi_{2}(\btheta, \ba,\bb) \\ n_{p_{0}}(\psi_{0})=k}} \mathcal{M}_{t}^{\psi_{0}} $$

We provide a schematic of these spaces and their relationships in Figure \ref{MatchedModuliSchematic}.

We note for the following arguments that by the remarks above $\mathcal{M}_{I}$ is a smooth manifold of dimension 1 for a generic choice of almost complex structure, and for each $t$ a (potentially different) generic choice of almost complex structure will ensure $\mathcal{M}_{t}$ is a smooth manifold of dimension $0$. We will denote by $\mathfrak{o}_{\mathcal{M}_{I}}$ and $\mathfrak{o}_{\mathcal{M}_{t}}$ nowhere zero sections of the bundles $\mathcal{L}_{I}$ and $\mathcal{L}_{t}$ respectively, which are the determinant line bundles of the virtual index bundles of the linearized equations defining these moduli spaces. We recall that such sections determine orientations of the moduli spaces.

\begin{figure}[h!]
	\centering
	\begin{tikzpicture}
	\node[anchor=south west,inner sep=0] (image) at (0,0) {\includegraphics[width=0.8\textwidth]{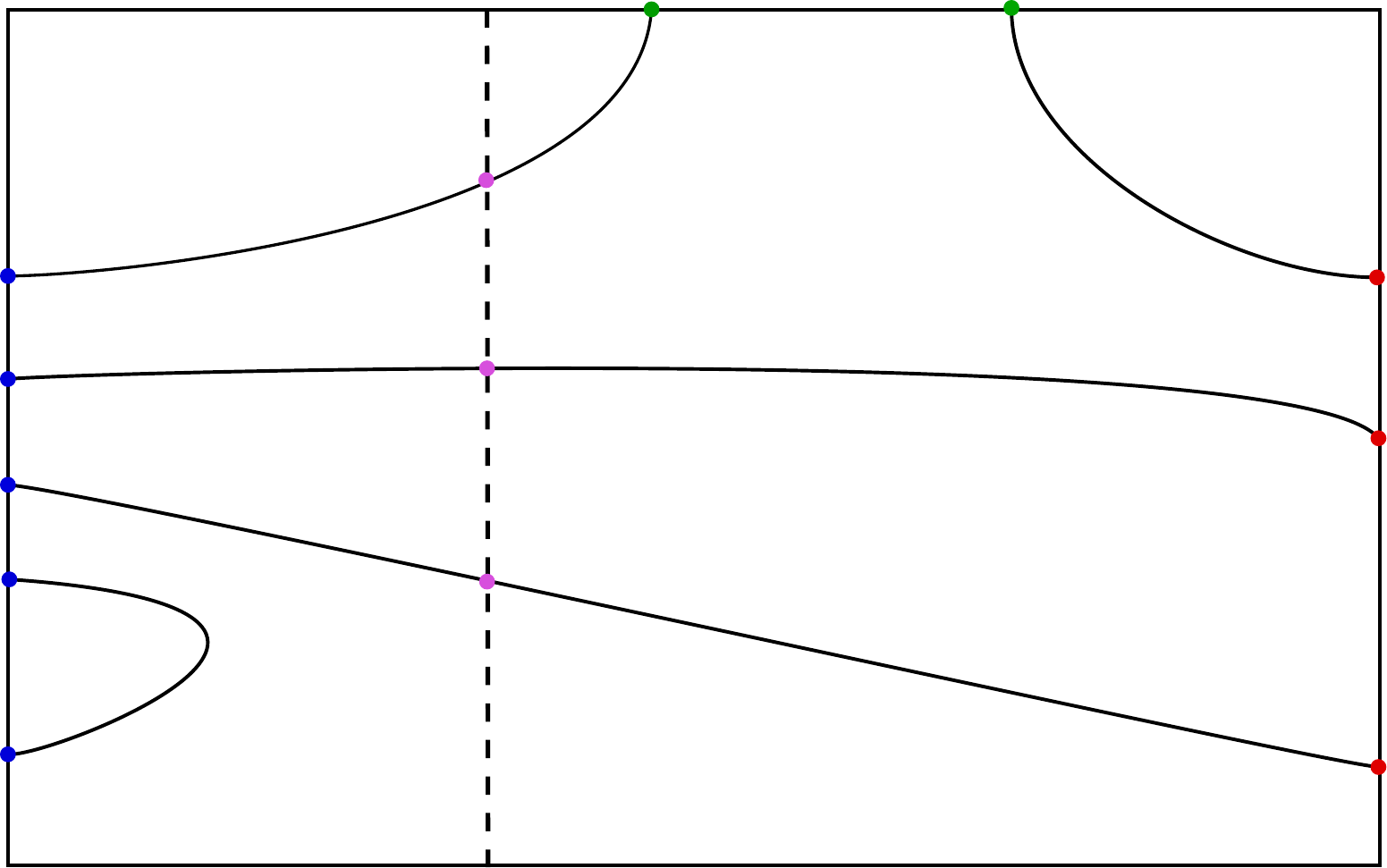}};
	\begin{scope}[x={(image.south east)},y={(image.north west)}]
	\node[text = blue] at (-.04,.56) {\large $\mathcal{M}_{0}$};
	\node[text = red] at (1.04,.5) {\large $\mathcal{M}_{1}$};
	\node[text = Purple] at (.31,.62) {\large $\mathcal{M}_{t}$};
	\node at (.6,.61) {\large $\mathcal{M}_{I}$};
	\node at (.28,.9) {\large $\mathcal{M}\times{t}$};
	\node at (-.07,.9) {\large $\mathcal{M}\times{0}$};
	\node at (1.07,.9) {\large $\mathcal{M}\times{1}$};
	
	\end{scope}
	\end{tikzpicture}
	\caption{A schematic of the space $\mathcal{M} \times I$ with $\mathcal{M}_{I}$ inside it. Vertical slices of the picture such as the vertical dashed line represent the spaces $\mathcal{M} \times {t}$, while the solid curves collectively represent the smooth moduli space $\mathcal{M}_{I}$. The left and right endpoints on $\mathcal{M}_{I}$ represent $\mathcal{M}_{0} \subset \mathcal{M} \times {0}$ and $\mathcal{M}_{1} \subset \mathcal{M} \times {1}$ respectively, while the endpoints of $\mathcal{M}_{I}$ on the top and bottom of the figure represent degenerations of triangles into broken triangles in the compactification.}
	\label{MatchedModuliSchematic}
\end{figure}

For arguments appearing later, we want to ensure we can achieve the following intuitively achievable constraints on our orientations:

\begin{claim}\label{ParametrizedOrientation}
	Let $\mathcal{M}_{I}$ and $\mathcal{M}_{t}$ be as above. Then there are coherent orientation systems $\mathfrak{o}_{\mathcal{M}_0}$ on $\mathcal{M}_{0}$, $\mathfrak{o}_{\mathcal{M}_1}$ on $\mathcal{M}_{1}$, and $\mathfrak{o}_{\mathcal{M}_I}$ on $\mathcal{M}_{I}$ such that $(\mathfrak{o}_{\mathcal{M}_I}) |_{\mathcal{M}_{0}} \cong - \mathfrak{o}_{\mathcal{M}_0}$ and $(\mathfrak{o}_{\mathcal{M}_I}) |_{\mathcal{M}_{1}} \cong \mathfrak{o}_{\mathcal{M}_1}$. 
\end{claim}

\begin{proof}
	The proof is an elaboration on that of Lemma \ref{OrientabilityofMatchedModuliSpace}. Consider again the universal moduli space of holomorphic maps $\mathcal{M}_{\text{univ}}$ consisting of triples $(u, j, J)$ satisfying the conditions as in the proof of Lemma \ref{OrientabilityofMatchedModuliSpace}. We consider the map
	$$\rho^{p} \times \text{id}: \mathcal{M}_{\text{univ}} \times I \rightarrow \text{Sym}^{k}(\Delta) \times I$$
	given by $(\rho^{p} \times \text{id})(u, j, J, t) = (\rho^{p}(u), t)$. This map is again a submersion when $\rho^{p}(u)$ is not in the fat diagonal, by \cite[Proposition 9.47]{Naturality}. Let 
	$$P = \{(p(t), t) | \text{t} \in I \}$$
	and note that $(\rho^{p}\times \text{id})^{-1}(P) = (\mathcal{M}_{\text{univ}})_{I}$, where here we have used the notation $(\mathcal{M}_{\text{univ}})_{I}$ to indicate the universal moduli space matched to $p(I)$ (as the notation is used in Equation \eqref{MI}). Since we are working with a path $p$ missing the fat diagonal, $\rho^{p} \times \text{id}$ is a submersion, and we have, as a consequence of the Sard Smale Theorem, parametric transversality: denoting by $\mathcal{M}_{J}$ the moduli space of holomorphic curves with respect to the almost complex structure $J$, $\rho^{p} \times \text{id}: \mathcal{M}_{J} \times I \rightarrow \text{Sym}^{k}(\Delta) \times I$ is transverse to $P$ for generic $J$. By \cite[Section 6]{Cylindrical} we may orient such $\mathcal{M}_{J}$, which in turn specifies a product orientation on $\mathcal{M}_{J} \times I$. Since $P$ is also oriented (by a fixed orientation for $I$), we then have for such $J$  that $(\rho^{p}\times \text{id})^{-1}(P) = \mathcal{M}_{I}$ inherits an orientation $\mathfrak{o}_{\mathcal{M}_I}$; furthermore, this orientation satisfies the boundary conditions
	
	\begin{equation}\label{orientationconvention}
	(\mathfrak{o}_{\mathcal{M}_I}) |_{\mathcal{M}_{0}} =  -\mathfrak{o}_{\mathcal{M}_{0}}
	\end{equation} 
	and 
	
	\begin{equation}
	(\mathfrak{o}_{\mathcal{M}_I}) |_{\mathcal{M}_{1}} = \mathfrak{o}_{\mathcal{M}_{1}} ,
	\end{equation}  
	where $\mathfrak{o}_{\mathcal{M}_{0}}$ and $\mathfrak{o}_{\mathcal{M}_{1}}$ are the orientations coming from the previously fixed choice of orientation on $\mathcal{M}_{J}$, as desired. Finally, we note that by the same argument used to prove \cite[Lemma 10.10]{Cylindrical}, we may arrange for the orientation systems $\mathfrak{o}_{\mathcal{M}_{I}}$, $\mathfrak{o}_{\mathcal{M}_{0}}$ and $\mathfrak{o}_{\mathcal{M}_{1}}$ in the preceding paragraph to be enlarged to coherent systems in the sense of Definition \ref{coherenttriangles}. \qedhere 
	
\end{proof}

Having discussed the smooth manifold structure and a particular construction of coherent orientations on the matched moduli spaces of triangles on a triple diagram, we now state a glueing result from \cite{Naturality} which will allow us to relate these matched moduli spaces of triangles on the diagram $\mathcal{T}_{0}$ to the triangles on $\mathcal{T} \# \mathcal{T}_{0}$ we seek to count. We consider homology classes of triangles $\psi$ on an arbitrary pointed triple diagram $\mathcal{T}=(\Sigma, \balpha ', \balpha, \bbeta, p)$ and $\psi_{0}$ on the pointed diagram $\mathcal{T}_{0}= (\Sigma_{0}, \balpha_{0} ', \balpha_{0}, \bbeta_{0}, p_{0})$. We form the connected sum of the diagrams at the points $p$ and $p_{0}$, and consider the resulting homology class $\psi \# \psi_{0}$:

\begin{prop}[Proposition 9.49 in \cite{Naturality}] \label{Prop9.49}
	Let $u$ and $u_{0}$ be holomorphic triangles representing homology classes $\psi$ and $\psi_{0}$ in $\Sigma \times \Delta$ and $\Sigma_{0} \times \Delta$ respectively. Let $k = n_{p}(\psi) = n_{p_{0}}(\psi_{0})$, and suppose $\mu(u) =0$, $\mu(u_{0}) = 2k$, and $\rho^{p}(u) = \rho^{p_{0}}(u_{0}) \in \text{Sym}^{k}(\Delta) \setminus \text{Diag}^{k}(\Delta)$. Suppose further that $\mathcal{M}(\psi)$ and $\mathcal{M}(\psi_{0}, \rho^{p}(u))$ are transversely cut out near $u$ and $u_{0}$. Then there is a homeomorphism $h$ between $[0,1)$ and a neighborhood of $(u,u_{0})$ in the compactified 1-dimensional moduli space
	$$\overline{\bigcup_{T} \mathcal{M}_{J(T)}(\psi \# \psi_{0})}$$
	such that $h(u,u_{0}) = \{0 \}$
\end{prop}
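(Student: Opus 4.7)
The plan is to carry out a standard pregluing and Newton iteration argument, organized so that the single matching condition in $\text{Sym}^k(\Delta)$ correctly produces a one-parameter family indexed by the neck length. Since the matching hypothesis $\rho^p(u) = \rho^{p_0}(u_0) = \{d_1, \ldots, d_k\} \in \text{Sym}^k(\Delta) \setminus \text{Diag}^k(\Delta)$ means that the $k$ points of $u^{-1}(p \times \Delta)$ and the $k$ points of $u_0^{-1}(p_0 \times \Delta)$ project to the same $k$ distinct points of $\Delta$, we can in fact form a pre-glued curve fiberwise over $\Delta$. Concretely, at each matched point $d_i$ we excise small neighborhoods of the intersections of $u$ with $p \times \{d_i\}$ and of $u_0$ with $p_0 \times \{d_i\}$, insert a neck of modulus $T$ between them using Darboux coordinates and a cutoff function adapted to the almost complex structures satisfying $(J'1')$--$(J'5')$, and glue via the standard neck construction in cylindrical Heegaard Floer homology (as used in \cite[Appendix A]{Cylindrical}). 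This produces for each $T \gg 0$ an approximately $J(T)$-holomorphic map $u \natural_T u_0 \colon S \natural_T S_0 \to (\Sigma \# \Sigma_0) \times \Delta$ representing $\psi \# \psi_0$, whose $\bar\partial$-error has an exponentially decaying bound in $T$ measured in appropriate weighted Sobolev norms.

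Next I would analyze the linearized operator $D_{u \natural_T u_0}$ on weighted Sobolev spaces whose exponential weights are concentrated at the $k$ necks. The hypotheses that $\mathcal{M}(\psi)$ is transversely cut out at $u$ (so $D_u$ is surjective with $\ker D_u = 0$, since $\mu(u) = 0$) and that the matched moduli space $\mathcal{M}(\psi_0, \rho^p(u))$ is transversely cut out at $u_0$ (a zero-dimensional manifold by the codimension calculation $\mu(u_0) - 2k = 0$ in Proposition \ref{Prop9.47}) together give uniform surjectivity of $D_{u \natural_T u_0}$ with a right inverse whose norm is bounded independently of $T$. A standard index count combining $\mu(\psi) + \mu(\psi_0) = 2k$ with the $k$-fold fiberwise matching and the one-parameter family of neck lengths shows that the kernel is one-dimensional, spanned by the infinitesimal neck-stretching parameter. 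Applying the contraction mapping principle to the equation $\bar\partial(u \natural_T u_0 + \xi) = 0$ for $\xi$ in the complement of this kernel yields, for every sufficiently large $T$, a unique genuine holomorphic triangle $u_T \in \mathcal{M}_{J(T)}(\psi \# \psi_0)$ near $u \natural_T u_0$.

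The map $T \mapsto u_T$ is continuous, and sending $T \to \infty$ it converges in the sense of Proposition \ref{Prop9.40} to the broken configuration $(u, u_0)$; reparametrizing via $s = 1/T$ and adjoining the point $s=0 \mapsto (u, u_0)$ gives the desired homeomorphism $h \colon [0,1) \to $ (neighborhood of $(u, u_0)$). To complete the proof I would verify surjectivity of $h$: any sequence $v_n \in \mathcal{M}_{J(T_n)}(\psi \# \psi_0)$ with $T_n \to \infty$ that Gromov-converges to $(u, u_0)$ must, by the uniqueness clause in the implicit function theorem applied to $D_{u \natural_{T_n} u_0}$, eventually coincide with $u_{T_n}$. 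The principal technical obstacle is the simultaneous control of the $k$ necks: one must arrange the weighted Sobolev spaces so that the pregluing error, the bound on the right inverse, and the quadratic term in the Picard iteration are all uniform in $T$ even as the $k$ matching points in $\Delta$ vary in a fixed compact family, and one must verify that the $k$ fiberwise matching constraints interact correctly with the single one-parameter neck-length degree of freedom so that the final dimension of the glued moduli space is indeed $1$ and not $k$ or $2k-1$.
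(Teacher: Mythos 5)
Note first that this paper does not prove Proposition \ref{Prop9.49} at all: it is imported verbatim from \cite{Naturality}, so there is no in-paper proof to compare against. Your sketch follows the same standard strategy used in the cited source: fiberwise pregluing at the $k$ matched points (which are transverse, multiplicity-one intersections with $\{p\}\times\Delta$ precisely because $\rho^{p}(u)$ avoids the diagonal), exponential error estimates, a uniformly bounded right inverse spliced from the transversality hypotheses, Picard iteration giving a unique glued triangle for each large $T$, and Gromov compactness plus the uniqueness clause to see that $h$ covers a whole neighborhood of $(u,u_0)$; in outline this is correct and is essentially the argument of \cite{Naturality}. Two points to tighten: for fixed $T$ the relevant linearized operator has index $\mu(\psi\#\psi_0)=0$, so its kernel should be zero-dimensional, the one-dimensional kernel you describe only appears in the $T$-parametrized problem, and conflating the two obscures the statement you actually need; and the place where the hypothesis that $\mathcal{M}(\psi_0,\rho^{p}(u))$ is transversely cut out enters is the linear gluing step, namely that the $2k$-dimensional kernel of $D_{u_0}$ maps isomorphically under $d\rho^{p_0}$ onto $T\,\mathrm{Sym}^{k}(\Delta)$, so that the $k$ asymptotic matching conditions at the neck, paired with the rigidity of $u$, kill all kernel directions except the neck-length parameter. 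Making that linear statement explicit is exactly the resolution of the ``technical obstacle'' you flag at the end, and it is what guarantees the glued family has dimension $1$ rather than $k$ or $2k-1$.
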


Finally, the following three facts will also be useful in the proof of the triangle count of Proposition \ref{trianglecount}, so we state them here as lemmas for convenience in referencing.

\begin{claim}[Lemma 9.50 in \cite{Naturality}] \label{Lemma950}
	Consider the triple diagram $\mathcal{T}_{0}= (\Sigma_{0}, \boldsymbol{\alpha_{0} '}, \boldsymbol{\alpha_{0}}, \boldsymbol{\beta_{0}})$. If $\boldsymbol{x} \in \mathbb{T}_{\alpha_{0}'} \cap \mathbb{T}_{\alpha_{0}}$ and $\psi_{0} \in \pi_{2}(\boldsymbol{x}, \boldsymbol{a}, \boldsymbol{b})$, then 
	
	\begin{equation}
	\mu(\psi_{0}) = 2n_{p_{0}}(\psi_{0}) + \mu(\boldsymbol{x}, \boldsymbol{\Theta})
	\end{equation}
\end{claim}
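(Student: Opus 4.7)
The plan is to reduce the formula to a comparison with a convenient reference triangle class, and then use additivity of the Maslov index and basepoint multiplicity together with the definition of the relative grading on the $(\balpha_{0}', \balpha_{0})$ Heegaard diagram.

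First, I would produce a distinguished triangle class $\psi_{\btheta} \in \pi_{2}(\btheta, \ba, \bb)$ of Maslov index zero with $n_{p_{0}}(\psi_{\btheta})=0$. This should be the obvious ``small triangle'' class visible in Figure \ref{triplediagram}: near each of the intersection points $\theta_{i}^{+}$, there is a small triangular domain in $\Sigma_{0}$ with vertices at $\theta_{i}^{+}$, and the corresponding components of $\ba$ and $\bb$, and whose boundary lies on the $\alpha_{i}'$, $\alpha_{i}$ and $\beta_{i}$ arcs. The union of these two disjoint small triangles is supported away from the basepoint $p_{0}$, so $n_{p_{0}}(\psi_{\btheta})=0$. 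That $\mu(\psi_{\btheta})=0$ follows from the standard Maslov index formula for a totally positive domain of this type (for example, from the genus zero triangle formula $\mu = e(\mathcal{D}) + n_{\bx}(\mathcal{D}) + n_{\by}(\mathcal{D}) + n_{\bz}(\mathcal{D})$ applied to the two disjoint triangular regions). Alternatively, one can invoke the fact that $\btheta$ is the top graded generator and the small triangle map $\mathcal{F}(\btheta \otimes \ba)$ counts a single Maslov index zero curve with output $\bb$.

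Next, given any $\psi_{0} \in \pi_{2}(\bx, \ba, \bb)$, I would write
$$\psi_{0} = \phi * \psi_{\btheta},$$
where $\phi \in \pi_{2}(\bx, \btheta)$ is a homotopy class of Whitney disk in the double Heegaard diagram $(\Sigma_{0}, \balpha_{0}', \balpha_{0}, p_{0})$ obtained by concatenating $\psi_{0}$ with the reverse of $\psi_{\btheta}$ along their common $(\ba, \bb)$ cylindrical ends. Such a class exists and is unique up to the periodic domains of the $(\balpha_{0}', \balpha_{0})$ diagram, because the difference $\psi_{0} - \psi_{\btheta}$ has trivial boundary components on the $\bbeta_{0}$ curves and trivial asymptotics at $\ba$ and $\bb$.

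Now I would apply additivity. The Maslov index and the basepoint multiplicity are both additive under concatenation of a triangle with a disk, so
$$\mu(\psi_{0}) = \mu(\phi) + \mu(\psi_{\btheta}) = \mu(\phi), \qquad n_{p_{0}}(\psi_{0}) = n_{p_{0}}(\phi) + n_{p_{0}}(\psi_{\btheta}) = n_{p_{0}}(\phi).$$
By definition of the relative homological grading on $CF^{-}(\Sigma_{0}, \balpha_{0}', \balpha_{0}, p_{0})$ (recall Section \ref{SpinStructuressection}), we have
$$\mu(\bx, \btheta) = \mu(\phi) - 2 n_{p_{0}}(\phi).$$
Substituting the expressions for $\mu(\phi)$ and $n_{p_{0}}(\phi)$ in terms of $\psi_{0}$ yields
$$\mu(\bx, \btheta) = \mu(\psi_{0}) - 2 n_{p_{0}}(\psi_{0}),$$
which is the desired formula. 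The main obstacle is simply verifying that $\psi_{\btheta}$ has the claimed Maslov index and basepoint multiplicity zero; once that is in hand, the rest of the argument is purely formal additivity together with the definition of the relative grading.
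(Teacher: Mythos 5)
The paper never proves this statement itself --- it is imported verbatim as Lemma 9.50 of \cite{Naturality} --- so there is no in-paper argument to compare against; your strategy (splice an arbitrary $\psi_{0}$ with a reference class $\psi_{\btheta} \in \pi_{2}(\btheta,\ba,\bb)$ satisfying $\mu = n_{p_{0}} = 0$, then use additivity of $\mu$ and $n_{p_{0}}$ together with the definition of the relative grading on $(\Sigma_{0},\balpha_{0}',\balpha_{0},p_{0})$) is the natural one and is correct in outline. Two of your justifications, however, do not hold up as written. First, the index formula you quote for the reference class, $\mu = e(\mathcal{D}) + n_{\bx} + n_{\by} + n_{\bz}$, is not the triangle index formula: applied to the union of the two small embedded triangles it gives $2$, not $0$ (Sarkar's formula for triangles carries a correction term). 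The conclusion $\mu(\psi_{\btheta}) = 0$, $n_{p_{0}}(\psi_{\btheta}) = 0$ is still true --- for instance, the two small triangles lie in disjoint disk regions, so the index localizes to the standard local model of a small triangle --- but the computation you cite as the ``main obstacle'' is carried out with a wrong formula.

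Second, and more substantively, the existence of the decomposition $\psi_{0} = \phi * \psi_{\btheta}$ with $\phi \in \pi_{2}(\bx,\btheta)$ is asserted rather than proved. A priori the difference of the two triangle classes has $\bbeta_{0}$-boundary equal to an integral combination $c_{1}[\beta_{1}] + c_{2}[\beta_{2}]$ of the full $\beta$-curves, and it is not automatic that $c_{1} = c_{2} = 0$; this is exactly the point at which something specific to $\mathcal{T}_{0}$ must be used. It does hold here: since $(\Sigma_{0},\balpha_{0}',\balpha_{0})$ represents $\#^{2}(S^{1}\times S^{2})$, the classes $[\alpha_{1}'],[\alpha_{2}']$ lie in the span of $[\alpha_{1}],[\alpha_{2}]$ in $H_{1}(\Sigma_{0})$, while $[\alpha_{1}],[\alpha_{2}],[\beta_{1}],[\beta_{2}]$ are linearly independent, so null-homology of the boundary forces the $\beta$-coefficients of any (triply) periodic domain to vanish; consequently every difference of classes in $\pi_{2}(\bx,\ba,\bb)$ is an $(\balpha_{0}',\balpha_{0})$-periodic domain plus a multiple of $[\Sigma_{0}]$, and can be absorbed into $\phi$. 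Alternatively you can bypass the decomposition entirely: verify the formula for one spliced class $\phi_{1}*\psi_{\btheta}$, then check that both sides change by $2k$ under adding $k[\Sigma_{0}]$ and by $0$ under adding a triply periodic domain $P$ with $n_{p_{0}}(P)=0$, using $\mu(P) = \langle c_{1}(\mathfrak{s}_{0}), H(P)\rangle = 0$ for the torsion $\text{Spin}^{c}$ structure --- the same facts that make $\mu(\bx,\btheta) = \mu(\phi) - 2n_{p_{0}}(\phi)$ well defined in the first place. With either repair your argument goes through.
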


\begin{claim}\label{hatdifferential}
	The differential on $\widehat{CF}(\Sigma_{0}, \boldsymbol{\alpha_{0} '}, \boldsymbol{\alpha_{0}}, p_{0},\mathfrak{o}_{\balpha_{0}', \balpha_{0}})$, defined with respect to the coherent orientation system $\mathfrak{o}_{\balpha_{0}', \balpha_{0}}$ specified in Lemma \ref{StandardHandleslide}, vanishes.
\end{claim}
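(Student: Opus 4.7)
The approach I would take is a simple rank-matching argument: identify the chain complex as having just enough generators to fill up the rank of $\widehat{HF}(\#^2(S^1\times S^2),\mathfrak{s}_0)$ with its canonical orientation system, and then conclude from the equality of ranks over $\mathbb{Z}$ that the differential must vanish.

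First I would observe that the pointed Heegaard diagram $(\Sigma_0, \balpha_0', \balpha_0, p_0)$ represents $\#^2(S^1\times S^2)$, since $\balpha_0'$ is obtained from $\balpha_0$ by precisely the sort of small Hamiltonian isotopy/handleslide moves described in Lemma \ref{StandardHandleslide}. Reading off Figure \ref{triplediagram}, we have $\mathbb{T}_{\balpha_0'}\cap\mathbb{T}_{\balpha_0} = \{\theta_1^{\epsilon_1}\theta_2^{\epsilon_2} : \epsilon_i\in\{+,-\}\}$, a set of exactly four points, all of which realize the unique $\text{Spin}^c$ structure $\mathfrak{s}_0$ with $c_1(\mathfrak{s}_0)=0$. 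Hence $\widehat{CF}(\Sigma_0, \balpha_0', \balpha_0, p_0, \mathfrak{o}_{\balpha_0', \balpha_0})$ is a free $\mathbb{Z}$-module of rank $4$.

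Next I would identify $\mathfrak{o}_{\balpha_0', \balpha_0}$ with the canonical coherent orientation system on the diagram. By \cite[Lemma 4.16]{Disks1} there are $2^{b_1(\#^2(S^1\times S^2))}=4$ equivalence classes of coherent orientation systems on this diagram. The canonical one yields the classical Ozsv\'ath--Szab\'o computation $\widehat{HF}(\#^2(S^1\times S^2), \mathfrak{s}_0)\cong\Lambda^* H_1 \cong \mathbb{Z}^4$, and in particular has the top relative homological grading of $HF^-$ isomorphic to $\mathbb{Z}$; a direct inspection shows that none of the other three systems has this property. Since Lemma \ref{StandardHandleslide} singles out $\mathfrak{o}_{\balpha_0', \balpha_0}$ by exactly this top-grading property, it must coincide with the canonical one (up to an overall sign, which does not affect the chain complex). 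Thus $\widehat{HF}(\Sigma_0, \balpha_0', \balpha_0, p_0, \mathfrak{o}_{\balpha_0', \balpha_0})\cong \mathbb{Z}^4$.

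Finally I would conclude with the standard rank count: for any free chain complex $C$ of finite rank $n$ over $\mathbb{Z}$, one has $\mathrm{rank}_\mathbb{Z}(H_*(C))=n-2\,\mathrm{rank}(\partial)$. Applied with $n=4=\mathrm{rank}_\mathbb{Z}(\widehat{HF})$, this forces $\mathrm{rank}(\partial)=0$, and since $\mathrm{im}(\partial)$ is a subgroup of the free abelian group $C$ of trivial rank, it must be zero. Hence $\partial\equiv 0$. The step I expect to require the most care is the identification of $\mathfrak{o}_{\balpha_0', \balpha_0}$ with the canonical orientation system; should one want to avoid this indirect identification, an alternative is to stretch the neck along the connect-sum curve separating the two one-handle regions of $\Sigma_0$, observe that the relevant moduli spaces degenerate into products of moduli spaces on two copies of the standard genus-one diagram for $S^1\times S^2$ (on which the $\widehat{CF}$ differential visibly vanishes, as there are only two generators and they lie in different relative gradings with no index-one disks contributing), and verify that the resulting product orientation system satisfies the top-grading property of Lemma \ref{StandardHandleslide}.
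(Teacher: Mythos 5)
Your proposal is correct and takes essentially the same route as the paper: both arguments note that $\widehat{CF}(\Sigma_{0},\balpha_{0}',\balpha_{0},p_{0})$ has rank $4$ (the four $\theta$-generators) while $\widehat{HF}$ of $\#^{2}(S^{1}\times S^{2})$ in $\mathfrak{s}_{0}$, computed with the orientation system of Lemma \ref{StandardHandleslide} via \cite[Lemma 9.4]{Disks1}, also has rank $4$, so the rank count over $\mathbb{Z}$ forces $\partial = 0$. Your intermediate step identifying $\mathfrak{o}_{\balpha_{0}',\balpha_{0}}$ with the canonical orientation system is an unnecessary detour, since the cited lemma already computes the homology for exactly the system specified in Lemma \ref{StandardHandleslide}.
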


\begin{proof}
	By \cite[Lemma 9.4]{Disks1} $\text{rank}_{\mathbb{Z}} (\widehat{HF}(\Sigma_{0}, \boldsymbol{\alpha_{0} '}, \boldsymbol{\alpha_{0}},p_{0},\mathfrak{o}_{\balpha_{0}', \balpha_{0}}))= 4$. By inspection $\text{rank}_{\mathbb{Z}} (\widehat{CF})=4$, so the differential must vanish. 
\end{proof}

\begin{claim}
	\label{quasi}
	The map 
	
	$$\Psi_{\beta_{0}} ^{\alpha_{0} \rightarrow \alpha_{0}'} : \widehat{CF}(\Sigma_{0}, \boldsymbol{\alpha_{0} }, \boldsymbol{\beta_{0}}, p_{0}) \rightarrow \widehat{CF}(\Sigma_{0}, \boldsymbol{\alpha_{0} '}, \boldsymbol{\beta_{0}}, p_{0})$$
	
	satisfies $\Psi_{\beta_{0}} ^{\alpha_{0} \rightarrow \alpha_{0}'}(\boldsymbol{a}) = \pm \boldsymbol{b}$.
	
\end{claim}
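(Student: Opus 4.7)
My plan exploits the fact that, by direct inspection of the triple diagram $\mathcal{T}_{0}$ in Figure \ref{triplediagram}, the intersection sets $\mathbb{T}_{\balpha_{0}} \cap \mathbb{T}_{\bbeta_{0}} = \{\ba\}$ and $\mathbb{T}_{\balpha_{0}'} \cap \mathbb{T}_{\bbeta_{0}} = \{\bb\}$ each consist of a single point. Indeed, each curve in $\bbeta_{0}$ has a unique transverse intersection with one of the curves of $\balpha_{0}$ (respectively $\balpha_{0}'$) and is disjoint from the other. Consequently the two Floer chain complexes $\widehat{CF}(\Sigma_{0}, \balpha_{0}, \bbeta_{0}, p_{0})$ and $\widehat{CF}(\Sigma_{0}, \balpha_{0}', \bbeta_{0}, p_{0})$ are each free $\mathbb{Z}$-modules of rank one, supported in a single relative grading. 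In particular both differentials vanish automatically.

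Next, I would invoke handleslide invariance in the form appropriate for $\widehat{CF}$. The curves $\balpha_{0}'$ are obtained from $\balpha_{0}$ by the type of sequence of handleslides followed by a small Hamiltonian isotopy described in Lemma \ref{StandardHandleslide}; this is precisely what permits the definition of the top-graded generator $\btheta = \theta_{1}^{+}\theta_{2}^{+} \in \mathbb{T}_{\balpha_{0}'} \cap \mathbb{T}_{\balpha_{0}}$ and hence of the map $\Psi_{\bbeta_{0}}^{\balpha_{0} \rightarrow \balpha_{0}'}(\cdot) = \mathcal{F}_{\balpha_{0}', \balpha_{0}, \bbeta_{0}}(\btheta \otimes \cdot)$. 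The analog of Lemma \ref{HandleslideMapProperties} for $\widehat{CF}$ (which holds by the same arguments as for $CF^{-}$, cf.\ the opening remarks of Section \ref{WeakHeegaardFloerInvariantsSection}) then asserts that this map is a chain homotopy equivalence of complexes of $\mathbb{Z}$-modules.

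Finally, I would conclude as follows: a chain homotopy equivalence between chain complexes each concentrated in a single relative grading is automatically an isomorphism of the underlying $\mathbb{Z}$-modules, since any putative homotopy inverse and any chain homotopies between the compositions and the identities are grading-shift maps and therefore identically zero (their targets vanish). Hence $\Psi_{\bbeta_{0}}^{\balpha_{0} \rightarrow \balpha_{0}'}$ is an isomorphism of free $\mathbb{Z}$-modules $\mathbb{Z}\langle \ba \rangle \xrightarrow{\cong} \mathbb{Z}\langle \bb \rangle$, and such an isomorphism sends a generator to $\pm$ a generator. Therefore $\Psi_{\bbeta_{0}}^{\balpha_{0} \rightarrow \balpha_{0}'}(\ba) = \pm \bb$. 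The only step requiring any genuine argument is the initial verification of the intersection counts from the diagram; the rest is formal, so I do not anticipate any significant obstacle.
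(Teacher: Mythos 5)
Your argument is correct and is essentially the paper's own proof: both rely on Lemma \ref{HandleslideMapProperties} (in its $\widehat{CF}$ form) to see that $\Psi_{\bbeta_{0}}^{\balpha_{0} \rightarrow \balpha_{0}'}$ is a quasi-isomorphism, and on the observation that each complex is freely generated by its unique intersection point ($\ba$, respectively $\bb$) with vanishing differential, so the map must be an isomorphism $\mathbb{Z}\langle \ba \rangle \to \mathbb{Z}\langle \bb \rangle$ and hence sends $\ba$ to $\pm\bb$. Your extra remarks about the homotopy inverse and homotopies being forced to vanish are unnecessary (with zero differentials the homotopy relation already gives $gf = fg = \mathrm{id}$), but they do not affect the validity.
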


\begin{proof}
	By Lemma \ref{HandleslideMapProperties}, $\Psi_{\beta_{0}} ^{\alpha_{0} \rightarrow \alpha_{0}'}$ is a quasi-isomorphism. Since the two complexes in question are trivial of rank one over $\mathbb{Z}$, the quasi-isomorphism must be an isomorphism between trivial, rank one complexes over $\mathbb{Z}$, of which there are precisely two.
\end{proof}

\subsection{Counting Triangles}
We are now in position to prove the main triangle count, and conclude the proof of handleswap invariance.

\begin{proof}[Proof of Proposition \ref{trianglecount}]
	As we did in Sections \ref{WeakHeegaardFloerInvariantsSection} and \ref{MainProof}, we will consider the case of the chain complexes $CF^{-}$ in what follows in order to fix definitions, however we note that the proof carries over equally well for all variants $CF^{\circ}$.
	
	For an almost complex structure J which achieves transversality we have, by definition,
	
	$$ \mathcal{F}_{\mathcal{T} \# \mathcal{T}_{0}}((\boldsymbol{x} \times \boldsymbol{\Theta}) \otimes (\boldsymbol{y} \times \boldsymbol{a})) = \sum_{\bz} \sum_{\substack{A \in \pi_{2}(\boldsymbol{x} \times \boldsymbol{\Theta},\boldsymbol{y} \times \boldsymbol{a}, \boldsymbol{z} \times \bb ) \\ \mu(A) = 0}} (\#\mathcal{M}_{J}(A)) U^{n_{p}(A)} \cdot \bz  \times \bb$$
	
	and 
	
	$$ \mathcal{F}_{\mathcal{T}}(\boldsymbol{x} \otimes \boldsymbol{y}) \times \boldsymbol{b} = \left( \sum_{\bz} \sum_{\substack{A \in \pi_{2}(\bx,\by,\bz) \\ \mu(A) = 0}} (\#\mathcal{M}_{J}(A)) U^{n_{p}(A)} \cdot \bz \right) \times \bb$$

	To obtain the result we will count Maslov index 0 holomorphic triangles in the homology class $A$, for each generator $\bz \in \mathbb{T}_{\boldsymbol{\alpha '}} \cap \mathbb{T}_{\boldsymbol{\beta}}$ and class $A \in \pi_{2}(\bx \times \btheta, \by \times \ba, \bz \times \bb)$.
	
	Consider two homology classes of triangles $\psi \in \pi_{2}(\boldsymbol{x}, \boldsymbol{y}, \boldsymbol{z})$ on $\mathcal{T} = (\Sigma, \boldsymbol{\alpha '}, \boldsymbol{\alpha}, \boldsymbol{\beta},p)$ and $\psi_{0} \in \pi_{2}(\boldsymbol{\Theta}, \boldsymbol{a}, \boldsymbol{b})$ on $\mathcal{T}_{0}= (\Sigma_{0}, \boldsymbol{\alpha_{0} '}, \boldsymbol{\alpha_{0}}, \boldsymbol{\beta_{0}}, p_{0})$. If $n_{p}(\psi) = n_{p_{0}}(\psi_{0})$, so the classes match across the connect sum point, then the homology classes can be combined to give a class $\psi \# \psi_{0} \in \pi_{2}(\bx \times \btheta, \by \times \ba, \bz \times \bb)$.  Conversely, it is clear that any class $A \in \pi_{2}(\bx \times \btheta, \by \times \ba, \bz \times \bb)$ can be written uniquely as a connect sum of suitable classes with this matching condition. 
	
	So for any such homology class $A = \psi \# \psi_{0}$ with $\mu(A) = 0$, we aim to count Maslov index zero holomorphic representatives as we stretch the neck, i.e to count $ \# \mathcal{M}_{J(T_{i})}(\psi \# \psi_{0})$, where $J(T_{i})$ is a sequence of almost complex structures being stretched along the neck. To do so, suppose $u_{T_{i}}$ is a sequence of $J(T_{i})$-holomorphic triangles representing $\psi \# \psi_{0}$, where $\mu(\psi \# \psi_{0})=0$. We note here that by \cite[Theorem 4.1]{Sarkar} and Lemma \ref{Lemma950} we have $\mu(\psi \# \psi_{0}) = \mu(\psi) + \mu(\psi_{0}) -2n_{p}(\psi_{0}) = \mu(\psi) + \mu(\boldsymbol{\theta}, \boldsymbol{\theta}) = \mu(\psi)$. Hence $\mu(\psi) = 0$, and $\mu(\psi_{0}) = 2n_{p_{0}}(\psi_{0})$.
	
	By Proposition \ref{Prop9.40}, there is a subsequence of $u_{T_{i}}$ which converges to a triple $(U, V, U_{0})$ where $U$ is a broken holomorphic triangle in $\Sigma \times \Delta$ representing $\psi$, $U_{0}$ is a broken holomorphic triangle in $\Sigma_{0} \times \Delta$ representing $\psi_{0}$, and $V$ is a collection of holomorphic curves mapping into the neck regions that are asymptotic to (possibly multiply covered) Reeb orbits of the form $S^{1} \times \{d\}$.
	
	The proof will now proceed in steps as follows:
	
	\begin{enumerate}
		\item We will show U consists of a single holomorphic triangle $u$ with Maslov index zero, with $u$ satisfying (M1)-(M8), and potentially some number of constant holomorphic curves.
		\item We then show that $U_{0}$ consists of a single Maslov index $2n_{p_{0}}(\psi_{0})$ triangle $u_{0}'$, with $u_{0}'$ satsisfying (M1)-(M8) and $\rho^{p}(u) = \rho^{p_{0}}(u_{0})$, and potentially some number of constant holomorphic curves.
		\item We rule out the possibility of constant curves occurring in steps 1 and 2, and show that $V$ consists of a collection of trivial holomorphic cylinders.
		\item Using this knowledge of $(U,V,U_{0})$ and the glueing result, we reduce the proof to showing Lemma \ref{SingleDivisorModuliCount} below.
	\end{enumerate}
	
	In fact, the proofs of steps (1) through (3) given in \cite{Naturality} carry over exactly as they are stated there, so we will only carry out step (4).

	\textbf{Step 4}
	By steps (1)-(3), a sequence $u_{T_{i}}$ of $J(T_{i})$-holomorphic triangles representing $\psi \# \psi_{0}$  converges to a broken holomorphic triangle $(U,V,U_{0})$, where $U=u$ is a single holomomorphic triangle satisfying $\mu(u)=0$, $V$ is a collection of trivial holomorphic cylinders, $U_{0}$ is a single holomorphic triangle $u_{0}$ satisfying $\mu(u_{0}) = 2n_{p}(\psi)$, and $\rho^{p}(u)  =\rho^{p_{0}}(u_{0})$. By Proposition \ref{Prop9.49}, there is therefore a homeomorphic identification $h$ between a neighborhood of $(u,u_{0})$ in the compactified 1 dimensional moduli space
	
	$$ \overline{\bigcup_{T_{i}}\mathcal{M}_{J(T_{i})}(\psi \# \psi_{0})}$$
	and the interval $[0,1)$, such that $h(u,u_{0}) = \{0\}$. This yields an identification
	$$\mathcal{M}_{J(T_{i})}(\psi \# \psi_{0}) \cong \{ (u,u_{0}) \in \mathcal{M}(\psi) \times \mathcal{M}(\psi_{0}) | \rho^{p}(u) = \rho^{p}(u_{0})      \}$$ for sufficiently large $T_{i}$. We now fix $J_{T_{i}}$ for such a sufficiently large value of $T_{i}$, and drop this choice of almost complex structure from our notation. 
	
	Given coherent orientation systems $\mathfrak{o}_{\mathcal{T}}$ over $\mathcal{T}$ and $\mathfrak{o}_{\mathcal{T}_{0}}$ over $\mathcal{T}_{0}$, there is a coherent orientation system $\mathfrak{o}_{\mathcal{T} \# \mathcal{T}_{0}}$ with respect to which the signed count of the $0$ dimensional moduli space $\mathcal{M}(\psi \# \psi_{0})$ is given by 
	$$\# \mathcal{M}(\psi \# \psi_{0}) = \# \{ (u,u_{0}) \in \mathcal{M}(\psi) \times \mathcal{M}(\psi_{0}) | \rho^{p}(u) = \rho^{p}(u_{0})      \}.$$
	Indeed, given two homology classes of triangles $\psi$ on $\mathcal{T}$ and $\psi_{0}$ on $\mathcal{T}_{0}$, the glueing map $\natural$ (see \cite[Appendix A, page 1082]{Cylindrical} for the definition) used to identify the two moduli spaces is covered by a map of determinant lines $(\natural)_{\#}$ which can be used to produce an orientation $\mathfrak{o}_{\mathcal{T}\# \mathcal{T}_{0}}^{\psi \# \psi_{0}}$ over $\mathcal{M}(\psi \# \psi_{0})$ from orientations $\mathfrak{o}_{\mathcal{T}}^{\psi}$ over $\mathcal{M}(\psi)$ and $\mathfrak{o}_{\mathcal{T}_{0}}^{\psi_{0}}$ over $\mathcal{M}(\psi_{0})$. Similarly, for two homology classes of strips $A$ on $\mathcal{T}$ and $A_{0}$ on $\mathcal{T}_{0}$, the same procedure can be used to determine an orientation $\mathfrak{o}_{\mathcal{T} \# \mathcal{T}_{0}}^{A \# A_{0}}$ from $\mathfrak{o}_{\mathcal{T}}^{A}$ and $\mathfrak{o}_{\mathcal{T}_{0}}^{A_{0}}$. The fact that homology classes of strips and triangles on $\mathcal{T} \# \mathcal{T}_{0}$ are in bijective correspondence to pairs of homology classes of strips on $\mathcal{T}$ and $\mathcal{T}_{0}$ ensures that the coherent orientation systems $\mathfrak{o}_{\mathcal{T}}$ and $\mathfrak{o}_{\mathcal{T}_{0}}$ thus determine a single orientation system $\mathfrak{o}_{\mathcal{T} \# \mathcal{T}_{0}}$ over all classes of strips and triangles in the connect summed diagram  (ie the determinations for a particular class of triangle or strip on the summed diagram are not overspecified). That this induced orientation is coherent follows from the coherence of the two constituent orientations, along with the fact that glueing map $(\natural)_{\#}$ above commutes with the map $(\natural)_{*}$ appearing in Definition \ref{coherenttriangles}. More precisely, the coherence follows from these facts as
	\begin{align*}
	\mathfrak{o}_{\mathcal{T} \# \mathcal{T}_{0}}^{(\psi + A) \# (\psi_{0} + A_{0})} &:= (\natural)_{\#}(\mathfrak{o}_{\mathcal{T}}^{\psi + A} \times \mathfrak{o}_{\mathcal{T}_{0}}^{\psi_{0} + A_{0}})\\
	&= (\natural)_{\#}((\natural)_{*}(\mathfrak{o}_{\mathcal{T}}^{\psi} \times \mathfrak{o}_{\mathcal{T}}^{A}) \times (\natural)_{*}(\mathfrak{o}_{\mathcal{T}_{0}}^{\psi_{0}} \times \mathfrak{o}_{\mathcal{T}_{0}}^{A_{0}}))\\
	&= (\natural)_{*}((\natural)_{\#}(\mathfrak{o}_{\mathcal{T}}^{\psi} \times \mathfrak{o}_{\mathcal{T}_{0}}^{\psi_{0}}) \times (\natural)_{\#}(\mathfrak{o}_{\mathcal{T}}^{A} \times \mathfrak{o}_{\mathcal{T}_{0}}^{A_{0}})) \\
	& =:  (\natural)_{*}(\mathfrak{o}_{\mathcal{T} \# \mathcal{T}_{0}}^{\psi \# \psi_{0}} \times \mathfrak{o}_{\mathcal{T} \# \mathcal{T}_{0}}^{A \# A_{0}} ) \\
	\end{align*}
	where the second equality is the definition of coherence for the orientation systems $\mathfrak{o}_{\mathcal{T}}$ and $\mathfrak{o}_{\mathcal{T}_{0}}$, and the third equality is the statement of the commutativity of the two induced glueing maps referenced above. This commutativity follows from the fact that the two glueing maps can be viewed as taking place in a small neighborhood of the curves being glued, and can thus be performed in either order, or simultaneously, via the construction in \cite[Appendix A]{Cylindrical}. This establishes coherence of the system $\mathfrak{o}_{\mathcal{T} \# \mathcal{T}_{0}}$. 
	
	For $u \in \mathcal{M}(\psi)$ let $$\mathcal{M}_{(\btheta, \ba,\bb)}(\rho^{p}(u)) = \coprod_{\substack{\psi_{0} \in \pi_{2}(\btheta, \ba, \bb) \\ \mu(\psi_{0}) = 2n_{p}(\psi) }} \mathcal{M}(\psi_{0}, \rho^{p}(u)).$$ With respect to a coherent orientation system $\mathfrak{o}_{\mathcal{T} \# \mathcal{T}_{0}}$ on $\mathcal{T} \# \mathcal{T}_{0}$ determined from any coherent systems $\mathfrak{o}_{\mathcal{T}}$ and $\mathfrak{o}_{\mathcal{T}_{0}}$ as above, the triangle map in question can then be written as
	
	\begin{align*}
	\mathcal{F} &= \mathcal{F}_{\mathcal{T} \# \mathcal{T}_{0}}((\boldsymbol{x} \times \boldsymbol{\Theta}) \otimes (\boldsymbol{y} \times \boldsymbol{a})) \\
	&= \sum_{\bz} \sum_{\substack{\psi \in \pi_{2}(\boldsymbol{x}, \boldsymbol{y}, \boldsymbol{z}) \\ \psi_{0} \in \pi_{2}(\boldsymbol{\Theta},\boldsymbol{a}, \bb ) \\ \mu(\psi \# \psi_{0}) = 0}} \hspace{-1em} \# \{ (u,u_{0}) \in \mathcal{M}(\psi) \times \mathcal{M}(\psi_{0}) | \rho^{p}(u) = \rho^{p}(u_{0})      \} U^{n_{p} (\psi \# \psi_{0})} \cdot \bz  \times \bb \\
	&= \sum_{\bz} \sum_{\substack{ \mu(\psi) = 0 \\ \psi \in \pi_{2}(\boldsymbol{x}, \boldsymbol{y}, \boldsymbol{z}) }} \sum_{\substack{ \psi_{0} \in \pi_{2}(\boldsymbol{\Theta},\boldsymbol{a}, \bb ) \\ \mu(\psi_{0}) = 2n_{p}(\psi)}} \hspace{-1em} \# \{ (u,u_{0}) \in \mathcal{M}(\psi) \times \mathcal{M}(\psi_{0}) | \rho^{p}(u) = \rho^{p}(u_{0})      \} U^{n_{p}  (\psi \# \psi_{0})} \cdot \bz  \times \bb \\
	&=\sum_{\bz} \sum_{\substack{\psi \in \pi_{2}(\boldsymbol{x}, \boldsymbol{y}, \boldsymbol{z}) \\ \mu(\psi) = 0}} \sum_{\substack{ \psi_{0} \in \pi_{2}(\boldsymbol{\Theta},\boldsymbol{a}, \bb ) \\ \mu(\psi_{0}) = 2n_{p}(\psi)}} \sum_{u \in \mathcal{M}(\psi)} \# \left( u \times \mathcal{M}(\psi_{0}, \rho^{p}(u))\right) U^{n_{p} (\psi \# \psi_{0})} \cdot \bz  \times \bb \\
	&= \sum_{\bz} \sum_{\substack{\psi \in \pi_{2}(\boldsymbol{x}, \boldsymbol{y}, \boldsymbol{z}) \\ \mu(\psi) = 0}} \sum_{u \in \mathcal{M}(\psi)} \# \left( u \times \mathcal{M}_{(\btheta, \ba,\bb)}(\rho^{p}(u))\right) U^{n_{p}  (\psi \# \psi_{0})} \cdot \bz  \times \bb 
	\end{align*}
	We will show in Lemma \ref{SingleDivisorModuliCount} below that there is a coherent orientation system $\mathfrak{o}_{\mathcal{T}_{0}}$ on $\mathcal{T}_{0}$ for which either $$\# \mathcal{M}_{(\btheta, \ba,\bb)}(\rho^{p}(u)) = 1$$ for all $\psi$ with $\mu(\psi)=0$ and all $u \in \mathcal{M}(\psi)$, or $$\# \mathcal{M}_{(\btheta, \ba,\bb)}(\rho^{p}(u)) = -1$$ for all $\psi$ with $\mu(\psi)=0$ and all $u \in \mathcal{M}(\psi)$. Then we will have
	
	\begin{align*}
	\mathcal{F} &= \mathcal{F}^{-}_{\mathcal{T} \# \mathcal{T}_{0}}((\boldsymbol{x} \times \boldsymbol{\Theta}) \otimes (\boldsymbol{y} \times \boldsymbol{a})) \\
	 &= \sum_{\bz} \sum_{\substack{\psi \in \pi_{2}(\boldsymbol{x}, \boldsymbol{y}, \boldsymbol{z}) \\ \mu(\psi) = 0}} \sum_{u \in \mathcal{M}(\psi)} \# \left( u \times \mathcal{M}_{(\btheta, \ba,\bb)}(\rho^{p}(u))\right) U^{n_{p}(\psi \# \psi_{0})} \cdot \bz  \times \bb\\
	&= \pm \sum_{\bz} \sum_{\substack{\psi \in \pi_{2}(\boldsymbol{x}, \boldsymbol{y}, \boldsymbol{z}) \\ \mu(\psi) = 0}} \# \mathcal{M}(\psi) U^{n_{p}(\psi \# \psi_{0})} \cdot \bz  \times \bb \\
	&=  \pm \left( \sum_{\bz} \sum_{\psi \in \pi_{2}(\bx,\by,\bz), \mu(\psi) = 0} (\#\mathcal{M}(\psi)) U^{n_{p}(\psi)} \cdot \bz \right) \times \bb \\
	&= \pm \mathcal{F}^{-}_{\mathcal{T}}(\boldsymbol{x} \otimes \boldsymbol{y}) \times \boldsymbol{b}
	\end{align*}
	This completes the proof of the proposition, modulo Lemma \ref{SingleDivisorModuliCount}. \qedhere
	
\end{proof}

\begin{claim}\label{SingleDivisorModuliCount}
	For $\boldsymbol{d} \in Sym^{k}(\Delta) \setminus Diag(\Delta)$ and a generic choice of almost complex structure $J$, the moduli space $\mathcal{M}_{(\btheta, \ba, \bb)}(\boldsymbol{d})$ is a smoothly cut out 0-manifold. For such $J$, there is a coherent orientation system $\mathfrak{o}_{\mathcal{T}_{0}}$ on $\mathcal{T}_{0}$ for which the signed count of points in the moduli space is
	
	$$ \# \mathcal{M}_{(\btheta, \ba, \bb)}(\boldsymbol{d})= \pm 1$$
	where the constant is independent of $\boldsymbol{d}$.
\end{claim}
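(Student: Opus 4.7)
The plan is to first establish that $\mathcal{M}_{(\btheta, \ba, \bb)}(\boldsymbol{d})$ is a smoothly cut out $0$-manifold, then show its signed count is independent of $\boldsymbol{d}$ using the parametrized moduli space $\mathcal{M}_I$ and the orientation framework of Lemma \ref{ParametrizedOrientation}, and finally compute the count at one convenient $\boldsymbol{d}$. For the dimension, note that by Lemma \ref{Lemma950} any class $\psi_0 \in \pi_2(\btheta, \ba, \bb)$ with $n_{p_0}(\psi_0) = k$ has $\mu(\psi_0) = 2k + \mu(\btheta, \btheta) = 2k$. Since $\{\boldsymbol{d}\}$ is a single point outside the fat diagonal (codimension $2k$ in $\text{Sym}^k(\Delta)$), Proposition \ref{Prop9.47} yields that $\mathcal{M}(\psi_0, \boldsymbol{d})$ is a smooth $0$-manifold for generic $J$, and its orientability is automatic from Lemma \ref{OrientabilityofMatchedModuliSpace}.

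To show the signed count is constant in $\boldsymbol{d}$, I would fix two points $\boldsymbol{d}_0, \boldsymbol{d}_1$ and choose a generic embedded path $p : I \to \text{Sym}^k(\Delta) \setminus \text{Diag}(\Delta)$ connecting them, with image in a compact subset of $\text{Sym}^k(\Delta)$ bounded away from the cylindrical ends of $\Delta$. The parametrized moduli space $\mathcal{M}_I$ is then a smooth $1$-manifold by Proposition \ref{Prop9.47}. Analyzing its compactification via the analog of Proposition \ref{Prop9.40}, the only codimension-one boundary strata for generic $J$ and $p$ are the endpoints $\mathcal{M}_0$ and $\mathcal{M}_1$: a hypothetical strip-triangle breaking stratum at fixed $t \in I$ has expected dimension $(\mu(\phi) - 1) + (\mu(\psi_0') - 2k) = \mu(\psi_0) - 2k - 1 = -1$ by Maslov index additivity and is generically empty; sphere and disk bubblings are of higher codimension; and the compactness of $p(I)$ prevents divisor points from escaping to a cylindrical end in the limit. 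Lemma \ref{ParametrizedOrientation} then furnishes coherent orientation systems $\mathfrak{o}_{\mathcal{M}_0}, \mathfrak{o}_{\mathcal{M}_1}, \mathfrak{o}_{\mathcal{M}_I}$ for which the oriented boundary of $\mathcal{M}_I$ equals $\mathcal{M}_1 - \mathcal{M}_0$, so $\#\mathcal{M}_0 = \#\mathcal{M}_1$ as signed counts. The orientation $\mathfrak{o}_{\mathcal{M}_0}$ extends to a coherent orientation system $\mathfrak{o}_{\mathcal{T}_0}$ on the full triple diagram as in the construction sketched in the proof of Lemma \ref{ParametrizedOrientation}.

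Finally, for a specific $\boldsymbol{d}$ I would follow the mod-$2$ triangle count of \cite[Proposition 9.31]{Naturality}: placing $\boldsymbol{d}$ near the $\btheta$-vertex of $\Delta$ identifies $\mathcal{M}_{(\btheta, \ba, \bb)}(\boldsymbol{d})$ with a unique small holomorphic triangle representative in the relevant homology class. Since the moduli space consists of a single transversely cut out point, the signed count is $\pm 1$, with the sign independent of $\boldsymbol{d}$ by the preceding step. The main obstacle I anticipate is rigorously verifying that the compactification of $\mathcal{M}_I$ has no codimension-one strata beyond the endpoints — in particular, ruling out degenerations where divisor points migrate off to a cylindrical end as a strip breaks at the $\btheta$, $\ba$, or $\bb$ corner. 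The compact-image condition on $p$ excludes this in principle, but a fully careful treatment would also invoke the vanishing of the differential on $\widehat{CF}(\Sigma_0, \balpha_0', \balpha_0, \mathfrak{o}_{\balpha_0', \balpha_0})$ from Lemma \ref{hatdifferential}, together with structural facts about the double diagrams $\mathcal{H}_{\balpha_0, \bbeta_0}$ and $\mathcal{H}_{\balpha_0', \bbeta_0}$, to confirm that any residual strip-triangle contributions to $\partial \overline{\mathcal{M}_I}$ cancel in pairs with signs.
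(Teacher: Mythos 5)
There are two genuine gaps. First, in your cobordism step the dimension count ruling out strip--triangle breaking is wrong. You compute the expected dimension of a breaking stratum at a \emph{fixed} $t$ and get $-1$, but the breaking is allowed to occur at any $t\in I$, so in the $1$-parameter family the relevant expected dimension is $0$: ends of $\mathcal{M}_{I}$ where an index-$1$ strip breaks off at the $\btheta$ corner (limiting to a matched triangle in $\pi_{2}(\theta_{1}^{+}\theta_{2}^{-},\ba,\bb)$ or $\pi_{2}(\theta_{1}^{-}\theta_{2}^{+},\ba,\bb)$ of index $2k-1$ together with an index-$1$ strip in $\pi_{2}(\btheta,\bx)$ with $n_{p_{0}}=0$) do occur generically at isolated interior values of $t$. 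They are not ``generically empty''; their total signed contribution must instead be killed algebraically, which is exactly what Lemma \ref{hatdifferential} (vanishing of the differential on $\widehat{CF}(\Sigma_{0},\balpha_{0}',\balpha_{0})$) is for, since the sum $\sum_{\phi}\#\widehat{\mathcal{M}}(\phi)$ over index-$1$ classes with $n_{p_{0}}(\phi)=0$ vanishes and the ends appear as products $\mathcal{M}_{(\bx,\ba,\bb)}(p(t))\times\widehat{\mathcal{M}}(\phi)$. You flag this possibility only as an afterthought, but it is the actual mechanism; one also needs the separate case $\bx=\theta_{1}^{-}\theta_{2}^{-}$, which is excluded not by additivity of $\mu$ alone but by the inequality $\mathrm{ind}(A,S)\le\mu(A)$ for possibly non-embedded sources, giving negative expected dimension for the matched triangle there.

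Second, and more seriously, the model computation at a specific divisor is essentially absent. For $k\ge 1$ a triangle matched to $\boldsymbol{d}$ has $n_{p_{0}}=k>0$ and Maslov index $2k>0$, so it is never ``a unique small holomorphic triangle''; placing $\boldsymbol{d}$ near the $\btheta$ vertex and asserting a single transversely cut out point proves nothing. The argument the paper actually needs sends the $k$ divisor points toward the $\ba$ corner ($\nu_{\balpha\bbeta}$) while spreading them apart, shows the limit configuration consists of one index-$0$ triangle with $n_{p_{0}}=0$ together with $k$ matched index-$2$ strips on $(\Sigma_{0},\balpha_{0},\bbeta_{0})$, and then glues to get $\#\mathcal{M}_{(\btheta,\ba,\bb)}(p(1))=(\#\mathcal{M}_{(\ba,\ba)}(c))^{k}\cdot\sum_{\psi,\,n_{p_{0}}(\psi)=0}\#\mathcal{M}(\psi)$. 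The second factor is $\pm1$ by Lemma \ref{quasi}, but the first requires a genuine argument: one compares with the twice-stabilized standard diagram for $S^{1}\times S^{2}$ and uses invariance of $\widehat{HF}$ plus a neck-stretching/stabilization argument to find an orientation system with $\#\mathcal{M}_{(\ba,\ba)}(c)=\pm1$, and then extends it to a coherent system on the triple diagram via the existence result for coherent orientations of triangles. Without some substitute for this computation (and for the degeneration analysis identifying the limit configuration), your final step does not establish the count $\pm1$, which is the heart of the lemma.
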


\begin{proof}
	The proof is again carried out in steps:
	
	\begin{enumerate}
		\item We show the moduli space is transversely cut out for generic $J$.
		\item We show that for generic $\boldsymbol{d} \in \text{Sym}^{k}(\Delta) \setminus \text{Diag}(\Delta)$, the signed count $ \# \mathcal{M}_{(\btheta, \ba, \bb)}(\boldsymbol{d})$ is independent of $\boldsymbol{d}$.
		\item We find one choice of $\boldsymbol{d}$ giving the desired count. 
	\end{enumerate}
	
	In fact, the proof of step (1) given in \cite{Naturality} carries over exactly as it is stated there, so we will only prove steps (2) and (3).
	
	\textbf{Step 2}
	Let $p: I \rightarrow \text{Sym}^{k}(\Delta)$ be a path from $\boldsymbol{d_{0}}$ to $\boldsymbol{d_{1}}$, where the image of $p$ satisfies the conditions of Lemma \ref{Prop9.47}. We consider the moduli space
	$$\bigcup_{t \in I} \mathcal{M}_{(\btheta, \ba, \bb)}(p(t))$$
	which by Proposition \ref{Prop9.47} and Lemma \ref{OrientabilityofMatchedModuliSpace} is a smooth, orientable 1 manifold. From orientability, we know that the signed count of the ends of the moduli space above is zero. We now describe all contributions to the count of the ends.  We begin by making considerations which will hold for any choice of coherent orientation system satisfying the property appearing in Lemma \ref{ParametrizedOrientation}.
	
	The ends of $\bigcup_{t \in I} \mathcal{M}_{(\btheta, \ba, \bb)}(p(t))$ fall into three classes. They arise from $\mathcal{M}_{(\btheta, \ba, \bb)}(\boldsymbol{d_{0}})$, $\mathcal{M}_{(\btheta, \ba, \bb)}(\boldsymbol{d_{1}})$, and degenerations of holomorphic triangles to broken holomorphic triangles in the compactification. Let $u_{i}: S_{0} \rightarrow \Sigma_{0} \times \Delta$ be a sequence of holomorphic triangles in $\bigcup_{t \in I} \mathcal{M}_{(\btheta, \ba, \bb)}(p(t))$. As shown in \cite[ Lemma 9.58]{Naturality}, the only degenerations that can occur correspond to ``strip breaking". In particular, if $u_{i}$ converges to a  broken holomorphic triangle $$ U = (u_{1}, v_{1}, \ldots, v_{n}, w_{1}, \ldots, w_{m})$$ (in the sense of Definition \ref{Broken Holomorphic Triangle}), then in fact $U = (u_{1}, v_{1}, \ldots, v_{n})$ where the $v_{i}$ are holomorphic strips. We note that the argument used to rule out other types of degenerations has nothing to do with orientations.  Furthermore, we will see presently that among degenerations corresponding to strip breaking, the only ones which can occur yield broken triangles $U$ consisting of a triangle $u_{1}$ of index $2k -1$ which matches a divisor $p(t)$ for some $t \in I$, as well as a single curve $v_{1}: S \rightarrow \Sigma_{0} \times I \times \mathbb{R}$ with index 1. 
	
	To see this, note that if $U$ is genuinely broken then $U=(u_{1}, v_{1}, \ldots, v_{n})$ with $u_{1}$ a holomorphic triangle representing a class in $\pi_{2}(\boldsymbol{x}, \ba,\bb)$ and $v_{i}$  holomorphic curves in $\pi_{2}(\boldsymbol{y}_{i}, \boldsymbol{z}_{i})$ for some $\boldsymbol{y}_{i}, \boldsymbol{z}_{i} \in \mathbb{T}_{\alpha '} \cap \mathbb{T}_{\alpha}$.We now analyze what contributions to the ends can occur for the four possible intersection points $\bx \in \mathbb{T}_{\alpha '} \cap \mathbb{T}_{\alpha}$.

	Suppose $\bx = \btheta$. Then by applying Lemma \ref{Lemma950} to $u_{1}$ we obtain $\mu(u_{1}) = 2n_{p_{0}}(u_{1})$. Since $u_{1}$ satisfies a matching condition with $p(t)$ for some $t \in I$, we have $2n_{p_{0}}(u_{1}) = |\rho^{p}(p(t))| = k = 2n_{p_{0}}(\psi_{0})= \mu(\psi_{0})$. Thus $\mu(u_{1}) = \mu(\psi_{0})$. Since the total homology class of $U$ must be $\psi_{0}$, we therefore must have $\mu(v_{i}) = 0$ and $n_{p_{0}}(v_{I}) = 0$ for all $i$. Since the $v_{i}$ satisfy $(M1)$ and $(M3)$-$(M6)$, the only possibility for such curves is that each is a collection of constant components. Indeed, if any $v_{i}$ were locally nonconstant, it would satisfy $(M2)$, hence by \cite[Corollary 7.2]{Naturality} the dimension of the relevant moduli space containing it would be negative. Thus $U = (u_{1})$ (plus potentially some constant curves) is in the interior of $\bigcup_{t \in I} \mathcal{M}_{(\btheta, \ba, \bb)}(p(t))$, and so contributes nothing to the signed count of the ends. 
	
	Next we consider the cases  $\bx = \theta_{1}^{+} \theta_{2}^{-} , \theta_{1}^{-} \theta_{2}^{+} $. In these cases Lemma \ref{Lemma950} yields that the index of the triangle must be $\mu(u_{1}) = 2n_{p_{0}}(u_{1})-1 = 2n_{p_{0}}(\psi_{0})-1$, so the remaining curves must have indices which sum to 1. Similarly, $0= n_{p_{0}}(\psi_{0}) - n_{p_{0}}(u_{1})= \sum_{i} n_{p_{0}}(v_{i})$, so $v_{i}$ must have multiplicity 0 at the basepoint for each $i$. The only possibility in this case is that there is a single Maslov index 1 strip $v_{1}$. Thus in this case, we have additional contributions to the ends coming from:
	
	$$\bigcup_{\substack{t \in I \\ \boldsymbol{x} \in \{\theta_{1}^{+} \theta_{2}^{-} , \theta_{1}^{-} \theta_{2}^{+}\} }} \bigcup_{\substack{\phi \in \pi_{2}(\btheta, \boldsymbol{x}) \\ n_{p_{0}}(\phi) = 0}} \mathcal{M}_{(\boldsymbol{x}, \ba, \bb)}(p(t)) \times \widehat{\mathcal{M}}(\phi)$$
	Fix $\boldsymbol{x} \in \{\theta_{1}^{+} \theta_{2}^{-} , \theta_{1}^{-} \theta_{2}^{+}\}$. Then by Lemma \ref{hatdifferential} we know that
	
	$$\sum_{\substack{\phi \in \pi_{2}(\btheta, \boldsymbol{x}) \\ n_{p_{0}}(\phi) = 0}} \#  \widehat{\mathcal{M}}(\phi) = 0$$
	Thus 
	\begin{align*}
	&\#(\bigcup_{\substack{t \in I \\ \boldsymbol{x} \in \{\theta_{1}^{+} \theta_{2}^{-} , \theta_{1}^{-} \theta_{2}^{+}\} }} \bigcup_{\substack{\phi \in \pi_{2}(\btheta, \boldsymbol{x}) \\ n_{p_{0}}(\phi) = 0}} \mathcal{M}_{(\boldsymbol{x}, \ba, \bb)}(p(t)) \times \widehat{\mathcal{M}}(\phi)) \\
	&= \sum_{\substack{t \in I \\ \boldsymbol{x} \in \{\theta_{1}^{+} \theta_{2}^{-} , \theta_{1}^{-} \theta_{2}^{+}\} }} \sum_{\substack{\phi \in \pi_{2}(\btheta, \boldsymbol{x}) \\ n_{p_{0}}(\phi) = 0}} \#(\mathcal{M}_{(\boldsymbol{x}, \ba, \bb)}(p(t)) \times \widehat{\mathcal{M}}(\phi))  \\
	&= \sum_{\substack{t \in I \\ \boldsymbol{x} \in \{\theta_{1}^{+} \theta_{2}^{-} , \theta_{1}^{-} \theta_{2}^{+}\} }} \sum_{\substack{\phi \in \pi_{2}(\btheta, \boldsymbol{x}) \\ n_{p_{0}}(\phi) = 0}} (\# \mathcal{M}_{(\boldsymbol{x}, \ba, \bb)}(p(t))) \cdot (\#  \widehat{\mathcal{M}}(\phi)) \\
	&= 0
	\end{align*}
	Here we have used in the last equality the fact that we have endowed the orientable manifold $\bigcup_{t \in I} \mathcal{M}_{(\btheta, \ba, \bb)}(p(t))$ with some coherent orientation system. This implies in particular that the orientation induced on the compactification agrees with the product orientation at ends such as those above. So we see these cases also contribute nothing to the count of signed ends of the moduli space. 
	
	Lastly, we consider the case $\boldsymbol{x} = \theta_{1}^{-} \theta_{2}^{-}$. For any $\psi_{0} \in \pi_{2}(\theta_{1}^{-} \theta_{2}^{-}, \boldsymbol{a}, \boldsymbol{b})$ we have by lemma \ref{Lemma950} $\mu(psi_{0}) = 2n_{p_{0}}(\psi_{0}) - 2 =2k -2$. By proposition \ref{Prop9.47}, for a generic choice of almost complex structure $J$, and a fixed source $S$, the matched moduli space $\mathcal{M}(\psi_{0}, S, p(I))$ is a smooth manifold of dimension
	
	$$\text{ind}(\psi_{0},S) - \text{codim}(p(I))= \text{ind}(\psi_{0},S) - (2k-1) \leq \mu(\psi_{0}) - (2k-1) = -1$$
	Here the fact being used to establish the inequality is that for any holomorphic triangle $u$ in the homology class $A$ (not necessarily embedded), the index of the linearized $\bar{\partial}$ operator at $u$ satisfies $\text{ind}(A, S) = \mu(A) -2\text{sing}(u)$, and in particular $ \text{ind}(A, S) \leq \mu(A)$. This is \cite[Equation 9.46]{Naturality}, which comes from adapting \cite[Proposition 5.69]{Bordered}. This shows that for a generic choice of $J$, the broken triangle $U$ can not in fact contain a triangle $u_1$ in such a class $\psi_{0}$. 
	
	To summarize, we have shown that the ends of $\bigcup_{t \in I} \mathcal{M}_{(\btheta, \ba, \bb)}(p(t))$ correspond to $\mathcal{M}_{(\btheta, \ba, \bb)}(\boldsymbol{d_{0}})$, $\mathcal{M}_{(\btheta, \ba, \bb)}(\boldsymbol{d_{1}})$, and to degenerations of triangles into broken triangles containing one triangle and one strip, and that the last types of ends contribute nothing to the total signed count of the ends. Since we have chosen a collection of orientation systems satisfying the conclusion of Lemma \ref{ParametrizedOrientation}, we see that the signed count of the ends of  $\bigcup_{t \in I} \mathcal{M}_{(\btheta, \ba, \bb)}(p(t))$ is given by:
	
	$$\# \mathcal{M}_{(\btheta, \ba, \bb)}(\boldsymbol{d_{1}}) -  \# \mathcal{M}_{(\btheta, \ba, \bb)}(\boldsymbol{d_{0}}) = 0.$$
	This concludes step 2.
	
	We note that by Lemma \ref{ParametrizedOrientation}, a coherent orientation system on $\mathcal{M}_{(\btheta, \ba, \bb)}(p(0))$ induces a coherent orientation system over $\bigcup_{t \in I} \mathcal{M}_{(\btheta, \ba, \bb)}(p(t))$ and $\mathcal{M}_{(\btheta, \ba, \bb)}(p(1))$ satisfying the conclusion of the lemma. We thus see that if we can find a single divisor $\textbf{d}$ and a coherent orientation system $\mathfrak{o}$ over $\mathcal{M}_{(\btheta, \ba, \bb)}(\textbf{d})$ giving the desired count, then the argument of step 2 shows that there are induced coherent orientations over all divisors $\textbf{d}'$ in the same path component as $\textbf{d}$ for which the counts are the same. We will construct such a divisor in step 3 below.
	
	\textbf{Step 3}
	To construct a divisor $\textbf{d} \in \text{Sym}^{k}(\Delta) \setminus \text{Diag}(\Delta)$ giving the desired count, we consider a path of divisors subject to constraints, and evaluate the asymptotics of the moduli spaces of triangles matched to divisors in this path. Our argument is an explication of that in \cite{Naturality}, which is in turn based on an analogous argument  in \cite[pg. 653]{DisksLinks}  which deals with holomorphic strips. Our goal in summarizing these proofs is to make explicit the dependence of all statements on signs and orientations.
	
	We consider any path $p: [1,\infty) \rightarrow \text{Sym}^{k}(\Delta) \setminus \text{Diag}(\Delta)$ for which each point in $p(t)$ is at least a distance of $t$ away from all other points in $p(t)$, with respect to a metric on $\Delta$ for which the corners are infinite strips in $\mathbb{C}$ (see Figure \ref{Delta}). We further require that the points in $p(t)$ smoothly approach the vertex $v_{\alpha_{0} \beta_{0}}$ of $\Delta$ as $t \rightarrow \infty$. For such a path of divisors, we have as before a matched moduli space
	
	$$\mathcal{M}_{(\btheta, \ba, \bb)}(p) = \bigcup_{t \in [1,\infty]} \mathcal{M}_{(\btheta, \ba, \bb)}(p(t)).$$
	
	By the same arguments used in step 2, the ends of this moduli space corresponding to degenerations of triangles at finite values of $t$, with $t \neq 1$, will contribute nothing to the signed count of the ends, for any choice of coherent orientation system. Consider any coherent orientation system $\mathfrak{o}$ satisfying the properties of that furnished by Lemma \ref{ParametrizedOrientation}; then with respect to such an orientation system the signed count $\# \mathcal{M}_{(\btheta, \ba, \bb)}(p(1))$ must agree with the signed count of the ends of $\mathcal{M}_{(\btheta, \ba, \bb)}(p)$ coming from degenerations of triangles as $t \rightarrow \infty$. So we now count these ends.
	
	We claim that as $t \rightarrow \infty$, the only broken triangles which can occur in the limit consist of a single genuine triangle $\tau$ of index 0 on $(\Sigma_{0}, \balpha_{0}', \balpha_{0}, \bbeta_{0})$, along with $k$ index 2 curves on $(\Sigma_{0}, \balpha_{0}, \bbeta_{0})$ which satisfy matching conditions with some collection of divisors $c_{i} \in [0,1] \times \mathbb{R}$. To see this, we note that each point in the path $p$ consists of $k$ distinct points in $\Delta$, and the fact that these $k$ points separate and approach the vertex $v_{\alpha_{0} \beta_{0}}$ in the limit necessitates that the limiting broken triangle must contain $k$ strips satisfying matching conditions. To see the index of each of these curves must be 2, we make some simple observations about the diagram $(\Sigma_{0}, \balpha_{0}, \bbeta_{0})$ for $S^{3}$.

	First, note that the only homology classes of discs supporting holomorphic representatives are $\{\boldsymbol{e}_{\ba} + s[\Sigma_{0}] \}$ for nonnegative integers $s$, where $\boldsymbol{e}_{\ba}$ is the constant disk at $\ba$. The Maslov indices for such classes are $\mu(\boldsymbol{e}_{\ba} + s[\Sigma_{0}]) = 2s$. The fact that each strip satisfies a matching condition implies we must have $s \geq 1$ for each homology class. Since the total index of each holomorphic triangle in the moduli space $\mathcal{M}_{(\btheta, \ba, \bb)}(p)$ is $2k$, the limiting broken holomorphic triangle must have index $2k$, so the only possibility is that each of the k curves has index 2 (ie\ has s=1), and the triangle $\tau$ has index 0. By counting multiplicities and noting positivity of intersections, we see that the triangle $\tau$ must satisfy $n_{p_{0}}(\tau) = 0$. Using the same arguments as in the preceding proposition, we have that all of the curves in the broken triangle must satisfy $(M1) - (M8)$. 
	
	Applying the glueing result of Lipshitz \cite[Appendix A, Proposition A.1]{Cylindrical}, we see that we can obtain the signed count of the ends ocurring as degenerations as $t \rightarrow \infty$, or equivalently the count $\# \mathcal{M}_{(\btheta, \ba, \bb)}(p(1))$, as:
	
	$$\# \mathcal{M}_{(\btheta, \ba, \bb)}(p(1)) = (\# \mathcal{M}_{(\ba, \ba)}(c))^{k} \cdot \sum_{\substack{ \psi \in \pi_{2}(\btheta, \ba,\bb) \\ n_{p_{0}}(\psi) =0}} \# \mathcal{M}(\psi)$$
	where $c$ is a divisor in $[0,1] \times \mathbb{R}$ and $\mathcal{M}_{(\ba, \ba)}(c)$ is the moduli space of index 2 strips on $(\Sigma_{0}, \balpha_{0}, \bbeta_{0})$ with $\rho^{p}(u) = c$. Here the counts are occurring with respect to any coherent orientation system $\mathfrak{o}_{\mathcal{T}_{0}} = \{\mathfrak{o}_{\balpha_{0}', \balpha_{0}, \bbeta_{0}},\mathfrak{o}_{\balpha_{0}, \bbeta_{0}}, \mathfrak{o}_{\balpha_{0}', \balpha_{0}}, \mathfrak{o}_{\balpha_{0}', \bbeta_{0}} \}$ on $\mathcal{T}_{0}$ and the compatible orientation system $\mathfrak{o}_{\balpha_{0}, \bbeta_{0}}$ included in the data $\mathfrak{o}_{\mathcal{T}_{0}}$. The sum on the right hand side is precisely the count occurring in the triangle map in Lemma \ref{quasi}, and is thus $\pm 1$. Thus to finish this step it suffices to show that there is a coherent orientation system $\mathfrak{o}_{\mathcal{T}_{0}}$ for which
	
	$$ \# \mathcal{M}_{(\ba, \ba)}(c) = \pm 1.$$
	Consider the standard diagram $\mathcal{H}_{S^{1} \times S^{2}}$ for $S^{1} \times S^{2}$, twice stabilized via the diagram $(\Sigma_{0}, \balpha_{0}, \bbeta_{0})$ as shown in Figure \ref{TwiceStabilizedDiagram}. The figure depicts this genus 3 diagram for $S^{1} \times S^{2}$, along with a choice of basepoint $z$. Both bigons in $\mathcal{H}_{S^{1} \times S^{2}}$ for $S^{1} \times S^{2}$ admit a single holomorphic representative. We consider a choice of coherent orientation system on $\mathcal{H}_{S^{1} \times S^{2}}$ for which the the bigons cancel, and the resulting Floer homology is $\widehat{HF} \cong \mathbb{Z}^{2}$. By invariance of $\widehat{HF}$, the twice stabilized bigon in the twice stabilized diagram must also have a single holomorphic representative. As in the proof of stabilization invariance in \cite{Cylindrical}, this implies via a neck stretching argument that there is a coherent orientation system $\mathfrak{o}_{\balpha_{0}, \bbeta_{0}}$ on $(\Sigma_{0}, \balpha_{0}, \bbeta_{0})$ for which
	$$\# \mathcal{M}_{(\ba, \ba)}(c) = \pm 1.$$
	By \cite[Lemma 8.7]{Disks1}, this coherent orientation system can be extended to a coherent orientation system $\mathfrak{o}_{\mathcal{T}_{0}}$ for which the same condition holds.
	This completes step 3, and the proof of the lemma.

	\begin{figure}[h!]
		\centering
		\begin{tikzpicture}
		\node[anchor=south west,inner sep=0] (image) at (0,0) {\includegraphics[width=0.5\textwidth]{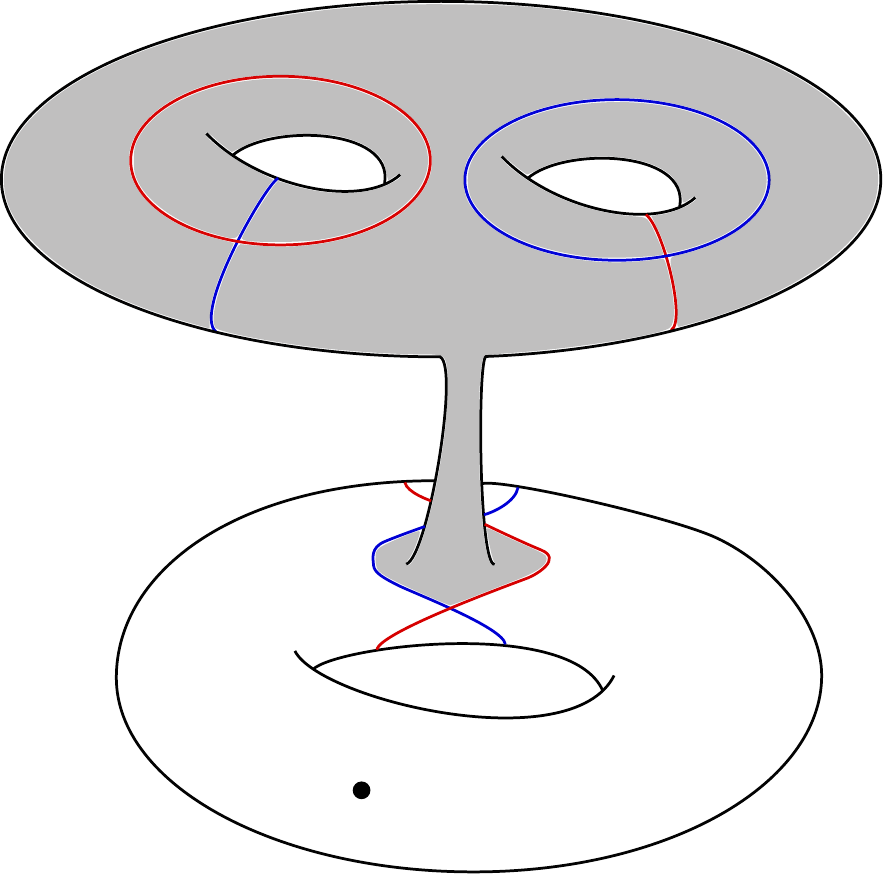}};
		\begin{scope}[x={(image.south east)},y={(image.north west)}]
		\end{scope}
		\end{tikzpicture}
		\caption{The diagram $\mathcal{H}_{S^{1} \times S^{2}}$ on the bottom of the figure is twice stabilized via a connect sum with $(\Sigma_{0}, \balpha_{0}, \bbeta_{0})$. Shaded in grey is a domain on the genus 3 diagram, the "twice stabilized bigon", which arises from one of the bigons in $\mathcal{H}_{S^{1} \times S^{2}}$.}
		\label{TwiceStabilizedDiagram}
	\end{figure}

\end{proof}

\section{The Surgery Exact Triangle}
\label{surgerytrianglesection}
In this section, we provide a brief explanation of how our results fit into the construction of the surgery exact sequence defined by Ozsv\'ath and Szab\'o in \cite[Section 9]{Disks2}. The fact that these constructions are compatible with ours turns out to be a matter of bookkeeping. We provide a sketch of the argument here with the hope that it will be useful in extending our naturality results to results about general cobordisms.

First, we recall one version of the construction of the surgery exact triangle and its relation to our naturality results. The relation between other versions of the statement of the exact triangle and our naturality results follows analogously. Let $Y$ be a closed oriented $3$-manifold, and $K \hookrightarrow Y$ be a knot with a longitude $\lambda$ and meridian $\mu$. We denote by $Y_{0}$ the $3$-manifold obtained by performing $\lambda$-surgery on $Y$, and by $Y_{1}$ the $3$-manifold obtained by performing $(\lambda + \mu)$-surgery on $Y$. Call any such triple $(Y, Y_{0}, Y_{1})$ of $3$-manifolds a \emph{triad}. Ozsv\`ath and Szab\`o showed:

\begin{thm}[Theorem 9.12 in \cite{Disks2}]
	\label{surgerytriangle}
	For any triad $(Y, Y_{0}, Y_{1})$ there are long exact sequences of $\mathbb{Z}[U]$-modules:
	
	\[
	\begin{tikzcd}
	HF^{+}(Y) \arrow[rr, "F"] & &   HF^{+}(Y_{0}) \arrow[ld, "F_{0}"]     \\
	& HF^{+}(Y_{1}) \arrow[lu, "F_{1}"]&  
	\end{tikzcd} 
	\] 
	
	and
	\[
	\begin{tikzcd}
	\widehat{HF}(Y) \arrow[rr, "\widehat{F}"] & &   \widehat{HF}(Y_{0}) \arrow[ld, "\widehat{F}_{0}"]     \\
	& \widehat{HF}(Y_{1}) \arrow[lu, "\widehat{F}_{1}"]&  
	\end{tikzcd} 
	\]

\end{thm}

The statement above is established via a corresponding statement made at the level of diagrams. To describe it, we recall a particular class of diagrams representing the manifolds in such a triad $(Y, Y_{0}, Y_{1})$. Let $(\mathcal{H}, \mathcal{H}_{0}, \mathcal{H}_{1})$ be a tuple of diagrams for the $3$-manifolds $(Y, Y_{0}, Y_{1})$ respectively. We will say the tuple of diagrams is \emph{subordinate to the surgery triad} if there is a pointed genus $g$ Heegaard quadruple diagram $(\Sigma, \balpha, \bbeta, \bgamma, \bdelta, z)$ satisfying the following properties:

\begin{itemize}
	\item The diagrams $(\Sigma, \balpha, \bbeta)$, $(\Sigma, \balpha, \bgamma)$, $(\Sigma, \balpha, \bdelta)$ represent $Y$, $Y_{0}$ and $Y_{1}$ respectively.
	\item For $i \neq g$, $\beta_{i}$, $\gamma_{i}$ and $\delta_{i}$ are isotopic translates of one another, each intersecting transversally in two points.
	\item $\gamma_{g}$ is isotopic to the juxtaposition of $\delta_{g}$ and  $\beta_{g}$ (See Figure 9 in \cite{Disks2} for a depiction of juxtaposition).
	\item Every multi-periodic domain on the quadruple diagram has positive and negative coefficients.
	
\end{itemize}
Existence of such subordinate diagrams was established by Ozsv\`ath and Szab\`o:

\begin{claim}[Lemma 9.2 in \cite{Disks2}]
	\label{subordinate exists}
	Given a triad $(Y, Y_{0}, Y_{1})$, there is a tuple of Heegaard diagrams $(\mathcal{H}, \mathcal{H}_{0}, \mathcal{H}_{1})$ subordinate to the triad.
\end{claim}

Theorem \ref{surgerytriangle} is then to be interpreted as a compact way of phrasing the following statement at the level of diagrams:

\begin{thm}
	\label{surgerytrianglediagramlevel}
	Let $(\mathcal{H}, \mathcal{H}_{0}, \mathcal{H}_{1})$ be a tuple subordinate to the triad $(Y, Y_{0}, Y_{1})$, and fix a coherent orientation system $\mathfrak{o}_{\mathcal{H}_{0}}$ on $\mathcal{H}_{0}$. Then there are coherent orientation systems on $\mathcal{H}$ and $\mathcal{H}_{1}$, and maps $F$, $F_{0}$ and $F_{1}$ induced by triangle counts as above, such that with respect to the chosen coherent orientation systems there are exact triangles:
	
	\[
	\begin{tikzcd}
	HF^{+}(\mathcal{H}) \arrow[rr, "F"] & &   HF^{+}(\mathcal{H}_{0}) \arrow[ld, "F_{0}"]     \\
	& HF^{+}(\mathcal{H}_{1}) \arrow[lu, "F_{1}"]&  
	\end{tikzcd} 
	\] 
	
	and
	\[
	\begin{tikzcd}
	\widehat{HF}(\mathcal{H}) \arrow[rr, "\widehat{F}"] & &   \widehat{HF}(\mathcal{H}_{0}) \arrow[ld, "\widehat{F}_{0}"]     \\
	& \widehat{HF}(\mathcal{H}_{1}) \arrow[lu, "\widehat{F}_{1}"]&  
	\end{tikzcd} 
	\] 
\end{thm}

We now use this restatement of the theorem at the level of diagrams to show that the surgery exact triangle is also well defined with respect to the transitive systems specified by Definition \ref{WeakHeegaardFloer1} and Theorem \ref{Functoriality}. 

Recall that Definition \ref{WeakHeegaardFloer1} and Theorem \ref{Functoriality} describe the four variants of Heegaard Floer homology as functors:

\[
HF^{\circ}: \text{Man}_{*} \rightarrow \text{Trans}(P(\mathbb{Z}[U]-\text{Mod}))
\]
The precise restatement of the surgery exact sequence that we wish to establish in this context is just that the exact sequence defined by Ozsv\`ath and Szab\`o extends to an exact sequence at the level of the transitive systems associated to a triad $(Y,Y_{0},Y_{1})$. Given transitive systems $T$, $T_{0}$ and $T_{1}$ and morphisms of transitive systems $F:T \rightarrow T_{0}$, $F_{0}: T_{0} \rightarrow T_{1}$ and $F_{1}: T_{1} \rightarrow T$, we will say the morphisms form an \emph{exact sequence of transitive systems} if the morphisms restricted to constituent objects form exact sequences. We then have:

\begin{cor}
	\label{surgerycorollary}
	For any triad $(Y, Y_{0},Y_{1})$, there are exact sequences of transitive systems \[
	\begin{tikzcd}
	HF^{+}(Y) \arrow[rr, "F"] & &   HF^{+}(Y_{0}) \arrow[ld, "F_{0}"]     \\
	& HF^{+}(Y_{1}) \arrow[lu, "F_{1}"]&  
	\end{tikzcd} 
	\] 
	
	and
	\[
	\begin{tikzcd}
	\widehat{HF}(Y) \arrow[rr, "\widehat{F}"] & &   \widehat{HF}(Y_{0}) \arrow[ld, "\widehat{F}_{0}"]     \\
	& \widehat{HF}(Y_{1}) \arrow[lu, "\widehat{F}_{1}"]&  
	\end{tikzcd} 
	\] 
\end{cor}
\begin{proof}[Proof of Corollary \ref{surgerycorollary}]
	Fix a triad $(Y,Y_{0},Y_{1})$, and a tuple $(\mathcal{H}, \mathcal{H}_{0}, \mathcal{H}_{1})$ of diagrams subordinate to the triad; such a subordinate tuple exists by Lemma \ref{subordinate exists}. By Theorem \ref{surgerytrianglediagramlevel}, applying $HF^{+}$ and $\widehat{HF}$ to the diagrams in this tuple yields long exact triangles relating the $\mathbb{Z}[U]$-modules associated to the diagrams. Note that Theorem \ref{surgerytrianglediagramlevel} ensures this statement is true with respect to any choice of coherent orientation system over $\mathcal{H}_{0}$, and the coherent orientations it induces on $\mathcal{H}$ and $\mathcal{H}_{1}$ via the triangle maps $F_{0}$ and $F_{1}$. For the remainder of the proof, we fix coherent orientations $(\mathfrak{o}_{\mathcal{H}},\mathfrak{o}_{\mathcal{H}_{0}}, \mathfrak{o}_{\mathcal{H}_{1}})$ on $(\mathcal{H}, \mathcal{H}_{0}, \mathcal{H}_{1})$ which are related in this way.
	
	Notice that by their definition, the transitive systems $HF^{\circ}(Y)$, $HF^{\circ}(Y_{0})$ and $HF^{\circ}(Y_{1})$ contain as constituent objects the modules $HF^{\circ}(\mathcal{H})$, $HF^{\circ}(\mathcal{H}_{0})$ and $HF^{\circ}(\mathcal{H}_{1})$. Thus the exact triangle associated to this tuple of diagrams in Theorem \ref{surgerytrianglediagramlevel} begins to partially define an exact sequence between the transitive systems. The situation is depicted in the leftmost column of Figure \ref{SurgeryCommutativity}. 
	
	It is easy to extend this partially defined triangle of maps between transitive systems to a (more) partially defined triangle, defined now on all objects which correspond to triples of diagrams subordinate to $(Y, Y_{0}, Y_{1})$. Given two tuples of diagrams $(\mathcal{H}, \mathcal{H}_{0}, \mathcal{H}_{1})$ and $(\mathcal{H}'', \mathcal{H}_{0}'', \mathcal{H}_{1}'')$ subordinate to $(Y, Y_{0},Y_{1})$, and equivalences $(\Psi'', \Psi_{0}'', \Psi_{1}'')$ induced by Heegaard moves relating the two tuples of diagrams, the maps $F_{i}$ and $F_{i}''$ appearing in the respective exact triangles commute (up to sign) with the equivalence maps by \cite[Theorem 4.4]{FourManifoldInvariants}. See Figure \ref{SurgeryCommutativity} for a depiction of the situation. In other words, the surgery triangle immediately extends by this result to a partially defined triangle of transitive systems, which is now defined on all diagrams occurring in a subordinate tuple. This can be described in the diagram of Figure \ref{SurgeryCommutativity} by saying that the front square faces in the diagram commute (up to sign). 
	
	The final thing that remains to be shown is that these partially defined morphisms of transitive systems can be extended to maps defined on the Heegaard Floer modules associated to any admisssible diagram, while preserving the consistency required of a morphism of transitive systems. With respect to the notation in Figure \ref{SurgeryCommutativity}, this can be phrased as asking for maps
	\[
	F': HF^{+}(\mathcal{H}') \rightarrow HF^{+}(\mathcal{H}_{0}')
	\]
	
	\[
	F_{0}': HF^{+}(\mathcal{H}_{0}') \rightarrow HF^{+}(\mathcal{H}_{1}')
	\]
	
	\[
	F_{1}': HF^{+}(\mathcal{H}_{1}') \rightarrow HF^{+}(\mathcal{H}')
	\]
	defined with respect to a tuple $(\mathcal{H}', \mathcal{H}_{0}', \mathcal{H}_{1}')$ which is not subordinate to $(Y,Y_{0},Y_{1})$, such that all of the faces in Figure \ref{SurgeryCommutativity} commute.
	
	This is again straightforward: let $(\Psi, \Psi_{0}, \Psi_{1})$ be a tuple of equivalences induced by Heegaard moves relating $(\mathcal{H}, \mathcal{H}_{0}, \mathcal{H}_{1})$ and $(\mathcal{H}', \mathcal{H}_{0}', \mathcal{H}_{1}')$, and $(\Psi', \Psi_{0}', \Psi_{1}')$ be a tuple of equivalences induced by Heegaard moves relating $(\mathcal{H}', \mathcal{H}_{0}', \mathcal{H}_{1}')$ and $(\mathcal{H}'', \mathcal{H}_{0}'', \mathcal{H}_{1}'')$. Again, we refer to Figure \ref{SurgeryCommutativity} to help recall the meaning of the notation. Define the map $F': HF^{+}(\mathcal{H}') \rightarrow HF^{+}(\mathcal{H}_{0}')$ by $F':= \Psi_{0} \circ F \circ \Psi^{-1}$ where $\Psi^{-1}$ is a homotopy inverse for the equivalence $\Psi$. Similarly, define $F_{0}':= \Psi_{1} \circ F_{0} \circ \Psi_{0}^{-1}$ and $F_{1}':= \Psi \circ F_{1} \circ \Psi_{1}^{-1}$. Note that 
	\begin{align*}
	F' &= \Psi_{0} \circ F \circ \Psi^{-1} \\
	&= \Psi_{0} \circ (\Psi_{0}'')^{-1} \circ F'' \circ \Psi'' \circ \circ \Psi^{-1} \text{ (by \cite[Theorem 4.4]{FourManifoldInvariants})} \\
	&= \pm (\Psi_{0}')^{-1} \circ F'' \circ \Psi' \text{ (by Theorem \ref{Functoriality} )}
	\end{align*}
	
	This shows that we can provide maps on all of the objects of our transitive systems, and furthermore by the computation above that this gives a well defined morphism of transitive systems. Exactness of all ``columns" follows by construction as well. This completes the proof. \qedhere
	
	\begin{figure}[!h]
		\[
		\begin{tikzcd}
		& HF^{+}(\mathcal{H'}) \arrow[dr,"\Psi'"] \arrow[dd, pos = .3, "F'", dashed] \arrow[from =dddd, dashed, swap, pos = .6, bend right = 40, "F_{1}'"]  &        \\
		HF^{+}(\mathcal{H}) \arrow[ur, "\Psi"] \arrow[rr, pos = .3, crossing over, "\Psi''"] \arrow[dd, "F"]& & HF^{+}(\mathcal{H}'') \arrow[dd, "F''"]  \\
		& HF^{+}(\mathcal{H}_{0}') \arrow[dr,crossing over, "\Psi_{0}'"] \arrow[dd, pos =.3, "F_{0}'", dashed] & \\
		HF^{+}(\mathcal{H}_{0}) \arrow[ur, "\Psi_{0}"] \arrow[rr, pos = .3, crossing over, "\Psi_{0}''"] \arrow[dd,"F_{0}"] & &HF^{+}(\mathcal{H}_{0}'') \arrow[dd, "F_{0}''"] \\
		& HF^{+}(\mathcal{H}_{1}') \arrow[dr,"\Psi_{1}'"]  & \\
		HF^{+}(\mathcal{H}_{1}) \arrow[ur, "\Psi_{1}"] \arrow[rr, pos=.3, "\Psi_{1}''"] \arrow[uuuu, bend left = 60, "F_{1}"] & &HF^{+}(\mathcal{H}_{1}'') \arrow[uuuu,swap,  bend right = 60, "F_{1}''"] 
		\end{tikzcd} 
		\] 
		\caption{A depiction of the diagrams involved in the proof of Corollary \ref{surgerycorollary}.}
		\label{SurgeryCommutativity}
	\end{figure}
\end{proof}

\bibliographystyle{alpha}
\bibliography{References}

\end{document}